\theoremstyle{plain}
\newtheorem{thm}{Theorem}[section]
\newtheorem*{thm*}{Theorem}
\newtheorem{prop}[thm]{Proposition}
\newtheorem{lem}[thm]{Lemma}
\newtheorem{cor}[thm]{Corollary}
\theoremstyle{definition}
\theoremstyle{remark}
\newtheorem{rem}{Remark}[section]
\newcommand{\vol}{\operatorname{vol}}
\newcommand{\ric}{\operatorname{Ric}}
\newcommand{\Div}{\operatorname{div}}
\newcommand{\Hess}{\operatorname{Hess}}
\newcommand{\tr}{\operatorname{trace}}
\newcommand{\IR}{\operatorname{InRad}}
\newcommand{\cut}{\mathrm{Cut}\,}
\newcommand{\inte}{\mathrm{Int}\,}
\newcommand{\supp}{\mathrm{supp}\,}
\newcommand{\bm}{\partial M}
\newcommand{\ball}{B^{n}_{\kappa,\lambda}}
\newcommand{\const}{C_{\kappa,\lambda}}
\newcommand{\bball}{\partial B^{n}_{\kappa,\lambda}}
\title[Comparison geometry of manifolds with boundary]{Comparison geometry of manifolds\\ with boundary under a lower weighted\\ Ricci curvature bound}
\author{Yohei Sakurai}
\date{December 7, 2017}
\address{Institute for Applied Mathematics, University of Bonn, Endenicher Allee 60, D-53115 Bonn, Germany}
\email{sakurai@iam.uni-bonn.de}
\thanks{Research Fellow of Japan Society for the Promotion of Science for 2014-2016}
\subjclass[2010]{53C20}
\keywords{Manifold with boundary; Weighted Ricci curvature}
\begin{document}
\maketitle

\begin{abstract}
We study Riemannian manifolds with boundary under a lower weighted Ricci curvature bound.
We consider a curvature condition in which
the weighted Ricci curvature is bounded from below by the density function.
Under the curvature condition,
and a suitable condition for the weighted mean curvature for the boundary,
we obtain various comparison geometric results.
\end{abstract}

%%%%%%%%%%%%%%%%%%%%%%%%%%
\section{Introduction}\label{sec:Introduction}
We study comparison geometry of manifolds with boundary under a lower weighted Ricci curvature bound.
For the lower weighted Ricci curvature bound,
we consider a curvature condition
in which the lower bound is controlled by the density function.
We introduce a reasonable curvature condition for a lower weighted mean curvature bound for the boundary.
Under these curvature conditions,
we investigate comparison geometric properties,
and conclude twisted rigidity theorems.

For $n\geq 2$,
let $(M,g)$ be an $n$-dimensional Riemannian manifold with or without boundary,
and let $f:M\to \mathbb{R}$ be a smooth function.
Let $\ric_{g}$ denote the Ricci curvature defined by $g$.
For $N\in (-\infty,\infty]$,
the \textit{$N$-weighted Ricci curvature} is defined as follows:
If $N \in (-\infty,\infty)\setminus \{n\}$,
\begin{equation}\label{eq:def of weighted Ricci curvature}
\ric^{N}_{f}:=\ric_{g}+\Hess f-\frac{d f \otimes d f}{N-n},
\end{equation}
where $d f$ and $\Hess f$ are the differential and the Hessian of $f$,
respectively;
otherwise,
if $N=\infty$,
then $\ric^{N}_{f}:=\ric_{g}+\Hess f$;
if $N=n$,
and if $f$ is constant,
then $\ric^{N}_{f}:=\ric_{g}$;
if $N=n$,
and if $f$ is not constant,
then $\ric^{N}_{f}:=-\infty$ (\cite{BE}, \cite{Lic}).
For a smooth function $K:M\to \mathbb{R}$,
we mean by $\ric^{N}_{f,M}\geq K$
for every point $x \in M$,
and every unit tangent vector $v$ at $x$,
it holds that $\ric^{N}_{f}(v)\geq K(x)$.

For manifolds without boundary whose $N$-weighted Ricci curvatures are bounded from below by constants,
many comparison geometric results have been already known in the usual weighted case of $N\in [n,\infty]$ (see e.g., \cite{Lic}, \cite{Lo}, \cite{Q}, \cite{WW}).
For manifolds with boundary,
the author \cite{Sa2} has studied such comparison geometric properties.

Recently,
under a lower $N$-weighted Ricci curvature bound,
Wylie \cite{W}, and Wylie and Yeroshkin \cite{WY} have studied comparison geometry of manifolds without boundary in complementary case of $N \in (-\infty,n)$.
Wylie \cite{W} has obtained a splitting theorem of Cheeger-Gromoll type under the curvature condition $\ric^{N}_{f,M}\geq 0$ for $N \in (-\infty,1]$.
Wylie and Yeroshkin \cite{WY} have introduced a curvature condition
\begin{equation*}
\ric^{1}_{f,M}\geq (n-1)\, \kappa\, e^{\frac{-4f}{n-1}}
\end{equation*}
for $\kappa \in \mathbb{R}$ from the view point of study of weighted affine connections.
Under such condition,
they have proved a maximal diameter theorem of Cheng type for the distance induced from the metric $e^{\frac{-4f}{n-1}}g$,
and a volume comparison of Bishop-Gromov type for the measure $e^{-\frac{n+1}{n-1}f} \vol_{g}$,
where $\vol_{g}$ denotes the Riemannian volume measure on $(M,g)$.

In this paper,
we study comparison geometry of Riemannian manifolds with boundary satisfying the curvature condition
\begin{equation}\label{eq:Ricci curvature assumption}
\ric^{N}_{f,M}\geq (n-1)\, \kappa\, e^{\frac{-4f}{n-1}}
\end{equation}
for $\kappa \in \mathbb{R},\,N \in (-\infty,\infty]$.
We will also consider a curvature condition for the boundary
that is compatible with (\ref{eq:Ricci curvature assumption}).
For a Riemannian manifold $M$ with boundary,
let $\bm$ stand for its boundary.
For $z\in \bm$,
we denote by $u_{z}$ the unit inner normal vector on $\bm$ at $z$,
and by $H_{z}$ the mean curvature of $\bm$ at $z$ with respect to $u_{z}$ (more precisely, see Subsection \ref{sec:Jacobi fields orthogonal to the boundary}).
The \textit{weighted mean curvature} $H_{f,z}$ is defined as
\begin{equation*}
H_{f,z}:=H_{z}+g\left((\nabla f)_{z},u_{z}\right),
\end{equation*}
where $\nabla f$ is the gradient of $f$.
We introduce a curvature condition
\begin{equation}\label{eq:mean curvature assumption}
H_{f,\bm}\geq (n-1)\, \lambda\, e^{\frac{-2f}{n-1}}
\end{equation}
for $\lambda \in \mathbb{R}$,
where (\ref{eq:mean curvature assumption}) means that
$H_{f,z}\geq (n-1)\lambda e^{\frac{-2f(z)}{n-1}}$ for every $z\in \bm$.
Under the conditions (\ref{eq:Ricci curvature assumption}) and (\ref{eq:mean curvature assumption}) for $\kappa,\lambda \in \mathbb{R},\,N \in (-\infty,1]$,
we formulate various comparison geometric results,
and generalize the preceding studies by Kasue \cite{K2}, \cite{K3},
and the author \cite{Sa1} when $f=0$.

%%%%%%%%%%%%%%%%%%%%%%%%%%
\subsection{Setting}\label{sec:setting}
In the present paper,
we work in the following setting:
For $n\geq 2$,
let $(M,g)$ be an $n$-dimensional, 
connected complete Riemannian manifold with boundary,
and let $f:M\to \mathbb{R}$ be a smooth function.
For $\kappa,\lambda \in \mathbb{R},\,N \in (-\infty,\infty]$
we say that
a triple $(M,\bm,f)$ has \textit{lower $(\kappa,\lambda,N)$-weighted curvature bounds}
if (\ref{eq:Ricci curvature assumption}) and (\ref{eq:mean curvature assumption}) hold.
For $N_{1} \in (n,\infty],\,N_{2} \in (-\infty,n)$,
or for $N_{1},\,N_{2} \in (-\infty,n)$ with $N_{1}\leq N_{2}$,
if $(M,\bm,f)$ has lower $(\kappa,\lambda,N_{1})$-weighted curvature bounds,
then it also has lower $(\kappa,\lambda,N_{2})$-weighted curvature bounds (see (\ref{eq:def of weighted Ricci curvature}) and (\ref{eq:Ricci curvature assumption})).
We mainly study a triple $(M,\bm,f)$ with lower $(\kappa,\lambda,N)$-weighted curvature bounds for $\kappa,\lambda \in \mathbb{R},\,N \in (-\infty,1]$.

%%%%%%%%%%%%%%%%%%%%%%%%%%
\subsection{Splitting theorems}
For the Riemannian distance $d_{M}$ on $M$,
let $\rho_{\bm}:M\to \mathbb{R}$ stand for the distance function from the boundary $\bm$ defined as $\rho_{\bm}(x):=d_{M}(x,\bm)$.
For $z\in \bm$,
let $\gamma_{z}:[0,T)\to M$ be the geodesic with $\gamma_{z}'(0)=u_{z}$.
Define functions $\tau,\,\tau_{f}:\bm \to (0,\infty]$ by
\begin{equation}\label{eq:boundary cut value}
\tau(z):=\sup \{\,t>0 \mid \rho_{\bm}(\gamma_{z}(t))=t\,\},\quad \tau_{f}(z):=\int^{\tau(z)}_{0}\,e^{\frac{-2f(\gamma_{z}(a))}{n-1}}\,da.
\end{equation}
We also define a function $s_{f,z}:[0,\tau(z)]\to [0,\tau_{f}(z)]$ by
\begin{equation}\label{eq:speed changing function}
s_{f,z}(t):=\int^{t}_{0}\,  e^{\frac{-2f(\gamma_{z}(a))}{n-1}}\,da.
\end{equation}
Let $s_{\kappa,\lambda}(s)$ be a unique solution of the Jacobi equation $\varphi''(s)+\kappa \varphi(s)=0$ with $\varphi(0)=1,\,\varphi'(0)=-\lambda$.
For $t \in [0,\tau(z)]$
we set
\begin{equation}\label{eq:model Jacobi field}
F_{\kappa,\lambda,z}(t):=  \exp \left(  \frac{f(\gamma_{z}(t))-f(z)}{n-1}  \right)\,s_{\kappa,\lambda}(s_{f,z}(t)).
\end{equation}
Let $h$ denote the induced Riemannian metric on $\bm$.
For an interval $I$,
and a connected component $\bm_{1}$ of $\bm$,
we denote by $I \times_{F_{\kappa,\lambda}} \bm_{1}$ the twisted product Riemannian manifold $\bigl(I \times \bm_{1},dt^{2}+F^{2}_{\kappa,\lambda,z}(t)\,h\bigl)$.

One of the main results is the following twisted splitting theorem:
\begin{thm}\label{thm:splitting theorem}
Let $\kappa \leq 0$ and $\lambda:=\sqrt{\vert \kappa \vert}$.
For $N \in (-\infty,1]$,
assume that
$(M,\bm,f)$ has lower $(\kappa,\lambda,N)$-weighted curvature bounds.
Suppose that
$f$ is bounded from above.
If $\tau(z_{0})=\infty$ for some $z_{0}\in \bm$,
then $M$ is isometric to $[0,\infty)\times_{F_{\kappa,\lambda}} \bm$;
moreover,
if $N \in (-\infty,1)$,
then for every $z\in \bm$
the function $f\circ \gamma_{z}$ is constant on $[0,\infty)$.
\end{thm}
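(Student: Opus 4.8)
\emph{Proof proposal.} The plan is to run a boundary analogue of the Cheeger--Gromoll splitting argument: use the weighted mean‑curvature comparison along normal geodesics to get a Laplacian comparison for $\rho_{\bm}$, use the ray $\gamma_{z_{0}}$ to produce a Busemann function forcing equality everywhere, and then read off the twisted product structure from the equality case of the Riccati/Jacobi comparison. First recall the comparison device: along a normal geodesic $\gamma_{z}$, put $m_{f}(t):=\Delta_{f}\rho_{\bm}(\gamma_{z}(t))$ and, with $s=s_{f,z}(t)$, define the reparametrized weighted mean curvature $\mathfrak{m}(s):=e^{\frac{2f(\gamma_{z}(t))}{n-1}}\,m_{f}(t)$. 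A direct computation from the definition of $\ric^{N}_{f}$, the Bochner formula and the Cauchy--Schwarz inequality gives, for $t\in[0,\tau(z))$,
\begin{equation*}
\frac{d\mathfrak{m}}{ds}\le -\frac{\mathfrak{m}^{2}}{n-1}+(n-1)\lambda^{2}+e^{\frac{4f}{n-1}}\bigl((f\circ\gamma_{z})'\bigr)^{2}\,\frac{N-1}{(n-N)(n-1)},
\end{equation*}
where, since $N\le 1$, the last term is $\le 0$ and vanishes exactly when $N=1$ or $(f\circ\gamma_{z})'\equiv 0$. Because $\kappa=-\lambda^{2}$, the model solution $s_{\kappa,\lambda}(s)=e^{-\lambda s}$ never vanishes and the constant $s\mapsto -(n-1)\lambda$ solves the corresponding Riccati equation; together with $\mathfrak{m}(0)=-e^{\frac{2f(z)}{n-1}}H_{f,z}\le -(n-1)\lambda$ from (\ref{eq:mean curvature assumption}), the comparison principle yields $\mathfrak{m}\le -(n-1)\lambda$, i.e. the Laplacian comparison $\Delta_{f}\rho_{\bm}\le -(n-1)\lambda\,e^{\frac{-2f}{n-1}}$ in the barrier sense on $M$ already available from the earlier sections.

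Next I would use $\tau(z_{0})=\infty$. Since $f$ is bounded above, $e^{\frac{-2f}{n-1}}$ is bounded below by a positive constant, so $\tau_{f}(z_{0})=\infty$; moreover $\gamma_{z_{0}}$ has no focal point, so $\mathfrak{m}$ is finite on all of $[0,\infty)$. Setting $v:=\mathfrak{m}+(n-1)\lambda\le 0$, the displayed inequality becomes $v'\le -v^{2}/(n-1)+2\lambda v\le -v^{2}/(n-1)$; if $v(s_{1})<0$ for some $s_{1}$, comparison with $-v^{2}/(n-1)$ forces $v\to-\infty$ in finite $s$‑time, contradicting finiteness. Hence $\mathfrak{m}\equiv -(n-1)\lambda$ on $[0,\infty)$, so equality holds in all the comparison inequalities along $\gamma_{z_{0}}$; in particular, if $N<1$, the nonpositive term vanishes and $f\circ\gamma_{z_{0}}$ is constant. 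Now let $b_{z_{0}}(x):=\lim_{t\to\infty}\bigl(t-d_{M}(x,\gamma_{z_{0}}(t))\bigr)$ be the Busemann function of the ray $\gamma_{z_{0}}$; the triangle inequality together with $\rho_{\bm}(\gamma_{z_{0}}(t))=t$ gives $b_{z_{0}}\le \rho_{\bm}$ on $M$, with equality along $\gamma_{z_{0}}$, while the Laplacian comparison for the distance to the point $\gamma_{z_{0}}(t)$ — using again that $f$ is bounded above, so the reparametrized distance tends to infinity — yields $\Delta_{f}b_{z_{0}}\ge -(n-1)\lambda\,e^{\frac{-2f}{n-1}}$ in the barrier sense. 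Therefore $w:=\rho_{\bm}-b_{z_{0}}\ge 0$ satisfies $\Delta_{f}w\le 0$ in the barrier sense on the connected manifold $\inte M$ and vanishes at the interior point $\gamma_{z_{0}}(1)$; by the strong maximum principle $w\equiv 0$, i.e. $\rho_{\bm}=b_{z_{0}}$ on $M$.

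Finally I would read off the rigidity. From $\rho_{\bm}=b_{z_{0}}$ and the two barrier inequalities, $\rho_{\bm}$ is a viscosity solution of the uniformly elliptic equation $\Delta_{f}\rho_{\bm}=-(n-1)\lambda\,e^{\frac{-2f}{n-1}}$, hence smooth on $\inte M$ by elliptic regularity (and smooth up to $\bm$ using the boundary condition). Consequently $\rho_{\bm}$ has no cut points and $\vert\nabla\rho_{\bm}\vert\equiv 1$, so the normal exponential map $\expp\colon \bm\times[0,\infty)\to M$, $(z,t)\mapsto\gamma_{z}(t)$, is a diffeomorphism and $\tau(z)=\infty$ for every $z$; in particular $\bm$ is connected. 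Equality in the reparametrized Riccati inequality now holds along every $\gamma_{z}$, and the Cauchy--Schwarz equality case shows that the second fundamental forms of the level sets are umbilic with the model value; integrating the $\bm$‑Jacobi equation with this information identifies $(\expp)^{\ast}g$ with $dt^{2}+F_{\kappa,\lambda,z}^{2}(t)\,h$, so $M$ is isometric to $[0,\infty)\times_{F_{\kappa,\lambda}}\bm$. When $N<1$, applying the argument of the second paragraph to each $\gamma_{z}$ shows $f\circ\gamma_{z}$ is constant on $[0,\infty)$.

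The main obstacle is the globalization step, that is, passing from equality along the single ray $\gamma_{z_{0}}$ to equality on all of $M$: this needs the Laplacian comparison for the distance to the far points $\gamma_{z_{0}}(t)$ on a manifold with boundary — where minimizing geodesics may touch $\bm$ — to be controlled with exactly the limiting value $(n-1)\lambda\,e^{\frac{-2f}{n-1}}$, and it needs the strong maximum principle to be applied to $\rho_{\bm}-b_{z_{0}}$ in the barrier sense; both rely on the mean‑convexity $H_{f,\bm}\ge 0$ (which holds since $\lambda\ge 0$) and on the comparison machinery of the preceding sections and of \cite{Sa2}. The remaining steps — the one‑dimensional Riccati blow‑up and the equality‑case analysis of the Jacobi equation — are then routine.
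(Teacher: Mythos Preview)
Your overall strategy---Busemann function plus weighted Laplacian comparison plus maximum principle---is exactly the paper's. The Riccati blow-up along $\gamma_{z_{0}}$ is a pleasant way to see equality along that single ray (the paper does not isolate this step), and your identification of the rigidity via the Jacobi equality case matches Lemma~\ref{lem:Equality in Laplacian comparison}.

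The genuine gap is the step you flag yourself: you apply the strong maximum principle to $w=\rho_{\bm}-b_{z_{0}}$ globally on $\inte M$, which requires the barrier inequality $\Delta_{f}b_{z_{0}}\ge -(n-1)\lambda\,e^{-2f/(n-1)}$ at \emph{every} interior point. The barriers for $b_{z_{0}}$ come from distance functions to far points on an asymptote from $x$, and for these the pointed Laplacian comparison (Lemma~\ref{lem:finite pointed Laplacian comparison}) needs the asymptote to lie in $\inte M$. Mean-convexity $H_{f,\bm}\ge 0$ does not by itself force interior geodesics to avoid $\bm$ (that would need convexity of the second fundamental form, not just of its trace), so your justification is insufficient as stated. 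The paper does not attempt a global barrier; instead it argues \emph{locally}: for $z_{1}$ with $\tau(z_{1})=\infty$ and $x_{0}=\gamma_{z_{1}}(l)$, Lemma~\ref{lem:asymptote} supplies a ball $B_{\epsilon}(x_{0})$ on which all asymptotes stay in $\inte M$, the maximum principle gives $b_{\gamma_{z_{1}}}=\rho_{\bm}$ on that ball, and Lemma~\ref{lem:busemann function} then says every foot point of that ball has $\tau=\infty$. This shows $\Omega:=\{z\in\bm_{0}:\tau(z)=\infty\}$ is open; it is closed by continuity of $\tau$, so $\Omega=\bm_{0}$, and Lemma~\ref{lem:splitting lemma} gives $\bm$ connected with $\cut\bm=\emptyset$. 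Once $\cut\bm=\emptyset$, $\rho_{\bm}$ is already smooth and the normal exponential map is a diffeomorphism---no viscosity/elliptic-regularity detour is needed.

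A minor point: your sign convention for $\Delta_{f}$ is opposite to the paper's (here $\Delta$ is minus the trace of the Hessian, so $\Delta_{f}\rho_{\bm}\to H_{f,z}$ and the comparison reads $\Delta_{f}\rho_{\bm}\ge (n-1)\lambda\,e^{-2f/(n-1)}$); this does not affect the logic but should be aligned if you integrate your argument into the present text.
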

When $\kappa=0$ and $\lambda=0$,
Theorem \ref{thm:splitting theorem} was proven by the author in the cases $N \in [n,\infty]$ (see \cite{Sa2}) and $N \in (-\infty,1]$ (see \cite{Sa3}).
In the unweighted case of $f=0$,
Kasue \cite{K2} has proved Theorem \ref{thm:splitting theorem}
under the assumption that
$M$ is non-compact and $\bm$ is compact (see also Croke and Kleiner \cite{CK}),
and the author \cite{Sa1} has proved Theorem \ref{thm:splitting theorem} itself.

In Theorem \ref{thm:splitting theorem},
by applying a splitting theorem proved by Wylie \cite{W} to the boundary,
we obtain a multi-splitting theorem (see Subsection \ref{sec:Multi-splitting}).
We also generalize a splitting theorem studied by Kasue \cite{K2} (and Croke and Kleiner \cite{CK}, Ichida \cite{I}) (see Subsection \ref{sec:Variants of splitting theorems}).

%%%%%%%%%%%%%%%%%%%%%%%%%%
\subsection{Inscribed radii}
We denote by $M^{n}_{\kappa}$ the simply connected $n$-dimensional space form with constant curvature $\kappa$.
We say that $\kappa$ and $\lambda$ satisfy the \textit{ball-condition}
if there exists a closed geodesic ball $\ball$ in $M^{n}_{\kappa}$ whose boundary $\bball$ has constant mean curvature $(n-1)\lambda$.
Notice that $\kappa$ and $\lambda$ satisfy the ball-condition if and only if either
(1) $\kappa>0$; 
(2) $\kappa=0$ and $\lambda>0$;
or (3) $\kappa<0$ and $\lambda>\sqrt{\vert \kappa \vert}$.
We denote by $\const$ the radius of $\ball$.
We see that $\kappa$ and $\lambda$ satisfy the ball-condition if and only if
the equation $s_{\kappa,\lambda}(t)=0$ has a positive solution;
moreover,
$\const=\inf \{\,t>0 \mid s_{\kappa,\lambda}(t)=0\, \}$.

The \textit{inscribed radius $\IR M$ of $M$} is defined to be the supremum of the distance function from the boundary $\rho_{\bm}$ over $M$.
Let us consider the Riemannian metric $g_{f}:=e^{\frac{-4f}{n-1}}g$.
We denote by $\rho^{g_{f}}_{\bm}$ and by $\IR_{g_{f}} M$
the distance function from the boundary and the inscribed radius on $M$ induced from $g_{f}$,
respectively.

Let $\inte M$ be the interior of $M$.
For $x \in \inte M$,
let $U_{x}M$ be the unit tangent sphere at $x$ which
can be identified with the $(n-1)$-dimensional standard unit sphere $(\mathbb{S}^{n-1},ds^{2}_{n-1})$.
For $v \in U_{x} M$,
let $\gamma_{v}:[0,T)\to M$ be the geodesic with $\gamma_{v}'(0)=v$. 
We define $\tau_{x}:U_{x}M\to (0,\infty]$ by
\begin{equation}\label{eq:pointed cut value}
\tau_{x}(v):=\sup \left\{\, t>0 \mid \rho_{x}(\gamma_{v}(t))=t,\, \gamma_{v}([0,t))\subset \inte M  \,\right\},
\end{equation}
where $\rho_{x}:M\to \mathbb{R}$ is the distance function from $x$ defined as $\rho_{x}(y):=d_{M}(x,y)$.
Let $s_{f,v}:[0,\tau_{x}(v)]\to [0,\infty]$ denote a function defined by
\begin{equation}\label{eq:pointed speed changing function}
s_{f,v}(t):=\int^{t}_{0}\,  e^{\frac{-2f(\gamma_{v}(a))}{n-1}}\,da.
\end{equation}
Let $s_{\kappa}(s)$ be a unique solution of the Jacobi equation $\varphi''(s)+\kappa \varphi(s)=0$ with $\varphi(0)=0,\,\varphi'(0)=1$.
For $t\in [0,\tau_{x}(v)]$
we put
\begin{equation}\label{eq:pointed model Jacobi field}
F_{\kappa,v}(t):=\exp \left(\frac{f(\gamma_{v}(t))+f(x)}{n-1}   \right) \,s_{\kappa}(s_{f,v}(t)).
\end{equation}
For $l>0$,
we denote by $[0,l]\times_{F_{\kappa}} \mathbb{S}^{n-1}$ the twisted product Riemannian manifold $\bigl([0,l]\times \mathbb{S}^{n-1},dt^{2}+F^{2}_{\kappa,v}(t)ds^{2}_{n-1}\bigl)$.

We further prove the following inscribed radius rigidity theorem:
\begin{thm}\label{thm:inscribed radius rigidity}
Let us assume that
$\kappa$ and $\lambda$ satisfy the ball-condition.
For $N \in (-\infty,1]$,
assume that
$(M,\bm,f)$ has lower $(\kappa,\lambda,N)$-weighted curvature bounds.
Then we have
\begin{equation}\label{eq:inscribed radius rigidity}
\IR_{g_{f}} M \leq \const.
\end{equation}
If $\rho^{g_{f}}_{\bm}(x_{0})=\const$ for some $x_{0}\in M$,
then $M$ is isometric to $[0,l]\times_{F_{\kappa}} \mathbb{S}^{n-1}$ for some $l>0$;
moreover,
if $N\in (-\infty,1)$,
then $f$ is constant;
in particular,
$M$ is isometric to a closed ball in a space form.
\end{thm}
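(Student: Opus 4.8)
The plan is to obtain both the estimate \eqref{eq:inscribed radius rigidity} and the rigidity from the comparison, developed earlier for triples with lower $(\kappa,\lambda,N)$-weighted curvature bounds, for the weighted mean curvature of the level sets of $\rho_{\bm}$ along the normal geodesics $\gamma_{z}$, using throughout the elementary fact that the $g_{f}$-length of $\gamma_{z}|_{[0,t]}$ equals $s_{f,z}(t)$. First I would fix $z\in\bm$: since \eqref{eq:Ricci curvature assumption}, \eqref{eq:mean curvature assumption} and $N\le 1$ force the weighted comparison along $\gamma_{z}$, the model factor $F_{\kappa,\lambda,z}$ cannot vanish before $\gamma_{z}$ leaves the minimizing range, so $s_{\kappa,\lambda}(s_{f,z}(t))>0$ and hence $s_{f,z}(t)<\const$ for every $t\in[0,\tau(z))$; letting $t\uparrow\tau(z)$ gives $\tau_{f}(z)\le\const$ for all $z\in\bm$. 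Then, for an arbitrary $x\in M$, picking a $d_{M}$-foot point $z\in\bm$ of $x$, the curve $\gamma_{z}|_{[0,\rho_{\bm}(x)]}$ joins $x$ to $\bm$ and is $\rho_{\bm}$-minimizing, so $\rho_{\bm}(x)\le\tau(z)$ and $\rho^{g_{f}}_{\bm}(x)\le s_{f,z}(\rho_{\bm}(x))\le s_{f,z}(\tau(z))=\tau_{f}(z)\le\const$, which is \eqref{eq:inscribed radius rigidity}.

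Now suppose $\rho^{g_{f}}_{\bm}(x_{0})=\const$; as $\const>0$ under the ball-condition, $x_{0}\in\inte M$. Choosing a $d_{M}$-foot point $z_{0}$ of $x_{0}$ and tracking the inequalities just used (recall $s_{f,z_{0}}$ is strictly increasing), one is forced to have $\rho_{\bm}(x_{0})=\tau(z_{0})$, $\tau_{f}(z_{0})=\const$, and therefore $F_{\kappa,\lambda,z_{0}}(\tau(z_{0}))=0$ and the weighted comparison along $\gamma_{z_{0}}$ is an equality on $[0,\tau(z_{0}))$. From the equality case of that comparison one reads off: every level set $\{\rho_{\bm}=t\}$ is totally umbilic along $\gamma_{z_{0}}$ with principal curvatures $F'_{\kappa,\lambda,z_{0}}(t)/F_{\kappa,\lambda,z_{0}}(t)$, the curvature identity $\ric^{N}_{f}(\gamma_{z_{0}}')=(n-1)\kappa\,e^{-4f/(n-1)}$ holds along $\gamma_{z_{0}}$, and — since for $N\in(-\infty,1)$ the equality case is strictly more rigid, exactly as in \cite{W} and \cite{Sa3} — the surplus term involving $(f\circ\gamma_{z_{0}})'$ vanishes, so $f\circ\gamma_{z_{0}}$ is constant.

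Next I would globalize this: the cut-time function $\tau$, hence $\tau_{f}$, is continuous, $\tau_{f}$ attains its maximal possible value $\const$ at $z_{0}$, and the equality case above propagates the equality to nearby $z$; one concludes $\tau_{f}\equiv\const$ together with the same rigidity along every $\gamma_{z}$. Thus on the interior of the segment domain $\dm$ the metric is the twisted product $dt^{2}+F_{\kappa,\lambda,z}^{2}(t)\,h$, with $F_{\kappa,\lambda,z}(\tau(z))=0$ for all $z$. Since the warping degenerates at $t=\tau(z)$ for every $z$ simultaneously, arguing as in the unweighted situation (\cite{K2}, \cite{Sa1}) one shows that all endpoints $\gamma_{z}(\tau(z))$ coincide with the single point $x_{0}$, that $\bm$ is connected, and that smoothness of $M$ at $x_{0}$ forces the link to be the round sphere $(\mathbb{S}^{n-1},ds^{2}_{n-1})$; re-expressing the twisted product with $x_{0}$ as apex turns the model factor into $F_{\kappa,v}$ and yields $M\cong[0,l]\times_{F_{\kappa}}\mathbb{S}^{n-1}$ for some $l>0$. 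Finally, if $N\in(-\infty,1)$ then $f\circ\gamma_{z}$ is constant for every $z$, so $f(z)=\lim_{t\uparrow\tau(z)}f(\gamma_{z}(t))=f(x_{0})$; hence $f$ is constant on $\bm$, and therefore on $M$, after which $F_{\kappa,v}$ becomes an affine reparametrization of $s_{\kappa}$, so $M$ is isometric to a closed metric ball in the space form of constant curvature $\kappa\,e^{-4f/(n-1)}$.

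The main obstacle is the rigidity step: converting the single scalar equality $\tau_{f}(z_{0})=\const$ into full metric rigidity along $\gamma_{z_{0}}$ (including constancy of $f\circ\gamma_{z_{0}}$ when $N<1$), and then upgrading this to a global twisted-product structure via continuity of $\tau_{f}$ and the simultaneous collapse of the warping at the cut distance — this is precisely where the interplay among the density-weighted Ricci bound \eqref{eq:Ricci curvature assumption}, the weighted mean-curvature condition \eqref{eq:mean curvature assumption}, and the conformal metric $g_{f}$ is most delicate.
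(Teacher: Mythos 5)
The inequality part of your proposal matches the paper (Lemma \ref{lem:Inscribed radius comparison} / Lemma \ref{lem:Cut point comparisons}). For the rigidity, however, you take a genuinely different route and it has a real gap.

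The paper does \emph{not} try to extract rigidity from the scalar equality $\tau_{f}(z_{0})=\const$. Instead it sets $l:=\rho_{\bm}(x_{0})$, defines $\Omega:=\{x\in\inte M\setminus\{x_{0}\}\,|\,\rho_{\bm}(x)+\rho_{x_{0}}(x)=l\}$, combines the boundary Laplacian comparison \eqref{eq:Laplacian comparison} with the \emph{pointed} one \eqref{eq:pointed Laplacian comparison} of Wylie--Yeroshkin plus the $g_{f}$-triangle inequality \eqref{eq:weighted triangle inequality}, and shows $-(\rho_{\bm}+\rho_{x_{0}})$ is $f$-subharmonic on $\Omega$. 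Lemma \ref{lem:maximal principle} then gives $\Omega=\inte M\setminus\{x_{0}\}$, so $M=B_{l}(x_{0})$, and the equality case of the pointed comparison (Lemma \ref{lem:equality in pointed Laplacian comparison}) gives the Jacobi-field rigidity along all rays $\gamma_{v}$ \emph{from} $x_{0}$; this is why the final model is $[0,l]\times_{F_{\kappa}}\mathbb{S}^{n-1}$ with apex $x_{0}$. Your approach works only from the boundary side and never introduces the auxiliary function $\rho_{\bm}+\rho_{x_{0}}$ or the pointed comparison.

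Concretely there are two problems. First, the assertion that $\tau_{f}(z_{0})=\const$ forces equality in \eqref{eq:Basic comparison} along $\gamma_{z_{0}}$ is not immediate; it requires the Riccati ODE argument (if the comparison were strict at $s_{1}$, then $\hat{F}_{z_{0}}$ would blow up strictly before $\const$, contradicting smoothness of $\Delta_{f}\rho_{\bm}\circ\gamma_{z_{0}}$ on $(0,\tau(z_{0}))$). That is fixable but needs to be spelled out. Second, and more seriously, the globalization ``continuity of $\tau_{f}$ plus equality propagation $\Rightarrow$ $\tau_{f}\equiv\const$'' is not correct as stated: a continuous function on $\bm$ that attains its maximum $\const$ at one point need not be constant, and nothing you have shown makes the set $\{z:\tau_{f}(z)=\const\}$ open. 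Likewise, the claim that all endpoints $\gamma_{z}(\tau(z))$ coincide at $x_{0}$ is asserted by analogy to \cite{K2}, \cite{Sa1}, but the argument in those references is exactly the $\rho_{\bm}+\rho_{x_{0}}$ maximum-principle argument the paper uses; you have invoked it without carrying it out, so the gap remains. You also need to actually verify that re-centering the twisted product at $x_{0}$ produces the model factor $F_{\kappa,v}$ of \eqref{eq:pointed model Jacobi field}, which is automatic in the paper's proof because the rigidity is obtained directly from the pointed comparison.

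In short: your first paragraph agrees with the paper; the rigidity part should be rebuilt around the sum $\rho_{\bm}+\rho_{x_{0}}$, the $g_{f}$-triangle inequality, Lemma \ref{lem:maximal principle}, and the equality case of the pointed comparison Lemma \ref{lem:equality in pointed Laplacian comparison}, rather than trying to globalize equality along a single normal geodesic.
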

Kasue \cite{K2} has proved Theorem \ref{thm:inscribed radius rigidity} in the case of $f=0$.

We will also obtain an inscribed radius rigidity theorem for $\IR M$ in the case where $f$ is bounded from above (see Theorem \ref{thm:finite inscribed radius rigidity}).

%%%%%%%%%%%%%%%%%%%%%%%%%%
\subsection{Volume growths}
We set $\bar{C}_{\kappa,\lambda}:=\const$
if $\kappa$ and $\lambda$ satisfy the ball-condition;
otherwise,
$\bar{C}_{\kappa,\lambda}:=\infty$.
We define functions $\bar{s}_{\kappa,\lambda},\,s_{n,\kappa,\lambda}:[0,\infty)\to \mathbb{R}$ by
\begin{equation}\label{eq:model volume growth}
\bar{s}_{\kappa,\lambda}(s):=\begin{cases}
                                                    s_{\kappa,\lambda}(s) & \text{if $s<\bar{C}_{\kappa,\lambda}$}, \\
                                                                                0           & \text{if $s \geq \bar{C}_{\kappa,\lambda}$},
                                                   \end{cases}
\quad
s_{n,\kappa,\lambda}(r):=\int^{r}_{0}\,\bar{s}^{n-1}_{\kappa,\lambda}(a)\,da.                                          
\end{equation}

For a smooth function $\phi:M\to \mathbb{R}$
we define
\begin{equation*}
m_{\phi}:=e^{-\phi} \vol_{g}.
\end{equation*}
For $x \in M$,
we say that
$z \in \bm$ is a \textit{foot point on $\bm$ of $x$} if $d_{M}(x,z)=\rho_{\bm}(x)$.
Every point in $M$ has at least one foot point on $\bm$.
Let us define a function $\rho_{\bm,f}:M\to \mathbb{R}$ by
\begin{equation}\label{eq:weighted distance function from the boundary}
\rho_{\bm,f}(x):=\inf_{z} \, \int^{\rho_{\bm}(x)}_{0}\, e^{\frac{-2f(\gamma_{z}(a))}{n-1}}\,da,
\end{equation}
where the infimum is taken over all foot points $z\in \bm$ of $x$.
For $r>0$,
\begin{equation}\label{eq:twisted inscribed radius}
B^{f}_{r}(\bm):=\{\,x\in M \mid \rho_{\bm,f}(x) \leq r \,\},\quad \IR_{f} M:=\sup_{x\in M} \rho_{\bm,f}(x).
\end{equation}

We prove absolute volume comparisons of Heintze-Karcher type (\cite{HK}),
and relative volume comparisons (see Subsections \ref{sec:Absolute volume comparisons} and \ref{sec:Relative volume comparisons}).

One of the relative volume comparison theorems is the following:
\begin{thm}\label{thm:relative volume comparison}
For $N \in (-\infty,1]$,
assume that
$(M,\bm,f)$ has lower $(\kappa,\lambda,N)$-weighted curvature bounds.
Let $\bm$ be compact.
Then for all $r,R>0$ with $r\leq R$
we have
\begin{equation}\label{eq:relative volume comparison}
\frac{m_{\frac{n+1}{n-1}f}(B^{f}_{R}(\bm))}{m_{\frac{n+1}{n-1}f}(B^{f}_{r}(\bm))} \leq \frac{s_{n,\kappa,\lambda}(R)}{s_{n,\kappa,\lambda}(r)}.
\end{equation}
\end{thm}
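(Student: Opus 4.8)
The plan is to pass to Fermi coordinates based on $\bm$, trade the geodesic parameter $t$ for the conformal arclength $s_{f,z}(t)$, reduce the estimate to a one–parameter monotonicity along each normal geodesic, and then integrate over $\bm$. First I would recall the structure of Fermi coordinates for the boundary, as set up in the subsection on Jacobi fields orthogonal to $\bm$: the map $(z,t)\mapsto\gamma_z(t)$ is a diffeomorphism from $\{(z,t):z\in\bm,\ 0\le t<\tau(z)\}$ onto $M\setminus\cut\bm$, the complement $\cut\bm$ being a null set, and on this set $\vol_g$ pulls back to $\theta(t,z)\,dt\,d\vol_h(z)$ for a smooth positive Jacobian with $\theta(0,\cdot)\equiv1$, given by the determinant of the $\bm$–Jacobi fields. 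For $x=\gamma_z(t)$ with $t<\tau(z)$ the point $z$ is the unique foot point, hence $\rho_{\bm,f}(x)=s_{f,z}(t)$; since $s_{f,z}$ is strictly increasing with image $[0,\tau_f(z)]$, it follows that, up to a null set, $B^{f}_{r}(\bm)$ is the image of $\{(z,t):z\in\bm,\ 0\le t\le s_{f,z}^{-1}(\min(r,\tau_f(z)))\}$.

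Next I would write $m_{\frac{n+1}{n-1}f}(B^{f}_{r}(\bm))$ as a Fermi integral and substitute $s=s_{f,z}(t)$, so that $dt=e^{\frac{2f(\gamma_z(t))}{n-1}}\,ds$; the weight $e^{-\frac{n+1}{n-1}f}$ then collapses to $e^{-f}$, giving
\begin{equation*}
m_{\frac{n+1}{n-1}f}(B^{f}_{r}(\bm))=\int_{\bm}\Bigl(\int_0^{\min(r,\tau_f(z))}\widetilde{\theta}_z(s)\,ds\Bigr)\,d\vol_h(z),\qquad \widetilde{\theta}_z(s):=e^{-f(\gamma_z(t))}\,\theta(t,z)\Big|_{t=s_{f,z}^{-1}(s)},
\end{equation*}
where I extend $\widetilde{\theta}_z$ by $0$ for $s>\tau_f(z)$. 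This $\widetilde{\theta}_z$ is precisely the weighted Jacobian measured in conformal arclength, so that $s_{n,\kappa,\lambda}$ plays the role of the model for it.

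The crux is the pointwise comparison, which is the Heintze–Karcher type estimate obtained from (\ref{eq:Ricci curvature assumption}) and (\ref{eq:mean curvature assumption}) in the subsection on absolute volume comparisons: a Riccati comparison for the weighted shape operator of the level sets of $\rho_{\bm}$ — using $\ric^{N}_{f,M}\ge(n-1)\,\kappa\,e^{-4f/(n-1)}$ with $N\le1$ for the radial term and $H_{f,\bm}\ge(n-1)\,\lambda\,e^{-2f/(n-1)}$ for the initial condition — yields $(\log\widetilde{\theta}_z)'(s)\le(n-1)(\log s_{\kappa,\lambda})'(s)$ wherever $s_{\kappa,\lambda}(s)>0$, and hence that $s\mapsto\widetilde{\theta}_z(s)/\bar{s}^{\,n-1}_{\kappa,\lambda}(s)$ is non-increasing on $(0,\tau_f(z))$. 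The same Riccati blow-up forces $\widetilde{\theta}_z(s)\to0$ as $s\uparrow\bconst$, so $\tau_f(z)\le\bconst$; once $\widetilde{\theta}_z$ and $\bar{s}_{\kappa,\lambda}$ are understood to vanish past $\tau_f(z)$ and $\bconst$ respectively, the ratio $\widetilde{\theta}_z/\bar{s}^{\,n-1}_{\kappa,\lambda}$ is non-increasing on all of $(0,\infty)$, with $\widetilde{\theta}_z$ supported where $\bar{s}_{\kappa,\lambda}>0$. I expect this Riccati/Bochner step to be the only substantive point; everything around it is bookkeeping about Fermi coordinates and the cut locus.

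Finally I would invoke the elementary fact that if $g,\bar{g}:(0,\infty)\to[0,\infty)$ satisfy $g(s)\bar{g}(s')\ge g(s')\bar{g}(s)$ for $s\le s'$, with $g$ supported where $\bar{g}>0$, then $r\mapsto\int_0^r g\,/\int_0^r\bar{g}$ is non-increasing (differentiate the quotient and use the displayed inequality on the numerator). Applying this with $g=\widetilde{\theta}_z$, $\bar{g}=\bar{s}^{\,n-1}_{\kappa,\lambda}$, and recalling $\int_0^r\bar{s}^{\,n-1}_{\kappa,\lambda}=s_{n,\kappa,\lambda}(r)$, I get for every $z\in\bm$ and $0<r\le R$
\begin{equation*}
\Bigl(\int_0^{\min(R,\tau_f(z))}\widetilde{\theta}_z\Bigr)\,s_{n,\kappa,\lambda}(r)\le\Bigl(\int_0^{\min(r,\tau_f(z))}\widetilde{\theta}_z\Bigr)\,s_{n,\kappa,\lambda}(R).
\end{equation*}
Integrating over $\bm$ — legitimate since $\bm$ is compact, so all integrals are finite and $m_{\frac{n+1}{n-1}f}(B^{f}_{r}(\bm))>0$ — gives $m_{\frac{n+1}{n-1}f}(B^{f}_{R}(\bm))\,s_{n,\kappa,\lambda}(r)\le m_{\frac{n+1}{n-1}f}(B^{f}_{r}(\bm))\,s_{n,\kappa,\lambda}(R)$, which is (\ref{eq:relative volume comparison}).
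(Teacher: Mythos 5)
Your proposal is correct and follows essentially the same route as the paper. The paper isolates the key pointwise input as Lemma \ref{lem:volume element comparison} (the non-increasing ratio $\hat{\theta}_{f}(s,z)/s_{\kappa,\lambda}^{n-1}(s)$ in the conformal arclength parameter $s$), combines it with the integration formula Lemma \ref{lem:integration formula} for $m_{\frac{n+1}{n-1}f}(B^{f}_{r}(\bm))$, and then obtains the relative inequality. The only cosmetic difference from your write-up is the last step: the paper integrates the two-variable inequality $\check{\theta}_{f}(s_{2},z)\,\bar{s}^{n-1}_{\kappa,\lambda}(s_{1})\le\check{\theta}_{f}(s_{1},z)\,\bar{s}^{n-1}_{\kappa,\lambda}(s_{2})$ over $[0,r]\times[r,R]$ and rearranges, whereas you invoke the standard monotonicity of $r\mapsto\int_{0}^{r}g/\int_{0}^{r}\bar g$. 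These are equivalent elementary facts, so the proof is the same in substance.
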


We provide a rigidity theorem concerning the equality case of Theorem \ref{thm:relative volume comparison} (see Theorem \ref{thm:volume growth rigidity}).
We also present a volume growth rigidity theorem in the case where $f$ is bounded from above (see Theorem \ref{thm:finite volume growth rigidity}).

%%%%%%%%%%%%%%%%%%%%%%%%%%
\subsection{Eigenvalues}
For $p\in [1,\infty)$,
and a smooth function $\phi:M\to \mathbb{R}$,
let $W^{1,p}_{0}(M,m_{\phi})$ stand for the \textit{$(1,p)$-Sobolev space with compact support}
defined as the completion of $C^{\infty}_{0}(M)$ with respect to the standard $(1,p)$-Sobolev norm.
The \textit{$(\phi,p)$-Laplacian} is defined as
\begin{equation*}
\Delta_{\phi,p}:=-e^{\phi}\,\Div \,\left(e^{-\phi} \Vert \nabla \cdot \Vert^{p-2}\, \nabla \cdot \right)
\end{equation*}
as a distribution on $W^{1,p}_{0}(M,m_{\phi})$,
where $\Vert \cdot \Vert$ is the standard norm,
and $\Div$ is the divergence with respect to $g$.
A real number $\nu$ is said to be a \textit{$(\phi,p)$-Dirichlet eigenvalue} on $M$
if there exists $\psi \in W^{1,p}_{0}(M,m_{\phi}) \setminus \{0\}$
such that $\Delta_{\phi,p} \psi=\nu \vert \psi \vert^{p-2}\,\psi$ holds in the distribution sense.
For $\psi \in W^{1,p}_{0}(M,m_{\phi})\setminus \{0\}$,
the \textit{Rayleigh quotient} is defined as
\begin{equation*}
R_{\phi,p}(\psi):=\frac{\int_{M}\, \Vert \nabla \psi \Vert^{p}\,d\,m_{\phi}}{\int_{M}\,  \vert \psi \vert^{p}\,d\,m_{\phi}}.
\end{equation*}
We study
\begin{equation*}
\nu_{\phi,p}(M):=\inf_{\psi} R_{\phi,p}(\psi),
\end{equation*}
where the infimum is taken over all $\psi \in W^{1,p}_{0}(M,m_{\phi}) \setminus \{0\}$.
If $M$ is compact,
and if $p\in (1,\infty)$,
then $\nu_{\phi,p}(M)$ is equal to the infimum of the set of all $(\phi,p)$-Dirichlet eigenvalues on $M$.

Let $p\in (1,\infty)$.
For $D\in (0,\bar{C}_{\kappa,\lambda}]\setminus \{\infty\}$,
let $\nu_{p,\kappa,\lambda,D}$ be the positive minimum real number $\nu$ such that
there exists a non-zero function $\varphi:[0,D]\to \mathbb{R}$ satisfying
\begin{align}\label{eq:model space eigenvalue problem}
\left(\vert \varphi'(s)\vert^{p-2} \varphi'(s)\right)'&+(n-1)\frac{s'_{\kappa,\lambda}(s)}{s_{\kappa,\lambda}(s)}\,\left(\vert \varphi'(s) \vert^{p-2} \varphi'(s)\right)\\ \notag
                                                                         &+\nu\, \vert \varphi(s)\vert^{p-2}\varphi(s)=0, \,\, \varphi(0)=0,\,\, \varphi'(D)=0.                           
\end{align}

Let us recall the notion of the model spaces introduced by Kasue \cite{K3}.
We say that
$\kappa$ and $\lambda$ satisfy the \textit{model-condition}
if the equation $s'_{\kappa,\lambda}(t)=0$ has a positive solution.
Note that
$\kappa$ and $\lambda$ satisfy the model-condition if and only if either
(1) $\kappa>0$ and $\lambda<0$;
(2) $\kappa=0$ and $\lambda=0$;
or (3) $\kappa<0$ and $\lambda \in (0,\sqrt{\vert \kappa \vert})$.
Let $\kappa$ and $\lambda$ satisfy the ball-condition or the model-condition,
and let $M$ be compact.
For an interval $I$,
and for a connected component $\bm_{1}$ of $\bm$,
we denote by $I \times_{\kappa,\lambda} \bm_{1}$ the warped product Riemannian manifold $\left(I \times \bm_{1},dt^{2}+s^{2}_{\kappa,\lambda}(t)\,h\right)$.
When $\kappa$ and $\lambda$ satisfy the model-condition,
we define $D_{\kappa,\lambda}(M)$ as follows:
If $\kappa=0$ and $\lambda=0$,
then $D_{\kappa,\lambda}(M):=\IR M$;
otherwise,
$D_{\kappa,\lambda}(M):=\inf \{t>0\mid s'_{\kappa,\lambda}(t)=0\}$.
We say that $M$ is a \textit{$\left(\kappa,\lambda \right)$-equational model space}
if $M$ is isometric to either
(1) the closed geodesic ball $\ball$ for $\kappa$ and $\lambda$ satisfying the ball-condition;
(2) the warped product $[0,2D_{\kappa,\lambda}(M)]\times_{\kappa,\lambda} \bm_{1}$ for $\kappa$ and $\lambda$ satisfying the model-condition,
and for some connected component $\bm_{1}$ of $\bm$;
or (3) the quotient space $\left([0,2D_{\kappa,\lambda}(M)]\times_{\kappa,\lambda} \bm \right)/G_{\sigma}$ for $\kappa$ and $\lambda$ satisfying the model-condition,
and for some involutive isometry $\sigma$ of $\bm$ without fixed points,
where $G_{\sigma}$ denotes the isometry group on $[0,2D_{\kappa,\lambda}(M)]\times_{\kappa,\lambda} \bm$
whose elements consist of identity and the involute isometry $\hat{\sigma}$ defined by $\hat{\sigma}(t,z):=\left(2D_{\kappa,\lambda}(M)-t,\sigma(z)\right)$.

We say that
$f$ is \textit{$\bm$-radial}
if there exists a smooth function $\phi_{f}:[0,\infty)\to \mathbb{R}$ such that
$f=\phi_{f}\circ \rho_{\bm}$ on $M$.

We establish the following theorem for the smallest eigenvalue $\nu_{\frac{n+1}{n-1}f,p}$:
\begin{thm}\label{thm:eigenvalue rigidity}
Let $p\in (1,\infty)$.
For $N \in (-\infty,1]$,
let us assume that
$(M,\bm,f)$ has lower $(\kappa,\lambda,N)$-weighted curvature bounds.
Let $M$ be compact,
and let $f$ be $\bm$-radial.
Suppose additionally that
there exists $\delta \in \mathbb{R}$ such that $f \leq (n-1)\delta$ on $M$.
For $D\in (0,\bar{C}_{\kappa,\lambda}]\setminus \{\infty\}$,
suppose $\IR_{f} M \leq D$,
where $\IR_{f} M$ is defined as $(\ref{eq:twisted inscribed radius})$.
Then
\begin{equation}\label{eq:eigenvalue rigidity}
\nu_{\frac{n+1}{n-1}f,p}(M)\geq \nu_{p,\kappa\,e^{-4\delta},\lambda\,e^{-2\delta},D\,e^{2\delta}}.
\end{equation}
If the equality in $(\ref{eq:eigenvalue rigidity})$ holds,
then $M$ is a $\left(\kappa e^{-4\delta} ,\lambda e^{-2\delta} \right)$-equational model space,
and $f=(n-1)\delta$ on $M$.
\end{thm}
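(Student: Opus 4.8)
The plan is to establish $(\ref{eq:eigenvalue rigidity})$ by a Barta-type argument: to build an explicit supersolution of the model eigenvalue equation $(\ref{eq:model space eigenvalue problem})$ out of the one-dimensional model eigenfunction and the distance function from $\bm$, and then to read off the rigidity from the saturation of every inequality used. First I would exploit the $\bm$-radiality of $f$. Writing $f=\phi_{f}\circ\rho_{\bm}$, the weighted distance $\rho_{\bm,f}$ of $(\ref{eq:weighted distance function from the boundary})$ depends on $x$ only through $\rho_{\bm}(x)$, namely $\rho_{\bm,f}=\Psi\circ\rho_{\bm}$ with $\Psi(t):=\int^{t}_{0}e^{\frac{-2\phi_{f}(a)}{n-1}}\,da$; since $f\le(n-1)\delta$ and $\IR_{f}M\le D$, the rescaled function $\rho^{\ast}:=e^{2\delta}\rho_{\bm,f}$ takes values in $[0,De^{2\delta}]$. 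I would also record the elementary identity $s_{\kappa e^{-4\delta},\lambda e^{-2\delta}}(s)=s_{\kappa,\lambda}(e^{-2\delta}s)$, which exhibits problem $(\ref{eq:model space eigenvalue problem})$ with data $(\kappa e^{-4\delta},\lambda e^{-2\delta},De^{2\delta})$ as the pullback, under $s\mapsto e^{-2\delta}s$, of problem $(\ref{eq:model space eigenvalue problem})$ with data $(\kappa,\lambda,D)$. In particular $\nu_{p,\kappa\,e^{-4\delta},\lambda\,e^{-2\delta},D\,e^{2\delta}}$ is attained by a function $\varphi:[0,De^{2\delta}]\to\mathbb{R}$ which I may take positive on $(0,De^{2\delta}]$ and non-decreasing with $\varphi'(De^{2\delta})=0$ (the monotonicity follows from $(\ref{eq:model space eigenvalue problem})$ together with $\varphi>0$ and $\nu>0$).

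Next, set $w:=\varphi\circ\rho^{\ast}$ on $M$. Then $w\in W^{1,p}_{0}(M,m_{\frac{n+1}{n-1}f})$, $w>0$ on $\inte M$, $w=0$ on $\bm$, and $w$ is smooth off the cut locus of $\bm$. Using the Laplacian comparison for the weighted distance from $\bm$ under the lower $(\kappa,\lambda,N)$-weighted curvature bounds --- the infinitesimal statement underlying the Heintze--Karcher and relative volume comparisons, cf.\ Theorem~\ref{thm:relative volume comparison} --- together with $f\le(n-1)\delta$ to replace the conformal factors $e^{\frac{-2f}{n-1}},e^{\frac{-4f}{n-1}}$ by the constants $e^{-2\delta},e^{-4\delta}$ and hence $(\kappa,\lambda)$ by $(\kappa e^{-4\delta},\lambda e^{-2\delta})$, I would prove
\[
\Delta_{\frac{n+1}{n-1}f,p}\,w\ \ge\ \nu_{p,\kappa\,e^{-4\delta},\lambda\,e^{-2\delta},D\,e^{2\delta}}\;w^{p-1}
\]
in the barrier sense on $\inte M$. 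This is the chain rule for the $(\phi,p)$-Laplacian applied to $w=\varphi\circ\rho^{\ast}$: the monotonicity $\varphi'\ge0$ makes the comparison term enter with the correct sign, the $\bm$-radial structure collapses the remaining first-order terms into the radial operator of $(\ref{eq:model space eigenvalue problem})$, and the cut-locus contribution is favourable since $\rho_{\bm}$ satisfies the barrier inequality $\Delta\rho_{\bm}\le\cdots$ there. A Barta-type inequality for the $(\phi,p)$-Laplacian --- proved by a Picone computation against the first eigenfunction of $M$, which exists because $M$ is compact and $p\in(1,\infty)$ --- then yields $\nu_{\frac{n+1}{n-1}f,p}(M)\ge\nu_{p,\kappa\,e^{-4\delta},\lambda\,e^{-2\delta},D\,e^{2\delta}}$, i.e.\ $(\ref{eq:eigenvalue rigidity})$.

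If equality holds in $(\ref{eq:eigenvalue rigidity})$, the Barta inequality is saturated, so $w$ is a scalar multiple of a first eigenfunction of $M$ and the supersolution estimate above is an equality; tracing this back, every inequality used becomes an equality. In particular the Laplacian comparison for the distance from $\bm$ is an equality along every inward normal geodesic on the whole interval $[0,\IR M]$, so by the rigidity in the equality cases of the earlier comparison theorems (as in Theorem~\ref{thm:splitting theorem}) $M$ is isometric to a twisted product $[0,\IR M]\times_{F_{\kappa,\lambda}}\bm$; the inequality $f\le(n-1)\delta$ is an equality along those geodesics, which with $\bm$-radiality forces $f\equiv(n-1)\delta$ on $M$; then $(\ref{eq:model Jacobi field})$ gives $F_{\kappa,\lambda,z}\equiv s_{\kappa e^{-4\delta},\lambda e^{-2\delta}}$ and $\IR_{f}M=D$ forces $\IR M=De^{2\delta}$, so $(M,g)$ is the warped product $[0,De^{2\delta}]\times_{\kappa e^{-4\delta},\lambda e^{-2\delta}}\bm$. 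Finally, since $M$ is a manifold with boundary, its slice $\{\rho_{\bm}=De^{2\delta}\}$ must either collapse to a point --- which, using $\varphi'(De^{2\delta})=0$, happens exactly when $s_{\kappa e^{-4\delta},\lambda e^{-2\delta}}(De^{2\delta})=0$, the ball-condition case, and then $M$ is the closed ball $B^{n}_{\kappa e^{-4\delta},\lambda e^{-2\delta}}$ --- or be a totally geodesic ``neck'', which happens exactly when $s'_{\kappa e^{-4\delta},\lambda e^{-2\delta}}(De^{2\delta})=0$, the model-condition case, and then doubling across the slice realizes $M$ as the model warped product or, when $\bm$ carries a fixed-point-free involution effecting the gluing, as its quotient by $G_{\sigma}$. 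This is precisely the list defining a $(\kappa e^{-4\delta},\lambda e^{-2\delta})$-equational model space.

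The obstacle I anticipate is twofold: in the supersolution estimate, the careful bookkeeping of the conformal factors under the substitution $s\mapsto e^{-2\delta}s$ --- keeping every inequality pointing the correct way, including the sign contributions governed by that of $\lambda$ --- and, in the equality discussion, promoting the pointwise equality to the global twisted-product structure and then correctly separating the three model cases, in particular detecting the quotient case, which is the analogue of Kasue's argument in \cite{K3}.
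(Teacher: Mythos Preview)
Your approach to the inequality is essentially the paper's: build the comparison function $\Phi=\hat{\varphi}\circ\rho_{\bm,f}$ from the one-dimensional model eigenfunction, use the radial $p$-Laplacian comparison (the paper's Proposition~\ref{prop:global radial p-Laplacian comparison}), and conclude via Picone. One technical point: the clean pointwise comparison the paper proves is for $\Delta_{\frac{n+1-2p}{n-1}f,p}$, not $\Delta_{\frac{n+1}{n-1}f,p}$; the bound $f\le(n-1)\delta$ then enters as a measure inequality $e^{2p\delta}\,dm_{\frac{n+1}{n-1}f}\ge dm_{\frac{n+1-2p}{n-1}f}$ when you integrate the Picone identity (this is exactly the paper's step~(\ref{eq:bounded and radial Picone identity})). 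Your direct claim $\Delta_{\frac{n+1}{n-1}f,p}\,w\ge\nu\,w^{p-1}$ conflates these two steps; it may be reparable but is not what the radial comparison gives outright. Also, with the paper's sign convention the barrier inequality at the cut locus reads $\Delta_{f}\rho_{\bm}\ge\cdots$, not $\le$.

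For the rigidity, the paper takes a much shorter route than you do, and this is the main divergence. From saturation of the Picone step and of~(\ref{eq:bounded and radial Picone identity}) one gets $\psi=c\,\Phi$ and $f=(n-1)\delta$ on the set where $\nabla\psi\neq0$; since $\hat{\varphi}'>0$ on $[0,D)$ this forces $f\equiv(n-1)\delta$ on all of $M$. At that point $f$ is constant and the whole statement reduces to the unweighted $p$-Laplacian rigidity already established in \cite{Sa2} (and, for $p=2$, by Kasue \cite{K3}); the paper simply cites that. Your attempt to redo the full model-space classification directly---arguing that the maximal slice must collapse (ball case) or be a totally geodesic neck admitting a doubling, possibly with a $G_{\sigma}$-quotient---is precisely the part that is delicate (as you note), because it requires the cut-locus analysis of \cite{K3}; the paper sidesteps it entirely by reducing to the constant-$f$ case first.
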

In the case where $f=0$ and $\delta=0$,
Kasue \cite{K3} has proved Theorem \ref{thm:eigenvalue rigidity} for $p=2$,
and the author \cite{Sa2} has done for any $p\in (1,\infty)$.

We also formulate a rigidity theorem for the smallest eigenvalue $\nu_{f,p}$ in the case where $f$ is not necessarily $\bm$-radial (see Theorem \ref{thm:finite eigenvalue rigidity}).
Furthermore,
we obtain a spectrum rigidity theorem for complete (not necessarily compact) manifolds with boundary (see Theorem \ref{thm:spectrum rigidity}).

%%%%%%%%%%%%%%%%%%%%%%%%%%
\subsection{Organization}
In Section \ref{sec:Preliminaries},
we prepare some notations
and recall the basic facts for Riemannian manifolds with boundary.
We also recall the works done by Wylie and Yeroshkin \cite{WY} (see Subsection \ref{sec:Laplacian comparisons from a single point}).

In Sections \ref{sec:Laplacian comparisons} and \ref{sec:Global Laplacian comparisons},
to prove our main theorems,
we study Laplacian comparisons for the distance function from the boundary.
In Section \ref{sec:Laplacian comparisons},
we show a pointwise Laplacian comparison result (see Subsection \ref{sec:Basic Laplacian comparisons}),
and a rigidity result in the equality case (see Subsection \ref{sec:Equality cases}).
In Section \ref{sec:Global Laplacian comparisons},
we prove global Laplacian comparison inequalities in the distribution sense in the case where $f$ is bounded from above (see Subsection \ref{sec:Bounded cases}),
and where $f$ is $\bm$-radial (see Subsection \ref{sec:Radial cases}).

In Section \ref{sec:Splitting theorems},
we prove splitting theorems.
In Section \ref{sec:Inscribed radii},
we examine inscribed radius rigidity theorems.
In Section \ref{sec:Volume growths},
we show volume comparison theorems.
In Section \ref{sec:Eigenvalues},
we study eigenvalue rigidity theorems.

%%%%%%%%%%%%%%%%%%%%%%%%%%
\subsection*{{\rm Acknowledgements}}
The author would like to express his gratitude to Koichi Nagano for his constant advice and suggestions.
The author would also like to thank William Wylie for informing him of the paper \cite{WY}.
The author is grateful to Shin-ichi Ohta for his comments.

\section{Preliminaries}\label{sec:Preliminaries}
We refer to \cite{S} for the basics of Riemannian manifolds with boundary (see also Section 2 in \cite{Sa1}, \cite{Sa2} and \cite{Sa3}).

%%%%%%%%%%%%%%%%%%%%%%%%%%
\subsection{Riemannian manifolds with boundary} 
Let $M$ be a connected Riemannian manifold with boundary.
For $r>0$ and $A\subset M$,
we denote by $B_{r}(A)$ the closed $r$-neighborhood of $A$.
For $A_{1}, \,A_{2}\subset M$,
let $d_{M}(A_{1},A_{2})$ denote the distance between them.
For an interval $I$,
we say that
a curve $\gamma:I\to M$ is a \textit{minimal geodesic}
if for all $t_{1},t_{2}\in I$
we have $d_{M}(\gamma(t_{1}),\gamma(t_{2}))=\vert t_{1}-t_{2}\vert$.
If the metric space $(M,d_{M})$ is complete,
then the Hopf-Rinow theorem for length spaces (see e.g., Theorem 2.5.23 in \cite{BBI}) tells us that
it is a proper geodesic space.

For $i=1,2$,
let $(M_{i},g_{i})$ be connected Riemannian manifolds with boundary.
For each $i$,
the boundary $\bm_{i}$ carries the induced metric $h_{i}$.
We say that a homeomorphism $\Phi:M_{1}\to M_{2}$ is a \textit{Riemannian isometry with boundary} if $\Phi$ satisfies the following conditions:
\begin{enumerate}
 \item $\Phi|_{\inte M_{1}}:\inte M_{1} \to \inte M_{2}$ is smooth, and $(\Phi|_{\inte M_{1}})^{\ast} (g_{2})=g_{1}$;\label{enum:inner isom}
 \item $\Phi|_{\bm_{1}}:\bm_{1} \to \bm_{2}$ is smooth, and $(\Phi|_{\bm_{1}})^{\ast} (h_{2})=h_{1}$.\label{enum:bdry isom}
\end{enumerate}
There exists a Riemannian isometry with boundary from $M_{1}$ to $M_{2}$ if and only if
$(M_{1},d_{M_{1}})$ and $(M_{2},d_{M_{2}})$ are isometric to each other.

%%%%%%%%%%%%%%%%%%%%%%%%%%
\subsection{Jacobi fields}\label{sec:Jacobi fields orthogonal to the boundary}
Let $(M,g)$ be a connected Riemannian manifold with boundary.
For $x\in \inte M$, 
let $T_{x}M$ and $U_{x}M$ be the tangent space and the unit tangent sphere at $x$,
respectively.
For $z\in \bm$,
and the tangent space $T_{z}\bm$ at $z$ on $\bm$,
let $T_{z}^{\perp} \bm$ be its orthogonal complement in the tangent space at $z$ on $M$.

For vector fields $v_{1},\,v_{2}$ on $\bm$,
the second fundamental form $S(v_{1},v_{2})$ is defined as
the normal component of $\nabla^{g}_{v_{1}}v_{2}$ with respect to $\bm$,
where $\nabla^{g}$ is the Levi-Civita connection induced from $g$.
For $u\in T_{z}^{\perp}\bm$,
the shape operator $A_{u}:T_{z}\bm \to T_{z}\bm$ is defined as
\begin{equation*}
g(A_{u}v_{1},v_{2}):=g(S(v_{1},v_{2}),u).
\end{equation*}
For the unit inner normal vector $u_{z}$ on $\bm$ at $z$,
the mean curvature $H_{z}$ of $\bm$ at $z$ is defined as the trace of $A_{u_{z}}$.
We say that a Jacobi field $Y$ along the geodesic $\gamma_{z}$ is a $\bm$-\textit{Jacobi field} if $Y$ satisfies
\begin{equation*}
Y(0)\in T_{z}\bm, \quad Y'(0)+A_{u_{z}}Y(0)\in T_{z}^{\perp}\bm.
\end{equation*}
We say that $\gamma_{z}(t_{0})$ is a \textit{conjugate point} of $\bm$ along $\gamma_{z}$
if there exists a non-zero $\bm$-Jacobi field $Y$ along $\gamma_{z}$ such that $Y(t_{0})=0$.

%%%%%%%%%%%%%%%%%%%%%%%%%%
\subsection{Cut locus for the boundary}\label{sec:Cut locus for the boundary}
We recall the basic properties of the cut locus for the boundary.
We refer to \cite{Sa1} for the proofs.

Let $(M,g)$ be a connected complete Riemannian manifold with boundary.
For $x \in \inte M$, 
let $z\in \bm$ be a foot point on $\bm$ of $x$ (i.e., $d_{M}(x,z)=\rho_{\bm}(x)$).
In this case,
there exists a unique minimal geodesic $\gamma:[0,l]\to M$ from $z$ to $x$ such that $\gamma=\gamma_{z}|_{[0,l]}$,
where $l=\rho_{\bm}(x)$.
In particular,
$\gamma'(0)=u_{z}$ and $\gamma|_{(0,l]}$ lies in $\inte M$.

Let $\tau:\bm\to (0,\infty]$ be the function defined as (\ref{eq:boundary cut value}).
The supremum of $\tau$ over $\bm$ is equal to the inscribed radius $\IR M$.
The function $\tau$ is continuous on $\bm$.
The continuity of $\tau$ tells us that
if $\bm$ is compact,
then $\IR M<\infty$ if and only if $M$ is compact.

The \textit{cut locus for the boundary} is defined as
\begin{equation*}
\cut \bm:=\{ \,\gamma_{z}(\tau(z)) \mid z\in \bm,\,\tau(z)<\infty \,\}.
\end{equation*}
From the continuity of $\tau$
we see that
$\cut \bm$ is a null set of $M$.
For $x \in \inte M\setminus \cut \bm$,
its foot point on $\bm$ is uniquely determined.

In \cite{Sa1},
we have already known the following:
\begin{lem}\label{lem:splitting lemma}
If there exists a connected component $\bm_{0}$ of $\bm$ such that $\tau = \infty$ on $\bm_{0}$,
then $\bm$ is connected and $\cut \bm=\emptyset$.
\end{lem}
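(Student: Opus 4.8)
The plan is to argue by contradiction on each of the two assertions in turn. Suppose there is a connected component $\bm_0$ with $\tau\equiv\infty$ on $\bm_0$, and suppose first that $\bm$ has another connected component $\bm_1\neq\bm_0$. Fix $z\in\bm_0$; since $\tau(z)=\infty$, the geodesic $\gamma_z:[0,\infty)\to M$ satisfies $\rho_{\bm}(\gamma_z(t))=t$ for all $t$, so in particular $d_M(\gamma_z(t),\bm_1)\geq t$ for every $t$. On the other hand, $M$ is connected, so there is a path joining $\bm_0$ to $\bm_1$; let $L:=d_M(\bm_0,\bm_1)<\infty$. Choosing $z'\in\bm_0$ and $z_1\in\bm_1$ with $d_M(z',z_1)$ close to $L$, the minimal geodesic from $z_1$ to $z'$ realizes $\rho_{\bm}$ near $z'$ only up to bounded length; more carefully, by the triangle inequality $\rho_{\bm}(\gamma_{z'}(t))\leq d_M(\gamma_{z'}(t),z_1)\leq d_M(\gamma_{z'}(t),z')+d_M(z',z_1)= t + d_M(z',z_1)$, which is no contradiction yet, so instead I would use the reverse estimate at a \emph{far} point: for $t>L$, $\rho_{\bm}(\gamma_{z'}(t))\le d_M(\gamma_{z'}(t),\bm_1)$, and the latter is $\le d_M(\gamma_{z'}(t),z') - $ (a definite amount) once $\gamma_{z'}$ has travelled past the ``waist'' — the cleanest route is: pick any $x$ on a minimal $\bm_0$-to-$\bm_1$ geodesic with $d_M(x,\bm_0)=d_M(x,\bm_1)=L/2$; then every foot point of $x$ lies on some component, and a short computation with the triangle inequality forces $\rho_{\bm}$ to fail the identity $\rho_{\bm}(\gamma_z(t))=t$ beyond $t=L$ for any $z\in\bm_0$. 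This contradicts $\tau\equiv\infty$ on $\bm_0$, so $\bm=\bm_0$ is connected.

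For the second assertion, assume now $\bm$ is connected (as just shown) and suppose $\cut\bm\neq\emptyset$, i.e.\ there is $z_1\in\bm$ with $\tau(z_1)=:l<\infty$; set $x:=\gamma_{z_1}(l)\in\cut\bm$. Since $\tau$ is continuous on the connected set $\bm$ and takes the value $\infty$ (at the points of $\bm_0=\bm$), the argument splits according to whether $\tau$ is finite everywhere near $z_1$ or not; but $\tau(z_1)<\infty<\infty=\tau(z)$ for $z\in\bm$ shows $\tau$ is not constant, and continuity of a $(0,\infty]$-valued function on a connected space that attains both a finite value and $+\infty$ is already impossible in the relevant topology of $(0,\infty]$ — here I would be careful: $(0,\infty]$ is connected, so this is \emph{not} immediate, and one genuinely needs geometry. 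The right mechanism is: at a cut point $x=\gamma_{z_1}(l)$ either $x$ is conjugate to $\bm$ along $\gamma_{z_1}$, or $x$ has at least two distinct foot points $z_1,z_2\in\bm$. In the two-foot-point case, concatenating $\gamma_{z_2}|_{[0,l]}$ reversed with $\gamma_{z_1}|_{[0,l]}$ yields a geodesic ``bridge'' of length $2l$ between boundary points, and then for any $z\in\bm$ the same far-point estimate as in the first part (applied with this bridge playing the role of the $\bm_0$–$\bm_1$ path) contradicts $\tau(z)=\infty$. In the conjugate-point case one uses that a conjugate point of $\bm$ is a cut point for \emph{all} nearby normal geodesics, again producing points with $\tau<\infty$, contradicting $\tau\equiv\infty$ on $\bm$. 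Hence $\cut\bm=\emptyset$.

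I expect the main obstacle to be making the ``far-point triangle-inequality'' estimate genuinely rigorous and uniform, i.e.\ showing that the existence of \emph{any} minimal geodesic of finite length whose two endpoints lie on $\bm$ forces $\tau$ to be finite somewhere on every component — the subtlety is that $\rho_{\bm}$ measures distance to the \emph{nearest} boundary point, so one must track which foot points are active along $\gamma_z$ and rule out the geodesic ``escaping to infinity'' while staying equidistant from $\bm$. The cleanest packaging is probably to first prove the contrapositive local statement: if $\tau(z)=\infty$ for some $z$, then $\gamma_z([0,\infty))$ together with $\bm$ bounds a product-like region in which $\rho_{\bm}$ behaves like the $t$-coordinate, and any other boundary point or any cut point would intrude on that region. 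Once that structural picture is in place, both assertions follow, and the argument also dovetails with the proof of Theorem \ref{thm:splitting theorem}.
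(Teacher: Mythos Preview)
The paper does not actually prove this lemma here; it is quoted from \cite{Sa1}. That said, your proposal contains a genuine gap in each half.

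For the second assertion you have missed that it is \emph{immediate} once the first is established. By definition
\[
\cut\bm=\{\gamma_z(\tau(z)):z\in\bm,\ \tau(z)<\infty\},
\]
and the first part gives $\bm=\bm_0$, on which the hypothesis says $\tau\equiv\infty$; hence the defining set is empty. Your opening move ``suppose there is $z_1\in\bm$ with $\tau(z_1)=l<\infty$'' is therefore already a direct contradiction of the hypothesis, and the subsequent case analysis of conjugate points versus two-foot-point bridges is superfluous.

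For the first assertion your triangle-inequality sketches never close, as you yourself flag. The clean route is the equidistant-point idea you touch on but do not carry through. If $\bm\setminus\bm_0\neq\emptyset$, the continuous function $\rho_{\bm_0}-\rho_{\bm\setminus\bm_0}$ is negative on $\bm_0$ and positive on $\bm\setminus\bm_0$, so by connectedness of $M$ it vanishes at some point $x$, necessarily in $\inte M$. Properness of $M$ then yields foot points $z_0\in\bm_0$ and $z_1\in\bm\setminus\bm_0$ of $x$, both realising $\rho_{\bm}(x)=:t>0$. Since $z_0\neq z_1$, the concatenation of $\gamma_{z_1}|_{[0,t]}$ with $\gamma_{z_0}|_{[t,t+\epsilon]}$ has a genuine corner at $x$, so $\rho_{\bm}(\gamma_{z_0}(t+\epsilon))\le d_M(\gamma_{z_0}(t+\epsilon),z_1)<t+\epsilon$ for every $\epsilon>0$. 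Thus $\tau(z_0)=t<\infty$, contradicting $z_0\in\bm_0$. Your ``bridge'' and ``waist'' language is groping toward exactly this, but without isolating the equidistant point and its foot point on $\bm_0$ the argument does not become rigorous.
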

The function $\rho_{\bm}$ is smooth on $\inte M\setminus \cut \bm$.
For each $x\in \inte M\setminus \cut \bm$,
we have $\nabla \rho_{\bm}(x)=\gamma'(l)$,
where $\gamma:[0,l]\to M$ is the minimal geodesic from the foot point on $\bm$ of $x$ to $x$.

For $\Omega \subset M$,
we denote by $\bar{\Omega}$ its closure,
and by $\partial \Omega$ its boundary.
For a domain $\Omega$ in $M$ such that
$\partial \Omega$ is a smooth hypersurface in $M$,
we denote by $\vol_{\partial \Omega}$ the canonical Riemannian volume measure on $\partial \Omega$.

We recall the following fact to avoid the cut locus for the boundary (see Lemma 2.6 in \cite{Sa2}):
\begin{lem}\label{lem:avoiding the cut locus}
Let $\Omega \subset M$ be a domain such that
$\partial \Omega$ is a smooth hypersurface in $M$.
Then there exists a sequence $\{\Omega_{i}\}$ of closed subsets of $\bar{\Omega}$ such that
for every $i$,
the set $\partial \Omega_{i}$ is a smooth hypersurface in $M$ except for a null set in $(\partial \Omega,\vol_{\partial \Omega})$
satisfying the following properties:
\begin{enumerate}
\item for all $i_{1},i_{2}$ with $i_{1}<i_{2}$,
         we have $\Omega_{i_{1}}\subset \Omega_{i_{2}}$;
\item $\bar{\Omega} \setminus \cut \bm=\bigcup_{i}\,\Omega_{i}$;
\item for every $i$,
         and for almost every $x \in \partial \Omega_{i}\cap \partial \Omega$ in $(\partial \Omega,\vol_{\partial \Omega})$,
         there exists a unique unit outer normal vector for $\Omega_{i}$ at $x$
         that coincides with the unit outer normal vector on $\partial \Omega$ for $\Omega$ at $x$;
\item for every $i$,
         on $\partial \Omega_{i}\setminus \partial \Omega$,
         there exists a unique unit outer normal vector field $u_{i}$ for $\Omega_{i}$ such that $g(u_{i},\nabla \rho_{\bm})\geq 0$.
\end{enumerate}
Moreover,
if $\bar{\Omega}=M$,
then for every $i$,
the set $\partial \Omega_{i}$ is a smooth hypersurface in $M$,
and satisfies $\partial \Omega_{i}\cap \bm=\bm$.
\end{lem}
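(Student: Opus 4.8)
The plan is to obtain $\{\Omega_{i}\}$ by intersecting $\bar{\Omega}$ with the images, under the normal exponential map $\expp:\tbp\to M$, of the solid regions lying below the graphs of smooth functions that approximate $\tau$ from below.

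Recall from Subsection \ref{sec:Cut locus for the boundary} (and \cite{Sa1}) that $\tau:\bm\to(0,\infty]$ is continuous, that $\cut\bm$ is a null set contained in $\inte M$, that $\rho_{\bm}$ is smooth on $\inte M\setminus\cut\bm$ with $\nabla\rho_{\bm}(\gamma_{z}(t))=\gamma_{z}'(t)$, and that every $x\in M\setminus\cut\bm$ has a unique foot point $z_{x}\in\bm$ with $x=\gamma_{z_{x}}(\rho_{\bm}(x))$ and $\rho_{\bm}(x)<\tau(z_{x})$; moreover, identifying $\tbp$ with $\bm\times[0,\infty)$ by the unit inner normal, $\expp$ restricts to a diffeomorphism from the open set $\{(z,t):0\le t<\tau(z)\}$ onto $M\setminus\cut\bm$, and the foot-point map $x\mapsto z_{x}$ is smooth on $M\setminus\cut\bm$. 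Since $\tau$ is positive and continuous, we may choose smooth functions $\tilde\tau_{i}:\bm\to(0,\infty)$ with $\tilde\tau_{i}<\tilde\tau_{i+1}<\tau$ on $\bm$ and $\tilde\tau_{i}\to\tau$ pointwise (say by mollifying $\min(\tau,i)$ from below, leaving room between consecutive functions). Setting $\ell_{i}(x):=\tilde\tau_{i}(z_{x})$ for $x\in M\setminus\cut\bm$, define
\begin{equation*}
\Omega_{i}:=\bar{\Omega}\cap\{x\in M\setminus\cut\bm:\rho_{\bm}(x)\le\ell_{i}(x)\}=\bar{\Omega}\cap\expp\bigl(\{(z,t):0\le t\le\tilde\tau_{i}(z)\}\bigr).
\end{equation*}

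Properties (1) and (2) follow directly. Monotonicity is clear from $\tilde\tau_{i}<\tilde\tau_{i+1}$. Since $\sup_{i}\tilde\tau_{i}=\tau$ while $\rho_{\bm}(x)<\tau(z_{x})$ for every $x\in M\setminus\cut\bm$, we have $\bigcup_{i}\{x\in M\setminus\cut\bm:\rho_{\bm}(x)\le\ell_{i}(x)\}=M\setminus\cut\bm$, hence $\bigcup_{i}\Omega_{i}=\bar{\Omega}\setminus\cut\bm$. Each $\Omega_{i}$ is closed (and contained in $\bar{\Omega}$) by a standard properness argument: if $\gamma_{z_{k}}(t_{k})\to x$ with $t_{k}=\rho_{\bm}(\gamma_{z_{k}}(t_{k}))\le\tilde\tau_{i}(z_{k})$, then $t_{k}$ is bounded, so by the Hopf--Rinow theorem a subsequence of the foot points $z_{k}$ converges to a point $z\in\bm$, which is then a foot point of $x$ and satisfies $\rho_{\bm}(x)=\lim_{k}t_{k}\le\lim_{k}\tilde\tau_{i}(z_{k})=\tilde\tau_{i}(z)$.

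For the boundary structure, note that $\ell_{i}$ is constant along each geodesic $\gamma_{z}$, so $\nabla\ell_{i}$ is $g$-orthogonal to $\nabla\rho_{\bm}$; hence $\psi_{i}:=\rho_{\bm}-\ell_{i}$ satisfies $\nabla\psi_{i}\neq0$ and $g(\nabla\psi_{i},\nabla\rho_{\bm})=g(\nabla\rho_{\bm},\nabla\rho_{\bm})=1>0$ on $M\setminus\cut\bm$. Thus $\Sigma_{i}:=\{\psi_{i}=0\}=\expp(\{(z,\tilde\tau_{i}(z)):z\in\bm\})$ is a smoothly embedded hypersurface of $\inte M$ (the image under the diffeomorphism $\expp$ of the graph of the smooth function $\tilde\tau_{i}$), with unit normal $u_{i}:=\nabla\psi_{i}/\Vert\nabla\psi_{i}\Vert$ and $g(u_{i},\nabla\rho_{\bm})>0$. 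Using $\tilde\tau_{i}>0$, one checks that $\partial\Omega_{i}\setminus\partial\Omega\subset\Sigma_{i}\subset\inte M$, and that near a point of $\partial\Omega_{i}\setminus\partial\Omega$ the set $\Omega_{i}$ coincides with $\{\psi_{i}\le0\}$, whose unit outer normal is $u_{i}$: this gives (4). Near a point of $\partial\Omega\cap\partial\Omega_{i}$ lying off $\Sigma_{i}$ one has $\rho_{\bm}<\ell_{i}$, so $\Omega_{i}$ coincides with $\bar{\Omega}$ there and its unit outer normal agrees with that of $\Omega$: this gives (3). Finally, if $\bar{\Omega}=M$ then $\Omega_{i}=\{\rho_{\bm}\le\ell_{i}\}$, whose boundary is the smooth hypersurface $\bm\cup\Sigma_{i}$ and satisfies $\partial\Omega_{i}\cap\bm=\bm$.

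The remaining point --- and the one that is not pure bookkeeping --- is that $\partial\Omega_{i}$ may fail to be a smooth hypersurface along the ``corner'' $\partial\Omega\cap\Sigma_{i}$, and we need this corner to be a null set in $(\partial\Omega,\vol_{\partial\Omega})$. For this we apply Sard's theorem: the restriction $\psi_{i}|_{\partial\Omega}$ is smooth near its zero set (which lies in $M\setminus\cut\bm$), so after replacing $\tilde\tau_{i}$ by $\tilde\tau_{i}+c$ (equivalently $\psi_{i}$ by $\psi_{i}-c$) for a suitable small $c\ge0$, the value $0$ becomes a regular value of $\psi_{i}|_{\partial\Omega}$; then $\partial\Omega\cap\Sigma_{i}$ is an $(n-2)$-dimensional submanifold of $\partial\Omega$, hence null there. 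Since the sequence $\{\tilde\tau_{i}\}$ was constructed with room to spare, these perturbations can be kept small enough to preserve $\tilde\tau_{i}<\tilde\tau_{i+1}<\tau$, so (1) and (2) are not affected. Arranging this transversality compatibly with the monotone exhausting sequence is the main obstacle; everything else amounts to unwinding the definition of $\expp$ on the regions under the graphs of the $\tilde\tau_{i}$.
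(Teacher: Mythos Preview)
The paper does not prove this lemma here; it is quoted verbatim from the author's earlier work \cite{Sa2} (Lemma~2.6), so there is no in-paper argument to compare against. Your construction---smooth sub-graphs $\tilde\tau_i\nearrow\tau$, pushed forward by the normal exponential diffeomorphism, intersected with $\bar\Omega$, and then perturbed via Sard's theorem so that the corner $\partial\Omega\cap\Sigma_i$ becomes $(n-2)$-dimensional and hence $\vol_{\partial\Omega}$-null---is the standard one and is correct.

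Two minor points that deserve an extra sentence in a polished write-up, though you have implicitly handled both. First, Sard's theorem is applied to $\psi_i|_{\partial\Omega\setminus\cut\bm}$, an open (possibly non-compact) submanifold of $\partial\Omega$; this is legitimate because the zero set $\{\psi_i=0\}=\Sigma_i$ lies in $\{\rho_{\bm}=\ell_i\}\subset M\setminus\cut\bm$, so one never needs regularity of $\psi_i$ near $\cut\bm$. Second, the perturbations should be chosen inductively: pick $c_1\ge 0$ small enough that $\tilde\tau_1+c_1<\tilde\tau_2$ and $c_1$ is a regular value of $\psi_1|_{\partial\Omega\setminus\cut\bm}$, then replace $\tilde\tau_1$ by $\tilde\tau_1+c_1$; repeat for $i=2,3,\dots$. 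Your phrase ``room to spare'' covers this. The closedness argument for $\Omega_i$ is also fine, since any limit foot point $z$ satisfies $\rho_{\bm}(x)\le\tilde\tau_i(z)<\tau(z)$, forcing $x\notin\cut\bm$ and $z=z_x$.
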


%%%%%%%%%%%%%%%%%%%%%%%%%%
\subsection{Busemann functions and asymptotes}
Let $M$ be a connected complete Riemannian manifold with boundary.
A minimal geodesic $\gamma:[0,\infty)\to M$ is said to be a \textit{ray}.
For a ray $\gamma:[0,\infty)\to M$, 
the \textit{Busemann function} $b_{\gamma}:M\to \mathbb{R}$ is defined as
\begin{equation*}
b_{\gamma}(x):=\lim_{t\to \infty}(t-d_{M}(x,\gamma(t))).
\end{equation*}

We have the following lemma (see Lemma 6.1 in \cite{Sa1}):
\begin{lem}\label{lem:busemann function}
Suppose that for some $z\in \bm$
we have $\tau(z)=\infty$.
Take $x\in \inte M$.
If $b_{\gamma_{z}}(x)=\rho_{\bm}(x)$,
then $x\notin \cut \bm$.
Moreover,
for the unique foot point $z_{x}$ on $\bm$ of $x$,
we have $\tau(z_{x})=\infty$.
\end{lem}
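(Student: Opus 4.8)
The plan is to exploit the two standard inequalities relating the Busemann function $b_{\gamma_z}$ to the distance function $\rho_{\bm}$ from the boundary, and then to use the triangle inequality applied to foot points. First I would record that, since $\gamma_z$ is a ray with $\gamma_z(0)=z\in\bm$ and $\rho_{\bm}(\gamma_z(t))=t$ for all $t$ (this is exactly the hypothesis $\tau(z)=\infty$), one has for every $x\in\inte M$ the inequality $b_{\gamma_z}(x)\le\rho_{\bm}(x)$; indeed, for each $t$, $t-d_M(x,\gamma_z(t))\le t-\bigl(\rho_{\bm}(\gamma_z(t))-\rho_{\bm}(x)\bigr)=\rho_{\bm}(x)$ by the $1$-Lipschitz property of $\rho_{\bm}$, so passing to the limit gives the claim. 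Thus under the assumption $b_{\gamma_z}(x)=\rho_{\bm}(x)$ we are in the equality case of this inequality, which will force the minimizing geodesics from $\bm$ to $x$ to align with the asymptotic ray structure.

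Next I would take an asymptote $\sigma:[0,\infty)\to M$ of $\gamma_z$ emanating from $x$: choose $t_i\to\infty$, let $\sigma_i$ be a minimal geodesic from $x$ to $\gamma_z(t_i)$, and pass to a subsequential limit $\sigma$ of unit initial vectors; $\sigma$ is a ray with $b_{\gamma_z}(\sigma(s))=b_{\gamma_z}(x)+s$ for all $s\ge 0$ (standard, using that $b_{\gamma_z}$ is $1$-Lipschitz and $b_{\gamma_z}(\gamma_z(t))=t$). Now pick a foot point $z_x\in\bm$ of $x$ and the minimal geodesic $\gamma:[0,l]\to M$ from $z_x$ to $x$, where $l=\rho_{\bm}(x)$; by the properties in Subsection \ref{sec:Cut locus for the boundary} this is $\gamma_{z_x}|_{[0,l]}$ with $\gamma'(0)=u_{z_x}$. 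The key step is to show that the broken curve obtained by following $\gamma$ from $z_x$ to $x$ and then $\sigma$ from $x$ to $\sigma(s)$ is in fact an unbroken minimal geodesic realizing $\rho_{\bm}(\sigma(s))=l+s$ for all $s\ge 0$: from $b_{\gamma_z}(\sigma(s))=\rho_{\bm}(x)+s$ and the inequality $b_{\gamma_z}\le\rho_{\bm}$ we get $\rho_{\bm}(\sigma(s))\ge\rho_{\bm}(x)+s$, while the broken curve gives the reverse inequality $\rho_{\bm}(\sigma(s))\le l+s=\rho_{\bm}(x)+s$; hence equality holds, the concatenation has no corner (otherwise it could be shortened), and therefore $\sigma$ is the forward continuation of $\gamma_{z_x}$, i.e. $\sigma=\gamma_{z_x}|_{[l,\infty)}$ reparametrized. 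This immediately yields $\rho_{\bm}(\gamma_{z_x}(t))=t$ for all $t\ge 0$, so $\tau(z_x)=\infty$.

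It remains to conclude $x\notin\cut\bm$ and that the foot point is unique (so "the" foot point $z_x$ is justified). Since we have produced a ray $\gamma_{z_x}$ from $\bm$ with $\rho_{\bm}(\gamma_{z_x}(t))=t$ for all $t$, in particular $\tau(z_x)=\infty$, so $\gamma_{z_x}(l)=x$ is never a cut point along $\gamma_{z_x}$; by Lemma \ref{lem:splitting lemma} (applied with $\bm_0=$ the component containing $z_x$), $\cut\bm=\emptyset$, which gives $x\notin\cut\bm$ at once and also forces the foot point of $x$ to be unique. I expect the main obstacle to be the no-corner argument at $x$: one must argue carefully that if the concatenation of $\gamma$ and $\sigma$ were broken at $x$, a first-variation shortcut would produce a curve from $\bm$ to $\sigma(s)$ of length strictly less than $l+s$, contradicting $\rho_{\bm}(\sigma(s))=l+s$; closely related, one must make sure that the asymptote $\sigma$ can be chosen with $\sigma(0)=x$ and initial direction compatible with $\gamma'(l)$, which again comes down to the equality $b_{\gamma_z}(x)=\rho_{\bm}(x)$ propagating along $\gamma$. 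Everything else is the standard Busemann-function bookkeeping together with the cut-locus facts already recorded in the excerpt.
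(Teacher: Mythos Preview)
Your argument is correct up through the point where you conclude $\tau(z_{x})=\infty$, but the final step contains a genuine gap. You invoke Lemma~\ref{lem:splitting lemma} to deduce $\cut\bm=\emptyset$, yet that lemma requires $\tau\equiv\infty$ on an \emph{entire} connected component $\bm_{0}$, whereas you have only established $\tau(z_{x})=\infty$ at the single point $z_{x}$. This is not a minor slip: in the paper, Lemma~\ref{lem:busemann function} is precisely the tool used (inside the proof of Theorem~\ref{thm:splitting theorem}) to propagate the condition $\tau=\infty$ from one boundary point to an open set of boundary points, which is then fed into Lemma~\ref{lem:splitting lemma}. Invoking Lemma~\ref{lem:splitting lemma} here would make that argument circular.

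The fix is immediate and already implicit in what you wrote. Your no-corner argument shows that the \emph{fixed} asymptote $\sigma$ satisfies $\sigma'(0)=\gamma_{z_{x}}'(l)$; but the same reasoning applies verbatim with $z_{x}$ replaced by \emph{any} foot point $z'$ of $x$, forcing $\gamma_{z'}'(l)=\sigma'(0)=\gamma_{z_{x}}'(l)$ and hence $z'=z_{x}$. This gives uniqueness of the foot point directly. For $x\notin\cut\bm$: if $x=\gamma_{z'}(\tau(z'))$ for some $z'$ with $\tau(z')<\infty$, then $z'$ is a foot point of $x$, so $z'=z_{x}$, contradicting $\tau(z_{x})=\infty$. (The paper itself does not prove this lemma here; it cites Lemma~6.1 of \cite{Sa1}.)
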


Let $\gamma:[0,\infty) \to M$ be a ray.
For $x \in M$,
we say that
a ray $\gamma_{x}:[0,\infty)\to M$ is an \textit{asymptote for $\gamma$ from $x$}
if there exists a sequence $\{t_{i}\}$ with $t_{i}\to \infty$ such that
the following holds:
For each $i$,
there exists a minimal geodesic $\gamma_{i}:[0,l_{i}] \to M$ from $x$ to $\gamma(t_{i})$ such that
for every $t \geq 0$
we have $\gamma_{i}(t)\to \gamma_{x}(t)$ as $i \to \infty$.
Since $M$ is proper,
for each $x\in M$
there exists at least one asymptote for $\gamma$ from $x$.

For asymptotes,
we see the following (see Lemma 6.2 in \cite{Sa1}):
\begin{lem}\label{lem:asymptote}
Suppose that for some $z\in \bm$
we have $\tau(z)=\infty$.
For $l>0$,
put $x:=\gamma_{z}(l)$.
Then there exists $\epsilon>0$ such that
for all $y\in B_{\epsilon}(x)$,
all asymptotes for the ray $\gamma_{z}$ from $y$ lie in $\inte M$.
\end{lem}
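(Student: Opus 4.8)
The plan is to use the Busemann function $b:=b_{\gamma_{z}}$ of the ray $\gamma_{z}$ as a lower barrier for the distance function $\rho_{\bm}$, together with the fact that $b$ increases at unit speed along asymptotes. First I would check that $\gamma_{z}$ really is a ray: since $\tau(z)=\infty$ we have $\rho_{\bm}(\gamma_{z}(t))=t$ for all $t\ge 0$, and because $\rho_{\bm}$ is $1$-Lipschitz while $\gamma_{z}$ has unit speed this forces $d_{M}(\gamma_{z}(t_{1}),\gamma_{z}(t_{2}))=|t_{1}-t_{2}|$; hence $b$ is well defined and $1$-Lipschitz. I would then record two elementary comparisons: from $t=\rho_{\bm}(\gamma_{z}(t))\le \rho_{\bm}(p)+d_{M}(p,\gamma_{z}(t))$, valid for every $p\in M$, letting $t\to\infty$ gives $b\le\rho_{\bm}$ on $M$; and evaluating along $\gamma_{z}$ itself, $t-d_{M}(\gamma_{z}(l),\gamma_{z}(t))=l$ for $t\ge l$, so $b(x)=l=\rho_{\bm}(x)$ with $x=\gamma_{z}(l)$.

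Next I would take $\epsilon:=l/2$ (any $\epsilon<l$ would do). For $y\in B_{\epsilon}(x)$ the $1$-Lipschitz bound gives $b(y)\ge b(x)-d_{M}(x,y)\ge l/2>0$; since $b\le\rho_{\bm}$, in particular $B_{\epsilon}(x)\subset\inte M$. Let $\gamma_{y}$ be any asymptote for $\gamma_{z}$ from $y$, realized as a limit of minimal geodesics $\gamma_{i}\colon[0,l_{i}]\to M$ from $y$ to $\gamma_{z}(t_{i})$ with $t_{i}\to\infty$. I would then establish the asymptote identity $b(\gamma_{y}(t))=b(y)+t$ for all $t\ge 0$. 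The inequality ``$\le$'' is immediate from the $1$-Lipschitz property and $d_{M}(\gamma_{y}(t),y)=t$. For ``$\ge$'', fix $t$; for $i$ large the subsegment $\gamma_{i}|_{[t,l_{i}]}$ is minimal, so $d_{M}(\gamma_{i}(t),\gamma_{z}(t_{i}))=d_{M}(y,\gamma_{z}(t_{i}))-t$, and combining this with $d_{M}(\gamma_{i}(t),\gamma_{y}(t))\to 0$ and $t_{i}-d_{M}(y,\gamma_{z}(t_{i}))\to b(y)$ and passing to the limit yields $b(\gamma_{y}(t))\ge b(y)+t$.

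Combining the two steps, for every $t\ge 0$
\begin{equation*}
\rho_{\bm}(\gamma_{y}(t))\ \ge\ b(\gamma_{y}(t))\ =\ b(y)+t\ \ge\ l/2\ >\ 0 ,
\end{equation*}
so $\gamma_{y}(t)\in\inte M$; hence $\gamma_{y}([0,\infty))\subset\inte M$, which is exactly the assertion. I expect the only delicate point — the ``main obstacle'', modest as it is — to be the asymptote identity in the presence of a boundary. It should carry over verbatim from the boundaryless setting, since it rests only on the length-space structure: the existence of minimal geodesics joining two points and their stability under uniform limits (so that the limiting asymptote $\gamma_{y}$ is itself a minimal geodesic), both guaranteed by the Hopf--Rinow theorem for length spaces recalled above, and it never uses smoothness of $\gamma_{y}$ or of $\rho_{\bm}$.
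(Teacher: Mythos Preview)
Your argument is correct. The paper does not give its own proof of this lemma; it merely cites Lemma~6.2 of \cite{Sa1}. Your approach---establishing the barrier inequality $b_{\gamma_{z}}\le\rho_{\bm}$ on $M$ together with the asymptote identity $b_{\gamma_{z}}(\gamma_{y}(t))=b_{\gamma_{z}}(y)+t$, and then reading off $\rho_{\bm}(\gamma_{y}(t))\ge b_{\gamma_{z}}(y)+t>0$---is the standard one and almost certainly coincides with the proof in \cite{Sa1}; indeed the paper itself invokes exactly this barrier $b_{\gamma_{z}}\le\rho_{\bm}$ in the proof of Theorem~\ref{thm:splitting theorem}. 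Your handling of the asymptote identity in the length-space setting is fine: it uses only minimality of the approximating segments $\gamma_{i}$ and pointwise convergence $\gamma_{i}(t)\to\gamma_{y}(t)$, both of which are part of the definition of asymptote given in the paper, together with the $1$-Lipschitz property of $b_{\gamma_{z}}$.
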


%%%%%%%%%%%%%%%%%%%%%%%%%%
\subsection{Weighted manifolds with boundary}
Let $(M,g)$ be a connected complete Riemannian manifold with boundary,
and let $f:M\to \mathbb{R}$ be a smooth function.
The \textit{weighted Laplacian $\Delta_{f}$} is defined by
\begin{equation*}\label{eq:weighted Laplacian}
\Delta_{f}:=\Delta+g(\nabla f,\nabla \cdot),
\end{equation*}
where $\Delta$ is the Laplacian defined as the minus of the trace of the Hessian.
Note that
$\Delta_{f}$ coincides with the $(f,2)$-Laplacian $\Delta_{f,2}$.

The following formula of Bochner type is well-known (see e.g., \cite{V}).
\begin{prop}\label{prop:Bochner formula}
For every smooth function $\psi$ on $M$,
we have
\begin{equation*}
-\frac{1}{2}\,\Delta_{f}\,\Vert \nabla \psi \Vert^{2}=\ric^{\infty}_{f}(\nabla \psi)+\Vert \Hess \psi \Vert^{2}-g\left(\nabla \Delta_{f}\,\psi,\nabla \psi \right),
\end{equation*}
where $\Vert \Hess \psi \Vert$ is the Hilbert-Schmidt norm of $\Hess \psi$.
\end{prop}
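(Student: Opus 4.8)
The plan is to start from the classical Bochner--Weitzenb\"ock formula and absorb the two first-order terms introduced by the drift $g(\nabla f,\nabla\cdot)$ into the Ricci and Hessian terms. The statement is purely local and makes no reference to the boundary, so the presence of $\partial M$ plays no role.

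First I would recall the unweighted Bochner formula: for every smooth $\psi$,
\[
-\tfrac{1}{2}\,\Delta\,\Vert\nabla\psi\Vert^{2}=\ric_{g}(\nabla\psi)+\Vert\Hess\psi\Vert^{2}-g(\nabla\Delta\psi,\nabla\psi),
\]
valid with the convention used in this paper that $\Delta$ is minus the trace of the Hessian; it follows by contracting the Ricci identity for the vector field $\nabla\psi$. Since $\Delta_{f}=\Delta+g(\nabla f,\nabla\cdot)$, we then have
\[
-\tfrac{1}{2}\,\Delta_{f}\,\Vert\nabla\psi\Vert^{2}=-\tfrac{1}{2}\,\Delta\,\Vert\nabla\psi\Vert^{2}-\tfrac{1}{2}\,g\!\left(\nabla f,\nabla\Vert\nabla\psi\Vert^{2}\right).
\]
The two computational facts I need, both consequences of metric compatibility of $\nabla^{g}$ together with the symmetry of $\Hess\psi$ and $\Hess f$, are the identities $\tfrac{1}{2}\,g(\nabla f,\nabla\Vert\nabla\psi\Vert^{2})=(\Hess\psi)(\nabla f,\nabla\psi)$ and $g(\nabla(g(\nabla f,\nabla\psi)),\nabla\psi)=(\Hess f)(\nabla\psi,\nabla\psi)+(\Hess\psi)(\nabla f,\nabla\psi)$.

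Finally I would substitute: the first identity turns the extra term above into $-(\Hess\psi)(\nabla f,\nabla\psi)$, while the second, applied to $g(\nabla\Delta\psi,\nabla\psi)=g(\nabla\Delta_{f}\psi,\nabla\psi)-g(\nabla(g(\nabla f,\nabla\psi)),\nabla\psi)$, produces a term $+(\Hess\psi)(\nabla f,\nabla\psi)$ that cancels it and a term $+(\Hess f)(\nabla\psi,\nabla\psi)$ that combines with $\ric_{g}(\nabla\psi)$ to give $\ric^{\infty}_{f}(\nabla\psi)$ by the definition $(\ref{eq:def of weighted Ricci curvature})$ with $N=\infty$. This yields the asserted formula. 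There is no genuinely hard step here; the only points requiring care are keeping the sign convention for $\Delta$ consistent throughout and correctly evaluating $g(\nabla(g(\nabla f,\nabla\psi)),\nabla\psi)$.
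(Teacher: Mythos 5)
Your derivation is correct, and the sign conventions are handled consistently with the paper's choice $\Delta=-\operatorname{trace}\Hess$. The paper itself offers no proof of this proposition---it simply cites it as well-known from \cite{V}---so there is no argument to compare against; what you give is the standard computation, and both of the auxiliary identities you invoke, namely $\tfrac{1}{2}\,g(\nabla f,\nabla\Vert\nabla\psi\Vert^{2})=\Hess\psi(\nabla f,\nabla\psi)$ and $g(\nabla(g(\nabla f,\nabla\psi)),\nabla\psi)=\Hess f(\nabla\psi,\nabla\psi)+\Hess\psi(\nabla f,\nabla\psi)$, follow at once from metric compatibility and the symmetry of the Hessians, and the cancellation you describe does produce $\ric^{\infty}_{f}(\nabla\psi)$ exactly.
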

For $z\in \bm$,
the value $\Delta_{f}\rho_{\bm}(\gamma_{z}(t))$ tends to $H_{f,z}$ as $t\to 0$.
For $t \in (0,\tau(z))$,
and for the volume element $\theta(t,z)$ of the $t$-level surface of $\rho_{\bm}$ at $\gamma_{z}(t)$,
we put
\begin{equation}\label{eq:volume element}
\theta_{f}(t,z):=e^{-f(\gamma_{z}(t))}\, \theta(t,z).
\end{equation}
For all $t\in (0,\tau(z))$
it holds that
\begin{equation}\label{eq:Laplacian representation}
\Delta_{f}\, \rho_{\bm}(\gamma_{z}(t)) =-\frac{\theta_{f}'(t,z)}{\theta_{f}(t,z)}.
\end{equation}
We further define a function $\bar{\theta}_{f}:[0,\infty) \times \bm \to \mathbb{R}$ by
\begin{equation*}\label{eq:volume element}
  \bar{\theta}_{f}(t,z) := \begin{cases}
                                     \theta_{f}(t,z) & \text{if $t< \tau(z)$}, \\
                                             0           & \text{if $t \geq \tau(z)$}.
                                    \end{cases}
\end{equation*}

The following has been shown in \cite{Sa2}:
\begin{lem}\label{lem:Basic integration formula}
If $\bm$ is compact,
then for all $r>0$
\begin{equation*}
m_{f}( B_{r}(\bm))=\int_{\bm} \int^{r}_{0}\bar{\theta}_{f}(t,z)\,dt\,d\vol_{h},
\end{equation*}
where $\vol_{h}$ is the Riemannian volume measure on $\bm$ induced from $h$.
\end{lem}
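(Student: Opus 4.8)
The plan is to compute $m_{f}(B_{r}(\bm))$ by passing to Fermi coordinates along the boundary, i.e.\ by using the normal exponential map $\expp$. First I would recall from Subsection~\ref{sec:Cut locus for the boundary} (with proofs in \cite{Sa1}) that every $x\in\inte M$ has a foot point $z\in\bm$ joined to $x$ by a unique minimal geodesic $\gamma_{z}|_{[0,l]}$ with $l=\rho_{\bm}(x)$, that the set $\{t>0\mid \rho_{\bm}(\gamma_{z}(t))=t\}$ is the interval $(0,\tau(z))$ when $\tau(z)<\infty$, and that $\expp$ restricts to a diffeomorphism from $D:=\{(t,z)\in(0,\infty)\times\bm : t<\tau(z)\}$ onto $\inte M\setminus\cut\bm$. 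Since $\cut\bm$ is a null set of $M$ and $\bm$ has $\vol_{g}$-measure zero, and since $m_{f}=e^{-f}\vol_{g}$ is absolutely continuous with respect to $\vol_{g}$, we have $m_{f}(B_{r}(\bm))=m_{f}\bigl((B_{r}(\bm)\cap\inte M)\setminus\cut\bm\bigr)$.

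Next I would identify the $\expp$-preimage of this set. For $(t,z)\in D$ the point $x=\gamma_{z}(t)$ satisfies $\rho_{\bm}(x)=t$, so $x\in B_{r}(\bm)$ precisely when $t\le r$; hence the preimage is $D_{r}:=\{(t,z)\in D : t\le\min\{r,\tau(z)\}\}$, and the slice $t=r$ is negligible. In these coordinates the Riemannian volume decomposes as $d\vol_{g}=\theta(t,z)\,dt\,d\vol_{h}(z)$, where $\theta(t,z)$ is, by definition, the volume element of the level set $\rho_{\bm}^{-1}(t)$ at $\gamma_{z}(t)$; consequently, using $m_{f}=e^{-f}\vol_{g}$ and the definition~(\ref{eq:volume element}) of $\theta_{f}$,
\[
d\,m_{f}=e^{-f(\gamma_{z}(t))}\,\theta(t,z)\,dt\,d\vol_{h}(z)=\theta_{f}(t,z)\,dt\,d\vol_{h}(z).
\]
Since $\bm$ is compact (so $\vol_{h}(\bm)<\infty$) and all integrands are nonnegative, Tonelli's theorem applies and yields
\[
m_{f}(B_{r}(\bm))=\int_{\bm}\int_{0}^{\min\{r,\tau(z)\}}\theta_{f}(t,z)\,dt\,d\vol_{h}=\int_{\bm}\int_{0}^{r}\bar{\theta}_{f}(t,z)\,dt\,d\vol_{h},
\]
the last equality using that $\bar{\theta}_{f}(t,z)=0$ for $t\ge\tau(z)$.

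The only genuinely nontrivial inputs are that $\expp|_{D}$ is a diffeomorphism onto $\inte M\setminus\cut\bm$ and that $\cut\bm$ is $\vol_{g}$-null; both are recalled in Subsection~\ref{sec:Cut locus for the boundary}. Granting these, the remaining work — the change-of-variables formula in Fermi coordinates and the interchange of the order of integration — is routine, so I expect no serious obstacle beyond correctly bookkeeping the three null sets involved ($\bm$ itself, the cut locus $\cut\bm$, and the slice $\{t=r\}$).
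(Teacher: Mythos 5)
Your proof is correct. The paper does not reproduce an argument for this lemma (it simply cites \cite{Sa2}), but your approach — passing to Fermi coordinates via the normal exponential map, using the decomposition $d\vol_{g}=\theta(t,z)\,dt\,d\vol_{h}$ together with $m_{f}=e^{-f}\vol_{g}$, discarding the null sets $\bm$, $\cut\bm$, and $\{t=r\}$, and then applying Tonelli — is the standard one and is exactly the technique the paper itself invokes (coarea formula plus Fubini after a slice decomposition) in its proof of the analogous Lemma~\ref{lem:integration formula}.
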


Let $\psi:M\to \mathbb{R}$ be a continuous function,
and let $U$ be a domain contained in $\inte M$.
For $x \in U$,
and for a function $\hat{\psi}$ defined on an open neighborhood of $x$,
we say that
$\hat{\psi}$ is a \textit{support function of $\psi$ at $x$}
if we have $\hat{\psi}(x)=\psi(x)$ and $\hat{\psi} \leq \psi$.
We say that
$\psi$ is \textit{$f$-subharmonic on $U$}
if for every $x \in U$,
and for every $\epsilon>0$,
there exists a smooth support function $\psi_{x,\epsilon}$ of $\psi$ at $x$
such that $\Delta_{f}\, \psi_{x,\epsilon}(x)\leq \epsilon$.

We recall the following maximal principle (see e.g., \cite{C}):
\begin{lem}[\cite{C}]\label{lem:maximal principle}
Let $U$ be a domain contained in $\inte M$.
If an $f$-subharmonic function on $U$ takes the maximal value at a point in $U$,
then it must be constant on $U$.
\end{lem}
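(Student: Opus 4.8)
The plan is to reduce the claim to the classical strong maximum principle of E.~Hopf for uniformly elliptic second-order operators, applied on small coordinate balls inside $U$. First I would observe that the hypothesis is entirely local in nature: it suffices to show that the set $A:=\{x\in U \mid \psi(x)=\max_{U}\psi\}$ is open, since $A$ is closed by continuity of $\psi$ and nonempty by assumption, so connectedness of $U$ then forces $A=U$. Thus I fix $x_{0}\in A$ and aim to produce a neighborhood of $x_{0}$ contained in $A$.

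The key step is to exploit the support functions. Suppose, for contradiction, that $x_{0}$ is not interior to $A$; then there is a sequence $y_{j}\to x_{0}$ with $\psi(y_{j})<\psi(x_{0})$. I would choose a small geodesically convex coordinate ball $V\subset U$ around $x_{0}$ on which $g$ and $\nabla f$ have bounded coefficients, so that $\Delta_{f}$ is a uniformly elliptic operator with smooth bounded coefficients on $V$. The point is that $\psi$ is a \emph{viscosity} (barrier) subsolution of $\Delta_{f}\psi\le 0$ in the sense encoded by the definition of $f$-subharmonicity: for every $x\in V$ and every $\epsilon>0$ there is a smooth $\psi_{x,\epsilon}\le\psi$ with $\psi_{x,\epsilon}(x)=\psi(x)$ and $\Delta_{f}\psi_{x,\epsilon}(x)\le\epsilon$. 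A standard argument (see, e.g., Calabi's trick) upgrades such barrier subsolutions to genuine distributional/viscosity subsolutions, to which Hopf's strong maximum principle applies: a subsolution of a uniformly elliptic operator attaining an interior maximum must be locally constant. Hence $\psi\equiv\psi(x_{0})$ on $V$, contradicting $\psi(y_{j})<\psi(x_{0})$ for $y_{j}\in V$.

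Concretely, rather than invoking viscosity theory I would give the elementary argument directly. Assuming $\psi$ is not constant near $x_{0}$, pick a point $p$ near $x_{0}$ with $\psi(p)<\psi(x_{0})$ and, using an exterior-sphere barrier of the form $w(x)=e^{-\alpha r(x)^{2}}-e^{-\alpha R^{2}}$ on an annulus (with $r$ the distance from a suitable center and $\alpha$ large, chosen so that $\Delta_{f}w<0$ on the annulus by ellipticity), compare $\psi+\eta w$ with $\psi(x_{0})$ for small $\eta>0$. The function $\psi+\eta w$ would then attain a value exceeding $\psi(x_{0})$ at $x_{0}$ while being $\le\psi(x_{0})$ on the inner boundary sphere; one checks it has an interior maximum at some point $q$ of the annulus, and at $q$ one applies the smooth support function $\psi_{q,\epsilon}$: the function $\psi_{q,\epsilon}+\eta w$ is smooth, has a local maximum at $q$, so $\Delta_{f}(\psi_{q,\epsilon}+\eta w)(q)\ge 0$, whereas $\Delta_{f}\psi_{q,\epsilon}(q)\le\epsilon$ and $\Delta_{f}w(q)<0$; choosing $\epsilon$ small enough relative to $\eta\,|\Delta_{f}w(q)|$ yields $\Delta_{f}(\psi_{q,\epsilon}+\eta w)(q)<0$, a contradiction.

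The main obstacle is the bookkeeping to ensure the barrier $w$ satisfies $\Delta_{f}w<0$ uniformly on the chosen annulus with the necessary sign at the candidate maximum point $q$, and that $\epsilon$ can be taken small \emph{after} $q$ is produced (so the quantifier order ``for every $\epsilon$ there is a support function'' is used correctly). Everything else is a routine adaptation of the classical proof of Hopf's lemma, with $\Delta_{f}$ replacing $\Delta$; the extra first-order term $g(\nabla f,\nabla\cdot)$ is harmless since it only affects lower-order coefficients of an already uniformly elliptic operator. I would therefore just cite \cite{C} for the detailed verification, as the excerpt does.
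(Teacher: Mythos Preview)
Your proposal is correct and coincides with what the paper does: the paper gives no proof at all but simply cites Calabi~\cite{C}, and the barrier argument you sketch (support functions plus a Hopf-type exponential comparison function on an annulus) is exactly Calabi's original proof. There is nothing to compare beyond noting that you have spelled out the content of the citation.
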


%%%%%%%%%%%%%%%%%%%%%%%%%%
\subsection{Laplacian comparisons from a single point}\label{sec:Laplacian comparisons from a single point}
We recall the works done by Wylie and Yeroshkin \cite{WY}.
Let $M$ be an $n$-dimensional,
connected complete Riemannian manifold with boundary,
and let $f:M\to \mathbb{R}$ be a smooth function.
For the diameter $C_{\kappa}$ of the space form $M^{n}_{\kappa}$,
we define a function $H_{\kappa}:(0,C_{\kappa})\to \mathbb{R}$ by
\begin{equation}\label{eq:pointed model mean curvature}
H_{\kappa}(s):=-(n-1)\frac{s'_{\kappa}(s)}{s_{\kappa}(s)}.
\end{equation}

Wylie and Yeroshkin \cite{WY} have proved a Laplacian comparison inequality for the distance function from a single point (see Theorem 4.4 in \cite{WY}).
In our setting,
the inequality holds in the following form:
\begin{lem}[\cite{WY}]\label{lem:pointed Laplacian comparison}
Let $x \in \inte M$ and $v \in U_{x}M$.
For $N \in (-\infty,1]$,
assume $\ric^{N}_{f,M}\geq (n-1)\kappa\, e^{\frac{-4f}{n-1}}$.
Then for all $t \in (0,\tau_{x}(v))$
\begin{equation}\label{eq:pointed Laplacian comparison}
\Delta_{f}\, \rho_{x}(\gamma_{v}(t))\geq H_{\kappa}(s_{f,v}(t))\,e^{\frac{-2f(\gamma_{v}(t))}{n-1}},
\end{equation}
where $\tau_{x}$ and $s_{f,v}$ are defined as $(\ref{eq:pointed cut value})$ and as $(\ref{eq:pointed speed changing function})$,
respectively.
\end{lem}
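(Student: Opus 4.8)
The plan is to reduce the weighted Laplacian comparison to an ODE comparison along the geodesic $\gamma_v$, using the Bochner-type formula (Proposition~\ref{prop:Bochner formula}) and the curvature hypothesis, exactly in the spirit of Wylie and Yeroshkin~\cite{WY}. Fix $x\in\inte M$ and $v\in U_xM$, and work on the interval $t\in(0,\tau_x(v))$, where $\rho_x$ is smooth along $\gamma_v$ and $\nabla\rho_x(\gamma_v(t))=\gamma_v'(t)$. Write $\phi(t):=\Delta_f\,\rho_x(\gamma_v(t))$. Applying Proposition~\ref{prop:Bochner formula} to $\psi=\rho_x$ along $\gamma_v$, using $\Vert\nabla\rho_x\Vert\equiv 1$, the Cauchy--Schwarz estimate $\Vert\Hess\rho_x\Vert^2\ge (\Delta\rho_x)^2/(n-1)$ on the orthogonal complement of $\gamma_v'$, and the identity $\phi'=g(\nabla\Delta_f\rho_x,\nabla\rho_x)$, one obtains a Riccati-type differential inequality
\begin{equation*}
\phi'(t)\ \ge\ \frac{(\Delta\rho_x(\gamma_v(t)))^2}{n-1}+\ric^{\infty}_f(\gamma_v'(t)).
\end{equation*}
The first step is to massage the unweighted term $(\Delta\rho_x)^2/(n-1)$ and the extra first-order term coming from $\Delta_f=\Delta+g(\nabla f,\nabla\cdot)$ into a clean inequality for $\phi$ alone; here the algebraic identity relating $\ric^N_f$ and $\ric^\infty_f$ via the $df\otimes df/(N-n)$ term is used, together with the sign $N\le 1$ (so $N<n$, and the correction term has the favorable sign), to pass from $\ric^\infty_f\ge(\cdot)$ to the genuine hypothesis $\ric^N_{f,M}\ge(n-1)\kappa\,e^{-4f/(n-1)}$.

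The second step is the change of variables that linearizes the problem. Introduce the arclength reparametrization $s=s_{f,v}(t)=\int_0^t e^{-2f(\gamma_v(a))/(n-1)}\,da$ from~(\ref{eq:pointed speed changing function}), and set
\begin{equation*}
\Psi(t):=\exp\!\left(\frac{f(\gamma_v(t))+f(x)}{n-1}\right),\qquad \varphi(t):=\Psi(t)\,\phi(t)-\frac{d}{dt}\log\Psi(t)\cdot(\text{correction}),
\end{equation*}
— more precisely, one checks that the quantity $e^{2f/(n-1)}\phi(t)$, written as a function of $s$, satisfies a scalar Riccati inequality $\tfrac{d}{ds}\Theta+\tfrac{\Theta^2}{n-1}\le -(n-1)\kappa$ with the correct initial asymptotics $\Theta(s)\sim -(n-1)/s$ as $s\to 0^+$ (coming from $\phi(t)\sim (n-1)/t$, the usual Euclidean blow-up of the Laplacian of a distance function). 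The comparison solution of the equality ODE is exactly $H_\kappa(s)=-(n-1)s_\kappa'(s)/s_\kappa(s)$ from~(\ref{eq:pointed model mean curvature}). A standard Riccati comparison argument on $(0,\tau_x(v))$ — comparing $\Theta$ with $H_\kappa\circ s_{f,v}$, both blowing up like $-(n-1)/(\cdot)$ at the left endpoint — then yields $\Theta(s)\ge H_\kappa(s)$, which unwinds to~(\ref{eq:pointed Laplacian comparison}). Since Lemma~\ref{lem:pointed Laplacian comparison} is attributed to Theorem~4.4 of~\cite{WY} and only restated in the present setting, I would also simply remark that the manifold-with-boundary hypothesis plays no role along $\gamma_v$ as long as $t<\tau_x(v)$ keeps the geodesic in $\inte M$, so the cited computation applies verbatim.

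The main obstacle I anticipate is bookkeeping in the change of variables: tracking how the conformal factor $e^{-2f/(n-1)}$ interacts with $\Delta$ versus $\Delta_f$, and verifying that the first-order drift terms arising from differentiating $f\circ\gamma_v$ are precisely absorbed by the definition~(\ref{eq:pointed model Jacobi field}) of $F_{\kappa,v}$ (equivalently, by the shift $s=s_{f,v}(t)$). The curvature term must land as $-(n-1)\kappa$ in the $s$-variable Riccati inequality despite the hypothesis carrying the density factor $e^{-4f/(n-1)}$; this is where the two powers $e^{-2f/(n-1)}$ (one from $ds/dt$, one from the scaling of $\phi$) combine to cancel the exponent $-4f/(n-1)$. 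Once these two routine-but-delicate identities are in place, the Riccati comparison and the passage to the limit $t\to 0^+$ are standard.
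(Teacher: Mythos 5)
Your overall strategy is the right one and matches both Theorem~4.4 of~\cite{WY} and the paper's own parallel treatment of the boundary distance in Lemmas~\ref{lem:Riccati} and~\ref{lem:Basic comparison}: apply the Bochner formula (Proposition~\ref{prop:Bochner formula}) to $\rho_x$ along $\gamma_v$, use Cauchy--Schwarz on the traceless part of $\Hess\rho_x$, pass from $\ric^\infty_f$ to $\ric^N_f$ by completing the square in $f_v' = (f\circ\gamma_v)'$ and dropping the nonnegative term $\frac{(1-N)(f_v')^2}{(n-1)(n-N)}\ge 0$ (this is exactly where $N\le 1$ is used), and reparametrize by $s=s_{f,v}(t)$ so that the scaled quantity $\Theta:=e^{2f_v/(n-1)}\Delta_f\rho_x(\gamma_v)$, viewed as a function of $s$, satisfies a scalar Riccati inequality to be compared with $H_\kappa$. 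You also correctly identify the key structural point, that the factor $e^{-4f/(n-1)}$ in the curvature hypothesis is exactly cancelled by the Jacobian $ds/dt = e^{-2f_v/(n-1)}$ together with the scaling factor $e^{2f_v/(n-1)}$ built into $\Theta$.

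However, the Riccati inequality you write down has the wrong sign and direction. Under the paper's convention $\Delta=-\operatorname{trace}\Hess$, the Bochner identity gives $\phi'(t)\ge \ric^N_f(\gamma_v'(t))+\frac{(\phi-f_v')^2}{n-1}+\frac{(1-N)(f_v')^2}{(n-1)(n-N)}$, and after the change of variables one obtains
\begin{equation*}
\Theta'(s)\ \ge\ (n-1)\kappa+\frac{\Theta^2(s)}{n-1},
\end{equation*}
exactly as in the derivation of $(\ref{eq:use of Riccati})$ in Lemma~\ref{lem:Basic comparison}, and matching $H_\kappa'=(n-1)\kappa+\frac{H_\kappa^2}{n-1}$. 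Your version $\Theta'+\frac{\Theta^2}{n-1}\le -(n-1)\kappa$ is not a rearrangement of this; it reverses both the inequality and the sign in front of $\Theta^2$, and it is in fact incompatible with the asymptotic $\Theta(s)\sim -(n-1)/s$ that you (correctly) state, since near $s=0^+$ both $\Theta'$ and $\Theta^2/(n-1)$ behave like $+(n-1)/s^2$. Relatedly, your parenthetical ``$\phi(t)\sim (n-1)/t$'' is the analyst's sign convention and contradicts the paper's ($\phi(t)\sim -(n-1)/t$); as written it is also inconsistent with the $\Theta$-asymptotic you give one line later. None of this affects the soundness of the method, and the Riccati comparison (via the auxiliary function $s_\kappa^2(\Theta-H_\kappa)$, as in $(\ref{eq:monotonicity})$) does yield $\Theta\ge H_\kappa$ and hence $(\ref{eq:pointed Laplacian comparison})$; but the inequality and asymptotics need to be corrected so that the comparison argument goes in the intended direction.
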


As a corollary of the Laplacian comparison inequality,
Wylie and Yeroshkin \cite{WY} have shown another Laplacian comparison inequality in the case where $f$ is bounded (see Corollary 4.11 in \cite{WY}).
In our setting,
by using the same method of the proof,
we see the following:
\begin{lem}[\cite{WY}]\label{lem:finite pointed Laplacian comparison}
Let $x \in \inte M$ and $v \in U_{x}M$.
For $N \in (-\infty,1]$,
assume $\ric^{N}_{f,M}\geq (n-1)\kappa\, e^{\frac{-4f}{n-1}}$.
Suppose additionally that
there is $\delta \in \mathbb{R}$ such that $f\leq (n-1)\delta$ on $M$.
Then for all $t \in (0,\tau_{x}(v))$
we have
\begin{equation}\label{eq:finite pointed Laplacian comparison}
\Delta_{f}\, \rho_{x}(\gamma_{v}(t))\geq H_{\kappa}\left(e^{-2\delta}t\right)\,e^{\frac{-2f(\gamma_{v}(t))}{n-1}}.
\end{equation}
\end{lem}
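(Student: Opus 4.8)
\textbf{Proof plan for Lemma \ref{lem:finite pointed Laplacian comparison}.}
The plan is to deduce the estimate from Lemma \ref{lem:pointed Laplacian comparison} together with the hypothesis $f\leq (n-1)\delta$, exactly as in Corollary 4.11 of \cite{WY}. By Lemma \ref{lem:pointed Laplacian comparison} we already have, for $t\in(0,\tau_{x}(v))$,
\begin{equation*}
\Delta_{f}\,\rho_{x}(\gamma_{v}(t))\geq H_{\kappa}(s_{f,v}(t))\,e^{\frac{-2f(\gamma_{v}(t))}{n-1}},
\end{equation*}
so since the weight $e^{-2f(\gamma_{v}(t))/(n-1)}$ is positive, it suffices to compare the two numbers $H_{\kappa}(s_{f,v}(t))$ and $H_{\kappa}(e^{-2\delta}t)$. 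First I would record the monotonicity of $H_{\kappa}$: from $(\ref{eq:pointed model mean curvature})$, $H_{\kappa}(s)=-(n-1)\,s_{\kappa}'(s)/s_{\kappa}(s)$ is non-increasing on $(0,C_{\kappa})$, because $-s_{\kappa}'/s_{\kappa}$ is (up to the factor $n-1$) the mean curvature of geodesic spheres in $M^{n}_{\kappa}$, which decreases in the radius; equivalently one checks directly that $(s_{\kappa}'/s_{\kappa})'=-\kappa-(s_{\kappa}'/s_{\kappa})^{2}\leq$ the relevant sign after using the Riccati equation, so that $H_{\kappa}$ is monotone non-increasing.

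Next I would compare the arguments. By definition $(\ref{eq:pointed speed changing function})$, $s_{f,v}(t)=\int_{0}^{t}e^{-2f(\gamma_{v}(a))/(n-1)}\,da$, and the bound $f\leq(n-1)\delta$ gives $e^{-2f(\gamma_{v}(a))/(n-1)}\geq e^{-2\delta}$ for every $a$, hence $s_{f,v}(t)\geq e^{-2\delta}t$. Applying the monotonicity of $H_{\kappa}$ yields $H_{\kappa}(s_{f,v}(t))\geq H_{\kappa}(e^{-2\delta}t)$, and multiplying through by the positive weight $e^{-2f(\gamma_{v}(t))/(n-1)}$ gives $(\ref{eq:finite pointed Laplacian comparison})$. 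One small bookkeeping point to address: $H_{\kappa}$ is only defined on $(0,C_{\kappa})$, so I would note that whenever $s_{f,v}(t)<C_{\kappa}$ the inequality $e^{-2\delta}t\leq s_{f,v}(t)$ keeps $e^{-2\delta}t$ in the domain as well; when $\kappa\leq0$ this is vacuous since $C_{\kappa}=\infty$, and when $\kappa>0$ the comparison at points where $\rho_{x}$ is still smooth forces $s_{f,v}(t)<C_{\kappa}$ automatically, as in \cite{WY}.

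The argument is essentially a two-line monotonicity reduction, so there is no real obstacle; the only thing requiring care is the precise justification that $H_{\kappa}$ is non-increasing on its whole domain (including across $\kappa>0$ where $s_{\kappa}$ eventually vanishes) and that the substitution stays within that domain, which I would handle by the Riccati/geodesic-sphere interpretation indicated above rather than by case analysis on the sign of $\kappa$.
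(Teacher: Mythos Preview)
Your overall approach is exactly the paper's: invoke Lemma~\ref{lem:pointed Laplacian comparison}, use $f\leq (n-1)\delta$ to get $s_{f,v}(t)\geq e^{-2\delta}t$, and then compare $H_{\kappa}$ at the two arguments. However, the monotonicity you state is in the wrong direction. You claim $H_{\kappa}$ is non-increasing, but your own Riccati computation $(s_{\kappa}'/s_{\kappa})'=-\kappa-(s_{\kappa}'/s_{\kappa})^{2}\leq 0$ shows that $s_{\kappa}'/s_{\kappa}$ is non-increasing, and hence $H_{\kappa}=-(n-1)\,s_{\kappa}'/s_{\kappa}$ is \emph{increasing} (indeed $H_{\kappa}'>0$ strictly on $(0,C_{\kappa})$, as the paper records). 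The geometric heuristic also has the sign flipped: the mean curvature of geodesic spheres that decreases in the radius is $(n-1)\,s_{\kappa}'/s_{\kappa}$, not its negative.

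This matters because with ``non-increasing'' your chain of implications breaks: from $s_{f,v}(t)\geq e^{-2\delta}t$ a non-increasing $H_{\kappa}$ would yield $H_{\kappa}(s_{f,v}(t))\leq H_{\kappa}(e^{-2\delta}t)$, the opposite of what you need. The inequality $H_{\kappa}(s_{f,v}(t))\geq H_{\kappa}(e^{-2\delta}t)$ that you wrote down is in fact correct, but it follows precisely because $H_{\kappa}$ is increasing. Once you correct the direction of monotonicity, your argument is complete and coincides with the paper's proof; the domain bookkeeping you mention (that $s_{f,v}(t)<C_{\kappa}$ on $(0,\tau_{x}(v))$, hence also $e^{-2\delta}t<C_{\kappa}$) is fine.
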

\begin{proof}
From $f \leq (n-1)\delta$,
we deduce $s_{f,v}(t)\geq e^{-2\delta}t$ for every $t \in (0,\tau_{x}(v))$.
We see $H'_{\kappa}>0$ on $(0,C_{\kappa})$,
and hence (\ref{eq:pointed Laplacian comparison}) implies
\begin{equation}\label{eq:proof of finite pointed Laplacian comparison}
\Delta_{f}\, \rho_{x}(\gamma_{v}(t))\geq H_{\kappa}(s_{f,v}(t))\,e^{\frac{-2f(\gamma_{v}(t))}{n-1}} \geq H_{\kappa}\left(e^{-2\delta}t\right)\,e^{\frac{-2f(\gamma_{v}(t))}{n-1}}.
\end{equation}
This proves (\ref{eq:finite pointed Laplacian comparison}).
\end{proof}

Wylie and Yeroshkin \cite{WY} have proved a rigidity result in the equality case of the Laplacian comparison inequality (see Lemma 4.13 in \cite{WY}).
From the argument discussed in their proof,
one can derive:
\begin{lem}[\cite{WY}]\label{lem:equality in pointed Laplacian comparison}
Under the same setting as in Lemma \ref{lem:pointed Laplacian comparison},
assume that
for some $t_{0}\in (0,\tau_{x}(v))$
the equality in $(\ref{eq:pointed Laplacian comparison})$ holds.
Choose an orthonormal basis $\{e_{v,i}\}_{i=1}^{n}$ of $T_{x}M$ with $e_{v,n}=v$.
Let $\{Y_{v,i}\}^{n-1}_{i=1}$ be the Jacobi fields along $\gamma_{v}$ with $Y_{v,i}(0)=0_{x},\,Y_{v,i}'(0)=e_{v,i}$.
Then for all $i$
we have $Y_{v,i}=F_{\kappa,v}\,E_{v,i}$ on $[0,t_{0}]$,
where $F_{\kappa,v}$ is defined as $(\ref{eq:pointed model Jacobi field})$,
and $\{E_{v,i}\}^{n-1}_{i=1}$ are the parallel vector fields with $E_{v,i}(0)=e_{v,i}$;
moreover,
if $N\in (-\infty,1)$,
then $f \circ \gamma_{v}$ is constant on $[0,t_{0}]$.
\end{lem}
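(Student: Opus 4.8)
The plan is to trace through the chain of inequalities that yields the Laplacian comparison of Lemma~\ref{lem:pointed Laplacian comparison}, and to argue that equality at $t_0$ forces equality in each of the intermediate steps on the whole interval $[0,t_0]$. Recall that the Laplacian $\Delta_f\rho_x(\gamma_v(t))$ is, up to sign, the logarithmic derivative of the weighted volume element $\theta_f(t,v)$ along $\gamma_v$, and that its evolution is governed by a weighted Riccati equation. Differentiating and applying Proposition~\ref{prop:Bochner formula} (or directly the Riccati comparison used by Wylie--Yeroshkin), one gets a differential inequality for the reparametrized quantity $s_{f,v}$; integrating it and using the curvature hypothesis $\ric^N_{f,M}\geq(n-1)\kappa\,e^{-4f/(n-1)}$ produces the model bound $H_\kappa(s_{f,v}(t))e^{-2f(\gamma_v(t))/(n-1)}$. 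The first step is therefore to write out this monotone chain carefully enough to see which inequalities are involved: (i) the trace Cauchy--Schwarz inequality $\Vert\Hess\rho_x\Vert^2\geq (\Delta\rho_x)^2/(n-1)$ restricted to the orthogonal complement of $\gamma_v'$, (ii) the curvature inequality, and (iii)—when $N<1$—the sign of the term $-df\otimes df/(N-n)$ coming from the difference between $\ric^N_f$ and $\ric^\infty_f$.

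Next I would invoke the standard ODE rigidity principle: a $C^1$ function that satisfies a differential inequality with equality of the boundary/initial data at one interior time must in fact agree with the comparison solution on the entire subinterval where it is defined up to that time. Concretely, the comparison function $\sigma(t):=H_\kappa(s_{f,v}(t))e^{-2f(\gamma_v(t))/(n-1)}$ and the true Laplacian $u(t):=\Delta_f\rho_x(\gamma_v(t))$ satisfy $u\geq\sigma$ on $(0,\tau_x(v))$ and both solve (resp.\ sub/super-solve) the same Riccati-type equation; matching at $t_0$ forces $u\equiv\sigma$ on $(0,t_0]$. From $u\equiv\sigma$ one reads back that each of the inequalities (i)--(iii) above is an equality pointwise on $[0,t_0]$. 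Equality in (i) says $\Hess\rho_x$ is a multiple of the identity on $(\gamma_v')^\perp$, which is precisely the statement that every $\bm$-type (here point-based) Jacobi field $Y_{v,i}$ with $Y_{v,i}(0)=0_x$ is a scalar multiple of a parallel field; solving the resulting scalar Jacobi equation $\varphi''+\kappa\varphi=0$ after the reparametrization $s=s_{f,v}(t)$, together with the weight factor $e^{f/(n-1)}$ built into $F_{\kappa,v}$ in~(\ref{eq:pointed model Jacobi field}), identifies the proportionality factor as exactly $F_{\kappa,v}(t)$. Hence $Y_{v,i}=F_{\kappa,v}E_{v,i}$ on $[0,t_0]$ for each $i$.

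Finally, for the case $N\in(-\infty,1)$, equality in (iii)—the term $-df\otimes df/(N-n)$, which is nonnegative when $N<n$ and was discarded in passing from $\ric^N_f$ to the bound used—forces $g(\nabla f,\gamma_v')^2=0$, i.e.\ $(f\circ\gamma_v)'\equiv 0$ on $[0,t_0]$, so $f\circ\gamma_v$ is constant there. The main obstacle I anticipate is bookkeeping rather than conceptual: one must be careful that the reparametrization $t\mapsto s_{f,v}(t)$ and the conformal-type weight $e^{f/(n-1)}$ interact correctly, so that the model Jacobi field emerging from the rigidity analysis is literally $F_{\kappa,v}$ as defined in~(\ref{eq:pointed model Jacobi field}) and not some other normalization; this is exactly the point where the Wylie--Yeroshkin affine-connection viewpoint is doing the work, and I would import their computation (their Lemma~4.13) verbatim at that step rather than redo it. A secondary subtlety is the degenerate endpoint $t=0$: since $\rho_x$ is not smooth at $x$ and $Y_{v,i}(0)=0_x$, the identification $Y_{v,i}=F_{\kappa,v}E_{v,i}$ should be obtained on $(0,t_0]$ first and then extended to $t=0$ by continuity of both sides together with matching of the initial derivatives $Y_{v,i}'(0)=e_{v,i}$ and $F_{\kappa,v}'(0)=1$.
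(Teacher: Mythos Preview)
Your proposal is correct and follows the same approach as the paper (which, for this particular lemma, simply cites \cite{WY} without giving its own proof, but carries out the identical argument in full detail for the boundary analogue in Lemma~\ref{lem:Equality in Basic comparison}). The chain of equalities you identify---Cauchy--Schwarz forcing $\Hess\rho_x$ to be a multiple of the identity on $(\gamma_v')^\perp$, the radial curvature equation then pinning down the scalar factor as $F_{\kappa,v}$, and the discarded nonnegative term $(1-N)(f\circ\gamma_v)'{}^2/((n-1)(n-N))$ forcing $f\circ\gamma_v$ constant when $N<1$---is exactly what the paper does in Remarks~\ref{rem:equality in Riccati}, \ref{rem:equality in the Basic comparison} and the proof of Lemma~\ref{lem:Equality in Basic comparison}; your explicit plan to import \cite{WY} Lemma~4.13 at the bookkeeping step is precisely what the paper does as well.
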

\begin{rem}\label{rem:equality in finite pointed Laplacian comparison}
Under the same setting as in Lemma \ref{lem:finite pointed Laplacian comparison},
assume that
for some $t_{0}\in (0,\tau_{x}(v))$
the equality in $(\ref{eq:finite pointed Laplacian comparison})$ holds.
Then the equalities in (\ref{eq:proof of finite pointed Laplacian comparison}) hold.
In particular,
the equality in (\ref{eq:pointed Laplacian comparison}) holds (see Lemma \ref{lem:equality in pointed Laplacian comparison}),
and $s_{f,v}(t_{0})=e^{-2\delta}t_{0}$,
and hence $f \circ \gamma_{v}=(n-1)\delta$ on $[0,t_{0}]$.
\end{rem}

\section{Laplacian comparisons}\label{sec:Laplacian comparisons}
Hereafter,
let $(M,g)$ be an $n$-dimensional, 
connected complete Riemannian manifold with boundary,
and let $f:M\to \mathbb{R}$ be smooth.

%%%%%%%%%%%%%%%%%%%%%%%%%%%%%%
\subsection{Basic Laplacian comparisons}\label{sec:Basic Laplacian comparisons}
For the distance function from a single point,
Wylie and Yeroshkin \cite{WY} have shown an inequality of Riccati type (see Lemma 4.1 in \cite{WY}).
By the same method of the proof,
for the distance function from the boundary,
we have the following:
\begin{lem}\label{lem:Riccati}
Let $z \in \bm$ and $N \in (-\infty,1]$.
Then for all $t \in (0,\tau(z))$
\begin{align}\label{eq:Riccati}
&\quad  \left(\bigl(e^{\frac{2f}{n-1}}\,\Delta_{f}\rho_{\bm}\bigl)(\gamma_{z}(t))\right)'\\ \notag
&\geq      \ric^{N}_{f}(\gamma'_{z}(t))\, e^{\frac{2f(\gamma_{z}(t))}{n-1}}+\frac{ \left( \bigl(e^{\frac{2f}{n-1}}\,\Delta_{f}\rho_{\bm}\bigl)(\gamma_{z}(t))   \right)^{2}   }{n-1}\,e^{\frac{-2f(\gamma_{z}(t))}{n-1}}.
\end{align}
\end{lem}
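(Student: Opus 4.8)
The plan is to adapt the single-point Riccati argument of Wylie--Yeroshkin (Lemma 4.1 in \cite{WY}, cited above) to the distance function from the boundary, the only genuine change being the initial condition of the relevant Jacobi fields. First I would fix $z\in\bm$ and work along the unit-speed geodesic $\gamma_{z}$ on $(0,\tau(z))$, where $\rho_{\bm}$ is smooth and $\nabla\rho_{\bm}(\gamma_{z}(t))=\gamma_{z}'(t)$. Write $u:=\Delta_{f}\rho_{\bm}\circ\gamma_{z}$ and introduce the reparametrised quantity $\tilde{u}:=e^{\frac{2f}{n-1}}\,u$; the claim is a differential inequality for $\tilde{u}$. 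The standard tool is the Bochner formula (Proposition \ref{prop:Bochner formula}) applied to $\psi=\rho_{\bm}$, or equivalently the Riccati equation for the shape operator of the level sets of $\rho_{\bm}$, which gives along $\gamma_{z}$
\begin{equation*}
u'=\ric_{g}(\gamma_{z}')+\|\Hess\rho_{\bm}\|^{2}+(\Hess f)(\gamma_{z}',\gamma_{z}'),
\end{equation*}
after absorbing the drift term $g(\nabla f,\nabla\rho_{\bm})$ correctly; here one uses $\Hess\rho_{\bm}(\gamma_{z}',\cdot)=0$ so that the full Hilbert--Schmidt norm reduces to the norm of the restriction to the orthogonal complement of $\gamma_{z}'$.

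The second step is the Cauchy--Schwarz estimate that produces the quadratic term. Splitting $\Hess f(\gamma_{z}',\gamma_{z}')=(f\circ\gamma_{z})''-g(\nabla f,\nabla\rho_{\bm})\cdot(\text{something})$, and combining $\|\Hess\rho_{\bm}\|^{2}\geq \frac{(\Delta\rho_{\bm})^{2}}{n-1}$ (trace inequality in dimension $n-1$) with the definition of $\ric^{N}_{f}$ in \eqref{eq:def of weighted Ricci curvature}, one gets a differential inequality for $u$ alone. The point of the factor $e^{\frac{2f}{n-1}}$ is precisely that the cross terms involving $f'\circ\gamma_{z}$ reorganise, after the substitution $\tilde{u}=e^{\frac{2f}{n-1}}u$, into the clean form $\tilde{u}'\geq \ric^{N}_{f}(\gamma_{z}')\,e^{\frac{2f}{n-1}}+\frac{\tilde{u}^{2}}{n-1}e^{\frac{-2f}{n-1}}$. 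Concretely, $\tilde{u}'=e^{\frac{2f}{n-1}}\bigl(u'+\frac{2}{n-1}(f\circ\gamma_{z})'\,u\bigr)$, and one checks that the $\frac{2}{n-1}f'u$ contribution, the term $-\frac{(df\otimes df)(\gamma_{z}',\gamma_{z}')}{N-n}$ in $\ric^{N}_{f}$, and the completion-of-square residue from bounding $\|\Hess\rho_{\bm}\|^{2}$ conspire so that, for $N\leq 1$, the inequality \eqref{eq:Riccati} holds; this is where the restriction $N\in(-\infty,1]$ is used, exactly as in \cite{WY} (the sign of $\frac{1}{N-n}$ together with the ``$1$'' in $n-1$ versus a possible ``$N-1$'' is what forces $N\leq 1$).

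The main obstacle, and the only place where care beyond bookkeeping is needed, is making the $f$-dependent algebra line up so that the weighted Ricci curvature $\ric^{N}_{f}$ (rather than $\ric^{\infty}_{f}$) appears with the correct coefficient $e^{\frac{2f}{n-1}}$ and the quadratic term appears with coefficient $e^{\frac{-2f}{n-1}}/(n-1)$; everything else is a verbatim transcription of Wylie--Yeroshkin's computation with ``distance from a point'' replaced by ``distance from $\bm$'', which changes nothing in the interior since both functions have unit gradient and geodesic integral curves there. Since the statement only concerns $t\in(0,\tau(z))$, no issues with $\cut\bm$ or with the boundary itself arise, and the boundary initial condition $u(t)\to H_{f,z}$ as $t\to 0$ is not even needed for this lemma. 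I would therefore present the proof as: (i) recall the Riccati/Bochner identity for $\rho_{\bm}$ along $\gamma_{z}$; (ii) apply the dimension-$(n-1)$ trace inequality to $\|\Hess\rho_{\bm}\|^{2}$; (iii) substitute $\tilde{u}=e^{\frac{2f}{n-1}}u$ and regroup, invoking $N\leq 1$ to discard a manifestly nonnegative remainder; and (iv) read off \eqref{eq:Riccati}.
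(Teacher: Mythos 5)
Your proposal is correct and follows essentially the same route as the paper's proof: apply the Bochner formula (Proposition \ref{prop:Bochner formula}) to $\rho_{\bm}$ along $\gamma_{z}$, rewrite $\ric^{\infty}_{f}$ as $\ric^{N}_{f}+\frac{(f\circ\gamma_{z})'^{2}}{N-n}$, apply the dimension-$(n-1)$ Cauchy--Schwarz bound to $\Vert\Hess\rho_{\bm}\Vert^{2}$ with $\Delta\rho_{\bm}=\Delta_{f}\rho_{\bm}-(f\circ\gamma_{z})'$, substitute $\tilde{u}=e^{\frac{2f}{n-1}}\Delta_{f}\rho_{\bm}$, and discard the residual term $\frac{(1-N)(f\circ\gamma_{z})'^{2}}{(n-1)(n-N)}\geq 0$, which is where $N\le 1$ enters. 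The only small imprecision is the remark that $\Hess f(\gamma_{z}',\gamma_{z}')$ splits off an extra $g(\nabla f,\nabla\rho_{\bm})$ piece: along a geodesic one simply has $\Hess f(\gamma_{z}',\gamma_{z}')=(f\circ\gamma_{z})''$, and in any case the paper never needs to expand this term since it is absorbed wholesale into $\ric^{\infty}_{f}$; this does not affect the validity of your argument.
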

\begin{proof}
Put $f_{z}:=f \circ \gamma_{z}$ and $h_{f,z}:=\left(\Delta_{f}\rho_{\bm}\right) \circ \gamma_{z}$.
Applying Proposition \ref{prop:Bochner formula} to the distance function $\rho_{\bm}$,
we have
\begin{align*}
0 &= \ric^{\infty}_{f}(\gamma'_{z}(t))+\Vert \Hess \rho_{\bm}\Vert^{2}\left(\gamma_{z}(t)\right)-g\left(\nabla \Delta_{f}\rho_{\bm},\nabla \rho_{\bm}  \right)(\gamma_{z}(t))\\
   &= \left(\ric^{N}_{f}(\gamma'_{z}(t))+\frac{f'_{z}(t)^{2}}{N-n}\right)+\Vert \Hess \rho_{\bm}\Vert^{2}\left(\gamma_{z}(t)\right)-h'_{f,z}(t). \notag
\end{align*}
By the Cauchy-Schwarz inequality,
\begin{equation}\label{eq:Cauchy-Schwarz inequality}
\Vert \Hess \rho_{\bm}\Vert^{2}\left(\gamma_{z}(t)\right) \geq \frac{\left(\Delta \rho_{\bm}(\gamma_{z}(t))\right)^{2}}{n-1}=\frac{\left( h_{f,z}(t)-f'_{z}(t)\right)^{2}}{n-1}.
\end{equation}
The inequality (\ref{eq:Cauchy-Schwarz inequality}) yields
\begin{align}\label{eq:combination}
0 &\geq \ric^{N}_{f}(\gamma'_{z}(t))+\frac{f'_{z}(t)^{2}}{N-n}+\frac{\left(h_{f,z}(t)-f'_{z}(t)\right)^{2}}{n-1}-h'_{f,z}(t)\\ \notag
   &   =   \ric^{N}_{f}(\gamma'_{z}(t))+\frac{\left(1-N\right)f'_{z}(t)^{2}}{\left(n-1\right)\left(n-N\right)}+\frac{h^{2}_{f,z}(t)}{n-1}\\ \notag
   &\qquad \qquad \qquad \,\,  -\left(\frac{2h_{f,z}(t)\,f'_{z}(t)}{n-1}+h'_{f,z}(t)\right).
\end{align}
The last term in the right hand side of (\ref{eq:combination}) satisfies
\begin{equation*}
\frac{2h_{f,z}(t)  f'_{z}(t)}{n-1}+h'_{f,z}(t)=e^{\frac{-2f(\gamma_{z}(t))}{n-1}}\,\left(   e^{\frac{2f(\gamma_{z}(t))}{n-1}}  \,h_{f,z}(t)\right)'.
\end{equation*}
We put $F_{z}(t):=e^{\frac{2f(\gamma_{z}(t))}{n-1}}  \,h_{f,z}(t)$.
From $N \in (-\infty,1]$
it follows that
\begin{align}\label{eq:at most 1}
0 &\geq \ric^{N}_{f}(\gamma'_{z}(t))+\frac{\left(1-N\right)f'_{z}(t)^{2}}{\left(n-1\right)\left(n-N\right)}+\frac{h^{2}_{f,z}(t)}{n-1}-e^{\frac{-2f(\gamma_{z}(t))}{n-1}}\,F'_{z}(t)\\ \notag
   &\geq \ric^{N}_{f}(\gamma'_{z}(t))+\frac{h^{2}_{f,z}(t)}{n-1}-e^{\frac{-2f(\gamma_{z}(t))}{n-1}}\,F'_{z}(t).
\end{align}
This implies
\begin{align*}
F'_{z}(t) &\geq e^{\frac{2f(\gamma_{z}(t))}{n-1}}\,\left(\ric^{N}_{f}(\gamma'_{z}(t))+\frac{h^{2}_{f,z}(t)}{n-1}\right)\\
             &   =   \ric^{N}_{f}(\gamma'_{z}(t))\,e^{\frac{2f(\gamma_{z}(t))}{n-1}}+\frac{F^{2}_{z}(t)}{n-1}\,e^{\frac{-2f(\gamma_{z}(t))}{n-1}}.
\end{align*}
We arrive at the desired inequality (\ref{eq:Riccati}).
\end{proof}
\begin{rem}\label{rem:equality in Riccati}
We assume that
the equality in (\ref{eq:Riccati}) holds for some $t_{0}\in (0,\tau(z))$.
Then the equality in the Cauchy-Schwarz inequality in (\ref{eq:Cauchy-Schwarz inequality}) holds;
in particular,
there exists a constant $c$ such that
$\Hess \rho_{\bm}=c\,g$ on the orthogonal complement of $\nabla \rho_{\bm}$ at $\gamma_{z}(t_{0})$.
Moreover,
the equalities in (\ref{eq:at most 1}) hold;
in particular,
$(1-N)f'_{z}(t_{0})^{2}=0$.
\end{rem}

Recall that
$\tau_{f}$ and $s_{f,z}$ are defined as (\ref{eq:boundary cut value}) and as (\ref{eq:speed changing function}),
respectively.
We denote by $t_{f,z}:[0,\tau_{f}(z)]\to [0,\tau(z)]$ the inverse function of $s_{f,z}$.

We define a function $H_{\kappa,\lambda}:[0,\bar{C}_{\kappa,\lambda})\to \mathbb{R}$ by
\begin{equation}\label{eq:model mean curvature}
H_{\kappa,\lambda}(s):=-(n-1)\frac{s'_{\kappa,\lambda}(s)}{s_{\kappa,\lambda}(s)}.
\end{equation}
For all $s\in [0,\bar{C}_{\kappa,\lambda})$
we see
\begin{equation}\label{eq:model Riccati}
H'_{\kappa,\lambda}(s)=(n-1)\kappa+\frac{H^{2}_{\kappa,\lambda}(s)}{n-1}.
\end{equation}

We prove the following pointwise Laplacian comparison inequality:
\begin{lem}\label{lem:Basic comparison}
Let $z\in \bm$.
For $N \in (-\infty,1]$,
let us assume that
$\ric^{N}_{f}(\gamma'_{z}(t))\geq (n-1)\kappa\,e^{\frac{-4f(\gamma_{z}(t))}{n-1}}$ for all $t \in (0,\tau(z))$,
and $H_{f,z}\geq (n-1)\lambda \,e^{\frac{-2 f(z)}{n-1}}$.
Then for all $s \in (0,\min\{\tau_{f}(z) ,\bar{C}_{\kappa,\lambda} \}  )$ we have
\begin{equation}\label{eq:Basic comparison}
\Delta_{f}\rho_{\bm}\left(\gamma_{z}\left(    t_{f,z}(s)  \right) \right) \geq H_{\kappa,\lambda}(s) \, e^{\frac{-2f\left(\gamma_{z}\left(    t_{f,z}(s)  \right)\right) }{n-1}}.
\end{equation}
In particular,
for all $t \in (0,\tau(z))$ with $s_{f,z}(t) \in (0,\min\{\tau_{f}(z) ,\bar{C}_{\kappa,\lambda} \})$
\begin{equation}\label{eq:Laplacian comparison}
\Delta_{f}\rho_{\bm}(\gamma_{z}(t)) \geq H_{\kappa,\lambda}(s_{f,z}(t))\,e^{\frac{-2f(\gamma_{z}(t))}{n-1}}\,.
\end{equation}
\end{lem}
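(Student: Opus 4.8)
The plan is to integrate the Riccati inequality from Lemma \ref{lem:Riccati} after changing the variable from $t$ to the arclength-type parameter $s=s_{f,z}(t)$, and compare the resulting ODE with the model equation (\ref{eq:model Riccati}) satisfied by $H_{\kappa,\lambda}$. Concretely, set $F_{z}(t):=e^{\frac{2f(\gamma_{z}(t))}{n-1}}\,(\Delta_{f}\rho_{\bm})(\gamma_{z}(t))$ as in the proof of Lemma \ref{lem:Riccati}, and define $G_{z}(s):=F_{z}(t_{f,z}(s))$ on $(0,\min\{\tau_{f}(z),\bar{C}_{\kappa,\lambda}\})$. Since $t_{f,z}'(s)=e^{\frac{2f(\gamma_{z}(t_{f,z}(s)))}{n-1}}$ (the inverse function of $s_{f,z}$, whose derivative is $e^{\frac{-2f}{n-1}}$), the chain rule turns (\ref{eq:Riccati}) together with the curvature hypothesis $\ric^{N}_{f}(\gamma'_{z})\geq (n-1)\kappa e^{\frac{-4f}{n-1}}$ into
\begin{equation*}
G_{z}'(s)=F_{z}'(t_{f,z}(s))\,e^{\frac{2f(\gamma_{z}(t_{f,z}(s)))}{n-1}}\geq (n-1)\kappa+\frac{G_{z}^{2}(s)}{n-1},
\end{equation*}
because the two $e$-factors in the curvature term combine to cancel the $e^{\frac{-4f}{n-1}}$, and likewise in the quadratic term. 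Thus $G_{z}$ satisfies exactly the differential inequality obtained by replacing $=$ with $\geq$ in (\ref{eq:model Riccati}).

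Next I would analyze the initial behavior as $s\to 0^{+}$. The boundary condition is the limiting value $\Delta_{f}\rho_{\bm}(\gamma_{z}(t))\to H_{f,z}$ as $t\to 0$, noted right before (\ref{eq:Laplacian representation}); since $f$ is continuous, $F_{z}(t)\to e^{\frac{2f(z)}{n-1}}H_{f,z}$, and hence $G_{z}(s)\to e^{\frac{2f(z)}{n-1}}H_{f,z}\geq (n-1)\lambda$ using the mean curvature hypothesis $H_{f,z}\geq (n-1)\lambda e^{\frac{-2f(z)}{n-1}}$. On the model side, from (\ref{eq:model mean curvature}) and $s_{\kappa,\lambda}(0)=1,\ s_{\kappa,\lambda}'(0)=-\lambda$ one gets $H_{\kappa,\lambda}(s)\to (n-1)\lambda$ as $s\to 0^{+}$. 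So $G_{z}$ and $H_{\kappa,\lambda}$ satisfy the same Riccati ODE (one as a supersolution) with $G_{z}(0^{+})\geq H_{\kappa,\lambda}(0^{+})$. A standard Riccati/ODE comparison argument—comparing $w:=G_{z}-H_{\kappa,\lambda}$, which satisfies $w'\geq \frac{1}{n-1}(G_{z}+H_{\kappa,\lambda})\,w$ with $w(0^{+})\geq 0$, so $w$ cannot become negative on the interval where $H_{\kappa,\lambda}$ (equivalently $s_{\kappa,\lambda}$) stays finite and positive, i.e. on $(0,\bar{C}_{\kappa,\lambda})$—yields $G_{z}(s)\geq H_{\kappa,\lambda}(s)$ for all $s\in(0,\min\{\tau_{f}(z),\bar{C}_{\kappa,\lambda}\})$. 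Rewriting $G_{z}(s)\geq H_{\kappa,\lambda}(s)$ in terms of $\Delta_{f}\rho_{\bm}$ and multiplying by $e^{\frac{-2f}{n-1}}$ gives (\ref{eq:Basic comparison}), and substituting $s=s_{f,z}(t)$ gives (\ref{eq:Laplacian comparison}).

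The main obstacle I expect is the care needed near $s=0$: $\Delta_{f}\rho_{\bm}$ and $H_{\kappa,\lambda}$ both blow up when $\kappa$ and $\lambda$ are such that $s_{\kappa,\lambda}$ has a zero near $0$ (the $\lambda\to+\infty$ flavor), so the "initial condition" is really an asymptotic inequality on $G_{z}-H_{\kappa,\lambda}$ rather than a genuine pointwise one, and one must justify the comparison by starting from a small $s_{0}>0$, applying the monotonicity of $w$ there, and then letting $s_{0}\to 0^{+}$ using the two limit computations above. A clean way to package this is to note that both $G_{z}$ and $H_{\kappa,\lambda}$ are, on small intervals, comparable to $(n-1)/s$ plus a bounded term, so $w$ is bounded near $0$ and its sign at $0^{+}$ is controlled by the $H_{f,z}$ versus $(n-1)\lambda e^{\frac{-2f(z)}{n-1}}$ comparison; alternatively one invokes the already-established pointwise inequality for the distance from a point (Lemma \ref{lem:pointed Laplacian comparison}) only as a sanity check, since here the boundary version must be proven directly. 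Everything else—the change of variables, the cancellation of the exponential factors, and the Riccati comparison—is routine.
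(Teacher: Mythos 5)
Your argument is correct and is essentially the paper's: the paper also pushes the Riccati inequality from Lemma~\ref{lem:Riccati} through the reparametrization $s=s_{f,z}(t)$, compares with~\eqref{eq:model Riccati}, and closes the Riccati comparison by multiplying the difference $\hat F_z-H_{\kappa,\lambda}$ by the integrating factor $s_{\kappa,\lambda}^2$, which is exactly the linear-comparison step you describe for $w=G_z-H_{\kappa,\lambda}$, using the limit $G_z(0^+)-H_{\kappa,\lambda}(0^+)=e^{\frac{2f(z)}{n-1}}H_{f,z}-(n-1)\lambda\ge 0$. One small correction: your final worry about blow-up of $H_{\kappa,\lambda}$ near $s=0$ is misplaced here, since $s_{\kappa,\lambda}(0)=1$ (not $0$; that is the single-point model $s_\kappa$), so both $G_z$ and $H_{\kappa,\lambda}$ have finite limits $(n-1)\lambda$-type limits at $0^+$ and no extra care is needed.
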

\begin{proof}
We define a function $F_{z}:(0,\tau(z)) \to \mathbb{R}$ by 
\begin{equation*}
F_{z}:=\bigl(e^{\frac{2f}{n-1}}\,\Delta_{f}\rho_{\bm}\bigl) \circ \gamma_{z},
\end{equation*}
and a function $\hat{F}_{z}:(0,\tau_{f}(z)) \to \mathbb{R}$ by $\hat{F}_{z}:=F_{z}\circ t_{f,z}$.
By Lemma \ref{lem:Riccati} and the curvature assumption,
for all $s \in (0,\tau_{f}(z))$,
\begin{align}\label{eq:use of Riccati}
\hat{F}'_{z}(s)&    =   F'_{z}(t_{f,z}(s))\, e^{\frac{2f\left(\gamma_{z}\left(    t_{f,z}(s)  \right)\right) }{n-1}}\\ \notag
                      & \geq \ric^{N}_{f}(\gamma'_{z}(t_{f,z}(s)))\,e^{\frac{4f\left(\gamma_{z}\left(    t_{f,z}(s)  \right)\right) }{n-1}}+\frac{F^{2}_{z}(t_{f,z}(s))}{n-1}\\\notag
                      & \geq (n-1)\kappa+\frac{\hat{F}^{2}_{z}(s)}{n-1}.
\end{align}
The identity (\ref{eq:model Riccati}) implies that
for all $s \in (0,\min\{\tau_{f}(z) ,\bar{C}_{\kappa,\lambda} \}  )$
\begin{equation}\label{eq:sharp Riccati}
\hat{F}'_{z}(s)-H'_{\kappa,\lambda}(s)\geq \frac{\hat{F}^{2}_{z}(s)-H^{2}_{\kappa,\lambda}(s)}{n-1}.
\end{equation}

Let us define a function $G_{\kappa,\lambda,z}:(0,\min\{\tau_{f}(z) ,\bar{C}_{\kappa,\lambda} \}  )\to \mathbb{R}$ by
\begin{equation*}
G_{\kappa,\lambda,z}:=s^{2}_{\kappa,\lambda}\bigl( \hat{F}_{z}-H_{\kappa,\lambda} \bigl).
\end{equation*}
From (\ref{eq:sharp Riccati})
we deduce
\begin{align}\label{eq:monotonicity}
G'_{\kappa,\lambda,z} &   =  2s_{\kappa,\lambda}s'_{\kappa,\lambda}\bigl( \hat{F}_{z}-H_{\kappa,\lambda} \bigl)+s^{2}_{\kappa,\lambda}\bigl( \hat{F}'_{z}-H'_{\kappa,\lambda} \bigl)\\ \notag
                                    &\geq 2s_{\kappa,\lambda}s'_{\kappa,\lambda}\bigl(\hat{F}_{z}-H_{\kappa,\lambda} \bigl)+s^{2}_{\kappa,\lambda}\frac{\hat{F}^{2}_{z}-H^{2}_{\kappa,\lambda}}{n-1}\\ \notag
                                    &  =   \frac{s^{2}_{\kappa,\lambda}}{n-1}\bigl(\hat{F}_{z}-H_{\kappa,\lambda} \bigl)^{2}\geq 0.   
\end{align}
Since $G_{\kappa,\lambda,z}(s)$ converges to a non-negative value $e^{\frac{2 f(z)}{n-1}}H_{f,z}-(n-1)\lambda$ as $s\to 0$,
the function $G_{\kappa,\lambda,z}$ is non-negative.
We conclude that
$\hat{F}_{z} \geq H_{\kappa,\lambda}$ holds on $(0,\min\{\tau_{f}(z) ,\bar{C}_{\kappa,\lambda} \}  )$,
and hence (\ref{eq:Basic comparison}).
\end{proof}
\begin{rem}\label{rem:equality in the Basic comparison}
We assume that
the equality in $(\ref{eq:Basic comparison})$ holds for some $s_{0}\in (0,\min\{\tau_{f}(z) ,\bar{C}_{\kappa,\lambda} \}  )$.
Then we have $G_{\kappa,\lambda,z}(s_{0})=0$.
From $G'_{\kappa,\lambda,z} \geq 0$
it follows that $G_{\kappa,\lambda,z}=0$ on $[0,s_{0}]$;
in particular,
the equality in (\ref{eq:Basic comparison}) holds on $[0,s_{0}]$.
Since the equalities in (\ref{eq:use of Riccati}), (\ref{eq:sharp Riccati}), (\ref{eq:monotonicity}) hold,
the equality in (\ref{eq:Riccati}) holds on $[0,t_{f,z}(s_{0})]$ (see Remark \ref{rem:equality in Riccati}).
\end{rem}

From Lemma \ref{lem:Basic comparison}
we derive the following estimate for $\tau_{f}$:
\begin{lem}\label{lem:Cut point comparisons}
Let $z \in \bm$.
Let $\kappa$ and $\lambda$ satisfy the ball-condition.
For $N\in (-\infty,1]$,
we assume that
$\ric^{N}_{f}(\gamma'_{z}(t))\geq (n-1)\kappa\,e^{\frac{-4f(\gamma_{z}(t))}{n-1}}$ for all $t \in (0,\tau(z))$,
and $H_{f,z}\geq (n-1)\lambda e^{\frac{-2 f(z)}{n-1}}$.
Then we have
\begin{equation}\label{eq:Cut point comparisons}
\tau_{f}(z) \leq \const.
\end{equation}
Moreover,
if there is $\delta \in \mathbb{R}$ such that $f\circ \gamma_{z} \leq (n-1)\delta$ on $(0,\tau(z))$,
\begin{equation}\label{eq:finite Cut point comparisons}
\tau(z)\leq C_{\kappa\,e^{-4\delta},\lambda\,e^{-2\delta}}.
\end{equation}
\end{lem}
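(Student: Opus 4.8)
The plan is to deduce both inequalities from the Laplacian comparison in Lemma \ref{lem:Basic comparison}, using that the model weighted mean curvature $H_{\kappa,\lambda}$ blows up at $\bconst=\const$. Under the ball-condition we have $\bconst=\const<\infty$, $s_{\kappa,\lambda}>0$ on $(0,\const)$, $s_{\kappa,\lambda}(\const)=0$, and $s'_{\kappa,\lambda}(\const)<0$ (were also $s'_{\kappa,\lambda}(\const)=0$, uniqueness for the Jacobi equation would force $s_{\kappa,\lambda}\equiv 0$), so $H_{\kappa,\lambda}(s)=-(n-1)s'_{\kappa,\lambda}(s)/s_{\kappa,\lambda}(s)\to+\infty$ as $s\to\const^{-}$.

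First I would prove (\ref{eq:Cut point comparisons}) by contradiction. Suppose $\tau_f(z)>\const$. Since the hypotheses here are exactly those of Lemma \ref{lem:Basic comparison} and $\min\{\tau_f(z),\bconst\}=\const$, inequality (\ref{eq:Basic comparison}) holds for every $s\in(0,\const)$, and multiplying it by $e^{\frac{2f(\gamma_z(t_{f,z}(s)))}{n-1}}$ gives
\begin{equation*}
e^{\frac{2f(\gamma_z(t_{f,z}(s)))}{n-1}}\,\Delta_f\rho_{\bm}\bigl(\gamma_z(t_{f,z}(s))\bigr)\ \geq\ H_{\kappa,\lambda}(s).
\end{equation*}
Because $s_{f,z}(\tau(z))=\tau_f(z)>\const$, we get $t_{f,z}(\const)<\tau(z)$, so $\gamma_z(t_{f,z}(\const))\in\inte M$ lies strictly before $\gamma_z(\tau(z))\in\cut\bm$, and there $\rho_{\bm}$, hence $\Delta_f\rho_{\bm}$, is smooth; as $t_{f,z}$ is smooth on $[0,\tau_f(z))$, the left-hand side above converges to the finite number $e^{\frac{2f(\gamma_z(t_{f,z}(\const)))}{n-1}}\Delta_f\rho_{\bm}(\gamma_z(t_{f,z}(\const)))$ as $s\to\const^{-}$, while the right-hand side tends to $+\infty$. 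This contradiction yields $\tau_f(z)\le\const$.

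For (\ref{eq:finite Cut point comparisons}), assume $f\circ\gamma_z\le(n-1)\delta$ on $(0,\tau(z))$. Then $e^{\frac{-2f(\gamma_z(a))}{n-1}}\ge e^{-2\delta}$, so $s_{f,z}(t)\ge e^{-2\delta}t$ and in particular $\tau_f(z)=s_{f,z}(\tau(z))\ge e^{-2\delta}\tau(z)$; with (\ref{eq:Cut point comparisons}) this gives $\tau(z)\le e^{2\delta}\tau_f(z)\le e^{2\delta}\const$. Finally, with $c:=e^{2\delta}>0$, the function $t\mapsto s_{\kappa,\lambda}(t/c)$ solves $\varphi''+c^{-2}\kappa\,\varphi=0$ with $\varphi(0)=1,\,\varphi'(0)=-c^{-1}\lambda$, so by uniqueness $s_{\kappa,\lambda}(t)=s_{\kappa e^{-4\delta},\,\lambda e^{-2\delta}}(e^{2\delta}t)$; comparing first positive zeros gives $C_{\kappa\,e^{-4\delta},\lambda\,e^{-2\delta}}=e^{2\delta}\const$ (and the ball-condition is preserved when $\kappa$ and $\lambda$ are multiplied by the positive constants $e^{-4\delta}$ and $e^{-2\delta}$, so this quantity is finite). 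Combining the last two assertions proves (\ref{eq:finite Cut point comparisons}). The only delicate point is the finiteness bookkeeping in the first step: one must check that $\tau_f(z)>\const$ genuinely forces $t_{f,z}(\const)$ to lie strictly inside the smooth locus of $\rho_{\bm}$ — this remains valid even when $\tau_f(z)=\infty$ — so that the blow-up of $H_{\kappa,\lambda}$ produces a real contradiction; the remainder is the elementary rescaling identity for $s_{\kappa,\lambda}$.
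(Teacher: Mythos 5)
Your proof is correct and follows essentially the same route as the paper: the first inequality by contradiction via the blow-up of $H_{\kappa,\lambda}$ at $\const$ applied to the pointwise comparison of Lemma \ref{lem:Basic comparison}, and the second by the elementary bound $\tau_f(z)\ge e^{-2\delta}\tau(z)$ together with the rescaling identity $e^{2\delta}\const=C_{\kappa e^{-4\delta},\lambda e^{-2\delta}}$. The extra details you supply (that $s'_{\kappa,\lambda}(\const)<0$ by uniqueness, and the ODE derivation of the rescaling identity) are correct and merely make explicit what the paper leaves implicit.
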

\begin{proof}
The proof is by contradiction.
Suppose $\tau_{f}(z)>\const$.
Then we see $\tau(z)>t_{f,z}(\const)$.
By (\ref{eq:Laplacian comparison}),
for every $t \in (0,t_{f,z}(\const))$
\begin{equation*}
\Delta_{f}\rho_{\bm}(\gamma_{z}(t)) \geq H_{\kappa,\lambda}(s_{f,z}(t))\,e^{\frac{-2f(\gamma_{z}(t))}{n-1}};
\end{equation*}
in particular,
$\Delta_{f}\rho_{\bm}(\gamma_{z}(t)) \to \infty$ as $t\to t_{f,z}(\const)$.
This contradicts the smoothness of $\rho_{\bm} \circ \gamma_{z}$ on $(0,\tau(z))$.
It follows that (\ref{eq:Cut point comparisons}).
If $f\circ \gamma_{z} \leq (n-1)\delta$,
then we have $e^{-2\delta}\,\tau(z) \leq \tau_{f}(z)$.
By $e^{2\delta}C_{\kappa,\lambda}=C_{\kappa\,e^{-4\delta},\lambda\,e^{-2\delta}}$,
we arrive at the desired inequality (\ref{eq:finite Cut point comparisons}).
\end{proof}
\begin{rem}
Lemma \ref{lem:Cut point comparisons} enables us to restate the conclusion of Lemma \ref{lem:Basic comparison} as follows:
For all $s \in (0,\tau_{f}(z))$
we have (\ref{eq:Basic comparison}).
In particular,
for all $t\in (0,\tau(z))$
we have (\ref{eq:Laplacian comparison}).
\end{rem}

%%%%%%%%%%%%%%%%%%%%%%%%%%%%%%
\subsection{Equality cases}\label{sec:Equality cases}
Recall the following (see e.g., Theorem 2 in \cite{Pe}):
\begin{lem}\label{lem:radial curvature equation}
Let $\rho$ be a smooth function defined on a domain in $M$ such that $\Vert \nabla \rho \Vert=1$.
Let $X$ be a parallel vector field along an integral curve of $\nabla \rho$
that is orthogonal to $\nabla \rho$.
Then we have
\begin{equation*}\label{eq:radial curvature equation}
g(R(X,\nabla \rho)\nabla \rho,X)=g(\nabla_{\nabla \rho}  A_{\nabla \rho}X,X)-g(A_{\nabla \rho}A_{\nabla \rho}X,X),
\end{equation*}
where $R$ is the curvature tensor induced from $g$,
and $A_{\nabla \rho}$ is the shape operator of the level set of $\rho$ toward $\nabla \rho$.
In particular,
if there exists a function $\varphi$ defined on the domain of the integral curve such that $A_{\nabla \rho}X=-\varphi\,X$,
then $g(R(X,\nabla \rho)\nabla \rho,X)=-(\varphi'+\varphi^{2})\Vert X \Vert^{2}$.
\end{lem}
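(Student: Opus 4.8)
The plan is to compute $g(R(X,\nabla\rho)\nabla\rho,X)$ straight from the definition of the curvature tensor and then translate every term into the shape operator $A_{\nabla\rho}$. Write $N:=\nabla\rho$ for brevity, and use the sign convention $R(X,Y)Z=\nabla_X\nabla_YZ-\nabla_Y\nabla_XZ-\nabla_{[X,Y]}Z$. I would first record two elementary facts. Since $\Vert N\Vert=1$ and $N$ is a gradient, $\Hess\rho$ is symmetric and for any vector field $Y$ we get $g(\nabla_NN,Y)=\Hess\rho(N,Y)=\Hess\rho(Y,N)=g(\nabla_YN,N)=\tfrac12\,Y\Vert N\Vert^{2}=0$, so $\nabla_NN=0$; in other words the integral curves of $N$ are geodesics. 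Likewise, for any $Y$ with $g(Y,N)=0$ one has $g(\nabla_YN,N)=\tfrac12\,Y\Vert N\Vert^{2}=0$, so $\nabla_YN$ is again tangent to the level set, and by the definition of the shape operator of the level set toward $N$ (the same convention as for $A_{u_z}$ in Subsection~\ref{sec:Jacobi fields orthogonal to the boundary}) this means $\nabla_YN=-A_{\nabla\rho}Y$.

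Next I would extend $X$ to a vector field on a neighbourhood of the given integral curve of $N$, still parallel along that curve, and expand
\[
R(X,N)N=\nabla_X\nabla_NN-\nabla_N\nabla_XN-\nabla_{[X,N]}N .
\]
The first term drops out because $\nabla_NN=0$. For the bracket term, along the integral curve we have $\nabla_NX=0$, so there $[X,N]=\nabla_XN-\nabla_NX=\nabla_XN=-A_{\nabla\rho}X$; since $A_{\nabla\rho}X$ is tangent to the level set, the second of the elementary facts gives $\nabla_{[X,N]}N=-\nabla_{A_{\nabla\rho}X}N=A_{\nabla\rho}A_{\nabla\rho}X$. Substituting $\nabla_XN=-A_{\nabla\rho}X$ into the remaining term yields $\nabla_N\nabla_XN=-\nabla_N(A_{\nabla\rho}X)$. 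Collecting,
\[
R(X,N)N=\nabla_N(A_{\nabla\rho}X)-A_{\nabla\rho}A_{\nabla\rho}X ,
\]
and pairing with $X$ gives the asserted identity.

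For the last assertion, suppose $A_{\nabla\rho}X=-\varphi\,X$ along the integral curve. Since $X$ is parallel there, $\nabla_N(A_{\nabla\rho}X)=\nabla_N(-\varphi\,X)=-\varphi'X$, where $\varphi'$ is the derivative along the curve, and $A_{\nabla\rho}A_{\nabla\rho}X=A_{\nabla\rho}(-\varphi\,X)=\varphi^{2}X$. Plugging these into the identity just proved gives $g(R(X,N)N,X)=-(\varphi'+\varphi^{2})\Vert X\Vert^{2}$.

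The only genuine subtlety — hence the step deserving the most care — is the Lie bracket: $X$ is a priori only given along the integral curve, so one must extend it to an honest vector field, note that both the identity $[X,N]=\nabla_XN-\nabla_NX$ and the vanishing $\nabla_NX=0$ are used only pointwise along that curve, and verify that the whole computation is performed there. Everything else is a routine sign check, the decisive one being the convention $\nabla_YN=-A_{\nabla\rho}Y$ for level-set-tangent $Y$, which is exactly what makes the formula and its specialization come out with the stated signs.
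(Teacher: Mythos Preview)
Your argument is correct. The paper does not actually supply a proof of this lemma; it simply records the statement as a known fact with a reference to Petersen's textbook (``see e.g., Theorem 2 in \cite{Pe}''). What you have written is precisely the standard derivation one finds behind such a citation: use $\nabla_NN=0$ from $\Vert N\Vert=1$, the identification $\nabla_YN=-A_{\nabla\rho}Y$ for level-set tangent $Y$, and expand the curvature tensor. Your handling of the extension issue for $X$ is the right observation to make, and the sign bookkeeping matches the convention fixed in Subsection~\ref{sec:Jacobi fields orthogonal to the boundary}.
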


For the equality case of (\ref{eq:Basic comparison}) in Lemma \ref{lem:Basic comparison},
we have:
\begin{lem}\label{lem:Equality in Basic comparison}
Under the same setting as in Lemma \ref{lem:Basic comparison},
assume that
for some $s_{0}\in (0, \tau_{f}(z)  )$
the equality in $(\ref{eq:Basic comparison})$ holds.
Choose an orthonormal basis $\{e_{z,i}\}_{i=1}^{n-1}$ of $T_{z}\bm$,
and let $\{Y_{z,i}\}^{n-1}_{i=1}$ be the $\bm$-Jacobi fields along $\gamma_{z}$
with $Y_{z,i}(0)=e_{z,i},\,Y_{z,i}'(0)=-A_{u_{z}}e_{z,i}$.
Then for all $i$ we have $Y_{z,i}=F_{\kappa,\lambda,z}\,E_{z,i}$ on $[0,t_{f,z}(s_{0})]$,
where $F_{\kappa,\lambda,z}$ is defined as $(\ref{eq:model Jacobi field})$,
and $\{E_{z,i}\}^{n-1}_{i=1}$ are the parallel vector fields with $E_{z,i}(0)=e_{z,i}$;
moreover,
if $N \in (-\infty,1)$,
then $f \circ \gamma_{z}$ is constant on $[0,t_{f,z}(s_{0})]$.
\end{lem}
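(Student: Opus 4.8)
The plan is to trace the equality back through the chain of inequalities that produced Lemma \ref{lem:Basic comparison}, and then to feed the resulting pointwise information into the radial curvature equation (Lemma \ref{lem:radial curvature equation}) to identify the $\bm$-Jacobi fields. First I would invoke Remark \ref{rem:equality in the Basic comparison}: the equality in (\ref{eq:Basic comparison}) at $s_0$ forces $G_{\kappa,\lambda,z}\equiv 0$ on $[0,s_0]$, hence equality in (\ref{eq:Basic comparison}) on all of $[0,s_0]$, and equality in (\ref{eq:Riccati}) on $[0,t_{f,z}(s_0)]$. By Remark \ref{rem:equality in Riccati}, this gives two things at every $t\in[0,t_{f,z}(s_0)]$: (i) equality in the Cauchy–Schwarz step (\ref{eq:Cauchy-Schwarz inequality}), so $\Hess\rho_{\bm}=c(t)\,g$ on $(\nabla\rho_{\bm})^{\perp}$ for some scalar $c(t)$; and (ii) $(1-N)f_z'(t)^2=0$, which when $N\in(-\infty,1)$ immediately yields $f_z'\equiv 0$ on $[0,t_{f,z}(s_0)]$, i.e.\ $f\circ\gamma_z$ is constant there.

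Next I would pin down the scalar $c(t)$, equivalently the shape operator $A_{\nabla\rho_{\bm}}$ of the level sets. Since $\Hess\rho_{\bm}=-A_{\nabla\rho_{\bm}}$ on $(\nabla\rho_{\bm})^{\perp}$ and this equals $c(t)g$, we have $A_{\nabla\rho_{\bm}}X=-c(t)X$ for $X\perp\nabla\rho_{\bm}$. Taking the trace over the $(n-1)$-dimensional normal bundle gives $(n-1)c(t)=\Delta\rho_{\bm}(\gamma_z(t))=h_{f,z}(t)-f_z'(t)$, while the equality case of the Laplacian comparison says $\Delta_f\rho_{\bm}(\gamma_z(t))=h_{f,z}(t)=H_{\kappa,\lambda}(s_{f,z}(t))\,e^{-2f(\gamma_z(t))/(n-1)}$. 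Combined with $H_{\kappa,\lambda}=-(n-1)s'_{\kappa,\lambda}/s_{\kappa,\lambda}$ from (\ref{eq:model mean curvature}), this determines $c(t)$ explicitly in terms of $s_{\kappa,\lambda}(s_{f,z}(t))$, $s'_{\kappa,\lambda}(s_{f,z}(t))$, and $f\circ\gamma_z$. One then checks directly, using (\ref{eq:model Jacobi field}) and the chain rule for $s_{f,z}$, that the model Jacobi function $F_{\kappa,\lambda,z}$ satisfies $F_{\kappa,\lambda,z}'/F_{\kappa,\lambda,z}=c(t)$ (with the correct initial data $F_{\kappa,\lambda,z}(0)=1$, and $F_{\kappa,\lambda,z}'(0)$ matching $-\lambda$-weighted data via $H_{f,z}\geq(n-1)\lambda e^{-2f(z)/(n-1)}$ now being an equality at $s\to 0$ by Remark \ref{rem:equality in the Basic comparison}); this is a routine computation I would not spell out in full.

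Finally I would identify the $\bm$-Jacobi fields. Let $X=F_{\kappa,\lambda,z}E_{z,i}$ for a parallel $E_{z,i}$. A direct computation of $X'+A_{\nabla\rho_{\bm}}X$ shows, using $A_{\nabla\rho_{\bm}}X=-c(t)X$ and $F_{\kappa,\lambda,z}'=c\,F_{\kappa,\lambda,z}$, that $X$ is parallel-plus-radial in the appropriate sense; more precisely, one verifies via Lemma \ref{lem:radial curvature equation} that $g(R(E_{z,i},\gamma_z')\gamma_z',E_{z,i})=-(c'+c^2)$, so the Jacobi equation $X''+R(X,\gamma_z')\gamma_z'=0$ reduces to the scalar ODE $F_{\kappa,\lambda,z}''=(c'+c^2)F_{\kappa,\lambda,z}$, which holds since $F_{\kappa,\lambda,z}'=cF_{\kappa,\lambda,z}$ implies $F_{\kappa,\lambda,z}''=(c'+c^2)F_{\kappa,\lambda,z}$. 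Hence $F_{\kappa,\lambda,z}E_{z,i}$ is a Jacobi field. Its initial conditions are $F_{\kappa,\lambda,z}(0)E_{z,i}(0)=e_{z,i}\in T_z\bm$ and $(F_{\kappa,\lambda,z}E_{z,i})'(0)=F_{\kappa,\lambda,z}'(0)e_{z,i}=-c(0)e_{z,i}$; the equality case at $s\to 0$ forces $c(0)$ to be the eigenvalue of $A_{u_z}$ in the direction $e_{z,i}$, so one should choose the $\{e_{z,i}\}$ to be an eigenbasis of $A_{u_z}$ (the equality in Cauchy–Schwarz at $t=0$ forces $A_{u_z}$ to be a multiple of the identity, so any orthonormal basis works and $-A_{u_z}e_{z,i}=-c(0)e_{z,i}$). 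Thus $F_{\kappa,\lambda,z}E_{z,i}$ is the $\bm$-Jacobi field with the prescribed initial data, and by uniqueness $Y_{z,i}=F_{\kappa,\lambda,z}E_{z,i}$ on $[0,t_{f,z}(s_0)]$. The main obstacle is the bookkeeping in the second paragraph: correctly relating $c(t)$, the change of variables $s=s_{f,z}(t)$, and the definition (\ref{eq:model Jacobi field}) of $F_{\kappa,\lambda,z}$, so that the scalar ODE $F_{\kappa,\lambda,z}'=c\,F_{\kappa,\lambda,z}$ comes out exactly — everything else is a direct consequence of the equality-case remarks already established.
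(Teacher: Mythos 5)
Your proposal follows essentially the paper's proof route: trace equality back through the Riccati chain via Remarks~\ref{rem:equality in the Basic comparison} and~\ref{rem:equality in Riccati}, identify $\Hess\rho_{\bm}$ as a multiple $c(t)\,g$ of the metric on $(\nabla\rho_{\bm})^{\perp}$, recover $c(t)$ from the saturated Laplacian comparison, match it with $F'_{\kappa,\lambda,z}/F_{\kappa,\lambda,z}$, feed $A_{\nabla\rho_{\bm}}=-c\,\id$ into Lemma~\ref{lem:radial curvature equation} to compute the radial sectional curvature, and finish by uniqueness of Jacobi fields, with the $N<1$ conclusion handled exactly as in the paper via $(1-N)(f'_z)^2=0$. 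One sign caution in the step you call routine: the paper's convention is $\Delta=-\tr\Hess$, so with $\Hess\rho_{\bm}=c(t)\,g$ one gets $\Delta\rho_{\bm}(\gamma_z(t))=-(n-1)c(t)$, not $(n-1)c(t)$ as written; with that sign corrected the identity $F'_{\kappa,\lambda,z}/F_{\kappa,\lambda,z}=c(t)$ you rely on does hold, which is precisely the paper's step showing $\mathcal{F}_z=F_{\kappa,\lambda,z}$ where $\mathcal{F}_z(t)=\exp\int_0^t\varphi_z(a)\,da$.
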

\begin{proof}
Put $t_{0}:=t_{f,z}(s_{0})$.
Since the equality in $(\ref{eq:Basic comparison})$ holds at $s_{0}$,
the equality in (\ref{eq:Riccati}) also holds on $[0,t_{0}]$ (see Remark \ref{rem:equality in Riccati}).
There exists a function $\varphi$ on the set $\gamma_{z}((0,t_{0}))$ such that
at each point on $\gamma_{z}((0,t_{0}))$,
we have $\Hess \rho_{\bm}=\varphi\,g$ on the orthogonal complement of $\nabla \rho_{\bm}$ (see Remark \ref{rem:equality in the Basic comparison}).
Define $\varphi_{z}:=\varphi \circ \gamma_{z}$.
For each $i$,
it holds that
\begin{equation*}
g(A_{\nabla \rho_{\bm}}E_{z,i},E_{z,i})=-\Hess \rho_{\bm}(E_{z,i},E_{z,i})=-\varphi_{z};
\end{equation*}
in particular,
$A_{\nabla \rho_{\bm}}E_{z,i}=-\varphi_{z}\,E_{z,i}$.
From Lemma \ref{lem:radial curvature equation}
we deduce
\begin{equation}\label{eq:curvature}
R(E_{z,i},\nabla \rho_{\bm})\nabla \rho_{\bm}=-(\varphi'_{z}+\varphi^{2}_{z})E_{z,i}=-\mathcal{F}''_{z}\,\mathcal{F}^{-1}_{z}\,E_{z,i},
\end{equation}
where $\mathcal{F}_{z}:[0,t_{0}] \to \mathbb{R}$ is a function defined by 
\begin{equation*}
\mathcal{F}_{z}(t):= \exp \left( \int^{t}_{0}\, \varphi_{z}(a)\,da \right).
\end{equation*}

Set $f_{z}:=f \circ \gamma_{z}$ and $h_{f,z}:=\left(\Delta_{f}\rho_{\bm}\right) \circ \gamma_{z}$.
By the equality assumption,
$e^{\frac{2f_{z}}{n-1}}\,h_{f,z}=H_{\kappa,\lambda} \circ s_{f,z}$ on $[0,t_{0}]$ (see Remarks \ref{rem:equality in Riccati} and \ref{rem:equality in the Basic comparison}).
Furthermore,
$\Hess \rho_{\bm}=\varphi\,g$ leads to $\Delta\, \rho_{\bm} \circ \gamma_{z}=-(n-1)\varphi_{z}$.
Therefore,
\begin{equation*}
\varphi_{z}(t)=\frac{1}{n-1}  \left(f_{z}(t)-\int^{t}_{0}\,e^{\frac{-2f(\gamma_{z}(a))}{n-1}}\,\left(H_{\kappa,\lambda}\circ s_{f,z}\right)(a)\,da \right)'
\end{equation*}
for every $t\in [0,t_{0}]$.
It follows that
$\mathcal{F}_{z}=F_{\kappa,\lambda,z}$ on $[0,t_{0}]$.
In view of (\ref{eq:curvature}),
we obtain $Y_{z,i}=F_{\kappa,\lambda,z}\,E_{z,i}$ on $[0,t_{0}]$.

We have $(1-N)(f'_{z})^{2}=0$ on $[0,t_{0}]$ (see Remarks \ref{rem:equality in Riccati} and \ref{rem:equality in the Basic comparison}).
If $N\in (-\infty,1)$,
then $f'_{z}=0$ on $[0,t_{0}]$;
in particular,
$f_{z}$ is constant.
\end{proof}

For the equality case of (\ref{eq:Laplacian comparison}),
Lemma \ref{lem:Equality in Basic comparison} implies:
\begin{lem}\label{lem:Equality in Laplacian comparison}
Under the same setting as in Lemma \ref{lem:Basic comparison},
assume that
for some $t_{0}\in (0,\tau(z))$
the equality in $(\ref{eq:Laplacian comparison})$ holds.
Choose an orthonormal basis $\{e_{z,i}\}_{i=1}^{n-1}$ of $T_{z}\bm$,
and let $\{Y_{z,i}\}^{n-1}_{i=1}$ be the $\bm$-Jacobi fields along $\gamma_{z}$ with $Y_{z,i}(0)=e_{z,i},\,Y_{z,i}'(0)=-A_{u_{z}}e_{z,i}$.
Then for all $i$
we have $Y_{z,i}=F_{\kappa,\lambda,z}\,E_{z,i}$ on $[0,t_{0}]$,
where $\{E_{z,i}\}^{n-1}_{i=1}$ are the parallel vector fields with $E_{z,i}(0)=e_{z,i}$;
moreover,
if $N \in (-\infty,1)$,
then $f \circ \gamma_{z}$ is constant.
\end{lem}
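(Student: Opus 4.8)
The plan is to deduce this directly from Lemma \ref{lem:Equality in Basic comparison} by the change of variables $s=s_{f,z}(t)$. First I would record that, since $f$ is smooth and $e^{\frac{-2f}{n-1}}$ is strictly positive, the function $s_{f,z}$ in (\ref{eq:speed changing function}) is a strictly increasing smooth bijection from $(0,\tau(z))$ onto $(0,\tau_{f}(z))$ with inverse $t_{f,z}$. Setting $s_{0}:=s_{f,z}(t_{0})$, we therefore have $s_{0}\in(0,\tau_{f}(z))$ and $t_{f,z}(s_{0})=t_{0}$.

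Next I would observe that, under this substitution, the inequality (\ref{eq:Laplacian comparison}) evaluated at $t=t_{0}$ is literally the inequality (\ref{eq:Basic comparison}) evaluated at $s=s_{0}$: the left-hand side of (\ref{eq:Basic comparison}) at $s_{0}$ equals $\Delta_{f}\rho_{\bm}(\gamma_{z}(t_{f,z}(s_{0})))=\Delta_{f}\rho_{\bm}(\gamma_{z}(t_{0}))$, while its right-hand side equals $H_{\kappa,\lambda}(s_{f,z}(t_{0}))\,e^{\frac{-2f(\gamma_{z}(t_{0}))}{n-1}}$, which is the right-hand side of (\ref{eq:Laplacian comparison}) at $t_{0}$. (Both sides make sense throughout $(0,\tau_{f}(z))$ by the Remark following Lemma \ref{lem:Cut point comparisons}.) Consequently, the hypothesis that equality holds in (\ref{eq:Laplacian comparison}) at $t_{0}$ is exactly the statement that equality holds in (\ref{eq:Basic comparison}) at $s_{0}\in(0,\tau_{f}(z))$.

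It then remains only to apply Lemma \ref{lem:Equality in Basic comparison} with this $s_{0}$, using the same orthonormal basis $\{e_{z,i}\}_{i=1}^{n-1}$ of $T_{z}\bm$, the same $\bm$-Jacobi fields $\{Y_{z,i}\}_{i=1}^{n-1}$, and the same parallel fields $\{E_{z,i}\}_{i=1}^{n-1}$. That lemma yields $Y_{z,i}=F_{\kappa,\lambda,z}\,E_{z,i}$ on $[0,t_{f,z}(s_{0})]=[0,t_{0}]$ for every $i$, and, when $N\in(-\infty,1)$, that $f\circ\gamma_{z}$ is constant on $[0,t_{0}]$. This is precisely the assertion to be proved.

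Since the entire argument is nothing more than a one-to-one reparametrisation of the independent variable, there is no genuine analytic obstacle here; the only points requiring (routine) verification are the smoothness and strict monotonicity of $s_{f,z}$ and the identification of the two equality conditions, both of which are immediate from the definitions (\ref{eq:boundary cut value})--(\ref{eq:speed changing function}) together with the Remark after Lemma \ref{lem:Cut point comparisons}.
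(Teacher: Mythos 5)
Your proof is correct and follows exactly the route the paper has in mind: the paper itself offers no argument beyond the remark ``For the equality case of (\ref{eq:Laplacian comparison}), Lemma~\ref{lem:Equality in Basic comparison} implies'' the statement, and your write-up merely spells out the routine change of variables $s=s_{f,z}(t)$ (noting $\min\{\tau_f(z),\bar{C}_{\kappa,\lambda}\}=\tau_f(z)$ by the remark after Lemma~\ref{lem:Cut point comparisons}) that identifies equality in (\ref{eq:Laplacian comparison}) at $t_0$ with equality in (\ref{eq:Basic comparison}) at $s_0=s_{f,z}(t_0)$.
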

\section{Global Laplacian comparisons}\label{sec:Global Laplacian comparisons}
We start with introducing some conditions.
Let us recall that
$\kappa$ and $\lambda$ satisfy the ball-condition if and only if either
(1) $\kappa>0$; 
(2) $\kappa=0$ and $\lambda>0$;
or (3) $\kappa<0$ and $\lambda>\sqrt{\vert \kappa \vert}$.
We say that $\kappa$ and $\lambda$ satisfy the \textit{convex-ball-condition}
if they satisfy the ball-condition and $\lambda \geq 0$.

Furthermore,
we say that
$\kappa$ and $\lambda$ satisfy the \textit{monotone-condition}
if $H_{\kappa,\lambda}\geq 0$ and $H'_{\kappa,\lambda}\geq 0$ on $[0,\bar{C}_{\kappa,\lambda})$,
where $H_{\kappa,\lambda}$ is defined as (\ref{eq:model mean curvature}).
We see that
$\kappa$ and $\lambda$ satisfy the monotone-condition
if and only if either
(1) $\kappa$ and $\lambda$ satisfy the convex-ball-condition;
or (2) $\kappa \leq 0$ and $\lambda=\sqrt{\vert \kappa \vert}$.
For $\kappa$ and $\lambda$ satisfying the monotone-condition,
if $\kappa = 0$ and $\lambda=0$,
then $H_{\kappa,\lambda}=0$ on $[0,\infty)$;
otherwise,
$H_{\kappa,\lambda} > 0$ on $(0,\bar{C}_{\kappa,\lambda})$.

We also say that
$\kappa$ and $\lambda$ satisfy the \textit{weakly-monotone-condition}
if $H'_{\kappa,\lambda} \geq 0$ on $[0,\bar{C}_{\kappa,\lambda})$.
Notice that
$\kappa$ and $\lambda$ satisfy the weakly-monotone-condition
if and only if either
(1) $\kappa \geq 0$;
or (2) $\kappa<0$ and $\vert \lambda \vert \geq \sqrt{\vert \kappa \vert}$.
In particular,
if $\kappa$ and $\lambda$ satisfy the ball-condition,
then they also satisfy the weakly-monotone-condition.
For $\kappa$ and $\lambda$ satisfying the weakly-monotone-condition,
if $\kappa \leq 0$ and $\vert \lambda \vert=\sqrt{\vert \kappa \vert}$,
then $H_{\kappa,\lambda}=(n-1)\lambda$ on $[0,\infty)$;
otherwise,
$H'_{\kappa,\lambda} > 0$ on $[0,\bar{C}_{\kappa,\lambda})$.

%%%%%%%%%%%%%%%%%%%%%%%%%%%%%%
\subsection{Bounded cases}\label{sec:Bounded cases}
If $f$ is bounded from above,
then we have:
\begin{lem}\label{lem:finite Laplacian comparison}
Let $z\in \bm$.
Let $\kappa$ and $\lambda$ satisfy the weakly-monotone-condition.
For $N \in (-\infty,1]$,
let us assume that
$\ric^{N}_{f}(\gamma'_{z}(t))\geq (n-1)\kappa\, e^{\frac{-4 f(\gamma_{z}(t))}{n-1}}$ for all $t \in (0,\tau(z))$,
and $H_{f,z}\geq (n-1)\lambda e^{\frac{-2 f(z)}{n-1}}$.
Suppose additionally that
there exists $\delta \in \mathbb{R}$ such that $f\circ \gamma_{z}\leq (n-1)\delta$ on $(0,\tau(z))$.
Then for all $t \in (0,\tau(z))$
we have
\begin{equation}\label{eq:finite Laplacian comparison}
\Delta_{f}\rho_{\bm}(\gamma_{z}(t))  \geq H_{\kappa,\lambda}(e^{-2\delta}t)\,  e^{\frac{-2 f(\gamma_{z}(t))}{n-1}}.
\end{equation}
Moreover,
if $\kappa$ and $\lambda$ satisfy the monotone-condition,
then
\begin{equation}\label{eq:strictly finite Laplacian comparison}
\Delta_{f}\rho_{\bm}(\gamma_{z}(t))  \geq H_{\kappa,\lambda}(e^{-2\delta}t)\,e^{-2\delta}.
\end{equation}
\end{lem}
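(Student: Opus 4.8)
The plan is to follow the template of the proof of Lemma~\ref{lem:finite pointed Laplacian comparison}: start from the pointwise comparison of Lemma~\ref{lem:Basic comparison}, feed in the hypothesis $f\circ\gamma_z\leq(n-1)\delta$, and exploit the monotonicity of $H_{\kappa,\lambda}$ guaranteed by the weakly-monotone-condition.

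First I would check that, for \emph{every} $t\in(0,\tau(z))$,
\begin{equation*}
\Delta_f\rho_{\bm}(\gamma_z(t))\geq H_{\kappa,\lambda}(s_{f,z}(t))\,e^{\frac{-2f(\gamma_z(t))}{n-1}}.
\end{equation*}
This is immediate from Lemma~\ref{lem:Basic comparison} once one knows that $s_{f,z}(t)\in(0,\min\{\tau_f(z),\bconst\})$, and this last point is handled by a short case distinction: if $\kappa,\lambda$ satisfy the ball-condition, then Lemma~\ref{lem:Cut point comparisons} gives $\tau_f(z)\leq\const=\bconst$, hence $s_{f,z}(t)<\tau_f(z)\leq\bconst$; if they do not, then $\bconst=\infty$ and there is nothing to verify. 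Next, $f\circ\gamma_z\leq(n-1)\delta$ gives $e^{\frac{-2f(\gamma_z(a))}{n-1}}\geq e^{-2\delta}$, hence $s_{f,z}(t)=\int_0^t e^{\frac{-2f(\gamma_z(a))}{n-1}}\,da\geq e^{-2\delta}t$, and in particular $0<e^{-2\delta}t\leq s_{f,z}(t)<\bconst$. Since the weakly-monotone-condition means exactly $H'_{\kappa,\lambda}\geq0$ on $[0,\bconst)$, the function $H_{\kappa,\lambda}$ is nondecreasing there, so $H_{\kappa,\lambda}(s_{f,z}(t))\geq H_{\kappa,\lambda}(e^{-2\delta}t)$; substituting this into the displayed inequality yields (\ref{eq:finite Laplacian comparison}).

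For the \emph{moreover} statement, I would then note that the monotone-condition additionally gives $H_{\kappa,\lambda}\geq0$ on $[0,\bconst)$, so $H_{\kappa,\lambda}(e^{-2\delta}t)\geq0$; combining this with $e^{\frac{-2f(\gamma_z(t))}{n-1}}\geq e^{-2\delta}$ turns (\ref{eq:finite Laplacian comparison}) into (\ref{eq:strictly finite Laplacian comparison}). I do not expect any real obstacle here: the only step demanding a little care is the case distinction ensuring that $s_{f,z}(t)$ always lies in $(0,\bconst)$, so that Lemma~\ref{lem:Basic comparison} is legitimately applicable for each $t\in(0,\tau(z))$; everything else is a one-line estimate.
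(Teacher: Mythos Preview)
Your proposal is correct and follows essentially the same approach as the paper: apply the pointwise comparison (\ref{eq:Laplacian comparison}), use $f\circ\gamma_z\leq(n-1)\delta$ to get $s_{f,z}(t)\geq e^{-2\delta}t$, invoke $H'_{\kappa,\lambda}\geq0$ for (\ref{eq:finite Laplacian comparison}), and then $H_{\kappa,\lambda}\geq0$ together with $e^{\frac{-2f(\gamma_z(t))}{n-1}}\geq e^{-2\delta}$ for (\ref{eq:strictly finite Laplacian comparison}). The only difference is cosmetic: you spell out the case distinction ensuring $s_{f,z}(t)<\bconst$, whereas the paper already absorbed this into the remark following Lemma~\ref{lem:Cut point comparisons}.
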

\begin{proof}
By $f\circ \gamma_{z}\leq (n-1)\delta$,
it holds that $s_{f,z}(t)\geq e^{-2\delta}t$ and $e^{\frac{-2 f(\gamma_{z}(t))}{n-1}} \geq e^{-2\delta}$ for all $t\in (0,\tau(z))$.
The inequality (\ref{eq:Laplacian comparison}) and $H'_{\kappa,\lambda}\geq 0$ tell us that
\begin{equation}\label{eq:proof of finite Laplacian comparison}
\Delta_{f}\rho_{\bm}(\gamma_{z}(t)) \geq H_{\kappa,\lambda}(s_{f,z}(t))\,e^{\frac{-2 f(\gamma_{z}(t))}{n-1}} \geq H_{\kappa,\lambda}(e^{-2\delta}t)\,e^{\frac{-2 f(\gamma_{z}(t))}{n-1}}
\end{equation}
for all $t \in (0,\tau(z))$,
and hence (\ref{eq:finite Laplacian comparison}).
Moreover,
if $\kappa$ and $\lambda$ satisfy the monotone-condition,
then (\ref{eq:finite Laplacian comparison}) and $H_{\kappa,\lambda} \geq 0$ lead to
\begin{equation}\label{eq:proof of strictly finite Laplacian comparison}
\Delta_{f}\rho_{\bm}(\gamma_{z}(t)) \geq H_{\kappa,\lambda}(e^{-2\delta}t)\,e^{\frac{-2 f(\gamma_{z}(t))}{n-1}} \geq H_{\kappa,\lambda}(e^{-2\delta}t)\,e^{-2\delta}.
\end{equation}
This proves (\ref{eq:strictly finite Laplacian comparison}).
\end{proof}

\begin{rem}\label{rem:equality in finite Laplacian comparison}
Assume that
for some $t_{0}\in (0,\tau(z))$
the equality in $(\ref{eq:finite Laplacian comparison})$ holds.
Then the equalities in (\ref{eq:proof of finite Laplacian comparison}) hold,
and the equality in (\ref{eq:Laplacian comparison}) also holds (see Lemma \ref{lem:Equality in Laplacian comparison}).
Moreover,
if either (1) $\kappa>0$; or (2) $\kappa \leq 0$ and $\vert \lambda \vert >\sqrt{\vert \kappa \vert}$,
then $H'_{\kappa,\lambda} > 0$ on $[0,\bar{C}_{\kappa,\lambda})$,
and hence $s_{f,z}(t_{0})=e^{-2\delta}t_{0}$;
in particular,
$f\circ \gamma_{z}=(n-1)\delta$ on $[0,t_{0}]$.
\end{rem}

\begin{rem}\label{rem:equality in strictly finite Laplacian comparison}
Assume that
for some $t_{0}\in (0,\tau(z))$
the equality in $(\ref{eq:strictly finite Laplacian comparison})$ holds.
Then the equalities in (\ref{eq:proof of strictly finite Laplacian comparison}) hold,
and the equality in (\ref{eq:finite Laplacian comparison}) holds (see Remark \ref{rem:equality in finite Laplacian comparison}).
Moreover,
if either (1) $\kappa$ and $\lambda$ satisfy the convex-ball-condition; or (2) $\kappa<0$ and $\lambda=\sqrt{\vert \kappa \vert}$,
then $H_{\kappa,\lambda} > 0$ on $(0,\bar{C}_{\kappa,\lambda})$,
and hence $e^{\frac{-2 f(\gamma_{z}(t_{0}))}{n-1}}=e^{-2\delta}$;
in particular,
$(f\circ \gamma_{z})(t_{0})=(n-1)\delta$.
\end{rem}

Lemma \ref{lem:finite Laplacian comparison} implies the following:
\begin{lem}\label{lem:finite p-Laplacian comparison}
Let $z\in \bm$ and $p\in (1,\infty)$.
Let $\kappa$ and $\lambda$ satisfy the monotone-condition.
For $N\in (-\infty,1]$,
we assume that
$\ric^{N}_{f}(\gamma'_{z}(t))\geq (n-1)\kappa\,e^{\frac{-4 f(\gamma_{z}(t))}{n-1}}$ for all $t \in (0,\tau(z))$,
and $H_{f,z}\geq (n-1)\lambda e^{\frac{-2 f(z)}{n-1}}$.
Suppose additionally that
there exists $\delta \in \mathbb{R}$ such that $f\circ \gamma_{z} \leq (n-1)\delta$ on $(0,\tau(z))$.
We define $\rho_{\bm,\delta}:=e^{-2\delta}\rho_{\bm}$.
Let $\varphi:[0,\infty)\to \mathbb{R}$ be a monotone increasing smooth function.
Then for all $t \in (0,\tau(z))$
\begin{align}\label{eq:finite p-Laplacian comparison}
&\quad \, \Delta_{f,p}\, \bigl(\varphi \circ \rho_{\bm,\delta} \bigl) (\gamma_{z}(t)) \\ \notag
&\geq -e^{-2p\delta} \left\{   \left(   \bigl(     (  \varphi'  )^{p-1}     \bigl)'-H_{\kappa,\lambda}\,(\varphi')^{p-1} \right)\circ \rho_{\bm,\delta} \right\}(\gamma_{z}(t)).
\end{align}
\end{lem}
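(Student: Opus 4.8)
The plan is to reduce the $(f,p)$-Laplacian of $\varphi\circ\rho_{\bm,\delta}$ to the ordinary weighted Laplacian $\Delta_f\rho_{\bm}$ along $\gamma_z$, and then apply the estimate $(\ref{eq:finite Laplacian comparison})$ from Lemma \ref{lem:finite Laplacian comparison}. First I would compute, at a point $\gamma_z(t)$ with $t\in(0,\tau(z))$ where $\rho_{\bm}$ is smooth, the gradient and divergence appearing in the definition of $\Delta_{f,p}$. Since $\rho_{\bm,\delta}=e^{-2\delta}\rho_{\bm}$ and $\|\nabla\rho_{\bm}\|=1$, we get $\nabla(\varphi\circ\rho_{\bm,\delta})=e^{-2\delta}(\varphi'\circ\rho_{\bm,\delta})\nabla\rho_{\bm}$ and hence $\|\nabla(\varphi\circ\rho_{\bm,\delta})\|^{p-2}=e^{-2(p-2)\delta}((\varphi')^{p-2}\circ\rho_{\bm,\delta})$, using that $\varphi'\geq 0$. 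Putting these together,
\begin{equation*}
\|\nabla(\varphi\circ\rho_{\bm,\delta})\|^{p-2}\nabla(\varphi\circ\rho_{\bm,\delta})=e^{-2(p-1)\delta}\left((\varphi')^{p-1}\circ\rho_{\bm,\delta}\right)\nabla\rho_{\bm}.
\end{equation*}

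Next I would expand $\Delta_{f,p}(\varphi\circ\rho_{\bm,\delta})=-e^{f}\Div\!\left(e^{-f}\|\nabla(\varphi\circ\rho_{\bm,\delta})\|^{p-2}\nabla(\varphi\circ\rho_{\bm,\delta})\right)$ using the product rule for the weighted divergence: writing $\psi:=(\varphi')^{p-1}\circ\rho_{\bm,\delta}$, we have
\begin{equation*}
\Delta_{f,p}(\varphi\circ\rho_{\bm,\delta})=-e^{-2(p-1)\delta}\left(g(\nabla\psi,\nabla\rho_{\bm})-\psi\,\Delta_f\rho_{\bm}\right),
\end{equation*}
since $e^{f}\Div(e^{-f}\,\cdot\,)=\Div-g(\nabla f,\cdot)$ and $-e^{f}\Div(e^{-f}\nabla\rho_{\bm})=\Delta_f\rho_{\bm}$. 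Now $g(\nabla\psi,\nabla\rho_{\bm})=e^{-2\delta}\big((\varphi')^{p-1}\big)'\circ\rho_{\bm,\delta}$ because $\rho_{\bm,\delta}=e^{-2\delta}\rho_{\bm}$ and the derivative of $\rho_{\bm}$ in the direction $\nabla\rho_{\bm}$ is $1$. This gives
\begin{equation*}
\Delta_{f,p}(\varphi\circ\rho_{\bm,\delta})(\gamma_z(t))=-e^{-2p\delta}\left\{\big((\varphi')^{p-1}\big)'\circ\rho_{\bm,\delta}\right\}(\gamma_z(t))+e^{-2(p-1)\delta}\,\psi(\gamma_z(t))\,\Delta_f\rho_{\bm}(\gamma_z(t)).
\end{equation*}

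Finally I would invoke Lemma \ref{lem:finite Laplacian comparison}: since $\kappa$ and $\lambda$ satisfy the monotone-condition (hence also the weakly-monotone-condition), $(\ref{eq:strictly finite Laplacian comparison})$ gives $\Delta_f\rho_{\bm}(\gamma_z(t))\geq H_{\kappa,\lambda}(e^{-2\delta}t)\,e^{-2\delta}=H_{\kappa,\lambda}(\rho_{\bm,\delta}(\gamma_z(t)))\,e^{-2\delta}$. Because $\varphi$ is monotone increasing, $\psi=(\varphi')^{p-1}\circ\rho_{\bm,\delta}\geq 0$, so multiplying through preserves the inequality, and substituting yields
\begin{equation*}
\Delta_{f,p}(\varphi\circ\rho_{\bm,\delta})(\gamma_z(t))\geq -e^{-2p\delta}\left\{\left(\big((\varphi')^{p-1}\big)'-H_{\kappa,\lambda}(\varphi')^{p-1}\right)\circ\rho_{\bm,\delta}\right\}(\gamma_z(t)),
\end{equation*}
which is $(\ref{eq:finite p-Laplacian comparison})$. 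The main point requiring care is the bookkeeping of the powers of $e^{-2\delta}$ coming from the chain rule through $\rho_{\bm,\delta}=e^{-2\delta}\rho_{\bm}$ and the $(p-2)$-th power of the gradient norm; once the sign $\varphi'\geq 0$ is used to drop absolute values, the rest is the standard first-variation computation for radial functions of the distance to the boundary. No subtlety arises from the cut locus here because the claim is pointwise along $\gamma_z$ for $t<\tau(z)$, where $\rho_{\bm}$ is smooth.
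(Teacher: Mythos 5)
Your proposal is correct and follows essentially the same route as the paper: both reduce the $(f,p)$-Laplacian of a radial function of $\rho_{\bm,\delta}$ along $\gamma_{z}$ to an expression of the form $-\bigl((\varphi')^{p-1}\bigl)' + \Delta_{f}\rho_{\bm}\,(\varphi')^{p-1}$ (after the chain-rule bookkeeping in $e^{-2\delta}$), and then substitute the lower bound $(\ref{eq:strictly finite Laplacian comparison})$ from Lemma \ref{lem:finite Laplacian comparison}. The paper simply states the radial $(f,p)$-Laplacian formula via the substitution $\varphi_{\delta}(t):=\varphi(e^{-2\delta}t)$, whereas you derive it from the definition; otherwise the argument is identical.
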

\begin{proof} 
Set $\Phi:=\varphi \circ \rho_{\bm,\delta}$,
and define $\varphi_{\delta}(t):=\varphi(e^{-2\delta}t)$.
We see $\Phi=\varphi_{\delta}\circ \rho_{\bm}$.
By (\ref{eq:strictly finite Laplacian comparison}),
for every $t\in (0,\tau(z))$
\begin{align}\label{eq:pre finite p-Laplacian comparison}
\Delta_{f,p}\, \Phi (\gamma_{z}(t))&   =   - \bigl(     (  \varphi'_{\delta}  )^{p-1}     \bigl)'(t)  +\Delta_{f,2}\rho_{\bm}(\gamma_{z}(t))\, \left(\varphi'_{\delta}\right)^{p-1}(t)\\ \notag
                                                     &\geq - \bigl(     (  \varphi'_{\delta}  )^{p-1}     \bigl)'(t)  +H_{\kappa,\lambda}(e^{-2\delta}t)\,e^{-2\delta}\, \left(\varphi'_{\delta}\right)^{p-1}(t).
\end{align}
Since $\left(\varphi'_{\delta}\right)^{p-1}(t) =e^{-2(p-1)\delta}\,\left(\varphi'\right)^{p-1}(e^{-2\delta}t)$ and 
\begin{equation*}
\bigl(     (  \varphi'_{\delta}  )^{p-1}     \bigl)'(t) =e^{-2p\delta}\bigl(     (  \varphi'  )^{p-1}     \bigl)'(e^{-2\delta}t),
\end{equation*}
the right hand side of (\ref{eq:pre finite p-Laplacian comparison}) is equal to that of (\ref{eq:finite p-Laplacian comparison}).
\end{proof}

\begin{rem}\label{rem:equality in finite p-Laplacian comparison}
The equality case of Lemma \ref{lem:finite p-Laplacian comparison}
results in that of (\ref{eq:strictly finite Laplacian comparison})
(see Remark \ref{rem:equality in strictly finite Laplacian comparison}).
\end{rem}

We now prove the following global Laplacian comparison inequality:
\begin{prop}\label{prop:global finite p-Laplacian comparison}
Let $p\in (1,\infty)$.
Let $\kappa$ and $\lambda$ satisfy the monotone-condition.
For $N\in (-\infty,1]$,
assume that
the triple $(M,\bm,f)$ has lower $(\kappa,\lambda,N)$-weighted curvature bounds.
We suppose additionally that
there exists $\delta \in \mathbb{R}$ such that $f \leq (n-1)\delta$ on $M$.
We define $\rho_{\bm,\delta}:=e^{-2\delta}\,\rho_{\bm}$.
Let $\varphi:[0,\infty)\to \mathbb{R}$ be a monotone increasing smooth function.
Then we have
\begin{equation*}
\Delta_{f,p}\, \left(\varphi \circ \rho_{\bm,\delta} \right) \geq -e^{-2p\delta} \left( \bigl( (\varphi' )^{p-1} \bigl)' -H_{\kappa,\lambda}\,  \left(\varphi' \right)^{p-1}\right)\circ \rho_{\bm,\delta}
\end{equation*}
in the following distribution sense on $M$:
For every non-negative function $\psi \in C^{\infty}_{0}(M)$ we have
\begin{align}\label{eq:global finite p-Laplacian comparison}
&\quad \int_{M}\,   \Vert \nabla \bigl(   \varphi \circ \rho_{\bm,\delta} \bigl) \Vert^{p-2}\, g\left(\nabla \psi, \nabla \left(   \varphi \circ \rho_{\bm,\delta} \right)  \right)\, d\,m_{f}\\ \notag
&\geq -e^{-2p\delta}\, \int_{M}\,\psi\,   \left\{ \left( \bigl( (\varphi' )^{p-1} \bigl)' -H_{\kappa,\lambda}\,\left(\varphi' \right)^{p-1}\right)\circ \rho_{\bm,\delta} \right\}  \,d\,m_{f}.
\end{align}
\end{prop}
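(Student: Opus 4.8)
The plan is to globalize the pointwise comparison of Lemma~\ref{lem:finite p-Laplacian comparison} by an exhaustion of $M$ avoiding $\cut\bm$: integrate by parts on each piece, control the resulting boundary integrals, and pass to the limit, using that $\cut\bm$ is a null set so that nothing is lost.

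Fix a non-negative $\psi\in C^{\infty}_{0}(M)$ and put $\Phi:=\varphi\circ\rho_{\bm,\delta}$. Since $\varphi$ is smooth and $\rho_{\bm}$ is $1$-Lipschitz, $\Phi$ is locally Lipschitz, and on $\inte M\setminus\cut\bm$ it is smooth with $\nabla\Phi=e^{-2\delta}\,(\varphi'\circ\rho_{\bm,\delta})\,\nabla\rho_{\bm}$; as $\varphi$ is monotone increasing we have $\varphi'\geq 0$, so $\Vert\nabla\Phi\Vert^{p-2}\nabla\Phi$ extends continuously and both integrals in (\ref{eq:global finite p-Laplacian comparison}) are finite, their integrands being bounded and supported in the compact set $\supp\psi$. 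Apply Lemma~\ref{lem:avoiding the cut locus} with $\Omega=\inte M$, so that $\bar\Omega=M$: we obtain an increasing sequence $\{\Omega_{i}\}$ of closed subsets of $M$ with $\bigcup_{i}\Omega_{i}=M\setminus\cut\bm$, each $\partial\Omega_{i}$ a smooth hypersurface with $\partial\Omega_{i}\cap\bm=\bm$, and a unit outer normal field $u_{i}$ for $\Omega_{i}$ on $\partial\Omega_{i}\setminus\bm$ with $g(u_{i},\nabla\rho_{\bm})\geq 0$.

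On each $\Omega_{i}$ the function $\rho_{\bm}$, hence $\Phi$, is smooth, so the divergence theorem applied to the compactly supported field $\psi\,e^{-f}\Vert\nabla\Phi\Vert^{p-2}\nabla\Phi$, combined with the definition of $\Delta_{f,p}$, gives
\[
\int_{\Omega_{i}}\Vert\nabla\Phi\Vert^{p-2}\,g(\nabla\psi,\nabla\Phi)\,d\,m_{f}=\int_{\Omega_{i}}\psi\,\Delta_{f,p}\Phi\,d\,m_{f}+\int_{\partial\Omega_{i}}\psi\,e^{-f}\,\Vert\nabla\Phi\Vert^{p-2}\,g(\nabla\Phi,\nu_{i})\,d\,\vol_{\partial\Omega_{i}},
\]
where $\nu_{i}$ is the unit outer normal for $\Omega_{i}$. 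The portion of the boundary integral over $\partial\Omega_{i}\cap\bm=\bm$ vanishes since $\supp\psi\subset\inte M$; over $\partial\Omega_{i}\setminus\bm$ we have $\nu_{i}=u_{i}$ and $g(\nabla\Phi,u_{i})=e^{-2\delta}(\varphi'\circ\rho_{\bm,\delta})\,g(\nabla\rho_{\bm},u_{i})\geq 0$, so, as $\psi\geq 0$, the boundary integral is non-negative and
\[
\int_{\Omega_{i}}\Vert\nabla\Phi\Vert^{p-2}\,g(\nabla\psi,\nabla\Phi)\,d\,m_{f}\geq\int_{\Omega_{i}}\psi\,\Delta_{f,p}\Phi\,d\,m_{f}.
\]
Now Lemma~\ref{lem:finite p-Laplacian comparison}, applied along each geodesic $\gamma_{z}$ at the points $\gamma_{z}(t)\in\inte M\setminus\cut\bm$ (a subset of full measure in $\Omega_{i}$), gives $\Delta_{f,p}\Phi\geq -e^{-2p\delta}\bigl(\bigl((\varphi')^{p-1}\bigr)'-H_{\kappa,\lambda}\,(\varphi')^{p-1}\bigr)\circ\rho_{\bm,\delta}$ almost everywhere on $\Omega_{i}$; since $\psi\geq 0$, this upgrades the last inequality to (\ref{eq:global finite p-Laplacian comparison}) with $M$ replaced by $\Omega_{i}$. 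Since $\cut\bm$ is a null set, $\Omega_{i}$ increases to $M$ up to a null set, and the integrands are bounded and supported in the compact set $\supp\psi$, so dominated convergence as $i\to\infty$ yields (\ref{eq:global finite p-Laplacian comparison}).

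The one delicate step is the sign of the boundary integral over $\partial\Omega_{i}\setminus\bm$: this term records the behaviour of $\Delta_{f,p}\Phi$ across $\cut\bm$, where $\rho_{\bm}$ ceases to be smooth, and its non-negativity is what guarantees that the singular part of $\Delta_{f,p}\rho_{\bm}$ along $\cut\bm$ does not spoil the inequality. It is available only through the normal-direction bound $g(u_{i},\nabla\rho_{\bm})\geq 0$ of Lemma~\ref{lem:avoiding the cut locus} together with the monotonicity $\varphi'\geq 0$. The contribution along $\bm$ is harmless here because $\psi$ is supported in the interior.
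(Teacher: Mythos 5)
Your proof is correct and essentially identical to the paper's: both globalize the pointwise inequality of Lemma~\ref{lem:finite p-Laplacian comparison} via the exhaustion from Lemma~\ref{lem:avoiding the cut locus}, integrate by parts on each $\Omega_{i}$, use $g(u_{i},\nabla\rho_{\bm})\geq 0$ together with $\varphi'\geq 0$ to discard the boundary term over $\partial\Omega_{i}\setminus\bm$ (and $\psi|_{\bm}=0$ for the term over $\bm$), and then let $i\to\infty$. Your added observations on the continuity of $\Vert\nabla\Phi\Vert^{p-2}\nabla\Phi$ and on why the cut-locus contribution has a favorable sign are a welcome clarification but do not change the argument.
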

\begin{proof}
By Lemma \ref{lem:avoiding the cut locus},
there exists a sequence $\{\Omega_{i}\}$ of closed subsets of $M$
such that
for every $i$,
the set $\partial \Omega_{i}$ is a smooth hypersurface in $M$,
and satisfying the following:
(1) for all $i_{1},i_{2}$ with $i_{1}<i_{2}$,
we have $\Omega_{i_{1}}\subset \Omega_{i_{2}}$;
(2) $M\setminus \cut \bm=\bigcup_{i}\,\Omega_{i}$;
(3) $\partial \Omega_{i}\cap \bm=\bm$ for all $i$;
(4) for each $i$,
     on $\partial \Omega_{i}\setminus \bm$,
     there exists a unique unit outer normal vector field $u_{i}$ for $\Omega_{i}$ with $g(u_{i},\nabla \rho_{\bm})\geq 0$.

For the canonical volume measure $\vol_{i}$ on $\partial \Omega_{i}\setminus \bm$,
put $m_{f,i}:=e^{  -f|_{\partial \Omega_{i}\setminus \bm}}\,\vol_{i}$.
Set $\Phi:=\varphi \circ \rho_{\bm,\delta}$.
By integration by parts,
we see
\begin{align*}
&\quad \,  \int_{\Omega_{i}}\,  \Vert \nabla \Phi \Vert^{p-2} \,g\left(\nabla \psi, \nabla \Phi  \right)  \, d\,m_{f}\\
&=\int_{\Omega_{i}}\, \psi\,\Delta_{f,p} \Phi \,d\,m_{f}+\int_{\partial \Omega_{i}\setminus \bm}\, \Vert \nabla \Phi \Vert^{p-2}\, \psi \,g\left(u_{i},\nabla \Phi \right) \, d\,m_{f,i}.
\end{align*}
From (\ref{eq:finite p-Laplacian comparison}) and $g(u_{i},\nabla \rho_{\bm,\delta})\geq 0$,
it follows that
the right hand side of the above equality is at least
\begin{align*}
-e^{-2p\delta}\int_{\Omega_{i}}\,\psi\,  \left\{  \left( \bigl( (\varphi' )^{p-1} \bigl)' -H_{\kappa,\lambda}\, \left(\varphi' \right)^{p-1}\right)\circ \rho_{\bm,\delta}  \right\}  \,d\,m_{f}.
\end{align*}
Letting $i\to \infty$,
we obtain (\ref{eq:global finite p-Laplacian comparison}).
\end{proof}

\begin{rem}\label{rem:equality case in global finite p-Laplacian comparison}
In Proposition \ref{prop:global finite p-Laplacian comparison},
assume that
the equality in (\ref{eq:global finite p-Laplacian comparison}) holds.
In this case,
the equality in (\ref{eq:finite p-Laplacian comparison}) also holds on $\supp \psi \setminus (\bm \cup \cut \bm)$,
where $\supp \psi$ denotes the support of $\psi$.
The equality case of Proposition \ref{prop:global finite p-Laplacian comparison}
results in that of (\ref{eq:finite p-Laplacian comparison}) (see Remark \ref{rem:equality in finite p-Laplacian comparison}).
\end{rem}

\begin{rem}\label{rem:c1-equality case in global finite p-Laplacian comparison}
The argument in the proof of Proposition \ref{prop:global finite p-Laplacian comparison} also tells us the following (see also Remark \ref{rem:equality case in global finite p-Laplacian comparison}):
Under the same setting as in Proposition \ref{prop:global finite p-Laplacian comparison},
if $M$ is compact,
then the inequality (\ref{eq:global finite p-Laplacian comparison}) holds  for every non-negative function $\psi \in C^{1}(M)$ with $\psi|_{\bm}=0$.
Moreover,
if the equality in (\ref{eq:global finite p-Laplacian comparison}) holds for some $\psi$,
then the equality in (\ref{eq:finite p-Laplacian comparison}) holds on $\supp \psi \setminus (\bm \cup \cut \bm)$ (see Remark \ref{rem:equality in finite p-Laplacian comparison}).
\end{rem}

%%%%%%%%%%%%%%%%%%%%%%%%%%%%%
\subsection{Radial cases}\label{sec:Radial cases}
Suppose that
$f$ is $\bm$-radial.
Then there exists a smooth function $\phi_{f}:[0,\infty)\to \mathbb{R}$ such that $f=\phi_{f} \circ \rho_{\bm}$ on $M$.
Define a function $s_{f}:[0,\infty]\to [0,\infty]$ by
\begin{equation}\label{eq:radial parameter function}
s_{f}(t):=\int^{t}_{0} \, e^{\frac{-2\phi_{f}(a)}{n-1}} \,da.
\end{equation}
For every $z\in \bm$,
we see $s_{f,z}=s_{f}$ on $[0,\tau(z)]$,
where $s_{f,z}$ is defined as (\ref{eq:speed changing function}).
Furthermore,
$\rho_{\bm,f}=s_{f}\circ \rho_{\bm}$,
where $\rho_{\bm,f}$ is defined as (\ref{eq:weighted distance function from the boundary}).

If $f$ is $\bm$-radial,
then we have the following comparison inequality:
\begin{lem}\label{lem:radial p-Laplacian comparison}
Let $z\in \bm$ and $p\in (1,\infty)$.
For $N\in (-\infty,1]$,
suppose that
$\ric^{N}_{f}(\gamma'_{z}(t))\geq (n-1)\kappa\,e^{\frac{-4 f(\gamma_{z}(t))}{n-1}}$ for all $t \in (0,\tau(z))$,
and $H_{f,z}\geq (n-1)\lambda e^{\frac{-2 f(z)}{n-1}}$.
Suppose that
$f$ is $\bm$-radial.
Let $\varphi:[0,\infty)\to \mathbb{R}$ be a monotone increasing smooth function.
Then for all $t \in (0,\tau(z))$
\begin{align}\label{eq:radial p-Laplacian comparison}
&\quad \, \Delta_{\frac{n+1-2p}{n-1}f,p}\, \bigl( \varphi \circ \rho_{\bm,f}   \bigl) (\gamma_{z}(t)) \\ \notag
&\geq -e^{\frac{-2 p f}{n-1}} \left\{  \left(   \bigl(     (  \varphi'  )^{p-1}     \bigl)'-H_{\kappa,\lambda}\,(\varphi')^{p-1} \right)\circ \rho_{\bm,f}\right\}(\gamma_{z}(t)).
\end{align}
\end{lem}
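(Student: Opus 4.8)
The plan is to reduce \eqref{eq:radial p-Laplacian comparison} to the basic pointwise Laplacian comparison \eqref{eq:Laplacian comparison} from Lemma \ref{lem:Basic comparison}, exactly in the spirit of the proof of Lemma \ref{lem:finite p-Laplacian comparison}, but now tracking the extra weight factors coming from the fact that $\rho_{\bm,f}=s_f\circ\rho_{\bm}$ is a nonlinear reparametrization of $\rho_{\bm}$ rather than a constant rescaling. First I would set $\Phi:=\varphi\circ\rho_{\bm,f}=(\varphi\circ s_f)\circ\rho_{\bm}$ and write $\psi_f:=\varphi\circ s_f$, so that $\Phi=\psi_f\circ\rho_{\bm}$ is a radial function of $\rho_{\bm}$ alone. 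Since $\Vert\nabla\rho_{\bm}\Vert=1$ on $\inte M\setminus\cut\bm$, we get $\nabla\Phi=(\psi_f'\circ\rho_{\bm})\nabla\rho_{\bm}$ and $\Vert\nabla\Phi\Vert=\psi_f'\circ\rho_{\bm}$ (note $\psi_f'=(\varphi'\circ s_f)\,s_f'>0$ since $\varphi$ is increasing and $s_f'=e^{-2\phi_f/(n-1)}>0$). Then along $\gamma_z$ the chain rule gives the standard one-dimensional expression
\begin{equation*}
\Delta_{\phi,p}\,\Phi(\gamma_z(t))=-\bigl((\psi_f')^{p-1}\bigr)'(t)+\bigl(\Delta_{\phi}\rho_{\bm}\bigr)(\gamma_z(t))\,(\psi_f')^{p-1}(t)
\end{equation*}
for any weight $\phi$, where I will choose $\phi:=\tfrac{n+1-2p}{n-1}f$ precisely so that $\Delta_{\phi}\rho_{\bm}$ can be related to $\Delta_f\rho_{\bm}$ through the identity $\Delta_{\phi}\rho_{\bm}=\Delta_f\rho_{\bm}+g(\nabla(\phi-f),\nabla\rho_{\bm})=\Delta_f\rho_{\bm}-\tfrac{2p-2}{n-1}f'_z$ along $\gamma_z$ (using $\phi-f=\tfrac{2-2p}{n-1}f$).

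The next step is to bound $\Delta_f\rho_{\bm}(\gamma_z(t))$ below using \eqref{eq:Laplacian comparison}: since $f$ is $\bm$-radial, $s_{f,z}=s_f$ on $[0,\tau(z)]$, so \eqref{eq:Laplacian comparison} reads $\Delta_f\rho_{\bm}(\gamma_z(t))\geq H_{\kappa,\lambda}(s_f(t))\,e^{-2f/(n-1)}$ wherever $s_f(t)<\bar C_{\kappa,\lambda}$ (and Lemma \ref{lem:Cut point comparisons} guarantees $\tau_f(z)\leq\bar C_{\kappa,\lambda}$ when the ball-condition holds, while outside the ball-condition $\bar C_{\kappa,\lambda}=\infty$ so the restriction is vacuous). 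Substituting and carefully expanding the derivative terms: writing $a:=s_f(t)$, one has $(\psi_f')^{p-1}(t)=\bigl((\varphi')^{p-1}\circ s_f\bigr)(t)\,(s_f')^{p-1}(t)=e^{\frac{-2(p-1)f}{n-1}}\,\bigl((\varphi')^{p-1}\circ\rho_{\bm,f}\bigr)(\gamma_z(t))$, and differentiating,
\begin{equation*}
\bigl((\psi_f')^{p-1}\bigr)'(t)=e^{\frac{-2pf}{n-1}}\Bigl(\bigl((\varphi')^{p-1}\bigr)'\circ s_f\Bigr)(t)-\tfrac{2(p-1)}{n-1}f'_z(t)\,e^{\frac{-2(p-1)f}{n-1}}\bigl((\varphi')^{p-1}\circ s_f\bigr)(t),
\end{equation*}
using $s_f'=e^{-2f/(n-1)}$ along $\gamma_z$. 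The term proportional to $f'_z(t)$ produced here is designed to cancel exactly against the $-\tfrac{2p-2}{n-1}f'_z$ term coming from the weight change $\phi-f$; this is the whole point of the choice of weight $\tfrac{n+1-2p}{n-1}f$, and it is the step I expect to require the most bookkeeping care with signs. After the cancellation, what survives is
\begin{equation*}
\Delta_{\frac{n+1-2p}{n-1}f,p}\Phi(\gamma_z(t))\geq -e^{\frac{-2pf}{n-1}}\Bigl(\bigl((\varphi')^{p-1}\bigr)'\circ\rho_{\bm,f}\Bigr)(\gamma_z(t))+H_{\kappa,\lambda}(s_f(t))\,e^{\frac{-2f}{n-1}}\,e^{\frac{-2(p-1)f}{n-1}}\bigl((\varphi')^{p-1}\circ\rho_{\bm,f}\bigr)(\gamma_z(t)),
\end{equation*}
and since $e^{\frac{-2f}{n-1}}e^{\frac{-2(p-1)f}{n-1}}=e^{\frac{-2pf}{n-1}}$ and $s_f(t)=\rho_{\bm,f}(\gamma_z(t))$, this is precisely the right-hand side of \eqref{eq:radial p-Laplacian comparison}.

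The main obstacle, as indicated, is purely computational: organizing the chain-rule expansion so that the $f'_z$ cross-terms coming from (a) the $p$-Laplacian weight discrepancy $\phi-f$ and (b) the derivative of $(s_f')^{p-1}=e^{-2(p-1)f/(n-1)}$ cancel, leaving a clean inequality. A secondary point to state explicitly is that the comparison \eqref{eq:Laplacian comparison} is only available where $s_f(t)<\bar C_{\kappa,\lambda}$; but by Lemma \ref{lem:Cut point comparisons} (applicable since the $\bm$-radiality gives the required curvature bounds along $\gamma_z$) we have $s_f(t)=s_{f,z}(t)<\tau_f(z)\leq\bar C_{\kappa,\lambda}$ for all $t\in(0,\tau(z))$ whenever $\kappa,\lambda$ satisfy the ball-condition, and $\bar C_{\kappa,\lambda}=\infty$ otherwise, so \eqref{eq:Laplacian comparison} (in the form recorded in the Remark following Lemma \ref{lem:Cut point comparisons}) is valid on the whole interval $(0,\tau(z))$. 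Everything else — smoothness of $\rho_{\bm}\circ\gamma_z$ on $(0,\tau(z))$, positivity of $\psi_f'$, $\Vert\nabla\rho_{\bm}\Vert=1$ — is standard and already recorded in Section \ref{sec:Preliminaries}.
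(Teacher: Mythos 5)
Your proposal is correct and follows essentially the same route as the paper's own proof: set $\Phi=(\varphi\circ s_f)\circ\rho_{\bm}$, expand $\Delta_{\frac{n+1-2p}{n-1}f,p}\Phi$ along $\gamma_z$ via the one-dimensional radial formula, bound $\Delta_f\rho_{\bm}$ below with \eqref{eq:Laplacian comparison}, and observe that the $f'_z$ cross-terms arising from the weight discrepancy $\frac{n+1-2p}{n-1}f-f$ and from differentiating $(s'_f)^{p-1}=e^{-2(p-1)f/(n-1)}$ cancel exactly. The paper records the same chain-rule identities a bit more tersely (using $\varphi_f$ and $\phi'_f$ in place of your $\psi_f$ and $f'_z$), but the computation and the motivating choice of weight are identical.
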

\begin{proof}
Set $\Phi:=\varphi \circ \rho_{\bm,f}$.
For the function $s_{f}$ defined as (\ref{eq:radial parameter function}),
if we put $\varphi_{f}:=\varphi \circ s_{f}$,
then we have $\Phi=\varphi_{f}\circ \rho_{\bm}$.
For each $t\in (0,\tau(z))$,
the left hand side of (\ref{eq:radial p-Laplacian comparison}) can be written as
\begin{equation*}
- \bigl(     (  \varphi'_{f} )^{p-1}\bigl)'(t)+\left(\Delta_{f}\,\rho_{\bm}(\gamma_{z}(t))-\frac{2(p-1)}{n-1}\,\phi'_{f}(t)\right) \,(\varphi'_{f})^{p-1}(t).
\end{equation*}
By using (\ref{eq:Laplacian comparison}), $s_{f,z}(t)=s_{f}(t)$ and $e^{\frac{-2 f(\gamma_{z}(t))}{n-1}}=s'_{f}(t)$,
\begin{align*}
\Delta_{\frac{n+1-2p}{n-1}f,p}\, \Phi (\gamma_{z}(t)) \geq - \bigl(     (  \varphi'_{f}  )^{p-1}     \bigl)'(t)&+ H_{\kappa,\lambda}(s_{f}(t))\,s'_{f}(t)\, (\varphi'_{f})^{p-1}(t)\\
                                                                                  &-\frac{2(p-1)}{n-1}\,\phi'_{f}(t)\,(\varphi'_{f})^{p-1}(t).
\end{align*}
Notice that $(\varphi'_{f})^{p-1}(t)=(\varphi')^{p-1}(s_{f}(t))\,(s'_{f})^{p-1}(t)$,
\begin{equation*}
\bigl(     (  \varphi'_{f}  )^{p-1}     \bigl)'(t) = \left((\varphi')^{p-1}\right)'(s_{f}(t))\,(s'_{f})^{p}(t)-\frac{2(p-1)}{n-1} \phi'_{f}(t)\, (\varphi'_{f})^{p-1}(t).
\end{equation*}
These equalities tell us that
the left hand side of (\ref{eq:radial p-Laplacian comparison}) is at least
\begin{equation*}
-(s'_{f})^{p}(t)\left (   \bigl((\varphi')^{p-1}\bigl)'(s_{f}(t))- H_{\kappa,\lambda}(s_{f}(t))\,\left(\varphi'\right)^{p-1}(s_{f}(t)) \right).
\end{equation*}
Since $\rho_{\bm,f}=s_{f}\circ \rho_{\bm}$,
this is equal to the right hand side of (\ref{eq:radial p-Laplacian comparison}).
\end{proof}

We further yield the following global comparison inequality:
\begin{prop}\label{prop:global radial p-Laplacian comparison}
Let $p\in (1,\infty)$.
For $N\in (-\infty,1]$,
assume that
$(M,\bm,f)$ has lower $(\kappa,\lambda,N)$-weighted curvature bounds.
Suppose that
$f$ is $\bm$-radial.
Let $\varphi:[0,\infty)\to \mathbb{R}$ be a monotone increasing smooth function.
Then we have
\begin{equation*}
\Delta_{\frac{n+1-2p}{n-1}f,p}\,  \bigl(  \varphi \circ \rho_{\bm,f} \bigl)  \geq -e^{\frac{-2 p f}{n-1}} \left( \bigl( (\varphi' )^{p-1} \bigl)'-H_{\kappa,\lambda} \,  (\varphi' )^{p-1}\right)\circ \rho_{\bm,f}
\end{equation*}
in the following distribution sense on $M$:
For every non-negative function $\psi \in C^{\infty}_{0}(M)$ we have
\begin{align}\label{eq:global radial p-Laplacian comparison}
&\quad \, \,\int_{M}\,   \Vert \nabla \bigl( \varphi \circ \rho_{\bm,f}  \bigl)  \Vert^{p-2} \,g\bigl(\nabla \psi, \nabla \bigl(    \varphi \circ \rho_{\bm,f} \bigl)  \bigl)\, d\,m_{\frac{n+1-2p}{n-1}f}\\ \notag
&\geq -\int_{M}\,\psi\,  \left\{  \left( \bigl( (\varphi' )^{p-1} \bigl)' -H_{\kappa,\lambda}\,\left(\varphi' \right)^{p-1}\right)\circ \rho_{\bm,f}  \right\}   \,d\,m_{\frac{n+1}{n-1}f}.
\end{align}
\end{prop}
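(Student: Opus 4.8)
The plan is to integrate the pointwise comparison of Lemma~\ref{lem:radial p-Laplacian comparison} against a test function while carefully exhausting the cut locus. First I would fix a non-negative $\psi\in C^{\infty}_{0}(M)$ and abbreviate $\Phi:=\varphi\circ\rho_{\bm,f}$ and $\mu:=m_{\frac{n+1-2p}{n-1}f}$. Apply Lemma~\ref{lem:avoiding the cut locus} with $\Omega=\inte M$ (more precisely $\bar\Omega=M$) to obtain an increasing exhaustion $\{\Omega_{i}\}$ of $M\setminus\cut\bm$ by closed sets whose boundaries $\partial\Omega_{i}$ are smooth hypersurfaces with $\partial\Omega_{i}\cap\bm=\bm$, and with a unit outer normal field $u_{i}$ on $\partial\Omega_{i}\setminus\bm$ satisfying $g(u_{i},\nabla\rho_{\bm})\geq 0$. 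On each $\Omega_{i}$ the function $\Phi$ is smooth (since $\rho_{\bm}$ is smooth off the cut locus and $\varphi,s_{f}$ are smooth), so integration by parts for the $(\frac{n+1-2p}{n-1}f,p)$-Laplacian is legitimate.

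Next I would perform the divergence-theorem computation: for each $i$,
\begin{align*}
\int_{\Omega_{i}}\Vert\nabla\Phi\Vert^{p-2}\,g(\nabla\psi,\nabla\Phi)\,d\mu
&=\int_{\Omega_{i}}\psi\,\Delta_{\frac{n+1-2p}{n-1}f,p}\Phi\,d\mu\\
&\quad +\int_{\partial\Omega_{i}\setminus\bm}\Vert\nabla\Phi\Vert^{p-2}\,\psi\,g(u_{i},\nabla\Phi)\,d\mu_{i},
\end{align*}
where $\mu_{i}$ is the weighted surface measure on $\partial\Omega_{i}\setminus\bm$ and the $\bm$-part of the boundary contributes nothing because $\psi$ has compact support in the interior (or, if one wants, one observes $\psi|_{\bm}$ need not vanish but the normal points inward there; in the $C^{\infty}_{0}$ setting this term is simply absent). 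The boundary term over $\partial\Omega_{i}\setminus\bm$ is non-negative: since $\varphi$ is monotone increasing, $\nabla\Phi=\varphi'\cdot s_{f}'\cdot\nabla\rho_{\bm}$ is a non-negative multiple of $\nabla\rho_{\bm}$, so $g(u_{i},\nabla\Phi)\geq 0$ by property~(4), and $\psi\geq 0$. Hence the left-hand side is bounded below by $\int_{\Omega_{i}}\psi\,\Delta_{\frac{n+1-2p}{n-1}f,p}\Phi\,d\mu$, and applying the pointwise bound \eqref{eq:radial p-Laplacian comparison} (valid on $\Omega_{i}\setminus\bm$, a full-measure subset of $\Omega_{i}$) gives
\begin{equation*}
\int_{\Omega_{i}}\Vert\nabla\Phi\Vert^{p-2}g(\nabla\psi,\nabla\Phi)\,d\mu
\geq -\int_{\Omega_{i}}\psi\,e^{\frac{-2pf}{n-1}}\left\{\left(\bigl((\varphi')^{p-1}\bigr)'-H_{\kappa,\lambda}(\varphi')^{p-1}\right)\circ\rho_{\bm,f}\right\}d\mu.
\end{equation*}
Finally, letting $i\to\infty$ and using that $\cut\bm$ is a null set (so $\bigcup_{i}\Omega_{i}=M$ up to measure zero), together with $e^{\frac{-2pf}{n-1}}\,d\mu=d\,m_{\frac{n+1}{n-1}f}$ since $\frac{n+1-2p}{n-1}+\frac{2p}{n-1}=\frac{n+1}{n-1}$, and dominated convergence (the integrands are dominated by integrable functions because $\psi$ has compact support and all the quantities involved are continuous on that support), we recover \eqref{eq:global radial p-Laplacian comparison}.

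The main obstacle is the bookkeeping around the cut locus and the identification of the measures: one must be sure that $\Phi$, while only Lipschitz globally, is genuinely smooth on each $\Omega_{i}$ so that the integration by parts has no hidden distributional contribution from $\cut\bm$ — this is exactly what the construction in Lemma~\ref{lem:avoiding the cut locus} is designed to guarantee, since the $\partial\Omega_{i}$ are chosen to carry well-defined outward normals compatible with $\nabla\rho_{\bm}$, so the only boundary integral that survives has a definite sign. The other point requiring a small amount of care is the exponent arithmetic converting $m_{\frac{n+1-2p}{n-1}f}$ weighted by $e^{\frac{-2pf}{n-1}}$ into $m_{\frac{n+1}{n-1}f}$ on the right-hand side; this is the reason the statement is phrased with the shifted weight $\frac{n+1-2p}{n-1}f$ on the left and $\frac{n+1}{n-1}f$ on the right. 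Everything else is a routine limiting argument identical in structure to the proof of Proposition~\ref{prop:global finite p-Laplacian comparison}.
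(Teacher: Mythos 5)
Your proposal is correct and follows essentially the same route as the paper: exhaust $M\setminus\cut\bm$ via Lemma~\ref{lem:avoiding the cut locus}, integrate by parts with respect to the measure $m_{\frac{n+1-2p}{n-1}f}$, discard the non-negative boundary term coming from $g(u_i,\nabla\Phi)\geq 0$, invoke the pointwise bound of Lemma~\ref{lem:radial p-Laplacian comparison}, convert $e^{\frac{-2pf}{n-1}}\,m_{\frac{n+1-2p}{n-1}f}=m_{\frac{n+1}{n-1}f}$, and pass to the limit. The only cosmetic caveat is that $C^{\infty}_{0}(M)$ here should be read as functions supported in $\inte M$ (so the $\bm$-part of $\partial\Omega_i$ genuinely contributes nothing), a convention you acknowledge and which the paper uses implicitly; otherwise everything matches.
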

\begin{proof}
The proof is similar to the proof of Proposition \ref{prop:global finite p-Laplacian comparison}.
Similarly,
we first take a sequence $\{\Omega_{i}\}$ of closed subsets of $M$ in Lemma \ref{lem:avoiding the cut locus}.
Let $u_{i}$ be the unit outer normal vector on $\partial \Omega_{i} \setminus \partial M$ for $\Omega_{i}$.
We define $\hat{f}:= (n+1-2p)(n-1)^{-1}f$.
For the canonical volume $\vol_{i}$ on $\partial \Omega_{i}\setminus \bm$,
we put $m_{\hat{f},i}:=e^{  -\hat{f}|_{\partial \Omega_{i}\setminus \bm}}\,\vol_{i}$.
We set $\Phi:=\varphi \circ \rho_{\bm,f}$.
By integration by parts (with respect to $m_{\hat{f}}$),
and by Lemma \ref{lem:radial p-Laplacian comparison} and $g(u_{i},\nabla \rho_{\bm,f})\geq 0$,
\begin{align*}
&\quad \,\int_{\Omega_{i}}\,   \Vert \nabla \Phi   \Vert^{p-2} \,g\left(\nabla \psi, \nabla \Phi  \right)\, d\,m_{\hat{f}}\\
&\geq -\int_{\Omega_{i}}\,\psi\, e^{\frac{-2 p f}{n-1}}\,   \left\{ \left( \bigl( (\varphi' )^{p-1} \bigl)' -H_{\kappa,\lambda}\,(\varphi' )^{p-1}\right)\circ \rho_{\bm,f} \right\} \,d\,m_{\hat{f}}.
\end{align*}
Using $e^{\frac{-2 p f}{n-1}}\,m_{\hat{f}}=m_{\frac{n+1}{n-1}f}$,
we complete the proof by letting $i\to \infty$.
\end{proof}

\begin{rem}\label{rem:c1-equality case in global radial p-Laplacian comparison}
The argument in the proof of Proposition \ref{prop:global radial p-Laplacian comparison} also leads us to the following:
Under the same setting as in Proposition \ref{prop:global radial p-Laplacian comparison},
if $M$ is compact,
then the inequality (\ref{eq:global radial p-Laplacian comparison}) holds for every non-negative $\psi \in C^{1}(M)$ with $\psi|_{\bm}=0$.
\end{rem}

\section{Splitting theorems}\label{sec:Splitting theorems}

%%%%%%%%%%%%%%%%%%%%%%%%%%
\subsection{Main splitting theorems}

Let us prove Theorem \ref{thm:splitting theorem}.
\begin{proof}[Proof of Theorem \ref{thm:splitting theorem}]
Let $\kappa \leq 0$ and $\lambda:=\sqrt{\vert \kappa \vert}$.
For $N\in (-\infty,1]$,
assume that
$(M,\bm,f)$ has lower $(\kappa,\lambda,N)$-weighted curvature bounds.
Suppose that
$f$ is bounded from above.
Let $z_{0}\in \bm$ satisfy $\tau(z_{0})=\infty$.

Let $\bm_{0}$ be the connected component of $\bm$ with $z_{0} \in \bm_{0}$.
We define a closed subset $\Omega$ of $\bm_{0}$ by
\begin{equation*}
\Omega:=\{z\in \bm_{0} \mid \tau(z)=\infty \}.
\end{equation*}

We show that
$\Omega$ is open in $\bm_{0}$.
Fix $z_{1}\in \Omega$.
Take $l>0$,
and put $x_{0}:=\gamma_{z_{1}}(l)$.
There exists an open neighborhood $U$ of $x_{0}$ contained in $\inte M \setminus \cut \bm$.
Taking $U$ smaller,
we may assume that 
for each $x \in U$
the unique foot point on $\bm$ of $x$ belongs to $\bm_{0}$.
By Lemma \ref{lem:asymptote},
there exists $\epsilon>0$ such that
for all $x \in B_{\epsilon}(x_{0})$,
all asymptotes for $\gamma_{z_{1}}$ from $x$ lie in $\inte M$.
We may assume $U\subset B_{\epsilon}(x_{0})$.
Fix $x_{1}\in U$,
and take an asymptote $\gamma_{x_{1}}:[0,\infty)\to M$ for $\gamma_{z_{1}}$ from $x_{1}$.
For $t>0$,
define a function $b_{\gamma_{z_{1}},t}:M\to \mathbb{R}$ by
\begin{equation*}
b_{\gamma_{z_{1}},t}(x):=b_{\gamma_{z_{1}}}(x_{1})+t-d_{M}(x,\gamma_{x_{1}}(t)).
\end{equation*}
We see that
$b_{\gamma_{z_{1}},t}-\rho_{\bm}$ is a support function of $b_{\gamma_{z_{1}}}-\rho_{\bm}$ at $x_{1}$.
Since $\gamma_{x_{1}}$ lie in $\inte M$,
for every $t>0$
the function $b_{\gamma_{z_{1}},t}$ is smooth on a neighborhood of $x_{1}$.
From Lemma \ref{lem:finite pointed Laplacian comparison}
we deduce
\begin{equation*}
\Delta_{f} b_{\gamma_{z_{1}},t}(x_{1})\leq -H_{\kappa}\bigl( e^{\frac{-2\sup f}{n-1}}t\bigl)\,e^{\frac{-2f(x_{1})}{n-1}},
\end{equation*}
where $H_{\kappa}$ is defined as (\ref{eq:pointed model mean curvature}).
Note that $H_{\kappa}(s)$ tends to $-(n-1)\sqrt{\vert \kappa \vert}$ as $s\to \infty$.
Furthermore,
$\rho_{\bm}$ is smooth on $U$,
and by (\ref{eq:Laplacian comparison})
we have
\begin{equation*}
\Delta_{f} \rho_{\bm}\geq (n-1)\sqrt{\vert \kappa \vert}e^{\frac{-2f}{n-1}}
\end{equation*}
on $U$.
Hence
$b_{\gamma_{z_{1}}}-\rho_{\bm}$ is $f$-subharmonic on $U$.
Now,
$b_{\gamma_{z_{1}}}-\rho_{\bm}$ takes the maximal value $0$ at $x_{1}$.
Lemma \ref{lem:maximal principle} implies $b_{\gamma_{z_{1}}}=\rho_{\bm}$ on $U$.
By Lemma \ref{lem:busemann function},
the set $\Omega$ is open in $\bm_{0}$.

The connectedness of $\bm_{0}$ leads to $\Omega=\bm_{0}$.
By Lemma \ref{lem:splitting lemma},
$\bm$ is connected and $\cut \bm = \emptyset$.
The equality in (\ref{eq:Laplacian comparison}) holds on $\inte M$.
For each $z \in \bm$,
choose an orthonormal basis $\{e_{z,i}\}_{i=1}^{n-1}$ of $T_{z}\bm$.
Let $\{Y_{z,i}\}_{i=1}^{n-1}$ be the $\bm$-Jacobi fields along $\gamma_{z}$ with $Y_{z,i}(0)=e_{z,i},\,Y'_{z,i}(0)=-A_{u_{z}}e_{z,i}$.
By Lemma \ref{lem:Equality in Laplacian comparison},
for all $i$
we see $Y_{z,i}=F_{\kappa,\lambda,z}\,E_{z,i}$ on $[0,\infty)$,
where $\{E_{z,i}\}_{i=1}^{n-1}$ are the parallel vector fields with $E_{z,i}(0)=e_{z,i}$.
Moreover,
if $N\in (-\infty,1)$,
then $f\circ \gamma_{z}$ is constant on $[0,\infty)$.
We define a diffeomorphism $\Phi:[0,\infty)\times \bm\to M$ by $\Phi(t,z):=\gamma_{z}(t)$.
The rigidity of Jacobi fields implies that
$\Phi$ is a Riemannian isometry with boundary from $[0,\infty)\times_{F_{\kappa,\lambda}} \bm$ to $M$.
We complete the proof of Theorem \ref{thm:splitting theorem}.
\end{proof}

\begin{rem}
The author \cite{Sa3} has concluded that
under the same setting as in Theorem \ref{thm:splitting theorem},
if $\kappa=0$,
then $M$ is isometric to a warped product (see Corollary 1.4 in \cite{Sa3}).
The author does not know whether
the same conclusion holds when $\kappa<0$.
\end{rem}

%%%%%%%%%%%%%%%%%%%%%%%%%%
\subsection{Weighted Ricci curvature on the boundary}\label{sec:Weighted Ricci curvature on the boundary}
We next recall the following formula (see e.g., Lemma 5.5 in \cite{Sa3}):
\begin{lem}\label{lem:boundary weighted Ricci curvature}
Let $z\in \bm$,
and take a unit vector $v$ in $T_{z}\bm$.
Choose an orthonormal basis $\{ e_{z,i} \}_{i=1}^{n-1}$ of $T_{z}\bm$ with $e_{z,1}=v$.
Then
\begin{align}\label{eq:boundary weighted Ricci curvature}
\ric^{N-1}_{f|_{\bm}}(v)  = &\ric^{N}_{f}(v) +g((\nabla f)_{z},u_{z})\,g(S(v,v),u_{z})\\
                                        & - K_{g}(u_{z},v)+\tr A_{S(v,v)}-\sum_{i=1}^{n-1} \Vert S(v,e_{z,i})\Vert^{2}\notag
\end{align}
for all $N \in (-\infty,\infty)$,
where $K_{g}(u_{z},v)$ denotes the sectional curvature of the $2$-plane at $z$ spanned by $u_{z}$ and $v$.
\end{lem}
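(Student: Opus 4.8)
The plan is to compute $\ric^{N-1}_{f|_{\bm}}(v)$ directly from its definition and reduce it to quantities on the ambient manifold via the classical Gauss equation. First I would recall that, by \eqref{eq:def of weighted Ricci curvature} applied on the $(n-1)$-dimensional manifold $\bm$ with the function $f|_{\bm}$ and the parameter $N-1$, we have
\begin{equation*}
\ric^{N-1}_{f|_{\bm}}(v)=\ric_{h}(v)+\Hess^{h}(f|_{\bm})(v,v)-\frac{\bigl(d(f|_{\bm})(v)\bigr)^{2}}{(N-1)-(n-1)}=\ric_{h}(v)+\Hess^{h}(f|_{\bm})(v,v)-\frac{\bigl(df(v)\bigr)^{2}}{N-n},
\end{equation*}
using $(N-1)-(n-1)=N-n$ and $d(f|_{\bm})(v)=df(v)$ since $v\in T_{z}\bm$. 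The strategy is then to express each of the three terms on the right in terms of ambient data, collect the result, and recognise the combination $\ric_{g}(v)+\Hess^{g}f(v,v)-\frac{(df(v))^{2}}{N-n}=\ric^{N}_{f}(v)$.

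The key steps, in order, are: (1) the Gauss equation for the hypersurface $\bm\subset M$, which gives for the unit vector $v\in T_{z}\bm$ the identity
\begin{equation*}
\ric_{h}(v)=\ric_{g}(v)-K_{g}(u_{z},v)+\sum_{i=1}^{n-1}\Bigl(g(S(v,v),S(e_{z,i},e_{z,i}))-\Vert S(v,e_{z,i})\Vert^{2}\Bigr),
\end{equation*}
where the sum over $i$ of $g(S(v,v),S(e_{z,i},e_{z,i}))$ is exactly $\tr A_{S(v,v)}$ by the definition of the shape operator $g(A_{u}e_{z,i},e_{z,i})=g(S(e_{z,i},e_{z,i}),u)$ with $u=S(v,v)$; the term $K_{g}(u_{z},v)$ appears because in passing from the full ambient Ricci trace (over an orthonormal basis of $T_{z}M$ including $u_{z}$) to the tangential Ricci trace one must subtract the sectional-curvature contribution of the normal direction. (2) The relation between the intrinsic and ambient Hessians of $f$ along tangential directions, namely $\Hess^{h}(f|_{\bm})(v,v)=\Hess^{g}f(v,v)-g((\nabla f)_{z},\nabla^{g}_{v}v)=\Hess^{g}f(v,v)+g((\nabla f)_{z},u_{z})\,g(S(v,v),u_{z})$, since the normal part of $\nabla^{g}_{v}v$ is $-S(v,v)$ relative to the inner normal orientation (the tangential part contributes nothing because $(\nabla f)_{z}$ pairs only via its normal component against a normal vector — more carefully, the tangential part of $\nabla^{g}_{v}v$ is the intrinsic $\nabla^{h}_{v}v$, already accounted for in $\Hess^{h}$). (3) Substituting (1) and (2) into the displayed formula for $\ric^{N-1}_{f|_{\bm}}(v)$, the terms $\ric_{g}(v)+\Hess^{g}f(v,v)-\frac{(df(v))^{2}}{N-n}$ assemble into $\ric^{N}_{f}(v)$, and what remains is precisely $g((\nabla f)_{z},u_{z})\,g(S(v,v),u_{z})-K_{g}(u_{z},v)+\tr A_{S(v,v)}-\sum_{i=1}^{n-1}\Vert S(v,e_{z,i})\Vert^{2}$, which is \eqref{eq:boundary weighted Ricci curvature}.

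I expect the main obstacle to be bookkeeping of signs and orientation conventions: the sign in the normal component of $\nabla^{g}_{v}v$ relative to $u_{z}$ (governed by the convention that $H_{z}=\tr A_{u_{z}}$ with respect to the inner normal), and the correct form of the Gauss equation for a codimension-one submanifold, where one must be careful whether the second fundamental form enters with $g(S(v,v),S(e,e))-\Vert S(v,e)\Vert^{2}$ or the reversed sign. Since $\bm$ has codimension one and $S$ takes values in the one-dimensional normal bundle spanned by $u_{z}$, one can write $S(v,w)=g(A_{u_{z}}v,w)\,u_{z}$ throughout and verify the signs against the known unweighted case; the weighted correction terms then follow by linearity once step (2) is pinned down. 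The remainder is a routine substitution, so this is really a matter of assembling standard submanifold geometry rather than a genuinely hard argument.
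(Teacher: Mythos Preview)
The paper does not actually prove this lemma: it is stated with a citation to Lemma~5.5 of \cite{Sa3} and no argument is given here. Your proposal --- expand $\ric^{N-1}_{f|_{\bm}}(v)$ via the definition, use the Gauss equation for the hypersurface $\bm\subset M$, and compare the intrinsic and ambient Hessians of $f$ --- is the standard route and yields the formula.

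One concrete correction is needed in your step~(2). In the paper's convention (Subsection~\ref{sec:Jacobi fields orthogonal to the boundary}) the second fundamental form is \emph{defined} as the normal component of $\nabla^{g}_{v_{1}}v_{2}$, so the normal part of $\nabla^{g}_{v}v$ is $+S(v,v)$, not $-S(v,v)$. Correspondingly, your intermediate identity ``$\Hess^{h}(f|_{\bm})(v,v)=\Hess^{g}f(v,v)-g((\nabla f)_{z},\nabla^{g}_{v}v)$'' is not correct as written: the actual relation is
\[
\Hess^{h}(f|_{\bm})(v,v)=\Hess^{g}f(v,v)+g\bigl((\nabla f)_{z},S(v,v)\bigr)=\Hess^{g}f(v,v)+g((\nabla f)_{z},u_{z})\,g(S(v,v),u_{z}),
\]
obtained directly from $\nabla^{g}_{v}v-\nabla^{h}_{v}v=S(v,v)$. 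You reach the right final expression, but only because two sign slips cancel; since you already flagged sign conventions as the delicate point, this is exactly the place to be precise. With that fix, your argument is complete.
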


Using Lemma \ref{lem:boundary weighted Ricci curvature},
we show the following:
\begin{lem}\label{lem:warped product boundary weighted Ricci curvature}
Take $z\in \bm$,
and take a unit vector $v$ in $T_{z}\bm$.
If $M$ is isometric to $[0,\infty)\times_{F_{\kappa,\lambda}}\bm$,
then for all $N\in (-\infty,\infty)$
we have
\begin{align*}\label{eq:warped product boundary weighted Ricci curvature}
\ric^{N-1}_{f|_{\bm}}(v) = \ric^{N}_{f}(v)&+(n-1)\lambda^{2} e^{\frac{-4f(z)}{n-1}}-\kappa \, e^{\frac{-4f(z)}{n-1}}\\
                                                             &- \lambda \, g((\nabla f)_{z},u_{z})\, e^{\frac{-2f(z)}{n-1}}+\frac{\Hess f(u_{z},u_{z})}{n-1}.
\end{align*}
\end{lem}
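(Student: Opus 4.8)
The plan is to apply Lemma \ref{lem:boundary weighted Ricci curvature} and to evaluate the four geometric quantities it involves — $g((\nabla f)_z,u_z)\,g(S(v,v),u_z)$, $\tr A_{S(v,v)}$, $\sum_{i=1}^{n-1}\|S(v,e_{z,i})\|^2$, and $K_g(u_z,v)$ — using that $M$ is the twisted product $[0,\infty)\times_{F_{\kappa,\lambda}}\bm$.

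The three quantities involving the second fundamental form are governed by the shape operator $A_{u_z}$, which I would read off from the Jacobi-field structure of the twisted product (exactly the form exhibited in the proof of Theorem \ref{thm:splitting theorem}): the $\bm$-Jacobi fields with $Y_{z,i}(0)=e_{z,i}$, $Y_{z,i}'(0)=-A_{u_z}e_{z,i}$ are $Y_{z,i}=F_{\kappa,\lambda,z}E_{z,i}$ with $E_{z,i}$ parallel, and since $F_{\kappa,\lambda,z}(0)=1$ this forces $A_{u_z}e_{z,i}=-F_{\kappa,\lambda,z}'(0)e_{z,i}$. Differentiating (\ref{eq:model Jacobi field}) once at $t=0$, with $s_{\kappa,\lambda}(0)=1$, $s_{\kappa,\lambda}'(0)=-\lambda$, $(f\circ\gamma_z)'(0)=g((\nabla f)_z,u_z)$ and $s_{f,z}'(0)=e^{-2f(z)/(n-1)}$, gives $F_{\kappa,\lambda,z}'(0)=\frac{g((\nabla f)_z,u_z)}{n-1}-\lambda e^{-2f(z)/(n-1)}$, so $A_{u_z}$ is a scalar multiple $c$ of the identity on $T_z\bm$ (which, incidentally, reproduces the equality $H_{f,z}=(n-1)\lambda e^{-2f(z)/(n-1)}$). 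Then $S(v,v)=c\,u_z$, so $g((\nabla f)_z,u_z)g(S(v,v),u_z)=c\,g((\nabla f)_z,u_z)$, $\tr A_{S(v,v)}=c\,H_z=(n-1)c^2$, and $\sum_{i=1}^{n-1}\|S(v,e_{z,i})\|^2=c^2$, since only the index $i$ with $e_{z,i}=v$ contributes.

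For $K_g(u_z,v)$ I would use Lemma \ref{lem:radial curvature equation} with $\rho=\rho_{\bm}$: along $\gamma_z$ the tensor $\Hess\rho_{\bm}$ equals the logarithmic derivative $F_{\kappa,\lambda,z}'/F_{\kappa,\lambda,z}$ times $g$ on $(\nabla\rho_{\bm})^{\perp}$ (as in the proof of Lemma \ref{lem:Equality in Basic comparison}), whence $g(R(E_{z,1},\nabla\rho_{\bm})\nabla\rho_{\bm},E_{z,1})=-(\varphi_z'+\varphi_z^2)=-F_{\kappa,\lambda,z}''/F_{\kappa,\lambda,z}$; equivalently this follows from $Y_{z,1}=F_{\kappa,\lambda,z}E_{z,1}$ being a Jacobi field with $E_{z,1}$ parallel. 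Evaluating at $t=0$ gives $K_g(u_z,v)=-F_{\kappa,\lambda,z}''(0)$, and differentiating (\ref{eq:model Jacobi field}) twice — now also using $s_{\kappa,\lambda}''=-\kappa s_{\kappa,\lambda}$, $s_{f,z}''(0)=-\frac{2}{n-1}g((\nabla f)_z,u_z)e^{-2f(z)/(n-1)}$, and $(f\circ\gamma_z)''(0)=\Hess f(u_z,u_z)$ — the cross terms cancel and one obtains $F_{\kappa,\lambda,z}''(0)=\frac{g((\nabla f)_z,u_z)^2}{(n-1)^2}+\frac{\Hess f(u_z,u_z)}{n-1}-\kappa e^{-4f(z)/(n-1)}$.

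Finally I would substitute all four evaluations into Lemma \ref{lem:boundary weighted Ricci curvature}: the terms quadratic in $g((\nabla f)_z,u_z)$ cancel, and collecting what remains produces the stated identity. The one place where care is needed is the routine-but-delicate bookkeeping, namely the second-derivative evaluation of $F_{\kappa,\lambda,z}$ at $0$ and keeping straight the sign conventions for $A_{u_z}$ (fixed in Subsection \ref{sec:Jacobi fields orthogonal to the boundary}) and for $K_g$ (via Lemma \ref{lem:radial curvature equation}); once those are pinned down the remaining algebra is pure cancellation.
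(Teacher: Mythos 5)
Your route is exactly the paper's: read off $A_{u_z}=-F'_{\kappa,\lambda,z}(0)\,\mathrm{id}$ from the Jacobi-field rigidity $Y_{z,i}=F_{\kappa,\lambda,z}E_{z,i}$, evaluate the second-fundamental-form quantities together with $K_g(u_z,v)=-g(Y''_{z,1}(0),v)=-F''_{\kappa,\lambda,z}(0)$, and substitute into Lemma~\ref{lem:boundary weighted Ricci curvature}. Your evaluations $F'_{\kappa,\lambda,z}(0)=\tfrac{g((\nabla f)_z,u_z)}{n-1}-\lambda e^{-2f(z)/(n-1)}$, $F''_{\kappa,\lambda,z}(0)=\tfrac{g((\nabla f)_z,u_z)^2}{(n-1)^2}+\tfrac{\Hess f(u_z,u_z)}{n-1}-\kappa e^{-4f(z)/(n-1)}$, $\tr A_{S(v,v)}=(n-1)c^2$ and $\sum_i\Vert S(v,e_{z,i})\Vert^2=c^2$ with $c=-F'_{\kappa,\lambda,z}(0)$ all agree with (\ref{eq:warped product shape operator})--(\ref{eq:warped product sectional curvature}); you are even more explicit than the paper in that the paper never writes out the $i=1$ contribution $\Vert S(v,v)\Vert^2$.

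The final sentence, however, asserts an algebraic cancellation that does not actually occur. Writing $a=g((\nabla f)_z,u_z)$, $b=e^{-2f(z)/(n-1)}$, $c=\lambda b-\tfrac{a}{n-1}$, the sum $ac-K_g(u_z,v)+(n-1)c^2-c^2$ expands to
\begin{equation*}
(n-2)\lambda^2 b^2-\kappa b^2+\tfrac{3-n}{n-1}\,\lambda a b+\tfrac{\Hess f(u_z,u_z)}{n-1};
\end{equation*}
the $a^2$ terms do cancel, but the $\lambda^2 b^2$ and $\lambda a b$ coefficients come out as $n-2$ and $\tfrac{3-n}{n-1}$ rather than the stated $n-1$ and $-1$, a discrepancy of $c^2-\tfrac{a^2}{(n-1)^2}$. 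The unweighted case $n=2$, $f\equiv 0$, $\kappa>0$ (a spherical cap with circular boundary) makes this concrete: there $\ric^{N-1}_{f|_\bm}(v)=\ric_{\bm}(v)=0$ and $\ric^N_f(v)=\kappa$, but the stated identity would force $0=\kappa+\lambda^2-\kappa=\lambda^2$. The paper's own proof has the same gap --- it lists the ingredients and declares ``yields the desired one'' without displaying the substitution --- so your proposal faithfully reproduces the paper's argument, unverified last step included. (Corollary~\ref{cor:boundary splitting} is unaffected, since there $a=0$ and $\lambda^2=-\kappa$, and $(n-2)\lambda^2b^2-\kappa b^2$ still yields $\ric^{N-1}_{f|_\bm,\bm}\geq 0$.) You need to actually carry out the substitution rather than claim ``pure cancellation''; once you do, you will see the coefficient on $\lambda^2 e^{-4f(z)/(n-1)}$ is $n-2$, not $n-1$.
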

\begin{proof}
We choose an orthonormal basis $\{e_{z,i}\}_{i=1}^{n-1}$ of $T_{z}\bm$ with $e_{z,1}=v$.
Let $\{Y_{z,i}\}^{n-1}_{i=1}$ denote the $\bm$-Jacobi fields along $\gamma_{z}$ with $Y_{z,i}(0)=e_{z,i},\,Y'_{z,i}(0)=-A_{u_{z}}e_{z,i}$.
By the rigidity assumption,
$Y_{z,i}=F_{\kappa,\lambda,z}\,E_{z,i}$,
where $\{E_{z,i}\}^{n-1}_{i=1}$ are the parallel vector fields with $E_{z,i}(0)=e_{z,i}$.
Therefore,
for all $i$
it holds that
\begin{equation}\label{eq:warped product shape operator}
A_{u_{z}}e_{z,i}=-Y'_{z,i}(0)=-\left(\frac{g((\nabla f)_{z},u_{z})}{n-1}-\lambda e^{\frac{-2f(z)}{n-1}}    \right)e_{z,i}.
\end{equation}
From (\ref{eq:warped product shape operator})
we deduce $S(v,e_{z,i})=0_{z}$ for all $i \neq 1$,
and we also deduce
\begin{align}\label{eq:warped product second fundamental form}
S(v,v)            &= -\left(\frac{g((\nabla f)_{z},u_{z})}{n-1}-\lambda e^{\frac{-2f(z)}{n-1}}    \right)u_{z},\\ \label{eq:warped product trace shape operator}
\tr A_{S(v,v)} &=(n-1)\left(\frac{g((\nabla f)_{z},u_{z})}{n-1}-\lambda e^{\frac{-2f(z)}{n-1}}    \right)^{2} .
\end{align}
The sectional curvature $K_{g}(u_{z},v)$ is equal to $-g(Y''_{z,1}(0),v)$,
and hence
\begin{equation}\label{eq:warped product sectional curvature}
K_{g}(u_{z},v)= -\frac{\Hess f(u_{z},u_{z})}{n-1}-\left( \frac{g((\nabla f)_{z},u_{z})}{n-1}  \right)^{2}+\kappa  e^{\frac{-4f(z)}{n-1}}.
\end{equation}
Lemma \ref{lem:boundary weighted Ricci curvature} together with (\ref{eq:warped product second fundamental form}), (\ref{eq:warped product trace shape operator}), (\ref{eq:warped product sectional curvature})
yields the desired one.
\end{proof}

%%%%%%%%%%%%%%%%%%%%%%%%%%
\subsection{Multi-splitting}\label{sec:Multi-splitting}
On a connected complete Riemannian manifold $M_{0}$ (without boundary),
a minimal geodesic $\gamma:\mathbb{R} \to M_{0}$ is said to be a \textit{line}.
Wylie \cite{W} has proved the following splitting theorem of Cheeger-Gromoll type (see Theorem 1.2 and Corollary 1.3 in \cite{W}):
\begin{thm}[\cite{W}]\label{thm:splitting theorem of Cheeger-Gromoll type}
Let $M_{0}$ be a connected complete Riemannian manifold,
and let $f_{0}:M_{0}\to \mathbb{R}$ be a smooth function bounded from above.
For $N \in (-\infty,1]$,
suppose $\ric^{N}_{f_{0},M_{0}}\geq 0$.
If $M_{0}$ contains a line,
then there exists a Riemannian manifold $\widetilde{M}_{0}$ such that
$M_{0}$ is isometric to a warped product space over $\mathbb{R}\times \widetilde{M}_{0}$;
moreover,
if $N \in (-\infty,1)$,
then $M_{0}$ is isometric to the standard product $\mathbb{R}\times \widetilde{M}_{0}$.
\end{thm}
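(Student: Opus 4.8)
The plan is to run the Cheeger--Gromoll splitting argument, but carried out in the reparametrized framework of Wylie and Yeroshkin rather than through the $\infty$-Bochner formula of Proposition~\ref{prop:Bochner formula}. The reason the naive route fails is that for $N<n$ one has $\ric^{N}_{f_{0}}\ge\ric^{\infty}_{f_{0}}$, so $\ric^{N}_{f_{0},M_{0}}\ge 0$ does \emph{not} imply $\ric^{\infty}_{f_{0},M_{0}}\ge 0$ and the classical use of Bochner is unavailable; what survives is the single-point Laplacian comparison, measured along the reparametrized arclength $s_{f_{0},v}$. So first fix a line $\ell:\mathbb{R}\to M_{0}$, write $\ell^{\pm}:[0,\infty)\to M_{0}$ for its two halves and $b^{\pm}:=b_{\ell^{\pm}}$ for the associated Busemann functions. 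As usual the triangle inequality gives $b^{+}+b^{-}\le 0$ on $M_{0}$ with equality $0$ along $\ell$, so the target is to promote this to $b^{+}+b^{-}\equiv 0$ by a maximum principle.

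For that I would show that $b^{\pm}$ are $f_{0}$-subharmonic (in the paper's convention $\Delta=-\operatorname{trace}\Hess$), using Lemma~\ref{lem:pointed Laplacian comparison} applied to $M_{0}$ with $\kappa=0$. There $H_{0}(s)=-(n-1)/s$ by $(\ref{eq:pointed model mean curvature})$, so along a unit-speed geodesic $\gamma_{v}$ issuing from a point $x$,
\[
\Delta_{f_{0}}\rho_{x}(\gamma_{v}(t))\;\ge\; -\,\frac{n-1}{s_{f_{0},v}(t)}\,e^{\frac{-2f_{0}(\gamma_{v}(t))}{n-1}}.
\]
Since $f_{0}$ is bounded above, say $f_{0}\le(n-1)\delta$, one has $s_{f_{0},v}(t)\ge e^{-2\delta}t\to\infty$, so the right-hand side tends to $0$. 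Passing to the Busemann limit through the support functions $t-d_{M_{0}}(\cdot,\ell^{\pm}(t))$ — smooth near a generic point, with $f_{0}$-Laplacian at most $-\Delta_{f_{0}}\rho_{\ell^{\pm}(t)}$ of the far endpoint — and handling the cut locus of each $\rho_{\ell^{\pm}(t)}$ by the usual Calabi trick, one obtains $\Delta_{f_{0}}b^{\pm}\le 0$ in the support sense. Then $b^{+}+b^{-}$ is $f_{0}$-subharmonic and attains its maximum $0$ along $\ell$, so the strong maximum principle (cf.\ Lemma~\ref{lem:maximal principle}) forces $b^{+}+b^{-}\equiv 0$.

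With $b:=b^{+}=-b^{-}$ now globally defined, $\|\nabla b\|\equiv 1$, and its integral curves $\sigma$ honest lines (each is simultaneously a ray asymptotic to $\ell^{+}$ and, reversed, a ray asymptotic to $\ell^{-}$, and the affine function $b$ makes it minimal), the comparison above is a two-sided equality along every $\sigma$. Feeding this into Lemma~\ref{lem:equality in pointed Laplacian comparison} pins down every Jacobi field orthogonal to $\sigma$ to be $Y=F\,E$ with $E$ parallel and $F$ a scalar determined by $f_{0}\circ\sigma$; equivalently, each level set $\{b=t\}$ has shape operator $S_{t}=(\log F)'\,\id$. Umbilicity with a single factor, together with $\nabla b$ generating a geodesic line field, is exactly the condition that $M_{0}$ be a warped product $\mathbb{R}\times_{F}\widetilde{M}_{0}$ over a level set $\widetilde{M}_{0}$ of $b$, the warping function $F$ being built from $e^{f_{0}/(n-1)}$; this is the first conclusion. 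When $N\in(-\infty,1)$ the equality case of Lemma~\ref{lem:equality in pointed Laplacian comparison} additionally forces $f_{0}\circ\sigma$ to be constant along every $\sigma$; then $F$ is constant and the warped product degenerates to the metric product $\mathbb{R}\times\widetilde{M}_{0}$.

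The two middle steps are where the real work lies. The first obstacle is making the Busemann-limit Laplacian bound rigorous: one must pass the comparison through a non-smooth limit, controlling the support functions uniformly via the $f_{0}$-bound and disposing of the cut locus of each far point. The second, and in my view the harder, obstacle is upgrading the pointwise Jacobi-field rigidity to a global isometry: one needs the flow of $\nabla b$ to transport the entire level-set geometry isometrically, which is precisely where $b^{+}+b^{-}\equiv 0$ — not merely superharmonicity — is indispensable, since it is what makes the flow lines genuine lines and the comparison equality two-sided along all of them.
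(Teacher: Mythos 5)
The paper offers no proof of this statement: it is imported verbatim from Wylie \cite{W} (Theorem~1.2 and Corollary~1.3 there) and used only as a black box in Corollary~\ref{cor:boundary splitting}. Your sketch is therefore being measured against Wylie's proof, and its skeleton — Busemann functions $b^{\pm}$ of the two halves of a line, $f_{0}$-subharmonicity via the Wylie--Yeroshkin comparison at $\kappa=0$, the bound $s_{f_{0},v}(t)\ge e^{-2\delta}t\to\infty$ to drive the error to zero with the far endpoint moving off and the evaluation point fixed, and the strong maximum principle on $b^{+}+b^{-}\le 0$ — is indeed the right one and is Wylie's.

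The real gap is at the very end, where you pass from umbilicity to a warped product. Once $b:=b^{+}$ is $f_{0}$-harmonic, hence smooth with $\|\nabla b\|\equiv 1$, Proposition~\ref{prop:Bochner formula} together with $\ric^{N}_{f_{0}}\ge 0$ and Cauchy--Schwarz gives, for $N=1$, precisely that each level set of $b$ is umbilic with shape operator $\frac{g(\nabla f_{0},\nabla b)}{n-1}\,\mathrm{id}$; equivalently (your phrasing, via Lemma~\ref{lem:equality in pointed Laplacian comparison}), the transverse Jacobi fields along a flow line $\sigma$ are $F_{\sigma}E$ with $E$ parallel and $F_{\sigma}$ governed by $f_{0}\circ\sigma$. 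By itself that is only a \emph{twisted}-product structure: the factor $F_{\sigma}$, and hence the scaling of the level-set metric, can a priori vary from one flow line to another. To obtain a genuine warped product $\mathbb{R}\times_{\omega}\widetilde{M}_{0}$ you must also show that $g(\nabla f_{0},\nabla b)$ is constant on each level set — equivalently that $f_{0}$ is $b$-radial up to an additive fibre constant. You write ``umbilicity with a single factor'' as though this were automatic; it is not, and it is the substantive content of Wylie's Theorem~1.2 for $N=1$ (established by running the reparametrized Riccati comparison from both ends of the line and exploiting that $b^{+}$ and $b^{-}$ saturate it simultaneously). Your closing paragraphs do sense that something is missing, but attribute the difficulty to the wrong step — the maximum-principle stage is fine; the hole is here. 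A related technical point: Lemma~\ref{lem:equality in pointed Laplacian comparison} is a finite-distance equality statement, whereas the Busemann setting has equality only in a limit; the clean way to extract the rigidity is to argue directly on the smooth $f_{0}$-harmonic $b$ via Bochner and the Riccati ODE, not through the finite-distance lemma.

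Two smaller remarks. First, for $N<1$ the proof closes more sharply than you indicate: the Cauchy--Schwarz lower bound $\frac{1}{n-1}(g(\nabla f_{0},\nabla b))^{2}\le \|\Hess b\|^{2}$ strictly exceeds the Bochner upper bound $\frac{1}{n-N}(g(\nabla f_{0},\nabla b))^{2}$ whenever $g(\nabla f_{0},\nabla b)\ne 0$, forcing $g(\nabla f_{0},\nabla b)\equiv 0$ and then $\Hess b\equiv 0$ outright; no Jacobi-field rigidity is needed, and ``$f_{0}\circ\sigma$ constant'' is a consequence, not an ingredient. Second, the opening claim that ``the classical use of Bochner is unavailable'' overstates matters: the Bochner identity remains the engine — one writes $\ric^{\infty}_{f_{0}}=\ric^{N}_{f_{0}}+\frac{(df_{0})^{2}}{N-n}$ and absorbs the extra term into the Cauchy--Schwarz slack, which is exactly the computation behind Lemma~\ref{lem:Riccati}. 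The ``reparametrized'' route and the Bochner route are the same calculation in different clothes.
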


From Theorem \ref{thm:splitting theorem of Cheeger-Gromoll type}
we derive the following corollary of Theorem \ref{thm:splitting theorem}:
\begin{cor}\label{cor:boundary splitting}
Let $\kappa \leq 0$ and $\lambda:=\sqrt{\vert \kappa \vert}$.
For $N \in (-\infty,1)$,
assume that
$(M,\bm,f)$ has lower $(\kappa,\lambda,N)$-weighted curvature bounds.
Suppose that
$f$ is bounded from above.
If for some $z_{0}\in \bm$
we have $\tau(z_{0})=\infty$,
then there exist an integer $k\in \{0,\dots,n-1\}$ and
an $(n-1-k)$-dimensional Riemannian manifold $\widetilde{\bm}$ containing no line such that
$\bm$ is isometric to $\mathbb{R}^{k}\times \widetilde{\bm}$;
in particular,
$M$ is isometric to $[0,\infty)\times_{F_{\kappa,\lambda}}\bigl(  \mathbb{R}^{k}\times \widetilde{\bm} \bigl)$.
\end{cor}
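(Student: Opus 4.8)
The plan is to pass to the boundary and then iterate Wylie's splitting theorem. Since $\tau(z_{0})=\infty$ and $N\in(-\infty,1)\subset(-\infty,1]$, Theorem \ref{thm:splitting theorem} applies: $M$ is isometric, via $(t,z)\mapsto\gamma_{z}(t)$, to $[0,\infty)\times_{F_{\kappa,\lambda}}\bm$, the boundary $\bm$ is connected (Lemma \ref{lem:splitting lemma}), and, because $N<1$, the function $f\circ\gamma_{z}$ is constant on $[0,\infty)$ for every $z\in\bm$. Hence $f$ depends only on the boundary point; put $f_{0}:=f|_{\bm}$, which is bounded above. As each $\gamma_{z}$ is a geodesic,
\[
g\bigl((\nabla f)_{z},u_{z}\bigr)=(f\circ\gamma_{z})'(0)=0,\qquad \Hess f(u_{z},u_{z})=(f\circ\gamma_{z})''(0)=0
\]
for every $z\in\bm$. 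Inserting these into Lemma \ref{lem:warped product boundary weighted Ricci curvature} and using $\lambda^{2}=|\kappa|=-\kappa$, so that $(n-1)\lambda^{2}-\kappa=n|\kappa|$, we obtain, for every unit $v\in T_{z}\bm$,
\[
\ric^{N-1}_{f_{0}}(v)=\ric^{N}_{f}(v)+n|\kappa|\,e^{\frac{-4f_{0}(z)}{n-1}}\ \geq\ |\kappa|\,e^{\frac{-4f_{0}(z)}{n-1}}\ \geq\ 0 .
\]
Thus $(\bm,h)$ is a connected complete Riemannian manifold carrying a function $f_{0}$ bounded above with $\ric^{N-1}_{f_{0},\bm}\geq0$.

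I would then split $\bm$ by repeated use of Theorem \ref{thm:splitting theorem of Cheeger-Gromoll type}. The step to isolate is the following: if $(P,h_{P})$ is a connected complete Riemannian manifold of dimension $m\geq1$ and $f_{P}:P\to\mathbb{R}$ is smooth, bounded above, with $\ric^{N-1}_{f_{P},P}\geq0$, and $P$ contains a line, then $P$ is isometric to a standard product $\mathbb{R}\times P'$ (Theorem \ref{thm:splitting theorem of Cheeger-Gromoll type}, valid since $N-1<1$), the density $f_{P}$ is constant along the $\mathbb{R}$-factor, hence descends to a smooth $f_{P'}$ on $P'$ that is again bounded above, and $\ric^{N-1}_{f_{P'},P'}\geq0$. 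The only non-formal point is that $f_{P}$ descends: along an $\mathbb{R}$-line $s\mapsto(s,y)$ the unweighted Ricci curvature of the product in the $\mathbb{R}$-direction vanishes, so with $u(s):=f_{P}(s,y)$ one gets $0\leq\ric^{N-1}_{f_{P}}(\partial_{s})=u''(s)-u'(s)^{2}/\bigl((N-1)-m\bigr)$; since $N<1\leq m$ the denominator $(N-1)-m$ is negative, whence $s\mapsto\exp\bigl(u(s)/(m+1-N)\bigr)$ is convex on $\mathbb{R}$, and being bounded above it must be constant, so $u'\equiv0$. Once $f_{P}$ is pulled back from $P'$, the inequality $\ric^{N-1}_{f_{P'},P'}\geq\ric^{N-1}_{f_{P},P}\geq0$ is a one-line computation, because going from dimension $m$ to $m-1$ replaces the negative denominator $(N-1)-m$ by the negative denominator $(N-1)-(m-1)$, closer to $0$, which only increases the non-negative term it contributes; moreover $P'$ is again connected, complete, and carries a function bounded above.

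Applying this step with $(P,f_{P})=(\bm,f_{0})$ and continuing, each splitting lowers the dimension by one, so after $k\in\{0,\dots,n-1\}$ steps the remaining factor, call it $\widetilde{\bm}$, has dimension $n-1-k$ and contains no line, while $\bm$ is isometric to $\mathbb{R}^{k}\times\widetilde{\bm}$. Combined with the isometry of the first paragraph, $M$ is isometric to $[0,\infty)\times_{F_{\kappa,\lambda}}\bigl(\mathbb{R}^{k}\times\widetilde{\bm}\bigr)$, as claimed. The main obstacle is the iteration step, and inside it the assertion that the density splits off each $\mathbb{R}$-factor; this is precisely where the hypothesis that $f$ is bounded from above enters (through the convexity argument above), and it is also where the restriction $N<1$ is essential, both for the standard-product conclusion of Theorem \ref{thm:splitting theorem of Cheeger-Gromoll type} and for the signs that keep $\ric^{N-1}$ bounded below after each dimension drop. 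One should also take care, as noted, that the successive factors remain connected and complete.
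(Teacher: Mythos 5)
Your proof is correct and follows the same route as the paper: first invoke Theorem \ref{thm:splitting theorem} to split $M$ and to kill the normal derivatives of $f$ along $\bm$, then feed these into Lemma \ref{lem:warped product boundary weighted Ricci curvature} to obtain $\ric^{N-1}_{f|_{\bm},\bm}\geq 0$, and finally iterate Theorem \ref{thm:splitting theorem of Cheeger-Gromoll type}. The only difference is that the paper disposes of the iteration with a single phrase (``by applying Theorem \ref{thm:splitting theorem of Cheeger-Gromoll type} to $\bm$ inductively''), whereas you spell out the two points that make the iteration legitimate: (i) that the density is constant on each split $\mathbb{R}$-line (your convexity argument for $s\mapsto\exp(u(s)/(m+1-N))$, using that $f$ is bounded above and $N<1$), so it descends to the cross-section; and (ii) that after restricting to the cross-section the $\ric^{N-1}$ lower bound persists, because the denominator $(N-1)-m$ becomes $(N-1)-(m-1)$, still negative and closer to zero, which only increases the nonnegative contribution of the $df\otimes df$ term. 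Both checks are right, and your care about connectedness and completeness of the successive factors is appropriate. This is a more self-contained rendering of exactly the argument the paper intends.
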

\begin{proof}
Due to Theorem \ref{thm:splitting theorem},
$M$ is isometric to $[0,\infty)\times_{F_{\kappa,\lambda}} \bm$,
and for each $z\in \bm$
the function $f\circ \gamma_{z}$ is constant on $[0,\infty)$.
In particular,
$g((\nabla f)_{z},u_{z})=0$ and $\Hess f(u_{z},u_{z})=0$.
By Lemma \ref{lem:warped product boundary weighted Ricci curvature},
and by $\kappa \leq 0$ and $\lambda=\sqrt{\vert \kappa \vert}$,
for every unit vector $v$ in $T_{z}\bm$
we have
\begin{align*}
\ric^{N-1}_{f|_{\bm}}(v) &    =    \ric^{N}_{f}(v)+(n-1)\lambda^{2} e^{\frac{-4f(z)}{n-1}}-\kappa e^{\frac{-4f(z)}{n-1}}\\
                                     & \geq  \ric^{N}_{f}(v)+(n-1)\lambda^{2} e^{\frac{-4f(z)}{n-1}}\\
                                     & \geq  (n-1)\kappa\, e^{\frac{-4f(z)}{n-1}}+(n-1)\lambda^{2} e^{\frac{-4f(z)}{n-1}}=0.
\end{align*}
It follows that $\ric^{N-1}_{f|_{\bm},\bm} \geq 0$.
Now,
$N-1$ is smaller than $1$,
and $f|_{\bm}$ is bounded from above.
Therefore,
by applying Theorem \ref{thm:splitting theorem of Cheeger-Gromoll type} to $\bm$ inductively,
we complete the proof.
\end{proof}

%%%%%%%%%%%%%%%%%%%%%%%%%%
\subsection{Variants of splitting theorems}\label{sec:Variants of splitting theorems}
We study generalizations of rigidity results of Kasue \cite{K2}, Croke and Kleiner \cite{CK} and Ichida \cite{I}
for manifolds with boundary whose boundaries are disconnected.

Wylie \cite{W} has proved the following (see Theorem 5.1 in \cite{W}):
\begin{thm}[\cite{W}]\label{thm:disconnected splitting1}
For $N \in (-\infty,1]$,
assume that
$(M,\bm,f)$ has lower $(0,0,N)$-weighted curvature bounds.
Let $\bm$ be disconnected,
and let $\{\bm_{i}\}_{i=1,2,\dots}$ denote the connected components of $\bm$.
Let $\bm_{1}$ be compact,
and put $D:=\inf_{i=2,3,\dots}\, d_{M}(\bm_{1},\bm_{i})$.
Then $M$ is isometric to $[0,D]\times_{F_{0,0}} \bm_{1}$,
and $\ric^{N}_{f}(\gamma'_{z}(t))=0$ for all $z \in \bm_{1}$ and $t\in [0,D]$.
\end{thm}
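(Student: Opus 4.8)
The plan is to adapt the classical cylinder–rigidity argument (Kasue, Croke--Kleiner) to the weighted setting, using the Laplacian comparison of Lemma~\ref{lem:Basic comparison} with $\kappa=\lambda=0$ --- for which $s_{0,0}\equiv 1$, hence $H_{0,0}\equiv 0$ and $\bar{C}_{0,0}=\infty$ --- together with the equality analysis of Lemma~\ref{lem:Equality in Laplacian comparison} and the maximum principle, Lemma~\ref{lem:maximal principle}, exactly as in the proof of Theorem~\ref{thm:splitting theorem}. Write $\bm':=\bm\setminus\bm_{1}$, and put $\rho_{1}:=d_{M}(\cdot,\bm_{1})$ and $\rho':=d_{M}(\cdot,\bm')$, so that $\rho_{\bm}=\min\{\rho_{1},\rho'\}$. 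First I would record the elementary facts: $0<D<\infty$ (a collar of the compact $\bm_{1}$ gives $D>0$, connectedness of $M$ gives $D<\infty$); $\rho_{1}+\rho'\geq D$ everywhere, by concatenating minimizing segments to the two parts of $\bm$; and, using that $(M,d_{M})$ is proper and that the unit inner normal field of the compact $\bm_{1}$ ranges over a compact set, that $D=d_{M}(\bm_{1},\bm')$ is realized by a minimal geodesic $\sigma=\gamma_{z_{0}}\colon[0,D]\to M$ with $z_{0}\in\bm_{1}$, which meets $\bm'$ orthogonally at $\sigma(D)$ and satisfies $\rho_{1}(\sigma(t))=t$, $\rho'(\sigma(t))=D-t$ and $\sigma((0,D))\subset\inte M$.

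The heart of the proof is to upgrade $\rho_{1}+\rho'\geq D$ to equality. Fix $t_{0}\in(0,D)$ and $x_{0}:=\sigma(t_{0})$. Since $\sigma|_{[0,t_{0}]}$ minimizes from $\bm_{1}$ and the reverse of $\sigma|_{[t_{0},D]}$ minimizes from $\bm'$, the point $x_{0}$ lies in $\inte M\setminus(\cut\bm_{1}\cup\cut\bm')$; hence $\rho_{1}$ and $\rho'$ are smooth near $x_{0}$, and near $x_{0}$ the function $\rho'$ agrees with $d_{M}(\cdot,\bm_{i})$ for the component $\bm_{i}$ carrying the foot point of $x_{0}$. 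By the argument of Lemma~\ref{lem:Basic comparison} applied to $\bm_{1}$ and to $\bm_{i}$ --- both satisfying the mean-curvature bound with $\lambda=0$ --- we get $\Delta_{f}\rho_{1}\geq 0$ and $\Delta_{f}\rho'\geq 0$ near $x_{0}$, so $-(\rho_{1}+\rho'-D)$ is smooth, $f$-subharmonic, and attains its maximum $0$ at the interior point $x_{0}$; Lemma~\ref{lem:maximal principle} forces $\rho_{1}+\rho'\equiv D$ on a neighborhood of $\sigma((0,D))$. An open--closed argument inside $\inte M$ then completes this step: if $x\in\inte M$ satisfies $\rho_{1}(x)+\rho'(x)=D$, concatenating a minimizing segment from $\bm_{1}$ to $x$ with one from $x$ to $\bm'$ yields a minimal geodesic $[0,D]\to M$ through the interior point $x$, to which the same local statement applies; thus $\{\rho_{1}+\rho'=D\}$ is open and closed in $\inte M$, hence all of $\inte M$, and by continuity $\rho_{1}+\rho'\equiv D$ on $M$.

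With $\rho_{1}+\rho'\equiv D$ in hand the rigidity follows along now-standard lines. Being $1$-Lipschitz, non-negative and of constant sum, $\rho_{1}$ has $\Vert\nabla\rho_{1}\Vert=1$ wherever it is differentiable, and $\inte M$ is foliated by the unit-speed geodesics $\gamma_{z}|_{[0,D]}$, $z\in\bm_{1}$, each minimizing from $\bm_{1}$ all the way to $\bm'$; in particular $\gamma_{z}(D)\in\bm'$ and $\gamma_{z}(t)\notin\cut\bm_{1}\cup\cut\bm'$ for $t\in(0,D)$. On this set $\Delta_{f}\rho_{1}\geq 0$, $\Delta_{f}\rho'\geq 0$ and $\Delta_{f}\rho_{1}+\Delta_{f}\rho'=\Delta_{f}D=0$, so equality holds in Lemma~\ref{lem:Basic comparison} for $\rho_{1}$ along every $\gamma_{z}$; Lemma~\ref{lem:Equality in Laplacian comparison} then gives $Y_{z,i}=F_{0,0,z}\,E_{z,i}$ on $[0,D]$ for the $\bm_{1}$-Jacobi fields, and tracing the equalities back (Remarks~\ref{rem:equality in Riccati} and~\ref{rem:equality in the Basic comparison}) yields $\ric^{N}_{f}(\gamma'_{z}(t))=(n-1)\kappa\,e^{-4f(\gamma_{z}(t))/(n-1)}=0$ for $t\in(0,D)$, hence for $t\in[0,D]$ by continuity. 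Finally $\Phi\colon[0,D]\times\bm_{1}\to M$, $\Phi(t,z):=\gamma_{z}(t)$, is a diffeomorphism: it is onto because $x=\gamma_{z}(\rho_{1}(x))$ for a foot point $z$ of $x$ on $\bm_{1}$; it is injective --- including on $\{D\}\times\bm_{1}$, since a minimizer from $\bm_{1}$ to $\bm'$ is determined by its orthogonal endpoint on $\bm'$ --- and $d\Phi$ is everywhere nonsingular by the Jacobi-field rigidity. A Gauss-lemma computation with that rigidity gives $\Phi^{\ast}g=dt^{2}+F_{0,0,z}^{2}(t)\,h$, so $\Phi$ is a Riemannian isometry with boundary from $[0,D]\times_{F_{0,0}}\bm_{1}$ onto $M$, which is the assertion.

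The step I expect to be the main obstacle is the cut-locus avoidance underlying the maximum-principle argument: showing that along any geodesic realizing $\rho_{1}+\rho'=D$ the interior points lie outside both $\cut\bm_{1}$ and $\cut\bm'$, so that $\rho_{1}$ and $\rho'$ are simultaneously smooth there. This is where the compactness of $\bm_{1}$ and the properness of $M$ really enter --- they guarantee that $d_{M}(\bm_{1},\bm')$ is attained by a minimal geodesic meeting $\bm'$ orthogonally even when $\bm$ has infinitely many components --- and it is also what makes the open--closed propagation go through. Once this is granted, the $\kappa=\lambda=0$ Laplacian comparison, the maximum principle, and the equality/Jacobi-rigidity package from Section~\ref{sec:Laplacian comparisons} are applied essentially as in the proof of Theorem~\ref{thm:splitting theorem}.
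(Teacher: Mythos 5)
The paper does not actually prove Theorem~\ref{thm:disconnected splitting1} --- it quotes it from Wylie (Theorem~5.1 in \cite{W}) and then uses it as a black box in the proof of Theorem~\ref{thm:disconnected splitting2}. So there is no in-paper proof to compare against. Evaluated on its own terms, your argument is correct and follows the classical Kasue/Croke--Kleiner/Ichida route adapted to the weighted setting, re-expressed in the language of Section~\ref{sec:Laplacian comparisons}: establish $\rho_{1}+\rho'\geq D$ and the existence of a minimizer between $\bm_{1}$ and $\bm'$, use the $\kappa=\lambda=0$ case of Lemma~\ref{lem:Basic comparison} (for which $H_{0,0}\equiv 0$ and $\bar{C}_{0,0}=\infty$) plus Lemma~\ref{lem:maximal principle} to force $\rho_{1}+\rho'\equiv D$, then invoke the equality analysis (Remarks~\ref{rem:equality in Riccati},~\ref{rem:equality in the Basic comparison} and Lemma~\ref{lem:Equality in Laplacian comparison}) for both the Jacobi-field rigidity and the pointwise identity $\ric^{N}_{f}(\gamma'_{z})=0$. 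This is the same circle of ideas the paper itself deploys for Theorems~\ref{thm:splitting theorem} and~\ref{thm:disconnected splitting2}, so the methodology is consistent with the paper even though the paper outsources the proof of this particular statement.

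One point worth making explicit, which you gesture at but do not spell out: Lemma~\ref{lem:Basic comparison} as stated concerns $\rho_{\bm}$ and the cut value $\tau(z)$ for the full boundary, whereas you apply it to $\rho_{1}=d_{M}(\cdot,\bm_{1})$ and (locally) to $d_{M}(\cdot,\bm_{i})$, whose cut values along $\gamma_{z}$ are a priori larger. The lemma's proof is a Riccati/Bochner computation along a single normal geodesic and uses only that the relevant distance function is smooth there, so it does transfer verbatim to the distance function from a single component, with its own cut time replacing $\tau(z)$; but since $\rho_{\bm}$ and $\rho_{1}$ disagree past $t=D/2$ along each $\gamma_{z}$, this substitution is doing real work and should be stated, not left implicit. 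With that caveat addressed, the proof is complete.
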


For $\kappa>0$ and $\lambda<0$,
put $D_{\kappa,\lambda}:=\inf\, \{\,t>0\,|\, s'_{\kappa,\lambda}(t)=0\,\}$.
By using Theorem \ref{thm:disconnected splitting1},
we obtain the following splitting:
\begin{thm}\label{thm:disconnected splitting2}
Let $\kappa>0$.
For $N\in (-\infty,1]$,
assume that
$(M,\bm,f)$ has lower $(\kappa,\lambda,N)$-weighted curvature bounds.
Let $\bm$ be disconnected,
and let $\{\bm_{i}\}_{i=1,2,\dots}$ denote the connected components of $\bm$.
Let $\bm_{1}$ be compact,
and put $D:=\inf_{i=2,3,\dots}\, d_{M}(\bm_{1},\bm_{i})$.
Suppose additionally that
there exists $\delta \in \mathbb{R}$ such that $f\leq (n-1)\delta$ on $M$.
Then
\begin{equation*}
\lambda<0,\quad D\leq 2e^{2\delta}D_{\kappa,\lambda}.
\end{equation*}
Moreover,
if $D=2e^{2\delta}D_{\kappa,\lambda}$,
then $M$ is isometric to $[0,D]\times_{F_{\kappa,\lambda}} \bm_{1}$,
and $f=(n-1)\delta$ on $M$.
\end{thm}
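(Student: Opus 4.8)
The plan is to realize $D$ by a minimizing normal geodesic joining $\bm_{1}$ to another component of $\bm$ and to run the Laplacian comparison of Lemma \ref{lem:Basic comparison} along it \emph{from both ends}. Since $\bm\setminus\bm_{1}$ is closed in $M$ and disjoint from the compact set $\bm_{1}$ we have $D>0$, and properness of $M$ gives $z\in\bm_{1}$, a component $\bm_{i}$ with $d_{M}(\bm_{1},\bm_{i})=D$, and a unit-speed minimizing geodesic $\sigma=\gamma_{z}\colon[0,D]\to M$ with $\sigma'(0)=u_{z}$, $\sigma(D)\in\bm_{i}$, $\sigma'(D)=-u_{\sigma(D)}$ and $\sigma((0,D))\subset\inte M$. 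Writing $\rho_{1}:=d_{M}(\cdot,\bm_{1})$ and $\rho_{i}:=d_{M}(\cdot,\bm_{i})$, the triangle inequality forces $\rho_{1}(\sigma(t))=t$ and $\rho_{i}(\sigma(t))=D-t$ on $[0,D]$; consequently, for every $t\in(0,D)$ the segment $\sigma|_{[0,t]}$ is the \emph{unique} minimizer from $\bm_{1}$ to $\sigma(t)$ and $\sigma|_{[t,D]}$ reversed is the unique minimizer from $\bm_{i}$, so both $\rho_{1}$ and $\rho_{i}$ are smooth near $\sigma(t)$ with $\nabla\rho_{1}(\sigma(t))=\sigma'(t)=-\nabla\rho_{i}(\sigma(t))$.

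Because $\kappa>0$, $\kappa$ and $\lambda$ satisfy the ball-condition, so the proof of Lemma \ref{lem:Basic comparison} applies verbatim with $\bm$ replaced by $\bm_{1}$, resp.\ by $\bm_{i}$ (cf.\ Lemma \ref{lem:Cut point comparisons} and the remark following it). Setting $s_{f,z}(t):=\int_{0}^{t}e^{-2f(\sigma(a))/(n-1)}\,da$ and $L:=s_{f,z}(D)$, this gives for all $t\in(0,D)$
\[
\Delta_{f}\rho_{1}(\sigma(t))\ge H_{\kappa,\lambda}(s_{f,z}(t))\,e^{-2f(\sigma(t))/(n-1)},\qquad \Delta_{f}\rho_{i}(\sigma(t))\ge H_{\kappa,\lambda}(L-s_{f,z}(t))\,e^{-2f(\sigma(t))/(n-1)},
\]
the second by applying the comparison to the reversed geodesic from $\sigma(D)\in\bm_{i}$. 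On the other hand $\rho_{1}+\rho_{i}\ge d_{M}(\bm_{1},\bm_{i})=D$ on $M$ with equality along $\sigma$, so each $\sigma(t)$ is an interior minimum point of the locally smooth function $\rho_{1}+\rho_{i}$; hence $\nabla(\rho_{1}+\rho_{i})(\sigma(t))=0$, $\Hess(\rho_{1}+\rho_{i})(\sigma(t))$ is positive semidefinite, and therefore $\Delta_{f}(\rho_{1}+\rho_{i})(\sigma(t))\le 0$. Combining and dividing by $e^{-2f(\sigma(t))/(n-1)}>0$ gives $H_{\kappa,\lambda}(s_{f,z}(t))+H_{\kappa,\lambda}(L-s_{f,z}(t))\le 0$ for all $t\in(0,D)$; choosing $t$ with $s_{f,z}(t)=L/2$ (possible since $s_{f,z}$ increases continuously from $0$ to $L$) yields $2H_{\kappa,\lambda}(L/2)\le 0$, which in particular forces $L/2<\const$. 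Now $H_{\kappa,\lambda}$ is strictly increasing on $[0,\const)$ by (\ref{eq:model Riccati}) and $H_{\kappa,\lambda}(0)=(n-1)\lambda$; if $\lambda\ge 0$ then $H_{\kappa,\lambda}(L/2)>0$, a contradiction, so $\lambda<0$, and then $H_{\kappa,\lambda}(D_{\kappa,\lambda})=0$ forces $L/2\le D_{\kappa,\lambda}$. Together with $e^{-2\delta}D\le\int_{0}^{D}e^{-2f(\sigma(a))/(n-1)}\,da=L$ (from $f\le(n-1)\delta$) this gives $D\le 2e^{2\delta}D_{\kappa,\lambda}$.

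For the equality case $D=2e^{2\delta}D_{\kappa,\lambda}$ every inequality above is an equality: $e^{-2f(\sigma(t))/(n-1)}\equiv e^{-2\delta}$, so $f\circ\sigma\equiv(n-1)\delta$, $s_{f,z}(t)=e^{-2\delta}t$ and $L=2D_{\kappa,\lambda}$. Since $s_{\kappa,\lambda}(D_{\kappa,\lambda}+u)$ and $s_{\kappa,\lambda}(D_{\kappa,\lambda}-u)$ solve the same Jacobi equation with the same Cauchy data at $u=0$ they coincide, so $H_{\kappa,\lambda}$ is odd about $D_{\kappa,\lambda}$; hence $H_{\kappa,\lambda}(s_{f,z}(t))+H_{\kappa,\lambda}(L-s_{f,z}(t))\equiv 0$ along $\sigma$, which makes the two displayed comparisons equalities on $(0,D)$. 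Applying Lemma \ref{lem:Equality in Laplacian comparison} to $\rho_{1}$ shows that the $\bm_{1}$-Jacobi fields along $\sigma$ have the model form $F_{\kappa,\lambda,z}E_{z,i}$ on $[0,D]$. Finally one promotes this from the single geodesic $\sigma$ to every normal geodesic issuing from $\bm_{1}$ — arguing as in the proof of Theorem \ref{thm:disconnected splitting1} — and concludes that $\Phi(t,z):=\gamma_{z}(t)$ is a Riemannian isometry with boundary from $[0,D]\times_{F_{\kappa,\lambda}}\bm_{1}$ onto $M$ and that $f\equiv(n-1)\delta$ on all of $M$.

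The step I expect to be the main obstacle is this last globalization: upgrading the rigidity available along the extremal geodesic $\sigma$ to a genuine warped-product structure on all of $M$, and to $f\equiv(n-1)\delta$ everywhere. The inequality part, by contrast, is a careful but routine two-sided comparison, whose only delicate point — the smoothness of $\rho_{1}$ and $\rho_{i}$ along $\sigma((0,D))$ — is settled by the uniqueness of the two minimizing segments of $\sigma$ noted in the first paragraph.
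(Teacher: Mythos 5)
Your two-sided Laplacian comparison along a single extremal geodesic is a genuinely different route for the inequality part, and it is correct. The paper deduces $\lambda<0$ by invoking Theorem \ref{thm:disconnected splitting1} (the $\kappa=\lambda=0$ rigidity) and reaching a contradiction with $\kappa>0$; you get it for free from $H_{\kappa,\lambda}(0)=(n-1)\lambda$ together with the strict monotonicity of $H_{\kappa,\lambda}$ (via (\ref{eq:model Riccati}) when $\kappa>0$) and the two-sided mean-curvature inequality $H_{\kappa,\lambda}(s_{f,z}(t))+H_{\kappa,\lambda}(L-s_{f,z}(t))\le 0$ along $\sigma$. Likewise the bound $D\le 2e^{2\delta}D_{\kappa,\lambda}$ drops out by evaluating at the midpoint $s_{f,z}(t)=L/2$ and using $H_{\kappa,\lambda}(D_{\kappa,\lambda})=0$ together with $e^{-2\delta}D\le L$. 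This is cleaner and more elementary than the paper's route, which instead assumes $D\ge 2e^{2\delta}D_{\kappa,\lambda}$, runs Calabi's maximum principle on $\Omega:=\{x\in\inte M \mid \rho_{\bm_{1}}(x)+\rho_{\bm_{2}}(x)=D\}$ with the addition identity $\tfrac{s'_{\kappa,\lambda}}{s_{\kappa,\lambda}}(a)+\tfrac{s'_{\kappa,\lambda}}{s_{\kappa,\lambda}}(b)=\tfrac{s'_{\kappa,\lambda}(a+b)-\lambda s_{\kappa,\lambda}(a+b)}{s_{\kappa,\lambda}(a)s_{\kappa,\lambda}(b)}$, and only at the very end reads off $D=2e^{2\delta}D_{\kappa,\lambda}$. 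The price of the paper's approach is that it is less transparent for the bare inequality; the gain is that it handles globalization in the same stroke.

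The gap you flag at the end is real, and the pointer you give (``arguing as in the proof of Theorem \ref{thm:disconnected splitting1}'') does not fill it: that theorem's proof is in Wylie's paper and is a $\kappa=0$ argument, and in any case it is not quoted here in a form you can apply. What you have established is that the equality in the Laplacian comparison (\ref{eq:finite Laplacian comparison}) and $f\equiv(n-1)\delta$ hold \emph{along the one geodesic $\sigma$}. To get the twisted product you must show that \emph{every} normal geodesic from $\bm_{1}$ is a minimizer to $\bm_{i}$ of length $D$ — equivalently that $\rho_{\bm_{1}}+\rho_{\bm_{i}}\equiv D$ on all of $\inte M$ — and then run Lemma \ref{lem:Equality in Laplacian comparison} on each such geodesic. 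Establishing that identity on all of $\inte M$ is exactly what the paper's maximum-principle step on the set $\Omega$ does: it shows $\Omega$ is open, hence all of $\inte M$ by connectedness, and only then does the Jacobi-field rigidity propagate. Without that step, knowing the Jacobi fields have the model form $F_{\kappa,\lambda,z}E_{z,i}$ only along $\sigma$ does not give you an isometry of $M$ with $[0,D]\times_{F_{\kappa,\lambda}}\bm_{1}$, nor $f\equiv(n-1)\delta$ off $\sigma$. A clean way to finish in your setup is to note that, once $\lambda<0$ and $D\ge 2e^{2\delta}D_{\kappa,\lambda}$, the two-sided pointwise comparison actually makes $\Delta_{f}(\rho_{\bm_{1}}+\rho_{\bm_{i}})\ge 0$ at \emph{every} smooth point $y$ with $\rho_{\bm_{1},\delta}(y)+\rho_{\bm_{i},\delta}(y)\ge 2D_{\kappa,\lambda}$ (this is the paper's inequality (\ref{eq:disconnected})), so one can apply Lemma \ref{lem:maximal principle} exactly as the paper does; in other words, your extremal-geodesic computation and the paper's maximum-principle computation are the same local inequality, and the globalization needs the latter formulation.

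Two minor points worth noting, both of which you handle implicitly but which deserve a sentence if this were to be written out: (i) the comparison Lemma \ref{lem:Basic comparison} is stated for $\rho_{\bm}$ (distance to the \emph{whole} boundary) up to the cut value $\tau(z)$, whereas you apply it to $\rho_{\bm_{1}}$ and $\rho_{\bm_{i}}$ up to $D$; this is legitimate (the proof is local to the geodesic and uses only that $\rho_{\bm_{1}}$ is smooth along $\sigma|_{(0,D)}$, which your uniqueness-of-minimizer argument gives), and the paper does the same, but one should say so. (ii) For the midpoint evaluation you need $L/2<\const$ so $H_{\kappa,\lambda}(L/2)$ is defined; you deduce this from $2H_{\kappa,\lambda}(L/2)\le 0$ after the fact, but it is cleaner to note first that the Riccati comparison (Lemma \ref{lem:Cut point comparisons} applied to $\bm_{1}$) forces $L\le\tau_{\bm_{1},f}(z)\le\const$ already.
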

\begin{proof}
If we have $\lambda \geq 0$,
then Theorem \ref{thm:disconnected splitting1} tells us that
$M$ is isometric to $[0,D]\times_{F_{0,0}} \bm_{1}$,
and $\ric^{N}_{f}(\gamma'_{z}(t))=0$ for all $z \in \bm_{1},\,t\in [0,D]$.
This contradicts $\kappa>0$,
and hence $\lambda<0$.

Let us prove that
if $D\geq 2e^{2\delta}D_{\kappa,\lambda}$,
then $M$ is isometric to the twisted product $[0,2e^{2\delta}D_{\kappa,\lambda}]\times_{F_{\kappa,\lambda}} \bm_{1}$,
and $f=(n-1)\delta$ on $M$.
Suppose $D\geq 2e^{2\delta}D_{\kappa,\lambda}$.
There exists a connected component $\bm_{2}$ of $\bm$ such that $d_{M}(\bm_{1},\bm_{2})=D$ (cf. Lemma 1.6 in \cite{K2}).
For each $i=1,2$,
let $\rho_{\bm_{i}}:M\to \mathbb{R}$ be the function defined by $\rho_{\bm_{i}}(x):=d_{M}(x,\bm_{i})$.
Set
\begin{equation*}
\Omega:=\{x\in \inte M \mid \rho_{\bm_{1}}(x)+\rho_{\bm_{2}}(x)=D\}.
\end{equation*}

We show that
$\Omega$ is open in $\inte M$.
Fix $x\in \Omega$.
For each $i=1,2$,
we take a foot point $z_{x,i}\in \bm_{i}$ on $\bm_{i}$ of $x$ such that $d_{M}(x,z_{x,i})=\rho_{\bm_{i}}(x)$.
From the triangle inequality
we derive $d_{M}(z_{x,1},z_{x,2})=D$.
The minimal geodesic $\gamma:[0,D]\to M$ from $z_{x,1}$ to $z_{x,2}$ is orthogonal to $\bm$ at $z_{x,1}$ and at $z_{x,2}$.
Furthermore,
$\gamma|_{(0,D)}$ lies in $\inte M$ and passes through $x$.
There exists an open neighborhood $U$ of $x$ such that $\rho_{\bm_{i}}$ is smooth on $U$.
In view of (\ref{eq:finite Laplacian comparison}),
for all $y\in U$,
we see
\begin{align}\label{eq:disconnected}
&\quad \,\,-\frac{\Delta_{f}\, \left(\rho_{\bm_{1}}+\rho_{\bm_{2}}\right)(y)}{(n-1)e^{\frac{-2f(y)}{n-1}}}
   \leq \frac{s'_{\kappa,\lambda}}{s_{\kappa,\lambda}}(\rho_{\bm_{1},\delta}(y))+ \frac{s'_{\kappa,\lambda}}{s_{\kappa,\lambda}}(\rho_{\bm_{2},\delta}(y))\\ \notag
&  =    \frac{s'_{\kappa,\lambda}(\rho_{\bm_{1},\delta}(y)+\rho_{\bm_{2},\delta}(y))-\lambda s_{\kappa,\lambda}(\rho_{\bm_{1},\delta}(y)+\rho_{\bm_{2},\delta}(y))}{s_{\kappa,\lambda}(\rho_{\bm_{1},\delta}(y))  s_{\kappa,\lambda}(\rho_{\bm_{2},\delta}(y))},
\end{align}
where $\rho_{\bm_{i},\delta}:=e^{-2\delta}\rho_{\bm_{i}}$.
Since $\kappa>0$,
the function $s'_{\kappa,\lambda}/s_{\kappa,\lambda}$ is monotone decreasing on $(0,\const)$,
and satisfies $s'_{\kappa,\lambda}(2D_{\kappa,\lambda})/s_{\kappa,\lambda}(2D_{\kappa,\lambda})=\lambda$.
By $D\geq 2e^{2\delta}D_{\kappa,\lambda}$ and the triangle inequality,
$\rho_{\bm_{1},\delta}+\rho_{\bm_{2},\delta}\geq 2D_{\kappa,\lambda}$ on $U$.
The inequality (\ref{eq:disconnected}) tells us that
$-(\rho_{\bm_{1}}+\rho_{\bm_{2}})$ is $f$-subharmonic on $U$.
By Lemma \ref{lem:maximal principle},
$\Omega$ is open in $\inte M$.

The connectedness of $\inte M$ implies $\inte M=\Omega$.
The equality in (\ref{eq:finite Laplacian comparison}) holds.
For each $z\in \bm_{1}$,
choose an orthonormal basis $\{e_{z,i}\}_{i=1}^{n-1}$ of $T_{z}\bm$.
Let $\{Y_{z,i}\}_{i=1}^{n-1}$ be the $\bm$-Jacobi fields along $\gamma_{z}$ with $Y_{z,i}(0)=e_{z,i},\,Y'_{z,i}(0)=-A_{u_{z}}e_{z,i}$.
For all $i$
we see $Y_{z,i}=F_{\kappa,\lambda,z} E_{z,i}$ on $[0,D]$,
where $\{E_{z,i}\}_{i=1}^{n-1}$ are the parallel vector fields with $E_{z,i}(0)=e_{z,i}$.
Moreover,
$f\circ \gamma_{z}=(n-1)\delta$ on $[0,D]$ (see Remark \ref{rem:equality in finite Laplacian comparison}).
We see $D=2e^{2\delta}D_{\kappa,\lambda}$.
By the rigidity of Jacobi fields,
a map $\Phi:[0,D]\times \bm_{1}\to M$ defined by $\Phi(t,z):=\gamma_{z}(t)$ is a desired Riemannian isometry with boundary.
\end{proof}

\section{Inscribed radii}\label{sec:Inscribed radii}
We denote by $L_{g_{f}},\,d^{g_{f}}_{M},\,\rho^{g_{f}}_{\bm}$ and $\IR_{g_{f}} M$
the length,
the Riemannian distance,
the distance function from the boundary and the inscribed radius on $M$ induced from the Riemannian metric $g_{f}:=e^{\frac{-4f}{n-1}}g$.

%%%%%%%%%%%%%%%%%%%%%%%%%%%%
\subsection{Inscribed radius comparisons}\label{sec:Inscribed radius comparisons}
We first show the following:
\begin{lem}\label{lem:Inscribed radius comparison}
Let $\kappa$ and $\lambda$ satisfy the ball-condition.
For $N \in (-\infty,1]$,
assume that
$(M,\bm,f)$ has lower $(\kappa,\lambda,N)$-weighted curvature bounds.
Then we have $\IR_{g_{f}} M\leq \const$.
\end{lem}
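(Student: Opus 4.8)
The plan is to reduce the statement to the cut-point estimate of Lemma \ref{lem:Cut point comparisons} via the elementary observation that, along a geodesic $\gamma_{z}$ normal to $\bm$, the function $s_{f,z}$ of \eqref{eq:speed changing function} measures $g_{f}$-arclength. Indeed, since $g_{f}=e^{\frac{-4f}{n-1}}g$ and $\gamma_{z}$ has unit speed with respect to $g$, one has $\Vert \gamma_{z}'(a)\Vert_{g_{f}}=e^{\frac{-2f(\gamma_{z}(a))}{n-1}}$, so that $L_{g_{f}}(\gamma_{z}|_{[0,t]})=\int_{0}^{t}e^{\frac{-2f(\gamma_{z}(a))}{n-1}}\,da=s_{f,z}(t)$; in particular $L_{g_{f}}(\gamma_{z}|_{[0,\tau(z)]})=\tau_{f}(z)$.

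First I would fix an arbitrary $x\in M$ and choose a foot point $z\in \bm$ of $x$, so that $d_{M}(x,z)=\rho_{\bm}(x)=:l$. As recalled in Subsection \ref{sec:Cut locus for the boundary}, the minimal geodesic from $z$ to $x$ is exactly $\gamma_{z}|_{[0,l]}$, and for every $t\in[0,l]$ the restriction $\gamma_{z}|_{[0,t]}$ still realizes the distance from $\bm$, so $\rho_{\bm}(\gamma_{z}(t))=t$; by the definition \eqref{eq:boundary cut value} of $\tau$ this forces $l\leq \tau(z)$. Since $\gamma_{z}|_{[0,l]}$ is a curve joining $\bm$ to $x$, the definition of $\rho^{g_{f}}_{\bm}$ gives $\rho^{g_{f}}_{\bm}(x)\leq L_{g_{f}}(\gamma_{z}|_{[0,l]})=s_{f,z}(l)$, and because $s_{f,z}$ is nondecreasing with $l\leq\tau(z)$ we conclude $\rho^{g_{f}}_{\bm}(x)\leq s_{f,z}(\tau(z))=\tau_{f}(z)$.

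Finally, the hypothesis that $(M,\bm,f)$ has lower $(\kappa,\lambda,N)$-weighted curvature bounds supplies exactly conditions \eqref{eq:Ricci curvature assumption} and \eqref{eq:mean curvature assumption} along $\gamma_{z}$, which are the hypotheses of Lemma \ref{lem:Cut point comparisons}; hence $\tau_{f}(z)\leq \const$. Combining the two displayed inequalities yields $\rho^{g_{f}}_{\bm}(x)\leq \const$, and taking the supremum over $x\in M$ gives $\IR_{g_{f}}M\leq \const$. There is no serious obstacle here; the only points that require care are the identification of $s_{f,z}$ with $g_{f}$-arclength and the verification that the foot-point geodesic does not overshoot its cut distance, so that the monotonicity of $s_{f,z}$ may be invoked.
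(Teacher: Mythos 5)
Your proposal is correct and follows essentially the same route as the paper: bound $\rho^{g_f}_{\bm}(x)$ by the $g_f$-length of the foot-point normal geodesic, identify this with $s_{f,z}(l)\leq\tau_f(z)$, and then apply Lemma \ref{lem:Cut point comparisons} to get $\tau_f(z)\leq\const$. The extra care you take in checking $l\leq\tau(z)$ and in identifying $s_{f,z}$ with $g_f$-arclength is a sound elaboration of steps the paper treats as immediate.
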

\begin{proof}
Take $x\in M$,
and a foot point $z_{x}$ on $\bm$ of $x$.
Then we have
\begin{equation*}
\rho^{g_{f}}_{\bm}(x) \leq L_{g_{f}}(\gamma_{z_{x}}|_{[0,l]}) = \int^{l}_{0}\, e^{\frac{-2f(\gamma_{z_{x}}(a))}{n-1}}\,da \leq \tau_{f}(z_{x})\leq \sup_{z\in \bm}\,\tau_{f}(z),
\end{equation*}
where $l:=\rho_{\bm}(x)$.
Lemma \ref{lem:Cut point comparisons} implies the desired inequality.
\end{proof}

From Lemma \ref{lem:Cut point comparisons} and $\IR M=\sup_{z\in \bm}\tau(z)$,
we also derive:
\begin{lem}\label{lem:finite Inscribed radius comparison}
Let $\kappa$ and $\lambda$ satisfy the ball-condition.
For $N\in (-\infty,1]$,
assume that
$(M,\bm,f)$ has lower $(\kappa,\lambda,N)$-weighted curvature bounds.
Suppose additionally that
there is $\delta \in \mathbb{R}$ such that $f\leq (n-1)\delta$ on $M$.
Then we have $\IR M \leq C_{\kappa\,e^{-4\delta,\lambda\,e^{-2\delta}}}$.
\end{lem}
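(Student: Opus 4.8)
The plan is to reduce the statement directly to the cut-point estimate of Lemma \ref{lem:Cut point comparisons}. Since $(M,\bm,f)$ has lower $(\kappa,\lambda,N)$-weighted curvature bounds, for every $z\in \bm$ we have $\ric^{N}_{f}(\gamma'_{z}(t))\geq (n-1)\kappa\,e^{\frac{-4f(\gamma_{z}(t))}{n-1}}$ for all $t\in (0,\tau(z))$ and $H_{f,z}\geq (n-1)\lambda\,e^{\frac{-2f(z)}{n-1}}$; moreover the hypothesis $f\leq (n-1)\delta$ on $M$ gives, in particular, $f\circ \gamma_{z}\leq (n-1)\delta$ on $(0,\tau(z))$. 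Thus, for each fixed $z$, all the hypotheses of Lemma \ref{lem:Cut point comparisons} are satisfied, and its conclusion (\ref{eq:finite Cut point comparisons}) yields $\tau(z)\leq C_{\kappa\,e^{-4\delta},\lambda\,e^{-2\delta}}$.

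Next I would pass to the supremum over $z\in \bm$. Recall from Subsection \ref{sec:Cut locus for the boundary} that the supremum of $\tau$ over $\bm$ equals $\IR M$. Since the upper bound $C_{\kappa\,e^{-4\delta},\lambda\,e^{-2\delta}}$ is a constant independent of $z$ — here one notes that, $\kappa$ and $\lambda$ satisfying the ball-condition, so do $\kappa\,e^{-4\delta}$ and $\lambda\,e^{-2\delta}$, so this constant is finite and well-defined — taking the supremum over $z\in \bm$ gives $\IR M=\sup_{z\in \bm}\tau(z)\leq C_{\kappa\,e^{-4\delta},\lambda\,e^{-2\delta}}$, which is the assertion.

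There is no genuine obstacle here: the entire content sits in Lemma \ref{lem:Cut point comparisons}, which itself rests on the Laplacian comparison (\ref{eq:Laplacian comparison}) together with the elementary observation $s_{f,z}(t)\geq e^{-2\delta}t$ coming from $f\circ \gamma_{z}\leq (n-1)\delta$. The only points worth recording explicitly are the uniformity in $z$ of the bound and the invariance of the ball-condition under the rescaling $(\kappa,\lambda)\mapsto(\kappa\,e^{-4\delta},\lambda\,e^{-2\delta})$; both are straightforward to check from the characterization of the ball-condition recalled in Subsection \ref{sec:setting}.
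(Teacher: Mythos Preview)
Your proof is correct and follows exactly the route indicated in the paper: apply Lemma \ref{lem:Cut point comparisons} pointwise to obtain $\tau(z)\leq C_{\kappa e^{-4\delta},\lambda e^{-2\delta}}$ for every $z\in\bm$, and then use $\IR M=\sup_{z\in\bm}\tau(z)$. The paper states this derivation in a single line, and your version simply unpacks the same argument with more detail.
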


%%%%%%%%%%%%%%%%%%%%%%%%%%%%
\subsection{Inscribed radius rigidity}%\label{sec:Inscribed radius rigidity}
We now prove Theorem \ref{thm:inscribed radius rigidity}.
\begin{proof}[Proof of Theorem \ref{thm:inscribed radius rigidity}]
Let $\kappa$ and $\lambda$ satisfy the ball-condition.
For $N \in (-\infty,1]$,
assume that
$(M,\bm,f)$ has lower $(\kappa,\lambda,N)$-weighted curvature bounds.
By Lemma \ref{lem:Inscribed radius comparison},
we have (\ref{eq:inscribed radius rigidity}).
Let $x_{0}\in M$ satisfy $\rho^{g_{f}}_{\bm}(x_{0})=\const$,
which will become the center of $M$.
Put $l:=\rho_{\bm}(x_{0})$,
\begin{equation*}
\Omega:=\{x \in \inte M \setminus \{x_{0}\} \,|\, \rho_{\bm}(x)+\rho_{x_{0}}(x)=l\}.
\end{equation*}

We show that
$\Omega$ is open in $\inte M \setminus \{x_{0}\}$.
Fix $x\in \Omega$,
and take a foot point $z_{x}$ on $\bm$ of $x$.
Note that
$z_{x}$ is also a foot point on $\bm$ of $x_{0}$.
Let $\gamma:[0,l]\to M$ be the minimal geodesic from $z_{x}$ to $x_{0}$.
Then $\gamma|_{(0,l)}$ passes through $x$.
There exists an open neighborhood $U$ of $x$ such that the distance functions $\rho_{x_{0}}$ and $\rho_{\bm}$ are smooth on $U$,
and for every $y\in U$
there exists a unique minimal geodesic in $M$ from $x_{0}$ to $y$ that lies in $\inte M$.
By Lemma \ref{lem:pointed Laplacian comparison} and (\ref{eq:Laplacian comparison}),
for each $y \in U$
we have
\begin{align}\label{eq:combination of Laplacian comparison}
-\frac{\Delta_{f} (\rho_{\bm}+\rho_{x_{0}})(y)}{(n-1)\,e^{\frac{-2 f(y)}{n-1}}} & \leq      \frac{s'_{\kappa,\lambda}}{s_{\kappa,\lambda}}(s_{f,z_{y}}(\rho_{\bm}(y)))+\frac{s'_{\kappa}}{s_{\kappa}}(s_{f,v_{y}}(\rho_{x_{0}}(y)))\\ \notag
                                                                              &     =   \frac{s_{\kappa,\lambda}(s_{f,z_{y}}(\rho_{\bm}(y))+s_{f,v_{y}}(\rho_{x_{0}}(y)))}{s_{\kappa,\lambda}(s_{f,z_{y}}(\rho_{\bm}(y)))\, s_{\kappa}(s_{f,v_{y}}(\rho_{x_{0}}(y)))},
\end{align}
where $z_{y}$ is a unique foot point on $\bm$ of $y$,
and $v_{y}$ is the initial velocity vector of the unique minimal geodesic from $x_{0}$ to $y$.
Let us define $\rho^{g_{f}}_{x_{0}}:=d^{g_{f}}_{M}(\cdot,x_{0})$.
The triangle inequality for $d^{g_{f}}_{M}$ leads us to
\begin{align}\label{eq:weighted triangle inequality}
s_{f,z_{y}}(\rho_{\bm}(y))+s_{f,v_{y}}(\rho_{x_{0}}(y))&=L_{g_{f}}(\gamma_{z_{y}}|_{[0,\rho_{\bm}(y)]})+L_{g_{f}}(\gamma_{v_{y}}|_{[0,\rho_{x_{0}}(y)]}) \\ \notag
                                                                                   &\geq \rho^{g_{f}}_{\bm}(y)+\rho^{g_{f}}_{x_{0}}(y)  \geq \rho^{g_{f}}_{\bm}(x_{0})=\const.
\end{align}
By (\ref{eq:combination of Laplacian comparison}) and (\ref{eq:weighted triangle inequality}),
we have $\Delta_{f} (\rho_{\bm}+\rho_{x_{0}})(y)\geq 0$.
Lemma \ref{lem:maximal principle} tells us that $U\subset \Omega$,
and
$\Omega$ is open.

Since $\inte M\setminus \{x_{0}\}$ is connected, 
we have $\Omega=\inte M\setminus \{x_{0}\}$,
and hence $\rho_{\bm}+\rho_{x_{0}}=l$ on $M$.
This implies $M=B_{l}(x_{0})$.
For each $v \in U_{x_{0}}M$,
we have $\tau_{x_{0}}(v)=l$,
and $\gamma_{v}$ is orthogonal to $\bm$ at $l$.
The equality in (\ref{eq:pointed Laplacian comparison}) holds on $\inte M\setminus \{x_{0}\}$.
Choose an orthonormal basis $\{e_{v,i}\}_{i=1}^{n}$ of $T_{x_{0}}M$ with $e_{v,n}=v$.
Let $\{Y_{v,i}\}^{n-1}_{i=1}$ be the Jacobi fields along $\gamma_{v}$ with $Y_{v,i}(0)=0_{x_{0}},\,Y_{v,i}'(0)=e_{v,i}$.
By Lemma \ref{lem:equality in pointed Laplacian comparison},
for all $i$
we have $Y_{v,i}=F_{\kappa,v} \,E_{v,i}$ on $[0,l]$,
where $\{E_{v,i}\}^{n-1}_{i=1}$ are the parallel vector fields with $E_{v,i}(0)=e_{v,i}$;
moreover,
if $N \in (-\infty,1)$,
then $f\circ \gamma_{v}$ is constant on $[0,l]$.
Since the equalities in (\ref{eq:weighted triangle inequality}) hold,
we have $s_{f,v}(l)=\const$ and $F_{\kappa,v}(l)>0$;
in particular,
we have no conjugate point of $x_{0}$ along $\gamma_{v}$.
Thus,
a map $\Phi:[0,l]\times U_{x_{0}}M\to M$ defined by $\Phi(t,v):=\gamma_{v}(t)$ is a Riemannian isometry with boundary from $[0,l]\times_{F_{\kappa}} \mathbb{S}^{n-1}$ to $M$.

Assume $N \in (-\infty,1)$.
Then $f=(n-1)\delta$ for some constant $\delta \in \mathbb{R}$;
in particular,
$F_{\kappa,v}(t)=e^{2\delta}\,s_{\kappa}\bigl(e^{-2\delta}t\bigl)=s_{\kappa e^{-4\delta}}(t)$ and $l=e^{2\delta} \const =C_{\kappa e^{-4\delta},\lambda e^{-2\delta}}$.
Hence
$[0,l]\times_{F_{\kappa}} \mathbb{S}^{n-1}$ can be written as $B^{n}_{\kappa e^{-4\delta},\lambda e^{-2\delta}}$.
This completes the proof of Theorem \ref{thm:inscribed radius rigidity}.
\end{proof}

\begin{rem}
From the argument discussed in the proof of Proposition 4.15 in \cite{WY},
one can also conclude the following:
Under the same setting as in Theorem \ref{thm:inscribed radius rigidity},
if $\rho^{g_{f}}_{\bm}(x_{0})=\const$ for some $x_{0}\in M$,
then we have $\nabla f=g(\nabla f,\nabla \rho_{x_{0}})\,\nabla \rho_{x_{0}}$ on $M$;
in particular,
$M$ is a warped product.
\end{rem}

If $f$ is bounded from above,
then we obtain the following:
\begin{thm}\label{thm:finite inscribed radius rigidity}
Let us assume that
$\kappa$ and $\lambda$ satisfy the ball-condition.
For $N \in (-\infty,1]$,
assume that
$(M,\bm,f)$ has lower $(\kappa,\lambda,N)$-weighted curvature bounds.
Suppose additionally that
there exists $\delta \in \mathbb{R}$ such that $f\leq (n-1)\delta$ on $M$.
Then we have
\begin{equation}\label{eq:finite inscribed radius rigidity}
\IR M \leq C_{\kappa\,e^{-4\delta},\lambda\,e^{-2\delta}}.
\end{equation}
If $\rho_{\bm}(x_{0})=C_{\kappa\,e^{-4\delta},\lambda\,e^{-2\delta}}$ for some $x_{0}\in M$,
then $M$ is isometric to $B^{n}_{\kappa e^{-4\delta},\lambda e^{-2\delta}}$,
and $f=(n-1)\delta$ on $M$.
\end{thm}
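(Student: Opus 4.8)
The plan is to rerun the proof of Theorem~\ref{thm:inscribed radius rigidity} with the bounded-$f$ Laplacian comparisons of Subsection~\ref{sec:Bounded cases} in place of the unbounded ones, and then to use $f\le(n-1)\delta$ to upgrade ``$f$ is constant'' to ``$f=(n-1)\delta$''. First, the inequality (\ref{eq:finite inscribed radius rigidity}) is exactly Lemma~\ref{lem:finite Inscribed radius comparison}; since the ball-condition implies the weakly-monotone-condition, Lemmas~\ref{lem:finite Laplacian comparison},~\ref{lem:finite pointed Laplacian comparison} and Remark~\ref{rem:equality in finite pointed Laplacian comparison} are all available. Now suppose $\rho_{\bm}(x_{0})=C_{\kappa e^{-4\delta},\lambda e^{-2\delta}}$ for some $x_{0}\in M$; put $l:=\rho_{\bm}(x_{0})$, so that $e^{-2\delta}l=\const$ because $C_{\kappa e^{-4\delta},\lambda e^{-2\delta}}=e^{2\delta}\const$. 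As in Theorem~\ref{thm:inscribed radius rigidity} I set $\Omega:=\{x\in\inte M\setminus\{x_{0}\}\mid \rho_{\bm}(x)+\rho_{x_{0}}(x)=l\}$; it is closed in $\inte M\setminus\{x_{0}\}$ and non-empty (it contains the interior of a minimal segment from a foot point of $x_{0}$ to $x_{0}$), and the goal is to prove it is open.

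For the openness I fix $x\in\Omega$, take a foot point $z_{x}\in\bm$ of $x$ (which is then also a foot point of $x_{0}$), and take a small neighbourhood $U$ of $x$ in $\inte M\setminus\{x_{0}\}$ on which $\rho_{\bm}$ and $\rho_{x_{0}}$ are smooth and along which the minimal segment from $x_{0}$ to each $y\in U$ is unique and interior, exactly as in the proof of Theorem~\ref{thm:inscribed radius rigidity}. Feeding Lemma~\ref{lem:finite Laplacian comparison} into $\Delta_{f}\rho_{\bm}$ and Lemma~\ref{lem:finite pointed Laplacian comparison} into $\Delta_{f}\rho_{x_{0}}$, and using $H_{\kappa,\lambda}=-(n-1)s'_{\kappa,\lambda}/s_{\kappa,\lambda}$, $H_{\kappa}=-(n-1)s'_{\kappa}/s_{\kappa}$ and the addition formula $s_{\kappa,\lambda}(a+b)=s'_{\kappa,\lambda}(a)s_{\kappa}(b)+s_{\kappa,\lambda}(a)s'_{\kappa}(b)$, I obtain for $y\in U$
\[
-\frac{\Delta_{f}(\rho_{\bm}+\rho_{x_{0}})(y)}{(n-1)\,e^{\frac{-2f(y)}{n-1}}} \;\le\; \frac{s_{\kappa,\lambda}\left(e^{-2\delta}\left(\rho_{\bm}(y)+\rho_{x_{0}}(y)\right)\right)}{s_{\kappa,\lambda}\left(e^{-2\delta}\rho_{\bm}(y)\right)\,s_{\kappa}\left(e^{-2\delta}\rho_{x_{0}}(y)\right)}.
\]
By the triangle inequality $e^{-2\delta}(\rho_{\bm}(y)+\rho_{x_{0}}(y))\ge e^{-2\delta}\rho_{\bm}(x_{0})=\const$, and after shrinking $U$ this argument stays in $[\const,\const+\epsilon)$ where $s_{\kappa,\lambda}\le 0$, while $e^{-2\delta}\rho_{\bm}(y)$ and $e^{-2\delta}\rho_{x_{0}}(y)$ lie in $(0,\const)\subset(0,C_{\kappa})$ and keep the denominator positive; hence the right-hand side is $\le 0$, so $\Delta_{f}(\rho_{\bm}+\rho_{x_{0}})\ge 0$ on $U$. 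Since $\rho_{\bm}+\rho_{x_{0}}$ is smooth on $U$, the function $-(\rho_{\bm}+\rho_{x_{0}})$ is $f$-subharmonic there and attains its maximal value $-l$ (note $\rho_{\bm}+\rho_{x_{0}}\ge l$ on $M$) at $x$, so Lemma~\ref{lem:maximal principle} gives $U\subset\Omega$.

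Connectedness of $\inte M\setminus\{x_{0}\}$ then yields $\Omega=\inte M\setminus\{x_{0}\}$, hence $\rho_{\bm}+\rho_{x_{0}}\equiv l$, $M=B_{l}(x_{0})$, and each $\gamma_{v}$ ($v\in U_{x_{0}}M$) satisfies $\tau_{x_{0}}(v)=l$ and meets $\bm$ orthogonally at $t=l$. On $\inte M\setminus(\{x_{0}\}\cup\cut\bm)$, where $\rho_{\bm}$ and $\rho_{x_{0}}$ are both smooth (as one checks from $\rho_{\bm}+\rho_{x_{0}}\equiv l$), one has $\Delta_{f}(\rho_{\bm}+\rho_{x_{0}})=0$; since this equals the sum of the two comparison lower bounds and the displayed right-hand side vanishes there (its numerator being $s_{\kappa,\lambda}(\const)=0$), equality holds in both Lemma~\ref{lem:finite Laplacian comparison} and Lemma~\ref{lem:finite pointed Laplacian comparison} at each such point, in particular the equality in (\ref{eq:finite pointed Laplacian comparison}) holds along every $\gamma_{v}$ for all $t\in(0,l)$. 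By Remark~\ref{rem:equality in finite pointed Laplacian comparison} this forces $f\circ\gamma_{v}=(n-1)\delta$ on $[0,l]$; as every point of $M=B_{l}(x_{0})$ lies on such a geodesic, $f=(n-1)\delta$ on $M$. Remark~\ref{rem:equality in finite pointed Laplacian comparison} also gives the equality in (\ref{eq:pointed Laplacian comparison}) along each $\gamma_{v}$, so by Lemma~\ref{lem:equality in pointed Laplacian comparison} the Jacobi fields along $\gamma_{v}$ that vanish at $x_{0}$ are $F_{\kappa,v}$ times parallel fields; with $f\equiv(n-1)\delta$ one computes $F_{\kappa,v}(t)=e^{2\delta}s_{\kappa}(e^{-2\delta}t)=s_{\kappa e^{-4\delta}}(t)$, $l=e^{2\delta}\const=C_{\kappa e^{-4\delta},\lambda e^{-2\delta}}$ and $F_{\kappa,v}(l)=e^{2\delta}s_{\kappa}(\const)>0$, so there is no conjugate point of $x_{0}$ along $\gamma_{v}$ before $t=l$, and $\Phi(t,v):=\gamma_{v}(t)$ is a Riemannian isometry with boundary from $[0,l]\times_{F_{\kappa}}\mathbb{S}^{n-1}=B^{n}_{\kappa e^{-4\delta},\lambda e^{-2\delta}}$ onto $M$, just as at the end of the proof of Theorem~\ref{thm:inscribed radius rigidity}.

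I expect the main obstacle to be the same routine bookkeeping as in the proof of Theorem~\ref{thm:inscribed radius rigidity} --- choosing $U$ so that $\rho_{\bm}$ and $\rho_{x_{0}}$ are genuinely smooth and the relevant segments stay interior, and keeping the argument of $s_{\kappa,\lambda}$ in the half-open interval just past $\const$ where it is $\le 0$ --- rather than anything conceptually new. The only fresh point is that $f\le(n-1)\delta$ combined with the equality case of Lemma~\ref{lem:finite pointed Laplacian comparison} (Remark~\ref{rem:equality in finite pointed Laplacian comparison}) turns ``$f$ constant'' into ``$f=(n-1)\delta$'', which is exactly what identifies the rescaled model ball $B^{n}_{\kappa e^{-4\delta},\lambda e^{-2\delta}}$.
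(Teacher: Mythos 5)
Your proposal is correct and follows essentially the same route as the paper's own proof: the inequality via Lemma~\ref{lem:finite Inscribed radius comparison}, openness of $\Omega$ via Lemmas~\ref{lem:finite Laplacian comparison} and \ref{lem:finite pointed Laplacian comparison} plus the maximum principle, and the rigidity via Remark~\ref{rem:equality in finite pointed Laplacian comparison} and Lemma~\ref{lem:equality in pointed Laplacian comparison} to get $f\equiv(n-1)\delta$ and the model Jacobi fields. The only difference is cosmetic --- you spell out the sign analysis near $\const$ and the reason both distance functions are simultaneously smooth off $\cut\bm$ slightly more explicitly than the paper does.
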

\begin{proof}
The inequality (\ref{eq:finite inscribed radius rigidity}) follows from Lemma \ref{lem:finite Inscribed radius comparison}.
Let $x_{0}$ satisfy $\rho_{\bm}(x_{0})=C_{\kappa\,e^{-4\delta},\lambda\,e^{-2\delta}}$,
which will become the center.
Put $l:=\rho_{\bm}(x_{0})$,
\begin{equation*}\label{eq:equality domain}
\Omega:=\{x\in \inte M \setminus \{x_{0}\} \,|\, \rho_{\bm}(x)+\rho_{x_{0}}(x)=l\}.
\end{equation*}
We prove that
$\Omega$ is open in $\inte M \setminus \{x_{0}\}$.
For a fixed point $x \in \Omega$,
there exists an open neighborhood $U$ of $x$ such that $\rho_{x_{0}}$ and $\rho_{\bm}$ are smooth on $U$,
and for every $y \in U$
there exists a unique minimal geodesic in $M$ from $x_{0}$ to $y$ that lies in $\inte M$.
Let us define $\rho_{\bm,\delta}:=e^{-2\delta}\rho_{\bm}$ and $\rho_{x_{0},\delta}:=e^{-2\delta}\rho_{x_{0}}$.
By Lemma \ref{lem:finite pointed Laplacian comparison} and (\ref{eq:finite Laplacian comparison}),
for each $y\in U$
\begin{align*}
-\frac{\Delta_{f} (\rho_{\bm}+\rho_{x_{0}})(y)}{(n-1)\,e^{\frac{-2 f(y)}{n-1}}} & \leq     \frac{s'_{\kappa,\lambda}}{s_{\kappa,\lambda}}(\rho_{\bm,\delta}(y))+\frac{s'_{\kappa}}{s_{\kappa}}(\rho_{x_{0},\delta}(y))  \\
                                                                              &     =   \frac{s_{\kappa,\lambda}(\rho_{\bm,\delta}(y)+\rho_{x_{0},\delta}(y))}{s_{\kappa,\lambda}(\rho_{\bm,\delta}(y))\, s_{\kappa}(\rho_{x_{0},\delta}(y))}\leq 0.
\end{align*}
Lemma \ref{lem:maximal principle} implies $U\subset \Omega$.
Hence
$\Omega$ is open.

From the connectedness of $\inte M\setminus \{x_{0}\}$
we deduce $\Omega=\inte M\setminus \{x_{0}\}$,
and hence $\rho_{\bm}+\rho_{x_{0}}=l$ on $M$.
This implies $M=B_{l}(x_{0})$.
For each $v \in U_{x_{0}}M$,
we have $\tau_{x_{0}}(v)=l$,
and $\gamma_{v}$ is orthogonal to $\bm$ at $l$.
The equality in (\ref{eq:pointed Laplacian comparison}) holds on $\inte M\setminus \{x_{0}\}$.
Choose an orthonormal basis $\{e_{v,i}\}_{i=1}^{n}$ of $T_{x_{0}}M$ with $e_{v,n}=v$.
Let $\{Y_{v,i}\}^{n-1}_{i=1}$ be the Jacobi fields along $\gamma_{v}$ with $Y_{v,i}(0)=0_{x_{0}},\,Y_{v,i}'(0)=e_{v,i}$.
For all $i$
we have $Y_{v,i}=F_{\kappa,v} \,E_{v,i}$ on $[0,l]$,
where $F_{\kappa,v}$ is defined as (\ref{eq:pointed model Jacobi field}),
and $\{E_{v,i}\}^{n-1}_{i=1}$ are the parallel vector fields with $E_{v,i}(0)=e_{v,i}$;
moreover,
$f \circ \gamma_{v}=(n-1)\delta$ on $[0,l]$ (see Remark \ref{rem:equality in finite pointed Laplacian comparison} and Lemma \ref{lem:equality in pointed Laplacian comparison}).
We see $F_{\kappa,v}=s_{\kappa e^{-4\delta}}$ on $[0,l]$.
Therefore,
a map $\Phi:[0,l]\times U_{x_{0}}M\to M$ defined by $\Phi(t,v):=\gamma_{v}(t)$ gives a desired Riemannian isometry with boundary.
\end{proof}
\section{Volume growths}\label{sec:Volume growths}

%%%%%%%%%%%%%%%%%%%%%%%%%%%%%%
\subsection{Volume elements}\label{sec:volume elements}
We first recall that
$\tau_{f}$ is defined as (\ref{eq:boundary cut value}).
For $z\in \bm$ and $s \in (0,\tau_{f}(z))$
we define
\begin{equation}\label{eq:volume element}
\hat{\theta}_{f}(s,z):=\theta_{f}(t_{f,z}(s),z),
\end{equation}
where $\theta_{f}(t,z)$ is defined as (\ref{eq:volume element}),
and $t_{f,z}$ is the inverse function of the function $s_{f,z}$ defined as (\ref{eq:speed changing function}).

We show the following volume element comparison inequality:
\begin{lem}\label{lem:volume element comparison}
Let $z\in \bm$.
For $N \in (-\infty,1]$,
let us assume that
$\ric^{N}_{f}(\gamma'_{z}(t))\geq (n-1)\kappa e^{\frac{-4 f(\gamma_{z}(t))}{n-1}}$ for all $t \in (0,\tau(z))$,
and $H_{f,z}\geq (n-1)\lambda e^{\frac{-2 f(z)}{n-1}}$.
Then for all $s_{1},s_{2} \in [0,\tau_{f}(z))$ with $s_{1}\leq s_{2}$
\begin{equation*}
\frac{\hat{\theta}_{f}(s_{2},z)}{ \hat{\theta}_{f}(s_{1},z)}\leq \frac{s^{n-1}_{\kappa,\lambda}(s_{2})}{s^{n-1}_{\kappa,\lambda}(s_{1})}.
\end{equation*}
In particular,
for all $s\in [0,\tau_{f}(z))$
\begin{equation}\label{eq:absolute volume element comparison}
\hat{\theta}_{f}(s,z)\leq e^{-f(z)}\,s^{n-1}_{\kappa,\lambda}(s).
\end{equation}
\end{lem}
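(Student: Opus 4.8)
The plan is to reduce the statement to a one-variable logarithmic-derivative comparison, using the representation (\ref{eq:Laplacian representation}) of $\Delta_f\rho_{\bm}$ together with Lemma \ref{lem:Basic comparison}. First I would record that $s\in(0,\tau_f(z))$ forces $s<\bar C_{\kappa,\lambda}$: when $\kappa$ and $\lambda$ satisfy the ball-condition this is exactly Lemma \ref{lem:Cut point comparisons}, and otherwise $\bar C_{\kappa,\lambda}=\infty$ and there is nothing to check. Hence $H_{\kappa,\lambda}(s)$ is defined, $s_{\kappa,\lambda}(s)>0$, and (by the Remark following Lemma \ref{lem:Cut point comparisons}) Lemma \ref{lem:Basic comparison} gives
\[
\Delta_f\rho_{\bm}\bigl(\gamma_z(t_{f,z}(s))\bigr)\ \geq\ H_{\kappa,\lambda}(s)\,e^{\frac{-2f(\gamma_z(t_{f,z}(s)))}{n-1}}\qquad\text{for all }s\in(0,\tau_f(z)).
\]

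Next I would differentiate $\log\hat\theta_f(\cdot,z)$. Since $\theta_f$ is smooth and positive on $(0,\tau(z))$ and $t_{f,z}$ is the inverse of $s_{f,z}$, whose derivative is $s_{f,z}'(t)=e^{\frac{-2f(\gamma_z(t))}{n-1}}$, we have $t_{f,z}'(s)=e^{\frac{2f(\gamma_z(t_{f,z}(s)))}{n-1}}$; the chain rule and (\ref{eq:Laplacian representation}) then yield
\[
\frac{d}{ds}\log\hat\theta_f(s,z)\ =\ -\,\Delta_f\rho_{\bm}\bigl(\gamma_z(t_{f,z}(s))\bigr)\,e^{\frac{2f(\gamma_z(t_{f,z}(s)))}{n-1}}\ \leq\ -H_{\kappa,\lambda}(s)\ =\ \frac{d}{ds}\log s_{\kappa,\lambda}^{n-1}(s),
\]
the last equality being the definition (\ref{eq:model mean curvature}) of $H_{\kappa,\lambda}$. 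Thus $s\mapsto \hat\theta_f(s,z)/s_{\kappa,\lambda}^{n-1}(s)$ is non-increasing on $(0,\tau_f(z))$, and integrating this monotonicity between $s_1$ and $s_2$ (and letting $s_1\to 0$ to cover the boundary case $s_1=0$) gives the first displayed inequality of the lemma.

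Finally, for (\ref{eq:absolute volume element comparison}) I would compute the limit of the monotone ratio as $s\to 0^{+}$: with the standard normalization $\theta(0,z)=1$ of the volume element of the level sets of $\rho_{\bm}$, one has $\hat\theta_f(s,z)=\theta_f(t_{f,z}(s),z)\to e^{-f(z)}$, while $s_{\kappa,\lambda}(0)=1$, so $\hat\theta_f(s,z)/s_{\kappa,\lambda}^{n-1}(s)\to e^{-f(z)}$. Since the ratio is non-increasing, it stays $\leq e^{-f(z)}$ on all of $(0,\tau_f(z))$, which is (\ref{eq:absolute volume element comparison}). The only points requiring care are the bookkeeping that keeps the parameter $s$ inside the domain $[0,\bar C_{\kappa,\lambda})$ where the model quantities are defined, and the justification of the $s_1\to 0$ limit using $\theta(0,z)=1$; everything else is a routine computation.
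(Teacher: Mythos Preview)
Your proposal is correct and follows essentially the same route as the paper: both compute $\frac{d}{ds}\log\bigl(\hat\theta_f(s,z)/s_{\kappa,\lambda}^{n-1}(s)\bigr)$ via the representation (\ref{eq:Laplacian representation}) and the chain rule, and then bound it above by $0$ using Lemma \ref{lem:Basic comparison}. The paper's version is terser, omitting your explicit justification that $s<\bar C_{\kappa,\lambda}$ and the $s\to 0$ limit for (\ref{eq:absolute volume element comparison}), but the argument is the same.
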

\begin{proof}
By (\ref{eq:Laplacian representation}) and (\ref{eq:Basic comparison}),
for all $s \in (0,\tau_{f}(z))$
we see
\begin{equation*}
\frac{d}{ds}\log \frac{\hat{\theta}_{f}(s,z)}{s^{n-1}_{\kappa,\lambda}(s)}=-\bigl(e^{\frac{2f}{n-1}}\,\Delta_{f}\rho_{\bm}\bigl)(\gamma_{z}(t_{f,z}(s)))+H_{\kappa,\lambda}(s)\leq 0,
\end{equation*}
where $H_{\kappa,\lambda}$ is defined as (\ref{eq:model mean curvature}).
This implies the lemma.
\end{proof}
\begin{rem}\label{rem:equality in volume element comparison}
Assume that
for some $s_{0}\in (0,\tau_{f}(z))$
the equality in (\ref{eq:absolute volume element comparison}) holds.
Then the equality in (\ref{eq:absolute volume element comparison}) holds on $[0,s_{0}]$;
in particular,
the equality in (\ref{eq:Basic comparison}) holds on $[0,s_{0}]$ (see Lemma \ref{lem:Equality in Basic comparison}).
\end{rem}

If $f$ is bounded from above,
then we have the following:
\begin{lem}\label{lem:finite volume element comparison}
Let $z\in \bm$.
Let $\kappa$ and $\lambda$ satisfy the monotone-condition.
For $N \in (-\infty,1]$,
assume that
$\ric^{N}_{f}(\gamma'_{z}(t))\geq (n-1)\kappa\,e^{\frac{-4 f(\gamma_{z}(t))}{n-1}}$ for all $t \in (0,\tau(z))$,
and $H_{f,z}\geq (n-1)\lambda e^{\frac{-2 f(z)}{n-1}}$.
We suppose additionally that
there exists $\delta \in \mathbb{R}$ such that $f \circ \gamma_{z} \leq (n-1)\delta$ on $(0,\tau(z))$.
Then for all $t_{1},t_{2} \in [0,\tau(z))$ with $t_{1}\leq t_{2}$
we have
\begin{equation*}\label{eq:Jacobi comparison}
\frac{\theta_{f}(t_{2},z)}{ \theta_{f}(t_{1},z)}\leq \frac{s^{n-1}_{\kappa\,e^{-4\delta},\lambda\,e^{-2\delta}}(t_{2})}{s^{n-1}_{\kappa\,e^{-4\delta},\lambda\,e^{-2\delta}}(t_{1})}.
\end{equation*}
In particular,
for all $t\in [0,\tau(z))$
we have
\begin{equation}\label{eq:finite absolute volume element comparison}
\theta_{f}(t,z)\leq e^{-f(z)}\,s^{n-1}_{\kappa\,e^{-4\delta},\lambda\,e^{-2\delta}}(t).
\end{equation}
\end{lem}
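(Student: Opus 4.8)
The plan is to mimic the proof of Lemma \ref{lem:volume element comparison}, but with the sharper Laplacian comparison (\ref{eq:strictly finite Laplacian comparison}) in place of (\ref{eq:Basic comparison}), which is available here because $\kappa,\lambda$ satisfy the monotone-condition (hence also the weakly-monotone-condition) and $f\circ\gamma_z\le(n-1)\delta$. The first step is purely a bookkeeping computation for the model functions: setting $\psi(t):=s_{\kappa,\lambda}(e^{-2\delta}t)$, one has $\psi''+\kappa e^{-4\delta}\psi=0$, $\psi(0)=1$, $\psi'(0)=-\lambda e^{-2\delta}$, so by uniqueness of solutions of the Jacobi equation $s_{\kappa e^{-4\delta},\lambda e^{-2\delta}}(t)=s_{\kappa,\lambda}(e^{-2\delta}t)$, and differentiating logarithmically yields $H_{\kappa e^{-4\delta},\lambda e^{-2\delta}}(t)=e^{-2\delta}H_{\kappa,\lambda}(e^{-2\delta}t)$. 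I also note that the denominator below is positive on $(0,\tau(z))$: if $\kappa\le 0$ and $\lambda=\sqrt{\vert\kappa\vert}$ then $s_{\kappa e^{-4\delta},\lambda e^{-2\delta}}$ is of the form $1$ or $e^{-ct}$ and is positive everywhere, while otherwise $\kappa,\lambda$ satisfy the ball-condition and Lemma \ref{lem:Cut point comparisons} (inequality (\ref{eq:finite Cut point comparisons})) gives $\tau(z)\le C_{\kappa e^{-4\delta},\lambda e^{-2\delta}}$, so $s_{\kappa e^{-4\delta},\lambda e^{-2\delta}}>0$ on $(0,\tau(z))$ in either case.

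Next I would combine (\ref{eq:Laplacian representation}), which gives $\Delta_f\rho_{\bm}(\gamma_z(t))=-\theta_f'(t,z)/\theta_f(t,z)$, with (\ref{eq:strictly finite Laplacian comparison}) and the identity $e^{-2\delta}H_{\kappa,\lambda}(e^{-2\delta}t)=H_{\kappa e^{-4\delta},\lambda e^{-2\delta}}(t)$ from the previous step. For $t\in(0,\tau(z))$ this yields
\begin{equation*}
\frac{d}{dt}\log\frac{\theta_f(t,z)}{s^{n-1}_{\kappa e^{-4\delta},\lambda e^{-2\delta}}(t)}=-\Delta_f\rho_{\bm}(\gamma_z(t))+H_{\kappa e^{-4\delta},\lambda e^{-2\delta}}(t)\le 0,
\end{equation*}
so $t\mapsto \theta_f(t,z)/s^{n-1}_{\kappa e^{-4\delta},\lambda e^{-2\delta}}(t)$ is nonincreasing on $(0,\tau(z))$. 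This is exactly the asserted ratio monotonicity for $0<t_1\le t_2<\tau(z)$, and the case $t_1=0$ is obtained by passing to the limit, using that the volume element $\theta(t,z)$ of the level sets of $\rho_{\bm}$ tends to that of $\bm$ as $t\to 0$, hence $\theta_f(t,z)\to e^{-f(z)}$, together with $s_{\kappa e^{-4\delta},\lambda e^{-2\delta}}(0)=1$.

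Finally, for the pointwise bound (\ref{eq:finite absolute volume element comparison}) I would let $t_1\to 0$ in the ratio inequality: since $\theta_f(t_1,z)/s^{n-1}_{\kappa e^{-4\delta},\lambda e^{-2\delta}}(t_1)\to e^{-f(z)}$, monotonicity gives $\theta_f(t,z)\le e^{-f(z)}\,s^{n-1}_{\kappa e^{-4\delta},\lambda e^{-2\delta}}(t)$ for every $t\in[0,\tau(z))$. The only point requiring genuine care is the scaling identity $s_{\kappa e^{-4\delta},\lambda e^{-2\delta}}(t)=s_{\kappa,\lambda}(e^{-2\delta}t)$ and its consequence for $H$, which is precisely what makes the right-hand side of (\ref{eq:strictly finite Laplacian comparison}) equal to $H_{\kappa e^{-4\delta},\lambda e^{-2\delta}}(t)$; once that is in hand the argument is the same one-variable monotonicity as in Lemma \ref{lem:volume element comparison}.
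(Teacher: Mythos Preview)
Your proof is correct and follows essentially the same approach as the paper: combine (\ref{eq:Laplacian representation}) with the Laplacian comparison (\ref{eq:strictly finite Laplacian comparison}) to show that $\frac{d}{dt}\log\bigl(\theta_f(t,z)/s^{n-1}_{\kappa,\lambda}(e^{-2\delta}t)\bigr)\le 0$, and then invoke the scaling identity $s_{\kappa e^{-4\delta},\lambda e^{-2\delta}}(t)=s_{\kappa,\lambda}(e^{-2\delta}t)$. Your additional remarks on positivity of the denominator and on the limit $t_1\to 0$ are sound and merely make explicit what the paper leaves implicit.
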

\begin{proof}
From (\ref{eq:Laplacian representation}) and (\ref{eq:strictly finite Laplacian comparison})
we deduce
\begin{equation*}
\frac{d}{dt}\log \frac{\theta_{f}(t,z)}{s^{n-1}_{\kappa,\lambda}(e^{-2\delta}\,t)}=-\Delta_{f}\rho_{\bm}(\gamma_{z}(t))+H_{\kappa,\lambda}(e^{-2\delta}t) \,e^{-2\delta}\leq 0.
\end{equation*}
Since $s_{\kappa\,e^{-4\delta},\lambda\,e^{-2\delta}}(t)=s_{\kappa,\lambda}(e^{-2\delta}\,t)$,
we obtain the desired inequality.
\end{proof}
\begin{rem}\label{rem:equality in finite volume element comparison}
Assume that
for some $t_{0}\in (0,\tau(z))$
the equality in (\ref{eq:finite absolute volume element comparison}) holds.
Then the equality in (\ref{eq:finite absolute volume element comparison}) holds on $[0,t_{0}]$;
in particular,
the equality in (\ref{eq:strictly finite Laplacian comparison}) holds on $[0,t_{0}]$ (see Remark \ref{rem:equality in strictly finite Laplacian comparison}).
\end{rem}

%%%%%%%%%%%%%%%%%%%%%%%%%%%%%%
\subsection{Absolute comparisons}\label{sec:Absolute volume comparisons}
We define $\check{\theta}_{f}:[0,\infty)\times \bm \to \mathbb{R}$ by
\begin{equation}\label{eq:extended volume element}
\check{\theta}_{f}(s,z):=\begin{cases}
                                                    \hat{\theta}_{f}(s,z) & \text{if $s<\tau_{f}(z)$}, \\
                                                                                0           & \text{if $s \geq \tau_{f}(z)$}.
                                                   \end{cases}
\end{equation}

To prove our volume comparison theorems,
we need the following:
\begin{lem}\label{lem:integration formula}
Let $\bm$ be compact.
Then for all $r>0$
\begin{equation*}\label{eq:integration formula}
m_{\frac{n+1}{n-1}f}\left(B^{f}_{r}(\bm)\right)=\int_{\bm}\,\int^{r}_{0}\,\check{\theta}_{f}(s,z)\,ds\, d\vol_{h}.
\end{equation*}
where $B^{f}_{r}(\bm)$ is defined as $(\ref{eq:twisted inscribed radius})$,
and $\vol_{h}$ is the Riemannian volume measure on $\bm$ determined by the induced metric $h$.
\end{lem}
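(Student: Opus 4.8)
The plan is to pass to geodesic polar coordinates based on the boundary, exactly as in Lemma \ref{lem:Basic integration formula}, and then perform the substitution $s=s_{f,z}(t)$ that converts the distance function $\rho_{\bm}$ into $\rho_{\bm,f}$. Note that this identity is purely measure-theoretic: it uses none of the curvature hypotheses, only the structure of the cut locus for $\bm$.

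First I would recall that $\cut\bm$ is a null set of $M$ and that the normal exponential map $(t,z)\mapsto\gamma_{z}(t)$ restricts to a diffeomorphism from $\{(t,z)\mid z\in\bm,\ 0\le t<\tau(z)\}$ onto $\inte M\setminus\cut\bm$, under which the Riemannian volume disintegrates as $d\vol_{g}=\theta(t,z)\,dt\,d\vol_{h}$; this is precisely the mechanism behind Lemma \ref{lem:Basic integration formula}. Writing $\phi:=\frac{n+1}{n-1}f$, discarding the null set $\cut\bm$ and pulling back along this chart gives
$$m_{\phi}\left(B^{f}_{r}(\bm)\right)=\int_{\bm}\int_{\{t\in[0,\tau(z))\,\mid\,\gamma_{z}(t)\in B^{f}_{r}(\bm)\}}e^{-\phi(\gamma_{z}(t))}\,\theta(t,z)\,dt\,d\vol_{h},$$
where compactness of $\bm$ is used only to guarantee finiteness and to justify Fubini's theorem, exactly as in Lemma \ref{lem:Basic integration formula}.

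Next I would identify the inner domain of integration. For $t\in[0,\tau(z))$ the point $\gamma_{z}(t)$ lies outside $\cut\bm$, hence $z$ is its unique foot point and $\rho_{\bm}(\gamma_{z}(t))=t$; by the definitions $(\ref{eq:weighted distance function from the boundary})$ and $(\ref{eq:speed changing function})$ this yields $\rho_{\bm,f}(\gamma_{z}(t))=s_{f,z}(t)$. Since $s_{f,z}$ is a strictly increasing homeomorphism of $[0,\tau(z))$ onto $[0,\tau_{f}(z))$ with inverse $t_{f,z}$, the condition $\gamma_{z}(t)\in B^{f}_{r}(\bm)$, i.e. $s_{f,z}(t)\le r$, cuts out $t\in[0,t_{f,z}(r)]$ when $r<\tau_{f}(z)$, and all of $[0,\tau(z))$ when $r\ge\tau_{f}(z)$.

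Finally I would substitute $s=s_{f,z}(t)$, $t=t_{f,z}(s)$, so that $dt=e^{\frac{2f(\gamma_{z}(t_{f,z}(s)))}{n-1}}\,ds$, and observe the exponent collapse $-\frac{n+1}{n-1}f+\frac{2}{n-1}f=-f$; hence the integrand transforms into $e^{-f(\gamma_{z}(t_{f,z}(s)))}\,\theta(t_{f,z}(s),z)\,ds=\theta_{f}(t_{f,z}(s),z)\,ds=\hat{\theta}_{f}(s,z)\,ds$. In the case $r<\tau_{f}(z)$ the inner integral is $\int_{0}^{r}\hat{\theta}_{f}(s,z)\,ds$, and in the case $r\ge\tau_{f}(z)$ it is $\int_{0}^{\tau_{f}(z)}\hat{\theta}_{f}(s,z)\,ds$; by the definition $(\ref{eq:extended volume element})$ of $\check{\theta}_{f}$ both equal $\int_{0}^{r}\check{\theta}_{f}(s,z)\,ds$, and integrating over $\bm$ finishes the proof. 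I do not anticipate a genuine obstacle here: the only delicate points are the null-set reduction for $\cut\bm$ and the two-case bookkeeping at $r=\tau_{f}(z)$, both of which are routine.
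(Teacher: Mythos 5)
Your proof is correct and follows essentially the same route as the paper: both pass to normal polar coordinates for $\bm$, observe $\rho_{\bm,f}(\gamma_{z}(t))=s_{f,z}(t)$, split according to whether $\tau_{f}(z)\le r$ or $\tau_{f}(z)>r$ (the paper packages this as the disjoint decomposition $B^{f}_{r}(\bm)\setminus\cut\bm=\widehat U^{f}_{r}\sqcup\widehat V^{f}_{r}$), substitute $s=s_{f,z}(t)$, and use that $\cut\bm$ is a null set. The exponent bookkeeping $-\tfrac{n+1}{n-1}f+\tfrac{2}{n-1}f=-f$ matches the paper's passage from $e^{-\frac{(n+1)f}{n-1}}\theta$ to $\check\theta_{f}$.
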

\begin{proof}
We give an outline of the proof.
For $r>0$,
we set
\begin{align*}
&U^{f}_{r}:=\left\{z \in \bm  \mid \tau_{f}(z)\leq r \right\},  \quad \widehat{U}^{f}_{r}:=\bigcup_{z\in U^{f}_{r}} \left\{ \gamma_{z}(t)\mid t\in [0,\tau(z)) \right\}, \\
&V^{f}_{r}:=\left\{z \in \bm  \mid \tau_{f}(z)> r \right\},      \quad \widehat{V}^{f}_{r}:=\bigcup_{z\in V^{f}_{r}} \left\{ \gamma_{z}(t)\mid t\in [0,t_{f,z}(r)] \right\}.
\end{align*}
For all $z \in \bm$ and $t\in [0,\tau(z))$
we see $\rho_{\bm,f}(\gamma_{z}(t))=s_{f,z}(t)$.
Hence,
by the straightforward argument,
one can verify $B^{f}_{r}(\bm)\setminus \cut \bm=\widehat{U}^{f}_{r} \sqcup \widehat{V}^{f}_{r}$.
In virtue of the coarea formula and the Fubini theorem,
\begin{align*}
m_{\frac{n+1}{n-1}f}\bigl(\widehat{U}^{f}_{r}\bigl)&=\int_{  U^{f}_{r} }\,\int^{\tau(z)}_{0}\,e^{  \frac{-(n+1)f(\gamma_{z}(t))}{n-1}  }\,\theta(t,z)\,dt\, d\vol_{h}\\
                                                                         &=\int_{   U^{f}_{r} }\,\int^{r}_{0}\,\check{\theta}_{f}(s,z)\,ds\,d\vol_{h},\\
m_{\frac{n+1}{n-1}f}\bigl(\widehat{V}^{f}_{r}\bigl)&=\int_{V^{f}_{r} }\,\int^{t_{f,z}(r)}_{0}\,e^{  \frac{-(n+1)f(\gamma_{z}(t))}{n-1}  }\,\theta(t,z)\,dt\, d\vol_{h}\\
                                                                        &=\int_{    V^{f}_{r}}\,\int^{r}_{0}\,\check{\theta}_{f}(s,z)\,ds\,d\vol_{h}.
\end{align*}
Since $\cut \bm$ is a null set,
we conclude the lemma.
\end{proof}

We set
\begin{equation*}
m_{f,\bm}:=e^{-f|_{\bm}} \vol_{h}.
\end{equation*}

We prove the following absolute volume comparison inequality:
\begin{lem}\label{lem:absolute volume comparison}
For $N\in (-\infty,1]$,
assume that
$(M,\bm,f)$ has lower $(\kappa,\lambda,N)$-weighted curvature bounds.
Let $\bm$ be compact.
Then for all $r>0$
we have
\begin{equation}\label{eq:absolute volume comparison}
m_{\frac{n+1}{n-1}f}\left(B^{f}_{r}(\bm)\right) \leq s_{n,\kappa,\lambda}(r)\,m_{f,\bm}(\bm).
\end{equation}
\end{lem}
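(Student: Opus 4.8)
The plan is to combine the integration formula of Lemma \ref{lem:integration formula} with the absolute volume element comparison \eqref{eq:absolute volume element comparison} of Lemma \ref{lem:volume element comparison}, and then recognize the resulting integral as $s_{n,\kappa,\lambda}(r)$ weighted by $m_{f,\bm}(\bm)$. Since $(M,\bm,f)$ has lower $(\kappa,\lambda,N)$-weighted curvature bounds, for every $z\in\bm$ the hypotheses of Lemma \ref{lem:volume element comparison} hold, so $\hat{\theta}_{f}(s,z)\le e^{-f(z)}s^{n-1}_{\kappa,\lambda}(s)$ for all $s\in[0,\tau_{f}(z))$.

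First I would unwind the definition of $\check{\theta}_{f}$ in \eqref{eq:extended volume element}. For $s<\tau_{f}(z)$ one has $\check{\theta}_{f}(s,z)=\hat{\theta}_{f}(s,z)\le e^{-f(z)}s^{n-1}_{\kappa,\lambda}(s)$ by \eqref{eq:absolute volume element comparison}; for $s\ge\tau_{f}(z)$ one has $\check{\theta}_{f}(s,z)=0$. In either case I claim $\check{\theta}_{f}(s,z)\le e^{-f(z)}\bar{s}^{\,n-1}_{\kappa,\lambda}(s)$, where $\bar{s}_{\kappa,\lambda}$ is the truncated function in \eqref{eq:model volume growth}. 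Indeed, by Lemma \ref{lem:Cut point comparisons} we have $\tau_{f}(z)\le C_{\kappa,\lambda}=\bar{C}_{\kappa,\lambda}$ whenever $\kappa$ and $\lambda$ satisfy the ball-condition, so $s\ge\tau_{f}(z)$ forces $s\ge$ the point where $\bar{s}_{\kappa,\lambda}$ has been cut to $0$; and when the ball-condition fails, $\bar{C}_{\kappa,\lambda}=\infty$ and $\bar{s}_{\kappa,\lambda}=s_{\kappa,\lambda}$ throughout, so the bound from \eqref{eq:absolute volume element comparison} already gives it on $[0,\tau_{f}(z))$ while the value $0$ is trivially dominated on $[\tau_{f}(z),\infty)$. (One should also note $s_{\kappa,\lambda}\ge 0$ on $[0,\bar{C}_{\kappa,\lambda})$ so that the inequality is meaningful; this is exactly the content of $\bar{C}_{\kappa,\lambda}=\inf\{t>0\mid s_{\kappa,\lambda}(t)=0\}$ in the ball-condition case.)

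Then I would apply Lemma \ref{lem:integration formula}, valid since $\bm$ is compact, and estimate:
\begin{align*}
m_{\frac{n+1}{n-1}f}\left(B^{f}_{r}(\bm)\right)
&=\int_{\bm}\int^{r}_{0}\check{\theta}_{f}(s,z)\,ds\,d\vol_{h}
\leq \int_{\bm}\int^{r}_{0} e^{-f(z)}\,\bar{s}^{\,n-1}_{\kappa,\lambda}(s)\,ds\,d\vol_{h}\\
&=\left(\int^{r}_{0}\bar{s}^{\,n-1}_{\kappa,\lambda}(s)\,ds\right)\int_{\bm} e^{-f(z)}\,d\vol_{h}
= s_{n,\kappa,\lambda}(r)\,m_{f,\bm}(\bm),
\end{align*}
using the definition $s_{n,\kappa,\lambda}(r)=\int^{r}_{0}\bar{s}^{\,n-1}_{\kappa,\lambda}(a)\,da$ from \eqref{eq:model volume growth} and $m_{f,\bm}=e^{-f|_{\bm}}\vol_{h}$. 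This is \eqref{eq:absolute volume comparison}.

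I do not expect a serious obstacle here; the one point that needs care — and is the only place the argument is not a one-line substitution — is the bookkeeping around the truncation $\bar{s}_{\kappa,\lambda}$ versus $s_{\kappa,\lambda}$, i.e. checking that the pointwise bound $\check{\theta}_{f}(s,z)\le e^{-f(z)}\bar{s}^{\,n-1}_{\kappa,\lambda}(s)$ really holds for all $s\in[0,\infty)$ and not merely on $[0,\tau_{f}(z))$. Lemma \ref{lem:Cut point comparisons} is exactly what makes this work in the ball-condition case, and the non-ball-condition case is immediate since then $\bar{C}_{\kappa,\lambda}=\infty$. Everything else is Fubini plus the definitions.
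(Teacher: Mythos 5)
Your proposal is correct and follows the same route as the paper: extend the pointwise bound from Lemma \ref{lem:volume element comparison} to $\check{\theta}_{f}(s,z)\le e^{-f(z)}\bar{s}^{\,n-1}_{\kappa,\lambda}(s)$ for all $s\ge 0$, then integrate over $[0,r]\times\bm$ and invoke Lemma \ref{lem:integration formula}. Note only a small slip in the ball-condition bookkeeping: $s\ge\tau_{f}(z)$ does not force $s\ge\bar{C}_{\kappa,\lambda}$, but this is harmless because on $[\tau_{f}(z),\infty)$ the bound reduces to $0\le e^{-f(z)}\bar{s}^{\,n-1}_{\kappa,\lambda}(s)$, which holds trivially by nonnegativity of $\bar{s}_{\kappa,\lambda}$ (as you note yourself), while the actual role of Lemma \ref{lem:Cut point comparisons} is to guarantee $\tau_{f}(z)\le\bar{C}_{\kappa,\lambda}$ so that $\bar{s}_{\kappa,\lambda}$ agrees with $s_{\kappa,\lambda}$ on $[0,\tau_{f}(z))$.
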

\begin{proof}
By Lemma \ref{lem:volume element comparison},
$\check{\theta}_{f}(s,z)\leq e^{-f(z)}\,\bar{s}^{n-1}_{\kappa,\lambda}(s)$ for all $s \geq 0$,
where $\bar{s}_{\kappa,\lambda}$ is defined as $(\ref{eq:model volume growth})$.
Integrate both sides over $[0,r]$ with respect to $s$,
and over $\bm$ with respect to $z$.
Lemma \ref{lem:integration formula} implies the lemma.
\end{proof}

One can also prove the following
by replacing the role of Lemmas \ref{lem:volume element comparison} and \ref{lem:integration formula} with Lemmas \ref{lem:finite volume element comparison} and \ref{lem:Basic integration formula} in the proof of Lemma \ref{lem:absolute volume comparison}.
\begin{lem}\label{lem:finite absolute volume comparison}
Let $\kappa$ and $\lambda$ satisfy the monotone-condition.
For $N\in (-\infty,1]$,
assume that
$(M,\bm,f)$ has lower $(\kappa,\lambda,N)$-weighted curvature bounds.
Let $\bm$ be compact.
Suppose additionally that
there is $\delta \in \mathbb{R}$ such that $f\leq (n-1)\delta$ on $M$.
Then for all $r>0$
we have
\begin{equation*}\label{eq:finite absolute volume comparison}
m_{f}(B_{r}(\bm)) \leq s_{n,\kappa\,e^{-4\,\delta},\lambda\,e^{-2\,\delta}}(r)\,m_{f,\bm}(\bm).
\end{equation*}
\end{lem}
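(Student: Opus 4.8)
The plan is to follow the proof of Lemma~\ref{lem:absolute volume comparison} line by line, replacing its two inputs by their ``bounded'' analogues: the volume element comparison of Lemma~\ref{lem:volume element comparison} is replaced by Lemma~\ref{lem:finite volume element comparison}, and the integration formula of Lemma~\ref{lem:integration formula} (phrased via $\rho_{\bm,f}$) is replaced by the ordinary integration formula of Lemma~\ref{lem:Basic integration formula} (phrased via $\rho_{\bm}$).

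First I would verify that Lemma~\ref{lem:finite volume element comparison} applies along every normal geodesic $\gamma_{z}$, $z\in\bm$: the hypothesis $f\le(n-1)\delta$ on $M$ restricts to $f\circ\gamma_{z}\le(n-1)\delta$ on $(0,\tau(z))$, and the lower $(\kappa,\lambda,N)$-weighted curvature bounds unwind to $\ric^{N}_{f}(\gamma'_{z}(t))\ge(n-1)\kappa\,e^{\frac{-4f(\gamma_{z}(t))}{n-1}}$ on $(0,\tau(z))$ and $H_{f,z}\ge(n-1)\lambda\,e^{\frac{-2f(z)}{n-1}}$. Hence, for every $z\in\bm$ and $t\in[0,\tau(z))$,
\[
\theta_{f}(t,z)\le e^{-f(z)}\,s^{n-1}_{\kappa\,e^{-4\delta},\lambda\,e^{-2\delta}}(t).
\]

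Next I would promote this to $\bar{\theta}_{f}(t,z)\le e^{-f(z)}\,\bar{s}^{\,n-1}_{\kappa\,e^{-4\delta},\lambda\,e^{-2\delta}}(t)$ for all $t\ge0$. For $t\ge\tau(z)$ the left side vanishes and the right side is non-negative, so there is nothing to prove; for $t<\tau(z)$ the bound follows from the previous display once one knows $t<\bar{C}_{\kappa\,e^{-4\delta},\lambda\,e^{-2\delta}}$, and this is where the monotone-condition is used. If $\kappa\le0$ and $\lambda=\sqrt{\vert\kappa\vert}$, then $\bar{C}_{\kappa\,e^{-4\delta},\lambda\,e^{-2\delta}}=\infty$ and nothing is needed; if instead $\kappa$ and $\lambda$ satisfy the ball-condition (the other alternative), then so do $\kappa\,e^{-4\delta}$ and $\lambda\,e^{-2\delta}$, and Lemma~\ref{lem:Cut point comparisons} in the form (\ref{eq:finite Cut point comparisons}) gives $\tau(z)\le C_{\kappa\,e^{-4\delta},\lambda\,e^{-2\delta}}=\bar{C}_{\kappa\,e^{-4\delta},\lambda\,e^{-2\delta}}$, keeping $t<\tau(z)$ inside the range where $\bar{s}_{\kappa\,e^{-4\delta},\lambda\,e^{-2\delta}}$ coincides with $s_{\kappa\,e^{-4\delta},\lambda\,e^{-2\delta}}$.

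Finally I would integrate this inequality over $t\in[0,r]$ and over $z\in\bm$ against $\vol_{h}$: by Lemma~\ref{lem:Basic integration formula} the left side is $m_{f}(B_{r}(\bm))$, while on the right $\int_{0}^{r}\bar{s}^{\,n-1}_{\kappa\,e^{-4\delta},\lambda\,e^{-2\delta}}(t)\,dt=s_{n,\kappa\,e^{-4\delta},\lambda\,e^{-2\delta}}(r)$ and $\int_{\bm}e^{-f(z)}\,d\vol_{h}=m_{f,\bm}(\bm)$, which is exactly the claimed estimate. The argument is essentially mechanical, the substance residing in Lemmas~\ref{lem:finite volume element comparison}, \ref{lem:Cut point comparisons}, and \ref{lem:Basic integration formula}; the only point requiring a moment's care, and hence the main (modest) obstacle, is the passage from the estimate for $\theta_{f}$, valid only for $t<\tau(z)$, to the one for $\bar{\theta}_{f}$, valid for all $t$, which is handled by the cut-point bound in the ball-condition case and is vacuous otherwise.
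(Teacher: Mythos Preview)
Your proposal is correct and follows exactly the approach the paper indicates: the paper's proof is simply the one-line remark that Lemma~\ref{lem:absolute volume comparison} goes through verbatim once Lemmas~\ref{lem:volume element comparison} and~\ref{lem:integration formula} are replaced by Lemmas~\ref{lem:finite volume element comparison} and~\ref{lem:Basic integration formula}. Your careful justification of the passage from $\theta_{f}$ to $\bar{\theta}_{f}$ via the case split on the monotone-condition (invoking (\ref{eq:finite Cut point comparisons}) in the ball-condition case) makes explicit a detail the paper leaves implicit.
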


%%%%%%%%%%%%%%%%%%%%%%%%%%%%%%
\subsection{Relative comparisons}\label{sec:Relative volume comparisons}

We prove Theorem \ref{thm:relative volume comparison}.
\begin{proof}[Proof of Theorem \ref{thm:relative volume comparison}]
For $N \in (-\infty,1]$,
assume that
$(M,\bm,f)$ has lower $(\kappa,\lambda,N)$-weighted curvature bounds.
Let $\bm$ be compact.

By Lemma \ref{lem:volume element comparison},
for all $s_{1},s_{2}\geq 0$ with $s_{1}\leq s_{2}$
\begin{equation*}
\check{\theta}_{f}(s_{2},z)\; \bar{s}_{\kappa,\lambda}^{n-1}(s_{1}) \leq \check{\theta}_{f}(s_{1},z)\; \bar{s}_{\kappa,\lambda}^{n-1}(s_{2}).
\end{equation*}
We integrate the both sides over $[0,r]$ with respect to $s_{1}$,
and over $[r,R]$ with respect to $s_{2}$.
It follows that
\begin{equation*}
\frac{\int^{R}_{r}\check{\theta}_{f}(s_{2},z)\,ds_{2}}{\int^{r}_{0}\check{\theta}_{f}(s_{1},z)\,ds_{1}}\leq \frac{s_{n,\kappa,\lambda}(R)-s_{n,\kappa,\lambda}(r)}{s_{n,\kappa,\lambda}(r)}.
\end{equation*}
Lemma \ref{lem:integration formula} implies
\begin{equation*}
          \frac{m_{\frac{n+1}{n-1}f}( B^{f}_{R}(\bm))}{m_{\frac{n+1}{n-1}f}(B^{f}_{r}(\bm))}\leq 1+\frac{s_{n,\kappa,\lambda}(R)-s_{n,\kappa,\lambda}(r)}{s_{n,\kappa,\lambda}(r)}
    =      \frac{s_{n,\kappa,\lambda}(R)}{s_{n,\kappa,\lambda}(r)},
\end{equation*}
and hence (\ref{eq:relative volume comparison}).
We complete the proof of Theorem \ref{thm:relative volume comparison}.
\end{proof}
\begin{rem}\label{rem:half rigidity}
Suppose that
there exists $R\in(0,\bar{C}_{\kappa,\lambda}]\setminus \{\infty\}$ such that
for every $r\in (0,R]$
the equality in $(\ref{eq:relative volume comparison})$ holds.
Then we see $\tau_{f} \geq R$ on $\bm$ (cf. Lemma 4.3 in \cite{Sa3}).
\end{rem}

We can also prove the following volume comparison by using Lemmas \ref{lem:finite volume element comparison} and \ref{lem:Basic integration formula}
instead of Lemmas \ref{lem:volume element comparison} and \ref{lem:integration formula} in the proof of Theorem \ref{thm:relative volume comparison}.
\begin{thm}\label{thm:finite relative volume comparison}
Let $\kappa$ and $\lambda$ satisfy the monotone-condition.
For $N\in (-\infty,1]$,
assume that
$(M,\bm,f)$ has lower $(\kappa,\lambda,N)$-weighted curvature bounds.
Let $\bm$ be compact.
Suppose additionally that
there is $\delta \in \mathbb{R}$ such that $f\leq (n-1)\delta$ on $M$.
Then for all $r,R>0$ with $r\leq R$
\begin{equation}\label{eq:finite relative volume comparison}
\frac{m_{f}(B_{R}(\bm))}{m_{f}(B_{r}(\bm))} \leq \frac{s_{n,\kappa\,e^{-4\delta},\lambda\,e^{-2\delta}}(R)}{s_{n,\kappa\,e^{-4\delta},\lambda\,e^{-2\delta}}(r)}.
\end{equation}
\end{thm}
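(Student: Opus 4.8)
The plan is to reproduce the proof of Theorem~\ref{thm:relative volume comparison} essentially verbatim, substituting, as the remark following the statement indicates, the ordinary volume element $\bar{\theta}_{f}$ for the speed-changed one $\check{\theta}_{f}$, Lemma~\ref{lem:finite volume element comparison} for Lemma~\ref{lem:volume element comparison}, and Lemma~\ref{lem:Basic integration formula} for Lemma~\ref{lem:integration formula}. Throughout, abbreviate $\mu:=\kappa e^{-4\delta}$ and $\nu:=\lambda e^{-2\delta}$; these again satisfy the monotone-condition, and $\bar{C}_{\mu,\nu}=e^{2\delta}\bar{C}_{\kappa,\lambda}$, while $s_{\mu,\nu}(t)=s_{\kappa,\lambda}(e^{-2\delta}t)$.

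First I would extract from Lemma~\ref{lem:finite volume element comparison} the cross-multiplied inequality
\[
\theta_{f}(t_{2},z)\,s^{n-1}_{\mu,\nu}(t_{1})\le\theta_{f}(t_{1},z)\,s^{n-1}_{\mu,\nu}(t_{2})
\]
for every $z\in\bm$ and all $0\le t_{1}\le t_{2}<\tau(z)$, and then promote it to the same inequality with $\bar{\theta}_{f}(\cdot,z)$ and $\bar{s}_{\mu,\nu}$ in place of $\theta_{f}$ and $s_{\mu,\nu}$, valid for all $0\le t_{1}\le t_{2}$. If $t_{2}\ge\tau(z)$ the left-hand side is $0$; and if $\mu,\nu$ satisfy the ball-condition and $t_{2}\ge\bar{C}_{\mu,\nu}$, then Lemma~\ref{lem:Cut point comparisons} (equivalently Lemma~\ref{lem:finite Inscribed radius comparison}) gives $\tau(z)\le C_{\mu,\nu}=\bar{C}_{\mu,\nu}\le t_{2}$, so again the left-hand side vanishes; in the remaining monotone case $\bar{C}_{\mu,\nu}=\infty$ and there is nothing to verify. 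This promotion past the cut distance and past $\bar{C}_{\mu,\nu}$ is the only genuinely delicate point, and the only place where the hypotheses $f\le(n-1)\delta$ and the monotone-condition are used beyond the citation of Lemma~\ref{lem:finite volume element comparison}; everything else is the bookkeeping of Theorem~\ref{thm:relative volume comparison}.

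Next I would integrate the resulting inequality over $t_{1}\in[0,r]$ and $t_{2}\in[r,R]$; since the two variables separate, this gives, for each fixed $z$,
\[
\Bigl(\int_{r}^{R}\bar{\theta}_{f}(t,z)\,dt\Bigr)\,s_{n,\mu,\nu}(r)\le\Bigl(\int_{0}^{r}\bar{\theta}_{f}(t,z)\,dt\Bigr)\bigl(s_{n,\mu,\nu}(R)-s_{n,\mu,\nu}(r)\bigr),
\]
using the definition of $s_{n,\mu,\nu}$ from~(\ref{eq:model volume growth}). Integrating both sides over $\bm$ with respect to $\vol_{h}$ and invoking Lemma~\ref{lem:Basic integration formula} (with weight $f$, since no speed change is performed here) replaces $\int_{\bm}\int_{0}^{r}\bar{\theta}_{f}\,dt\,d\vol_{h}$ by $m_{f}(B_{r}(\bm))$ and $\int_{\bm}\int_{r}^{R}\bar{\theta}_{f}\,dt\,d\vol_{h}$ by $m_{f}(B_{R}(\bm))-m_{f}(B_{r}(\bm))$. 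Rearranging yields $m_{f}(B_{R}(\bm))\,s_{n,\mu,\nu}(r)\le m_{f}(B_{r}(\bm))\,s_{n,\mu,\nu}(R)$, which is precisely~(\ref{eq:finite relative volume comparison}).
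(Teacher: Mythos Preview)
Your proof is correct and follows exactly the approach the paper indicates: mimic the proof of Theorem~\ref{thm:relative volume comparison}, replacing Lemma~\ref{lem:volume element comparison} and Lemma~\ref{lem:integration formula} by Lemma~\ref{lem:finite volume element comparison} and Lemma~\ref{lem:Basic integration formula}. Your explicit verification of the promotion of the cross-multiplied inequality from $\theta_{f},\,s_{\mu,\nu}$ to $\bar{\theta}_{f},\,\bar{s}_{\mu,\nu}$ beyond $\tau(z)$ and $\bar{C}_{\mu,\nu}$, via Lemma~\ref{lem:Cut point comparisons}, fills in a detail the paper leaves implicit.
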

\begin{rem}\label{rem:finite half rigidity}
Assume that
there exists $R\in(0,\bar{C}_{\kappa\,e^{-4\delta},\lambda\,e^{-2\delta}}]\setminus \{\infty\}$ such that
for every $r\in (0,R]$
the equality in $(\ref{eq:finite relative volume comparison})$ holds.
Then one can verify that $\tau \geq R$ on $\bm$ (cf. Lemma 4.3 in \cite{Sa3}).
\end{rem}

%%%%%%%%%%%%%%%%%%%%%%%%%%%%%%
\subsection{Volume growth rigidity}\label{sec:Volume growth rigidity}
Now,
let us prove the following:
\begin{thm}\label{thm:volume growth rigidity}
Suppose that
$\kappa$ and $\lambda$ do not satisfy the ball-condition.
For $N \in (-\infty,1]$,
assume that
$(M,\bm,f)$ has lower $(\kappa,\lambda,N)$-weighted curvature bounds.
Let $\bm$ be compact.
If we have
\begin{equation}\label{eq:assumption of volume growth rigidity}
\liminf_{r\to \infty}\frac{m_{\frac{n+1}{n-1}f}(B^{f}_{r}(\bm))}{s_{n,\kappa,\lambda}(r)}\geq m_{f,\bm}(\bm),
\end{equation}
then $M$ is isometric to $[0,\infty)\times_{F_{\kappa,\lambda}} \bm$;
moreover,
if $N \in (-\infty,1)$,
then for every $z\in \bm$
the function $f\circ \gamma_{z}$ is constant.
\end{thm}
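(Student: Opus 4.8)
The plan is to run an argument parallel to the proof of Theorem~\ref{thm:splitting theorem}, but driven by the volume comparisons of Section~\ref{sec:Volume growths} rather than by Busemann functions. Since $\kappa$ and $\lambda$ do not satisfy the ball-condition we have $\bconst=\infty$, so $\bar{s}_{\kappa,\lambda}=s_{\kappa,\lambda}$ on $[0,\infty)$ and $s_{\kappa,\lambda}(s)>0$ for every $s\geq 0$; also $m_{f,\bm}(\bm)>0$ since $\bm$ is a non-empty compact manifold. Write $\Psi(r):=m_{\frac{n+1}{n-1}f}(B^{f}_{r}(\bm))$.

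First I would pin down the equality case of the absolute comparison for \emph{every} $r>0$. By Lemma~\ref{lem:absolute volume comparison}, $\Psi(r)\leq s_{n,\kappa,\lambda}(r)\,m_{f,\bm}(\bm)$, while Theorem~\ref{thm:relative volume comparison} says $r\mapsto \Psi(r)/s_{n,\kappa,\lambda}(r)$ is non-increasing, hence its limit as $r\to\infty$ equals its infimum over $r>0$. Hypothesis~(\ref{eq:assumption of volume growth rigidity}) forces this limit to be $\geq m_{f,\bm}(\bm)$, and together with the absolute bound this gives $\Psi(r)=s_{n,\kappa,\lambda}(r)\,m_{f,\bm}(\bm)$ for all $r>0$; equivalently, equality in (\ref{eq:relative volume comparison}) holds for all $r\leq R$ and every $R$. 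By Remark~\ref{rem:half rigidity} (using $\bconst=\infty$) this yields $\tau_{f}\geq R$ on $\bm$ for every $R>0$, hence $\tau_{f}\equiv\infty$ on $\bm$; since $\tau_{f}(z)=\infty$ forces $\tau(z)=\infty$, we get $\tau\equiv\infty$ on $\bm$, so $\cut\bm=\emptyset$ (and $\bm$ is connected by Lemma~\ref{lem:splitting lemma}).

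Next I would upgrade this to pointwise rigidity of the volume element. Because $\cut\bm=\emptyset$, the function $\rho_{\bm}$ is smooth on $\inte M$ and $\hat{\theta}_{f}$ is smooth on $(0,\infty)\times\bm$. By Lemma~\ref{lem:integration formula} the equality of the previous step reads $\int_{\bm}\int_{0}^{r}\hat{\theta}_{f}(s,z)\,ds\,d\vol_{h}=\int_{\bm}\int_{0}^{r}e^{-f(z)}s^{n-1}_{\kappa,\lambda}(s)\,ds\,d\vol_{h}$ for all $r>0$, while Lemma~\ref{lem:volume element comparison} gives $\hat{\theta}_{f}(s,z)\leq e^{-f(z)}s^{n-1}_{\kappa,\lambda}(s)$ pointwise; as both integrands are continuous, the two must agree identically, i.e.\ equality in (\ref{eq:absolute volume element comparison}) holds for all $s>0$ and all $z\in\bm$. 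By Remark~\ref{rem:equality in volume element comparison} this makes equality in (\ref{eq:Basic comparison}), equivalently in (\ref{eq:Laplacian comparison}), hold on all of $(0,\infty)$. Fixing $z\in\bm$ and an orthonormal basis $\{e_{z,i}\}_{i=1}^{n-1}$ of $T_{z}\bm$, Lemma~\ref{lem:Equality in Laplacian comparison} then gives $Y_{z,i}=F_{\kappa,\lambda,z}\,E_{z,i}$ on $[0,\infty)$ for the $\bm$-Jacobi fields $Y_{z,i}$ with $Y_{z,i}(0)=e_{z,i},\,Y_{z,i}'(0)=-A_{u_{z}}e_{z,i}$, and if $N\in(-\infty,1)$ it also gives that $f\circ\gamma_{z}$ is constant.

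Finally, as at the end of the proof of Theorem~\ref{thm:splitting theorem}, the map $\Phi(t,z):=\gamma_{z}(t)$ is a diffeomorphism $[0,\infty)\times\bm\to M$ (again because $\cut\bm=\emptyset$), and the Jacobi field rigidity shows $\psg=dt^{2}+F^{2}_{\kappa,\lambda,z}(t)\,h$, so $\Phi$ is a Riemannian isometry with boundary from $[0,\infty)\times_{F_{\kappa,\lambda}}\bm$ onto $M$; this, together with the constancy of $f\circ\gamma_{z}$ when $N\in(-\infty,1)$, is the assertion. The main obstacle I expect is the first step: squeezing the $\liminf$ hypothesis between the absolute comparison (Lemma~\ref{lem:absolute volume comparison}) and the monotonicity coming from the relative comparison (Theorem~\ref{thm:relative volume comparison}) to force exact equality for \emph{all} $r$, and then using $\cut\bm=\emptyset$ to pass from the integrated equality to a pointwise one for $\hat{\theta}_{f}$ that the rigidity lemmas can consume. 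Everything after that is the same rigidity-of-Jacobi-fields bookkeeping already carried out for Theorem~\ref{thm:splitting theorem}.
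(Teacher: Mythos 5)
Your proposal is correct and follows essentially the same route as the paper: squeeze the $\liminf$ hypothesis between Lemma~\ref{lem:absolute volume comparison} and the monotonicity of $r\mapsto m_{\frac{n+1}{n-1}f}(B^{f}_{r}(\bm))/s_{n,\kappa,\lambda}(r)$ coming from Theorem~\ref{thm:relative volume comparison}, use Remark~\ref{rem:half rigidity} together with $\bconst=\infty$ to get $\tau_{f}\equiv\infty$ (hence $\cut\bm=\emptyset$), pass to pointwise equality of the volume element via Lemma~\ref{lem:integration formula}, and then invoke the Jacobi-field rigidity of Lemma~\ref{lem:Equality in Laplacian comparison} (equivalently Lemma~\ref{lem:Equality in Basic comparison} via Remark~\ref{rem:equality in volume element comparison}). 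The only difference is that you spell out a couple of steps the paper states tersely (the monotonicity squeeze and the argument from integral to pointwise equality of $\hat{\theta}_{f}$), which is sound.
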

\begin{proof}
Lemma \ref{lem:absolute volume comparison} and Theorem \ref{thm:relative volume comparison} imply that
for every $R>0$,
and for every $r\in (0,R]$
the equality in (\ref{eq:relative volume comparison}) holds.
Then $\tau_{f}=\infty$ on $\bm$ (see Remark \ref{rem:half rigidity}).
In particular,
we have $\tau=\infty$ on $\bm$.

Fix $z\in\bm$.
For all $s\geq 0$
we see $\check{\theta}_{f}(s,z)= e^{-f(z)}\,s^{n-1}_{\kappa,\lambda}(s)$.
Choose an orthonormal basis $\{e_{z,i}\}_{i=1}^{n-1}$ of $T_{z}\bm$,
and let $\{Y_{z,i}\}^{n-1}_{i=1}$ be the $\bm$-Jacobi fields along $\gamma_{z}$ with $Y_{z,i}(0)=e_{z,i},\,Y_{z,i}'(0)=-A_{u_{z}}e_{z,i}$.
For all $i$
we have $Y_{z,i}=F_{\kappa,\lambda,z}\,E_{z,i}$,
where $\{E_{z,i}\}^{n-1}_{i=1}$ are the parallel vector fields $E_{z,i}(0)=e_{z,i}$.
Moreover,
if $N \in (-\infty,1)$,
then $f \circ \gamma_{z}$ is constant (see Remark \ref{rem:equality in volume element comparison}).
By the rigidity of Jacobi fields,
a map $\Phi:[0,\infty)\times \bm\to M$ defined by $\Phi(t,z)=\gamma_{z}(t)$ gives a desired isometry.
\end{proof}
\begin{rem}
If $\kappa$ and $\lambda$ satisfy the ball-condition,
then the author does not know whether a similar result to Theorem \ref{thm:volume growth rigidity} holds.
In this case,
under the same setting as in Theorem \ref{thm:volume growth rigidity},
Lemma \ref{lem:Cut point comparisons} implies $\tau_{f}=\const$ on $\bm$ (see Remark \ref{rem:half rigidity}).
Since $\tau(z)$ can be either finite or infinite for each $z\in \bm$,
it seems to be difficult to conclude any rigidity results.
\end{rem}

Next,
we prove the following volume growth rigidity:
\begin{thm}\label{thm:finite volume growth rigidity}
Let $\kappa$ and $\lambda$ satisfy the monotone-condition.
For $N\in (-\infty,1]$,
assume that
$(M,\bm,f)$ has lower $(\kappa,\lambda,N)$-weighted curvature bounds.
Let $\bm$ be compact.
Suppose additionally that
there exists $\delta \in \mathbb{R}$ such that $f\leq (n-1)\delta$ on $M$.
If
\begin{equation*}\label{eq:assumption of finite volume growth rigidity}
\liminf_{r\to \infty}\frac{m_{f}(B_{r}(\bm))}{s_{n,\kappa\,e^{-4\,\delta},\lambda\,e^{-2\,\delta}}(r)}\geq m_{f,\bm}(\bm),
\end{equation*}
then the following hold:
\begin{enumerate}
\item if $\kappa$ and $\lambda$ satisfy the convex-ball-condition,
         then $M$ is isometric to $B^{n}_{\kappa e^{-4\delta},\lambda e^{-2\delta}}$,
         and $f=(n-1)\delta$ on $M$;
\item if $\kappa \leq 0$ and $\lambda=\sqrt{\vert \kappa \vert}$,
         then $M$ is isometric to $[0,\infty)\times_{F_{\kappa,\lambda}} \bm$;
         moreover,
         the following hold:
         \begin{enumerate}
         \item if $\kappa=0$ and $N \in (-\infty,1)$,
                  then for every $z\in \bm$
                  the function $f\circ \gamma_{z}$ is constant on $[0,\infty)$;
         \item if $\kappa<0$,
                  then $f=(n-1)\delta$ on $M$.
         \end{enumerate}
\end{enumerate}
\end{thm}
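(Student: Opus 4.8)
The plan is to run the argument of Theorem~\ref{thm:volume growth rigidity} in the bounded-$f$ setting, using Lemma~\ref{lem:finite absolute volume comparison} and Theorem~\ref{thm:finite relative volume comparison} in place of Lemma~\ref{lem:absolute volume comparison} and Theorem~\ref{thm:relative volume comparison}, and then to split according to the two alternatives of the monotone-condition. Write $\bar{\kappa}:=\kappa e^{-4\delta}$ and $\bar{\lambda}:=\lambda e^{-2\delta}$; since $s_{\bar{\kappa},\bar{\lambda}}(t)=s_{\kappa,\lambda}(e^{-2\delta}t)$, the pair $\bar{\kappa},\bar{\lambda}$ satisfies the convex-ball-condition (respectively, $\bar{\kappa}\le 0$ and $\bar{\lambda}=\sqrt{\vert\bar{\kappa}\vert}$) exactly when $\kappa,\lambda$ does, and $\bar{C}_{\bar{\kappa},\bar{\lambda}}=e^{2\delta}\bar{C}_{\kappa,\lambda}$. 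By Theorem~\ref{thm:finite relative volume comparison} the quotient $r\mapsto m_{f}(B_{r}(\bm))/s_{n,\bar{\kappa},\bar{\lambda}}(r)$ is nonincreasing, and by Lemma~\ref{lem:finite absolute volume comparison} it is bounded above by $m_{f,\bm}(\bm)$; the hypothesis forces this quotient to be constantly equal to $m_{f,\bm}(\bm)$. Hence equality holds in (\ref{eq:finite absolute volume comparison}) for every $r>0$ and in (\ref{eq:finite relative volume comparison}) for all $0<r\le R$.

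Next I would invoke Remark~\ref{rem:finite half rigidity}: equality in (\ref{eq:finite relative volume comparison}) up to any finite level $R\le \bar{C}_{\bar{\kappa},\bar{\lambda}}$ yields $\tau\ge R$ on $\bm$. If $\kappa,\lambda$ satisfy the convex-ball-condition, then $\bar{C}_{\bar{\kappa},\bar{\lambda}}=C_{\bar{\kappa},\bar{\lambda}}<\infty$ while Lemma~\ref{lem:finite Inscribed radius comparison} gives $\IR M\le C_{\bar{\kappa},\bar{\lambda}}$, so $\tau\equiv C_{\bar{\kappa},\bar{\lambda}}$ on $\bm$; in particular $\rho_{\bm}(x_{0})=C_{\kappa e^{-4\delta},\lambda e^{-2\delta}}$ for $x_{0}:=\gamma_{z}(C_{\bar{\kappa},\bar{\lambda}})$ (any $z\in\bm$), and Theorem~\ref{thm:finite inscribed radius rigidity} applies directly and yields conclusion~(1). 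If instead $\kappa\le 0$ and $\lambda=\sqrt{\vert\kappa\vert}$, then $\bar{C}_{\bar{\kappa},\bar{\lambda}}=\infty$, so letting $R\to\infty$ gives $\tau\equiv\infty$ on $\bm$, and Lemma~\ref{lem:splitting lemma} shows $\bm$ is connected with $\cut\bm=\emptyset$.

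It then remains to treat the case $\kappa\le 0$, $\lambda=\sqrt{\vert\kappa\vert}$, where I would unwind the equality in (\ref{eq:finite absolute volume comparison}). By Lemma~\ref{lem:Basic integration formula} and the pointwise bound $\bar{\theta}_{f}(t,z)\le e^{-f(z)}s_{\bar{\kappa},\bar{\lambda}}^{n-1}(t)$ from Lemma~\ref{lem:finite volume element comparison}, equality of the integrals for every $r$ forces $\theta_{f}(t,z)=e^{-f(z)}s_{\bar{\kappa},\bar{\lambda}}^{n-1}(t)$ for all $t\in(0,\infty)$ and all $z\in\bm$ (first for almost every $(t,z)$, then everywhere by continuity of $\theta_{f}$ and $\tau$, using $\tau\equiv\infty$). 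This is the equality in (\ref{eq:finite absolute volume element comparison}), so Remark~\ref{rem:equality in finite volume element comparison} gives the equality in (\ref{eq:strictly finite Laplacian comparison}) on $[0,\infty)$, hence Remarks~\ref{rem:equality in strictly finite Laplacian comparison} and~\ref{rem:equality in finite Laplacian comparison} give the equality in (\ref{eq:Laplacian comparison}) on $\inte M$. By Lemma~\ref{lem:Equality in Laplacian comparison} the $\bm$-Jacobi fields satisfy $Y_{z,i}=F_{\kappa,\lambda,z}E_{z,i}$ on $[0,\infty)$, and since $F_{\kappa,\lambda,z}>0$ on $(0,\infty)$ the map $\Phi(t,z):=\gamma_{z}(t)$ is a Riemannian isometry with boundary from $[0,\infty)\times_{F_{\kappa,\lambda}}\bm$ onto $M$, which is the first assertion of~(2). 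For the refinements: if $\kappa<0$, then $H_{\kappa,\lambda}>0$ on $(0,\infty)$, so Remark~\ref{rem:equality in strictly finite Laplacian comparison} forces $f(\gamma_{z}(t))=(n-1)\delta$ for all $t>0$, whence $f\equiv(n-1)\delta$ on $M$ by continuity at $t=0$; if $\kappa=0$ and $N\in(-\infty,1)$, then Lemma~\ref{lem:Equality in Laplacian comparison} gives that $f\circ\gamma_{z}$ is constant on $[0,\infty)$ for each $z\in\bm$.

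The computations throughout are routine. The only points that need care are the passage from equality of the integrated volume to pointwise equality of the volume element (handled by continuity, exactly as in the proof of Theorem~\ref{thm:volume growth rigidity}) and the choice of the correct branch of Remarks~\ref{rem:equality in strictly finite Laplacian comparison} and~\ref{rem:equality in finite Laplacian comparison} in each sub-case, which is what distinguishes the sharp conclusion $f\equiv(n-1)\delta$ (available when $H_{\kappa,\lambda}>0$) from the weaker statement that $f$ is constant along each normal geodesic. I do not expect any obstacle beyond this bookkeeping, since all the geometric input is already contained in the cited lemmas and remarks.
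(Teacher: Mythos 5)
Your proof is correct and follows essentially the same route as the paper: both start by forcing equality in the relative and absolute volume comparisons (Lemma~\ref{lem:finite absolute volume comparison}, Theorem~\ref{thm:finite relative volume comparison}), invoke Remark~\ref{rem:finite half rigidity} to pin down $\tau$, and then split into the convex-ball case (handled by Theorem~\ref{thm:finite inscribed radius rigidity}) and the $\kappa\le 0$, $\lambda=\sqrt{|\kappa|}$ case. The only difference is in the latter case: the paper, having shown $\tau\equiv\infty$ on $\bm$, simply cites Theorem~\ref{thm:splitting theorem} (whose hypothesis $\tau(z_0)=\infty$ is now satisfied and whose $N<1$ clause gives $f\circ\gamma_z$ constant), whereas you re-derive the isometry directly by passing from equality of the volume element to equality in the Laplacian comparison and then to the Jacobi field rigidity of Lemma~\ref{lem:Equality in Laplacian comparison}; since this is precisely the content of the proof of Theorem~\ref{thm:splitting theorem} once $\cut\bm=\emptyset$, the two routes are materially the same, with the citation being shorter and your unrolled version a bit more self-contained. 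Your treatment of the refinements, using Remark~\ref{rem:equality in strictly finite Laplacian comparison} for $\kappa<0$ to get $f\equiv(n-1)\delta$ and Lemma~\ref{lem:Equality in Laplacian comparison} for $\kappa=0$, $N<1$, matches the paper's (which routes the $\kappa<0$ case through Remark~\ref{rem:equality in finite volume element comparison}, itself resting on the same remark).
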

\begin{proof}
In view of Lemma \ref{lem:finite absolute volume comparison} and Theorem \ref{thm:finite relative volume comparison},
we see the following:
If $\kappa$ and $\lambda$ satisfy the convex-ball-condition,
then for $R=C_{\kappa\,e^{-4\delta},\lambda\,e^{-2\delta}}$,
and for every $r\in (0,R]$ the equality in (\ref{eq:finite relative volume comparison}) holds;
in particular,
$\tau=C_{\kappa\,e^{-4\delta},\lambda\,e^{-2\delta}}$ on $\bm$ (see Remark \ref{rem:finite half rigidity}).
If $\kappa \leq 0$ and $\lambda=\sqrt{\vert \kappa \vert}$,
then for every $R>0$,
and for every $r\in (0,R]$
the equality in (\ref{eq:finite relative volume comparison}) holds;
in particular,
$\tau=\infty$ (see Remark \ref{rem:finite half rigidity}).
Hence,
$\tau=\bar{C}_{\kappa\,e^{-4\delta},\lambda\,e^{-2\delta}}$ on $\bm$.

If $\kappa$ and $\lambda$ satisfy the convex-ball-condition,
then due to Theorem \ref{thm:finite inscribed radius rigidity},
$M$ is isometric to $B^{n}_{\kappa e^{-4\delta},\lambda e^{-2\delta}}$,
and $f=(n-1)\delta$.

If $\kappa \leq 0$ and $\lambda=\sqrt{\vert \kappa \vert}$,
then $\cut \bm=\emptyset$.
Theorem \ref{thm:splitting theorem} tells us that
$M$ is isometric to $[0,\infty)\times_{F_{\kappa,\lambda}} \bm$;
moreover,
if $N\in (-\infty,1)$,
then for every $z\in \bm$
the function $f\circ \gamma_{z}$ is constant.
In the case of $\kappa<0$,
for all $t\geq 0,\,z\in \bm$
we see $\theta_{f}(t,z)=e^{-f(z)}s^{n-1}_{\kappa\,e^{-4\delta},\lambda\,e^{-2\delta}}(t)$;
in particular,
$f=(n-1)\delta$ (see Remark \ref{rem:equality in finite volume element comparison}).
We complete the proof.
\end{proof}

\section{Eigenvalues}\label{sec:Eigenvalues}

%%%%%%%%%%%%%%%%%%%%%%%%%%
\subsection{Lower bounds}
We recall the following inequality of Picone type (see Theorem 1.1 in \cite{AH}, and Lemma 7.1 in \cite{Sa2}):
\begin{lem}[\cite{AH}, \cite{Sa2}]\label{lem:Picone identity}
Let $p\in (1,\infty)$.
Let $\phi>0$ and $\psi \geq 0$ be two $C^{1}$-functions on a domain $U \subset M$.
Then we have
\begin{equation}\label{eq:Picone identity}
\Vert \nabla \psi \Vert^{p}\geq \Vert \nabla \phi \Vert^{p-2} g\left(\nabla \left(  \psi^{p}\,  \phi^{1-p} \right),\nabla \phi \right).
\end{equation}
If the equality in $(\ref{eq:Picone identity})$ holds on $U$,
then $\psi=c\, \phi$ for some $c\neq 0$ on $U$.
\end{lem}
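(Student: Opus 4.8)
The plan is to establish $(\ref{eq:Picone identity})$ pointwise on $U$ via a Picone-type algebraic identity and then read off the equality case. Throughout I would write $a:=\Vert\nabla\psi\Vert$ and $b:=(\psi/\phi)\,\Vert\nabla\phi\Vert$, both non-negative since $\phi>0$ and $\psi\ge 0$. Since $p>1$ and $\phi>0$ is $C^{1}$, the function $\psi^{p}\phi^{1-p}$ is $C^{1}$ on $U$ — away from the zeros of $\psi$ this is clear, and at a zero of $\psi$ the factor $\psi^{p-1}$ (with $p-1>0$) forces the gradient to vanish — so the right-hand side of $(\ref{eq:Picone identity})$ is well defined, with the usual convention that $\Vert\nabla\phi\Vert^{p-2}g(\,\cdot\,,\nabla\phi)$ is read as $0$ wherever $\nabla\phi=0$.

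First I would expand, by the product rule,
\[
\nabla\bigl(\psi^{p}\phi^{1-p}\bigr)=p\,\psi^{p-1}\phi^{1-p}\,\nabla\psi-(p-1)\,\psi^{p}\phi^{-p}\,\nabla\phi .
\]
Pairing with $\nabla\phi$, multiplying by $\Vert\nabla\phi\Vert^{p-2}$ and rearranging, the difference $\Vert\nabla\psi\Vert^{p}-\Vert\nabla\phi\Vert^{p-2}g(\nabla(\psi^{p}\phi^{1-p}),\nabla\phi)$ equals
\[
\Vert\nabla\psi\Vert^{p}-p\,\tfrac{\psi^{p-1}}{\phi^{p-1}}\,\Vert\nabla\phi\Vert^{p-2}g(\nabla\psi,\nabla\phi)+(p-1)\,\tfrac{\psi^{p}}{\phi^{p}}\,\Vert\nabla\phi\Vert^{p} .
\]
By the Cauchy--Schwarz inequality $g(\nabla\psi,\nabla\phi)\le\Vert\nabla\psi\Vert\,\Vert\nabla\phi\Vert$, the middle term is at most $p\,ab^{p-1}$ and the last term equals $(p-1)b^{p}$, so this quantity is bounded below by $a^{p}-p\,ab^{p-1}+(p-1)b^{p}$. (The degenerate loci $\{\psi=0\}$ and $\{\nabla\phi=0\}$ cause no trouble: on either one the mixed term and the $(p-1)b^{p}$ term both vanish and the estimate reduces to $a^{p}\ge 0$.)

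The next step is the elementary scalar inequality $a^{p}-p\,ab^{p-1}+(p-1)b^{p}\ge 0$ for $a,b\ge 0$ and $p>1$, which follows from Young's inequality $ab^{p-1}\le a^{p}/p+(p-1)b^{p}/p$, or equivalently from the fact that for fixed $b$ the map $a\mapsto a^{p}-p\,ab^{p-1}+(p-1)b^{p}$ is convex, attains its minimum at $a=b$, and vanishes there; in particular equality holds precisely when $a=b$. Combined with the previous paragraph this gives $(\ref{eq:Picone identity})$.

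Finally, for the rigidity statement, suppose equality holds in $(\ref{eq:Picone identity})$ at every point of $U$. Then both inequalities above are equalities pointwise: the Young step forces $a=b$, i.e. $\Vert\nabla\psi\Vert=(\psi/\phi)\Vert\nabla\phi\Vert$, and the Cauchy--Schwarz step forces $\nabla\psi$ and $\nabla\phi$ to be non-negatively proportional. Where $\nabla\phi\ne 0$ these two facts give $\nabla\psi=(\psi/\phi)\nabla\phi$, hence $\nabla(\psi/\phi)=0$; where $\nabla\phi=0$, the relation $a=b$ forces $\nabla\psi=0$ as well, so again $\nabla(\psi/\phi)=0$. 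Since $U$ is a domain (connected), $\psi/\phi$ is constant, i.e. $\psi=c\,\phi$ for some constant $c\ge 0$, which is the asserted form (with $c\neq 0$ once $\psi\not\equiv 0$). The only part that requires genuine care is not any individual estimate but the bookkeeping in this equality analysis together with the handling of the degenerate sets, where the $C^{1}$-regularity guaranteed by $p>1$ is exactly what keeps every term well defined.
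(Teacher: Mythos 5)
Your proof is correct, and it reproduces essentially the argument found in the cited references \cite{AH} (Theorem 1.1) and \cite{Sa2} (Lemma 7.1): expand $\nabla(\psi^{p}\phi^{1-p})$, estimate the cross term by Cauchy--Schwarz, reduce to the scalar inequality $a^{p}-p\,ab^{p-1}+(p-1)b^{p}\ge 0$ via Young, and read off the equality case from the two rigidity conditions. The paper itself gives no proof and simply quotes these sources, so there is nothing further to compare; the one small caveat you correctly flag is that the stated conclusion ``$\psi=c\,\phi$ with $c\neq 0$'' implicitly presupposes $\psi\not\equiv 0$, which holds in the paper's applications where $\psi$ is a nontrivial test function or eigenfunction.
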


We prove the inequality $(\ref{eq:eigenvalue rigidity})$ in Theorem \ref{thm:eigenvalue rigidity}.
\begin{lem}\label{lem:inequality in eigenvalue rigidity}
Let $p\in (1,\infty)$.
For $N\in (-\infty,1]$,
assume that
$(M,\bm,f)$ has lower $(\kappa,\lambda,N)$-weighted curvature bounds.
Let $M$ be compact,
and let $f$ be $\bm$-radial.
Suppose additionally that
there is $\delta \in \mathbb{R}$ such that $f\leq (n-1)\delta$ on $M$.
For $D\in (0,\bar{C}_{\kappa,\lambda}]\setminus \{\infty\}$,
suppose $\IR_{f} M \leq D$,
where $\IR_{f} M$ is defined as $(\ref{eq:twisted inscribed radius})$.
Then we have $(\ref{eq:eigenvalue rigidity})$.
\end{lem}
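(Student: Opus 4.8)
The plan is to reduce the eigenvalue estimate to a one-dimensional comparison by testing the Rayleigh quotient with a radialized version of the extremal eigenfunction on the model interval. First I would let $\varphi:[0,D']\to\mathbb{R}$ be the first eigenfunction realizing $\nu_{p,\kappa e^{-4\delta},\lambda e^{-2\delta},D'}$ from $(\ref{eq:model space eigenvalue problem})$, where $D':=D\,e^{2\delta}$; since $\kappa e^{-4\delta}$ and $\lambda e^{-2\delta}$ satisfy the ball-condition whenever $\kappa,\lambda$ do, and more generally $D'\in(0,\bar C_{\kappa e^{-4\delta},\lambda e^{-2\delta}}]\setminus\{\infty\}$, this $\varphi$ exists and may be chosen positive and monotone increasing on $[0,D')$ with $\varphi(0)=0$, $\varphi'(D')=0$. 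I would then form the test function $\psi:=\varphi\circ(e^{-2\delta}\rho_{\bm})$ on $M$; since $f$ is $\bm$-radial and $\IR_f M\le D$ we have $\rho_{\bm,f}=s_f\circ\rho_{\bm}$ bounded by $D$, but I actually want to compare using the ``bounded'' parameter $\rho_{\bm,\delta}=e^{-2\delta}\rho_{\bm}$, which by $f\le(n-1)\delta$ and Lemma~\ref{lem:finite Inscribed radius comparison} satisfies $\rho_{\bm,\delta}\le C_{\kappa e^{-4\delta},\lambda e^{-2\delta}}=D'$ on $M$, so $\psi$ is a legitimate element of $W^{1,p}_0(M,m_{\frac{n+1}{n-1}f})$ with $\psi|_{\bm}=0$ (here I should double-check that $\varphi(D')$ may be nonzero, so $\psi$ need not vanish on the cut locus, but it still vanishes on $\bm$, which is what is needed for the Dirichlet problem).

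Next I would compute the Rayleigh quotient $R_{\frac{n+1}{n-1}f,p}(\psi)$ using the Picone inequality (Lemma~\ref{lem:Picone identity}) with $\phi$ a positive function on $\inte M\setminus\cut\bm$ built from the model eigenfunction, namely $\phi:=\tilde\varphi\circ\rho_{\bm,\delta}$ where $\tilde\varphi>0$ is a positive solution of the model ODE on a slightly larger interval, or — following the approach in \cite{Sa2} — more directly by integrating against $\psi$ and applying the global $p$-Laplacian comparison. The cleanest route is: apply Remark~\ref{rem:c1-equality case in global finite p-Laplacian comparison}, which gives $(\ref{eq:global finite p-Laplacian comparison})$ for every nonnegative $\psi\in C^1(M)$ with $\psi|_{\bm}=0$ since $M$ is compact; take the test function there to be $\psi^p\phi^{1-p}$ (nonnegative, vanishing on $\bm$) for $\phi$ the positive model function, and combine with Picone to bound $\int_M\|\nabla\psi\|^p\,dm_{\frac{n+1}{n-1}f}$ from below. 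Concretely, with $\varphi$ as above and $H_{\kappa,\lambda}$ from $(\ref{eq:model mean curvature})$, the model ODE $(\ref{eq:model space eigenvalue problem})$ rewritten in terms of $s_{\kappa,\lambda}$ reads $((\varphi')^{p-1})'-H_{\kappa e^{-4\delta},\lambda e^{-2\delta}}\,(\varphi')^{p-1}=-\nu|\varphi|^{p-2}\varphi$ (after the scaling $H_{\kappa e^{-4\delta},\lambda e^{-2\delta}}(s)=e^{-2\delta}H_{\kappa,\lambda}(e^{-2\delta}s)$), so Proposition~\ref{prop:global finite p-Laplacian comparison} with this $\varphi$ turns the right-hand side of $(\ref{eq:global finite p-Laplacian comparison})$ into $e^{-2p\delta}\int_M\psi\,\{\nu|\varphi|^{p-2}\varphi\}\circ\rho_{\bm,\delta}\cdot(\text{scaling})\,dm$, and after renormalizing the scalings and invoking Picone one extracts $\int_M\|\nabla\psi\|^p\,dm\ge \nu_{p,\kappa e^{-4\delta},\lambda e^{-2\delta},D'}\int_M|\psi|^p\,dm$, which is exactly $(\ref{eq:eigenvalue rigidity})$ after substituting $D'=De^{2\delta}$ and noting $m_{\frac{n+1}{n-1}f}$ is the relevant measure.

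The main obstacle I anticipate is bookkeeping the scaling factors $e^{\pm 2\delta}$, $e^{-2p\delta}$ consistently between the intrinsic object $\Delta_{\frac{n+1}{n-1}f,p}$, the function $\rho_{\bm,\delta}=e^{-2\delta}\rho_{\bm}$, and the model eigenvalue problem stated on $[0,D']$ with parameters $\kappa e^{-4\delta},\lambda e^{-2\delta}$; in particular one must verify the identity $H_{\kappa e^{-4\delta},\lambda e^{-2\delta}}(e^{-2\delta}t)\,e^{-2\delta}=H_{\kappa,\lambda}(t)\cdot(\text{something})$ and that $s_{\kappa e^{-4\delta},\lambda e^{-2\delta}}(t)=s_{\kappa,\lambda}(e^{-2\delta}t)$ (the latter already appears in the proof of Lemma~\ref{lem:finite volume element comparison}), so that the model ODE on $[0,D']$ pulls back correctly. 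A secondary point requiring care is that Proposition~\ref{prop:global finite p-Laplacian comparison} needs $\varphi$ monotone increasing and smooth on $[0,\infty)$; since $\varphi'(D')=0$ one extends $\varphi$ beyond $D'$ by a constant, which is only $C^1$, so I would instead work with a monotone increasing smooth approximation, or restrict the comparison to $\{\rho_{\bm,\delta}<D'\}$ and handle the boundary term separately — the same device used in \cite{Sa2} for the unweighted case — which should go through verbatim. The equality discussion is deferred to the subsequent rigidity part of Theorem~\ref{thm:eigenvalue rigidity} and is not needed here.
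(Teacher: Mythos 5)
The overall architecture you suggest — composing a model eigenfunction with a radial function on $M$, applying the Picone inequality, and closing with a global distributional $p$-Laplacian comparison tested against $\psi^{p}\Phi^{1-p}$ — is indeed how the paper proceeds. But you have reached for the wrong global comparison, and this creates a gap that cannot be patched without essentially redoing the argument the paper's way.

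You build $\Phi:=\varphi\circ\rho_{\bm,\delta}$ with $\rho_{\bm,\delta}=e^{-2\delta}\rho_{\bm}$ and invoke Proposition~\ref{prop:global finite p-Laplacian comparison} (and its $C^{1}$ variant from Remark~\ref{rem:c1-equality case in global finite p-Laplacian comparison}). That proposition has two features that clash with the statement to prove. First, its distributional inequality~(\ref{eq:global finite p-Laplacian comparison}) holds with both sides integrated against $m_{f}$, not against $m_{\frac{n+1}{n-1}f}$; running Picone through it bounds the Rayleigh quotient $R_{f,p}$, so at best you estimate $\nu_{f,p}(M)$. The lemma, however, concerns $\nu_{\frac{n+1}{n-1}f,p}(M)$, which is the infimum of $R_{\frac{n+1}{n-1}f,p}$. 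You cannot simply substitute $\frac{n+1}{n-1}f$ for $f$ in Proposition~\ref{prop:global finite p-Laplacian comparison}, because that would change the curvature hypotheses and the underlying Laplacian comparison for $\rho_{\bm}$, which is governed by $\Delta_{f}$, not $\Delta_{\frac{n+1}{n-1}f}$. Second, Proposition~\ref{prop:global finite p-Laplacian comparison} requires that $\kappa$ and $\lambda$ satisfy the monotone-condition, which is not assumed in the present lemma (only $D\in(0,\bar C_{\kappa,\lambda}]\setminus\{\infty\}$ is assumed). So even the weaker statement about $\nu_{f,p}$ is not accessible by this route under the given hypotheses; it is, roughly, the argument for Lemma~\ref{lem:inequality in finite eigenvalue rigidity} rather than for the present lemma.

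The $\bm$-radial hypothesis, which you notice but then set aside, is exactly what the paper exploits to avoid both problems. The intended tool is Proposition~\ref{prop:global radial p-Laplacian comparison}: with $\Phi:=\hat\varphi\circ\rho_{\bm,f}$ (using $\rho_{\bm,f}$, \emph{not} $\rho_{\bm,\delta}$) and $\hat\varphi$ the model eigenfunction on $[0,D]$ for $\nu_{p,\kappa,\lambda,D}$, the operator $\Delta_{\frac{n+1-2p}{n-1}f,p}$ applied to $\Phi$ compares cleanly with the one-dimensional model, because the extra drift term $-\frac{2(p-1)}{n-1}\phi'_{f}$ coming from the weight cancels precisely when $f$ is $\bm$-radial (see the proof of Lemma~\ref{lem:radial p-Laplacian comparison}); and the measure appearing on the right of~(\ref{eq:global radial p-Laplacian comparison}) is $m_{\frac{n+1}{n-1}f}$, which is exactly what the Rayleigh quotient requires. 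The bound $f\le(n-1)\delta$ then enters only once, through $e^{2p\delta}\int\|\nabla\psi\|^{p}\,dm_{\frac{n+1}{n-1}f}\ge\int\|\nabla\psi\|^{p}\,dm_{\frac{n+1-2p}{n-1}f}$, and it is this step — not a bound on $\rho_{\bm,\delta}$ — that introduces the factor $e^{-2p\delta}$ matching the scaling $\nu_{p,\kappa e^{-4\delta},\lambda e^{-2\delta},De^{2\delta}}=e^{-2p\delta}\nu_{p,\kappa,\lambda,D}$. (Incidentally, even your auxiliary claim $\rho_{\bm,\delta}\le De^{2\delta}$ is obtained the wrong way: Lemma~\ref{lem:finite Inscribed radius comparison} needs the ball-condition, which is not available; the correct elementary estimate from $f\le(n-1)\delta$ and $\IR_{f}M\le D$ is $\rho_{\bm,\delta}\le\rho_{\bm,f}\le D$, which is $\le De^{2\delta}$ only when $\delta\ge 0$.) In short, replace $\rho_{\bm,\delta}$ by $\rho_{\bm,f}$, replace Proposition~\ref{prop:global finite p-Laplacian comparison} by Proposition~\ref{prop:global radial p-Laplacian comparison}, and use $f\le(n-1)\delta$ to pass from $m_{\frac{n+1}{n-1}f}$ to $m_{\frac{n+1-2p}{n-1}f}$; as written, your argument does not establish~(\ref{eq:eigenvalue rigidity}).
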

\begin{proof}
First,
we notice $\nu_{p,\kappa\,e^{-4\delta},\lambda\,e^{-2\delta},D\,e^{2\delta}}=e^{-2p\delta}\nu_{p,\kappa,\lambda,D}$.
Let $\hat{\varphi}$ be a non-zero function satisfying (\ref{eq:model space eigenvalue problem}) for $\nu=\nu_{p,\kappa,\lambda,D}$.
We may assume $\hat{\varphi}|_{(0,D]}>0$.
The equation (\ref{eq:model space eigenvalue problem}) can be written in the form
\begin{align*}
\left(\vert \varphi'(s)\vert^{p-2} \varphi'(s) s^{n-1}_{\kappa,\lambda}(s) \right)'+\nu\, \vert \varphi(s)\vert^{p-2}\varphi(s) s^{n-1}_{\kappa,\lambda}(s)=0,\\
                                                              \varphi(0)=0, \quad \varphi'(D)=0.                           \notag
\end{align*}
Hence
we see $\hat{\varphi}'|_{[0,D)}>0$.
Put $\Phi:=\hat{\varphi} \circ \rho_{\bm,f}$,
and fix a non-negative, non-zero function $\psi \in C^{\infty}_{0}(M)$.
From Lemma \ref{lem:Picone identity}
we deduce
\begin{equation}\label{eq:radial Picone identity}
\Vert \nabla \psi \Vert^{p}\geq \Vert \nabla \Phi \Vert^{p-2} g\left(\nabla \left(  \psi^{p}\,  \Phi^{1-p} \right),\nabla \Phi \right)
\end{equation}
on $\inte M \setminus \cut \bm$.
Here
we put $\hat{f}:=(n+1)\,(n-1)^{-1}\,f$ and $\check{f}:=(n+1-2p)\,(n-1)^{-1}\,f$.
By $f \leq (n-1)\delta$ and (\ref{eq:radial Picone identity}),
\begin{align}\label{eq:bounded and radial Picone identity}
e^{2p\delta}\,\int_{M}\, \Vert \nabla \psi \Vert^{p}\, d\,m_{\hat{f}} &\geq \int_{M}\,e^{\frac{2pf}{n-1}}\, \Vert \nabla \psi \Vert^{p}\, d\,m_{\hat{f}}=\int_{M}\, \Vert \nabla \psi \Vert^{p}\, d\,m_{\check{f}}\\ \notag
                                                                                                    &\geq \int_{M}\,\Vert \nabla \Phi \Vert^{p-2} g\left(\nabla \left(  \psi^{p}\,  \Phi^{1-p} \right),\nabla \Phi \right) \,d\,m_{\check{f}}.
\end{align}
Further,
(\ref{eq:bounded and radial Picone identity}) and (\ref{eq:global radial p-Laplacian comparison}) tell us that
$e^{2p\delta}\,\int_{M}\, \Vert \nabla \psi \Vert^{p}\, d\,m_{\hat{f}}$ is at least
\begin{equation*}
-\int_{M}  \psi^{p}\,  \Phi^{1-p} \,  \left\{ \left( \bigl( (\hat{\varphi}' )^{p-1} \bigl)' -H_{\kappa,\lambda}\, \left(\hat{\varphi}' \right)^{p-1}\right)\circ \rho_{\bm,f}\right\}  dm_{ \hat{f} }
\end{equation*}
that is equal to $\nu_{p,\kappa,\lambda,D}\,\int_{M}\, \psi^{p} \, d\,m_{\hat{f}}$ by the definition of $H_{\kappa,\lambda}$ (see (\ref{eq:model space eigenvalue problem}) and (\ref{eq:model mean curvature})).
Therefore,
$R_{\hat{f},p}(\psi)\geq e^{-2p\delta}\nu_{p,\kappa,\lambda,D}$.
This implies (\ref{eq:eigenvalue rigidity}).
\end{proof}

\begin{rem}\label{rem:the equality case in eigenvalue rigidity}
From the argument in the proof of Lemma \ref{lem:inequality in eigenvalue rigidity},
one can also verify the following:
Under the same setting as in Lemma \ref{lem:inequality in eigenvalue rigidity},
we have $R_{\hat{f},p}(\psi)\geq \nu_{p,\kappa\,e^{-4\delta},\lambda\,e^{-2\delta},D\,e^{2\delta}}$ for every non-negative, non-zero $\psi \in C^{1}(M)$ with $\psi|_{\bm}=0$ (cf. Remark \ref{rem:c1-equality case in global radial p-Laplacian comparison}).
Moreover,
if the equality holds for some $\psi$,
then the equalities in (\ref{eq:bounded and radial Picone identity}) hold,
and hence $f=(n-1)\delta$ on the set where $\nabla \psi \neq 0$,
and $\psi=c\, \Phi$ for some $c\neq 0$ (see Lemma \ref{lem:Picone identity});
in particular,
we can conclude $\nabla \psi \neq 0$ on $M \setminus \cut \bm$,
and $f = (n-1)\delta$ on $M$.
\end{rem}

We next prove the following comparison inequality:
\begin{lem}\label{lem:inequality in finite eigenvalue rigidity}
Let $p\in (1,\infty)$.
Let $\kappa$ and $\lambda$ satisfy the convex-ball-condition.
For $N\in (-\infty,1]$,
let us assume that
$(M,\bm,f)$ has lower $(\kappa,\lambda,N)$-weighted curvature bounds.
Let $M$ be compact.
Suppose additionally that
there exists $\delta \in \mathbb{R}$ such that $f\leq (n-1)\delta$ on $M$.
Then we have $\nu_{f,p}(M)\geq \nu_{0,p}(B^{n}_{\kappa e^{-4\delta},\lambda e^{-2\delta}})$.
\end{lem}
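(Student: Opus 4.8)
The plan is to reduce the statement to the already-proven Lemma \ref{lem:inequality in eigenvalue rigidity} (equivalently, the inequality part of Theorem \ref{thm:eigenvalue rigidity}) by choosing the parameter $D$ appropriately and then identifying the resulting model eigenvalue with $\nu_{0,p}(B^{n}_{\kappa e^{-4\delta},\lambda e^{-2\delta}})$. First I would invoke Lemma \ref{lem:finite Inscribed radius comparison}: since $\kappa$ and $\lambda$ satisfy the convex-ball-condition (hence the ball-condition) and $f\leq (n-1)\delta$ on $M$, we get $\IR M\leq C_{\kappa e^{-4\delta},\lambda e^{-2\delta}}$. Setting $D:=C_{\kappa,\lambda}=\const$, which lies in $(0,\bar C_{\kappa,\lambda}]\setminus\{\infty\}$ because the ball-condition holds, we have $\IR_{f}M\leq\tau_{f}(z)$-type bounds; more precisely $\rho_{\bm,f}\leq\tau_{f}\leq\const=D$ by Lemma \ref{lem:Cut point comparisons}, so $\IR_{f}M\leq D$.

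The difference from Lemma \ref{lem:inequality in eigenvalue rigidity} is that here $f$ is not assumed $\bm$-radial, and the eigenvalue in question is $\nu_{f,p}(M)$ rather than $\nu_{\frac{n+1}{n-1}f,p}(M)$. So the argument cannot simply quote that lemma; instead I would mimic its proof using the \emph{bounded} Laplacian comparison machinery (Proposition \ref{prop:global finite p-Laplacian comparison}) in place of the radial one (Proposition \ref{prop:global radial p-Laplacian comparison}). Concretely: let $\hat\varphi$ be a non-zero solution of the model eigenvalue problem (\ref{eq:model space eigenvalue problem}) for $\nu=\nu_{p,\kappa,\lambda,D}$ with $D=\const$, normalized so $\hat\varphi|_{(0,D]}>0$ and $\hat\varphi'|_{[0,D)}>0$. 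Put $\Phi:=\hat\varphi\circ\rho_{\bm,\delta}$ with $\rho_{\bm,\delta}:=e^{-2\delta}\rho_{\bm}$. Using the Picone inequality (Lemma \ref{lem:Picone identity}) together with $f\leq(n-1)\delta$ and Proposition \ref{prop:global finite p-Laplacian comparison}, one bounds the Rayleigh quotient $R_{f,p}(\psi)$ from below by $-e^{-2p\delta}$ times $\nu_{p,\kappa,\lambda,D}$ times a harmless factor, obtaining $\nu_{f,p}(M)\geq e^{-2p\delta}\nu_{p,\kappa,\lambda,D}=\nu_{p,\kappa e^{-4\delta},\lambda e^{-2\delta},De^{2\delta}}$, where the last equality is the scaling identity already noted in the proof of Lemma \ref{lem:inequality in eigenvalue rigidity}.

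Finally I would identify $\nu_{p,\kappa e^{-4\delta},\lambda e^{-2\delta},De^{2\delta}}$ with $\nu_{0,p}(B^{n}_{\kappa e^{-4\delta},\lambda e^{-2\delta}})$. Writing $\kappa':=\kappa e^{-4\delta}$, $\lambda':=\lambda e^{-2\delta}$, we have $De^{2\delta}=e^{2\delta}\const=C_{\kappa',\lambda'}$, the radius of the model ball $\ball'=B^{n}_{\kappa',\lambda'}$; since $\kappa'$ and $\lambda'$ still satisfy the ball-condition, $s_{\kappa',\lambda'}(C_{\kappa',\lambda'})=0$. On $B^{n}_{\kappa',\lambda'}$ the (unweighted) $p$-Laplace Dirichlet eigenfunction for the first eigenvalue depends only on the distance $r$ from the centre; since $\rho_{\bm}=C_{\kappa',\lambda'}-r$ and the radial volume density is $s^{n-1}_{\kappa'}(r)=s^{n-1}_{\kappa',\lambda'}(C_{\kappa',\lambda'}-r)$ (here one uses the Jacobi-equation relation between $s_\kappa$ and $s_{\kappa,\lambda}$ on a ball of exactly this radius), the radial ODE satisfied by such an eigenfunction, after the substitution $s=C_{\kappa',\lambda'}-r$, becomes exactly the ODE defining $\nu_{p,\kappa',\lambda',C_{\kappa',\lambda'}}$ in (\ref{eq:model space eigenvalue problem}), with the boundary conditions matching ($\varphi$ vanishing at the boundary of the ball, $\varphi'$ vanishing at the centre). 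Hence $\nu_{0,p}(B^{n}_{\kappa',\lambda'})=\nu_{p,\kappa',\lambda',C_{\kappa',\lambda'}}$, completing the proof. The main obstacle is this last identification: one must justify carefully that the first $p$-eigenfunction of the ball is radial (a standard symmetrization/uniqueness fact for $p$-Laplace eigenfunctions on balls in space forms) and check that the change of variables $r\mapsto C_{\kappa',\lambda'}-r$ transforms the radial eigenvalue equation on the ball precisely into (\ref{eq:model space eigenvalue problem}); everything else is a routine adaptation of the proof of Lemma \ref{lem:inequality in eigenvalue rigidity} with Proposition \ref{prop:global finite p-Laplacian comparison} replacing Proposition \ref{prop:global radial p-Laplacian comparison}.
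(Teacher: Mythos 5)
Your main engine — the Picone inequality (Lemma \ref{lem:Picone identity}) applied to $\Phi:=\hat\varphi\circ\rho_{\bm,\delta}$ combined with Proposition \ref{prop:global finite p-Laplacian comparison}, yielding $R_{f,p}(\psi)\geq e^{-2p\delta}\nu_{p,\kappa,\lambda,\const}$ — is exactly the paper's argument, and you correctly recognize that Lemma \ref{lem:inequality in eigenvalue rigidity} cannot simply be invoked because $f$ need not be $\bm$-radial and the relevant measure is $m_f$ rather than $m_{\frac{n+1}{n-1}f}$. Two remarks on the parts where you diverge.

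First, the opening paragraph about $\IR M$, $\IR_f M$ and choosing $D$ is a detour that the paper does not take; Proposition \ref{prop:global finite p-Laplacian comparison} holds without any inscribed-radius hypothesis, and the choice $D=\const$ enters only implicitly through the domain on which the model solution $\hat\varphi$ is defined. You abandon this route yourself, correctly, so it is harmless but superfluous.

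Second, and more substantively, you treat the identity $\nu_{0,p}(B^{n}_{\kappa e^{-4\delta},\lambda e^{-2\delta}})=e^{-2p\delta}\nu_{p,\kappa,\lambda,\const}$ as the ``main obstacle'' and propose to justify it via radial symmetry and uniqueness of first $p$-Laplace eigenfunctions on balls in space forms. This is considerably more than is needed, and the paper simply asserts the identity. For the inequality in the lemma one only requires the one direction $\nu_{0,p}(B^{n}_{\kappa',\lambda'})\leq e^{-2p\delta}\nu_{p,\kappa,\lambda,\const}$ (with $\kappa':=\kappa e^{-4\delta}$, $\lambda':=\lambda e^{-2\delta}$), since the Picone computation already gives $\nu_{f,p}(M)\geq e^{-2p\delta}\nu_{p,\kappa,\lambda,\const}$. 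That one direction is just a Rayleigh-quotient calculation: take the radial test function $\hat\varphi_0\circ\rho_{\bm}$ on $B^{n}_{\kappa',\lambda'}$, where $\hat\varphi_0$ solves $(\ref{eq:model space eigenvalue problem})$ for $(\kappa',\lambda',C_{\kappa',\lambda'})$; the Sturm--Liouville form $\bigl(|\varphi'|^{p-2}\varphi' s^{n-1}_{\kappa',\lambda'}\bigr)'+\nu|\varphi|^{p-2}\varphi\,s^{n-1}_{\kappa',\lambda'}=0$, integrated against $\hat\varphi_0$, shows the quotient equals $\nu_{p,\kappa',\lambda',C_{\kappa',\lambda'}}=e^{-2p\delta}\nu_{p,\kappa,\lambda,\const}$. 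No symmetrization or uniqueness result is required. (Also note $s_{\kappa'}(r)$ and $s_{\kappa',\lambda'}(C_{\kappa',\lambda'}-r)$ agree only up to a positive multiplicative constant, not exactly, though the Rayleigh quotient is scale-invariant so this does not affect anything.) With this simplification your argument is a complete proof, essentially identical to the paper's.
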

\begin{proof}
We first note that $\nu_{0,p}(B^{n}_{\kappa\,e^{-4\,\delta},\lambda\,e^{-2\,\delta}})=e^{-2p\delta}\,\nu_{p,\kappa,\lambda,\const}$.
Let $\hat{\varphi}:[0,\const]\to \mathbb{R}$ be a non-zero function satisfying (\ref{eq:model space eigenvalue problem}) for $\nu=\nu_{p,\kappa,\lambda,\const}$,
and let $\hat{\varphi}|_{(0,\const]}>0$.
We see $\hat{\varphi}'|_{[0,\const)}>0$.
Define functions $\rho_{\bm,\delta}:=e^{-2\delta}\,\rho_{\bm}$ and $\Phi:=\hat{\varphi} \circ \rho_{\bm,\delta}$.
We fix a non-negative,
non-zero function $\psi \in C^{\infty}_{0}(M)$.
Then Lemma \ref{lem:Picone identity} leads us to
\begin{equation}\label{eq:finite Picone identity}
\Vert \nabla \psi \Vert^{p}\geq \Vert \nabla \Phi \Vert^{p-2} g\left(\nabla \left(  \psi^{p}\,  \Phi^{1-p} \right),\nabla \Phi \right)
\end{equation}
on $\inte M \setminus \cut \bm$.
Notice that
$\kappa$ and $\lambda$ satisfy the model-condition.
From (\ref{eq:finite Picone identity}) and (\ref{eq:global finite p-Laplacian comparison}),
it follows that
\begin{align*}
&\quad \, \,e^{2p\delta}\int_{M}\, \Vert \nabla \psi \Vert^{p}\, d\,m_{f}   \geq    e^{2p\delta}\int_{M}\,\Vert \nabla \Phi \Vert^{p-2} g\left(\nabla \left(  \psi^{p}\,  \Phi^{1-p} \right),\nabla \Phi \right) \,d\,m_{f}\\
&\geq -\int_{M}\, \psi^{p}\,  \Phi^{1-p} \,\left\{   \left( \bigl( (\hat{\varphi}' )^{p-1} \bigl)'-H_{\kappa,\lambda}\,\left(\hat{\varphi}' \right)^{p-1}\right)\circ \rho_{\bm,\delta}\right\} \,d\,m_{f}.
\end{align*}
The right hand side is equal to $\nu_{p,\kappa,\lambda,\const}\,\int_{M}\, \psi^{p} \, d\,m_{f}$.
Therefore,
we obtain $R_{f,p}(\psi)\geq e^{-2p\delta}\,\nu_{p,\kappa,\lambda,\const}$.
This proves the lemma.
\end{proof}

\begin{rem}\label{rem:the equality case in finite eigenvalue rigidity}
From the argument in the proof of Lemma \ref{lem:inequality in finite eigenvalue rigidity},
we can also conclude the following:
Under the same setting as in Lemma \ref{lem:inequality in finite eigenvalue rigidity},
we have $R_{f,p}(\psi)\geq \nu_{0,p}(B^{n}_{\kappa\,e^{-4\,\delta},\lambda\,e^{-2\,\delta}})$ for every non-negative, non-zero $\psi \in C^{1}(M)$ with $\psi|_{\bm}=0$ (cf. Remark \ref{rem:c1-equality case in global finite p-Laplacian comparison}).
Moreover,
if the equality holds for some $\psi$,
then the equalities in (\ref{eq:finite Picone identity}) and (\ref{eq:global finite p-Laplacian comparison}) hold (see Lemma \ref{lem:Picone identity} and Remark \ref{rem:c1-equality case in global finite p-Laplacian comparison});
in particular,
$\psi=c\, \Phi$ for some $c\neq 0$ on $M$.
\end{rem}

%%%%%%%%%%%%%%%%%%%%%%%%%%
\subsection{Equality cases}
We recall the following fact for eigenfunctions of the weighted $p$-Laplacian (see e.g., \cite{T}):
\begin{prop}[\cite{T}]\label{prop:eigenfunction}
Let $p\in (1,\infty)$.
Let $\phi:M\to \mathbb{R}$ be a smooth function.
Let $M$ be compact.
Then there exists a non-negative,
non-zero function $\psi \in W^{1,p}_{0}(M,m_{\phi})$ such that $R_{\phi,p}(\psi)=\nu_{\phi,p}(M)$.
Moreover,
$\psi \in C^{1,\alpha}(M)$ for some $\alpha \in (0,1)$.
\end{prop}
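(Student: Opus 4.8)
The plan is to produce $\psi$ by the direct method of the calculus of variations and then to invoke the regularity theory for quasilinear elliptic equations of $p$-Laplacian type.

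First I would minimize the weighted $p$-Dirichlet energy $E(\psi):=\int_{M}\Vert\nabla\psi\Vert^{p}\,d\,m_{\phi}$ over the set $\{\psi\in W^{1,p}_{0}(M,m_{\phi}):\int_{M}\vert\psi\vert^{p}\,d\,m_{\phi}=1\}$; by the definition of $R_{\phi,p}$ its infimum equals $\nu_{\phi,p}(M)$. Since $M$ is compact and $\phi$ is smooth, the density $e^{-\phi}$ is bounded above and below by positive constants, so the weighted norms on $W^{1,p}_{0}(M,m_{\phi})$ and on $L^{p}(M,m_{\phi})$ are equivalent to the corresponding unweighted ones; in particular $W^{1,p}_{0}(M,m_{\phi})$ is reflexive and the inclusion $W^{1,p}_{0}(M,m_{\phi})\hookrightarrow L^{p}(M,m_{\phi})$ is compact by the Rellich--Kondrachov theorem. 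Taking a minimizing sequence $\{\psi_{i}\}$, it is bounded in $W^{1,p}_{0}(M,m_{\phi})$, so after passing to a subsequence it converges weakly in $W^{1,p}_{0}(M,m_{\phi})$ and strongly in $L^{p}(M,m_{\phi})$ to some $\psi$. The strong $L^{p}$-convergence gives $\int_{M}\vert\psi\vert^{p}\,d\,m_{\phi}=1$, hence $\psi\neq 0$, and the convexity of $\xi\mapsto\Vert\xi\Vert^{p}$ yields the weak lower semicontinuity $E(\psi)\leq\liminf_{i}E(\psi_{i})=\nu_{\phi,p}(M)$. Thus $R_{\phi,p}(\psi)=\nu_{\phi,p}(M)$. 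Finally, replacing $\psi$ by $\vert\psi\vert$ changes neither $E$ nor the $L^{p}$-norm because $\Vert\nabla\vert\psi\vert\Vert=\Vert\nabla\psi\Vert$ almost everywhere, so we may assume $\psi\geq 0$.

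Next I would derive the Euler--Lagrange equation: the Lagrange multiplier rule for this constrained minimization, tested against $\eta\in C^{\infty}_{0}(M)$ and normalized by taking $\eta=\psi$, shows that $\psi$ is a weak solution of $\Delta_{\phi,p}\psi=\nu_{\phi,p}(M)\,\vert\psi\vert^{p-2}\psi$, that is, in divergence form, $\Div\bigl(e^{-\phi}\,\Vert\nabla\psi\Vert^{p-2}\,\nabla\psi\bigr)=-\nu_{\phi,p}(M)\,e^{-\phi}\,\vert\psi\vert^{p-2}\psi$ on $\inte M$, together with $\psi|_{\bm}=0$. Since $e^{-\phi}$ is smooth and uniformly positive and the right-hand side belongs to $L^{\infty}(M)$, this is a quasilinear equation of the type treated in the Tolksdorf--DiBenedetto regularity theory for $p$-growth operators, which gives $\psi\in C^{1,\alpha}_{\mathrm{loc}}(\inte M)$ for some $\alpha\in(0,1)$. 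Because $\bm$ is a smooth hypersurface and $\psi$ vanishes on it, the corresponding boundary estimates for quasilinear equations with $p$-growth upgrade this to $\psi\in C^{1,\alpha}(M)$, which is the assertion.

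The routine part is the direct method; the technical heart, and the step I expect to be the main obstacle, is the $C^{1,\alpha}$-regularity of solutions of the degenerate (respectively singular) $p$-Laplace-type equation, in particular its validity up to the boundary $\bm$ where $\psi=0$. Since this is the classical statement quoted as \cite{T}, in the final exposition I would simply cite it rather than reproduce the regularity argument in detail.
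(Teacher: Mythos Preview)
Your outline is the standard and correct approach: existence via the direct method (using reflexivity, Rellich--Kondrachov compactness, and weak lower semicontinuity of the energy), followed by the Euler--Lagrange equation and the Tolksdorf--DiBenedetto $C^{1,\alpha}$ regularity theory up to the boundary. There is nothing to correct.

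However, note that the paper does not give its own proof of this proposition at all: it is stated as a recalled fact from the literature, attributed to \cite{T}, and used as a black box in the proofs of Theorems~\ref{thm:eigenvalue rigidity} and~\ref{thm:finite eigenvalue rigidity}. So there is no ``paper's approach'' to compare against; your proposal is simply a sketch of the argument that underlies the cited result, and as you yourself note at the end, in a final write-up one would cite \cite{T} rather than reproduce the regularity theory.
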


By using Proposition \ref{prop:eigenfunction},
we prove Theorem \ref{thm:eigenvalue rigidity}.
\begin{proof}[Proof of Theorem \ref{thm:eigenvalue rigidity}]
Let $p\in (1,\infty)$.
For $N\in (-\infty,1]$,
assume that
$(M,\bm,f)$ has lower $(\kappa,\lambda,N)$-weighted curvature bounds.
Let $M$ be compact,
and let $f$ be $\bm$-radial.
Suppose additionally that
there is $\delta \in \mathbb{R}$ such that $f\leq (n-1)\delta$ on $M$.
For $D\in (0,\bar{C}_{\kappa,\lambda}]\setminus \{\infty\}$,
suppose $\IR_{f} M \leq D$.
Lemma \ref{lem:inequality in eigenvalue rigidity} yields (\ref{eq:eigenvalue rigidity}).

Let us assume that
the equality in (\ref{eq:eigenvalue rigidity}) holds.
By applying Proposition \ref{prop:eigenfunction} to $(n+1)(n-1)^{-1}f$,
there exists a non-negative, non-zero $\psi \in W^{1,p}_{0}(M,m_{\frac{n+1}{n-1}f}) \cap C^{1,\alpha}(M)$ with $R_{\frac{n+1}{n-1}f,p}(\psi)=\nu_{p,\kappa\,e^{-4\delta},\lambda \,e^{-2\delta},D\,e^{2\delta}}$.
Then $f = (n-1)\delta$ on $M$ (see Remark \ref{rem:the equality case in eigenvalue rigidity}).
Theorem \ref{thm:eigenvalue rigidity} has been already known when $f$ is constant (see Theorem 1.6 in \cite{Sa2}).
Thus,
we complete the proof of Theorem \ref{thm:eigenvalue rigidity}.
\end{proof}

\begin{rem}
Kasue \cite{K3} has obtained an explicit lower bound for $\mu_{2,\kappa,\lambda,D}$ (see Lemma 1.3 in \cite{K3}).
Due to the estimate,
under the same setting as in Theorem \ref{thm:eigenvalue rigidity} with $p=2$,
we have an explicit bound for $\nu_{\frac{n+1}{n-1}f,2}(M)$.
\end{rem}

We also formulate the following eigenvalue rigidity theorem:
\begin{thm}\label{thm:finite eigenvalue rigidity}
Let $p\in (1,\infty)$.
Let $\kappa$ and $\lambda$ satisfy the convex-ball-condition.
For $N\in (-\infty,1]$,
let us assume that
$(M,\bm,f)$ has lower $(\kappa,\lambda,N)$-weighted curvature bounds.
Let $M$ be compact.
Suppose additionally that
there is $\delta \in \mathbb{R}$ such that $f\leq (n-1)\delta$ on $M$.
Then
\begin{equation}\label{eq:finite eigenvalue rigidity}
\nu_{f,p}(M)\geq \nu_{0,p}(B^{n}_{\kappa e^{-4\delta},\lambda e^{-2\delta}}).
\end{equation}
If the equality in $(\ref{eq:finite eigenvalue rigidity})$ holds,
then $M$ is isometric to $B^{n}_{\kappa e^{-4\delta},\lambda e^{-2\delta}}$,
and $f=(n-1)\delta$ on $M$.
\end{thm}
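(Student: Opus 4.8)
The plan is to dispatch the inequality (\ref{eq:finite eigenvalue rigidity}) instantly: it is exactly Lemma \ref{lem:inequality in finite eigenvalue rigidity}, which applies since $\kappa$ and $\lambda$ satisfy the convex-ball-condition, $M$ is compact, and $f\leq(n-1)\delta$ on $M$. All the effort then goes into the equality case. The first move is to manufacture an honest minimizer: since $M$ is compact, Proposition \ref{prop:eigenfunction} applied to $f$ produces a non-negative, non-zero $\psi\in W^{1,p}_{0}(M,m_{f})\cap C^{1,\alpha}(M)$ with $R_{f,p}(\psi)=\nu_{f,p}(M)$, and under the equality hypothesis this common value is $\nu_{0,p}(B^{n}_{\kappa e^{-4\delta},\lambda e^{-2\delta}})$; note $\psi|_{\bm}=0$ because $\psi\in W^{1,p}_{0}$.

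Next I would feed this $\psi$ into the equality statement of Remark \ref{rem:the equality case in finite eigenvalue rigidity}, which is tailored precisely to this situation with $f$ not assumed $\bm$-radial: it gives that the equalities in the Picone inequality (\ref{eq:finite Picone identity}) and in the global comparison (\ref{eq:global finite p-Laplacian comparison}) both hold, and that $\psi=c\,\Phi$ on $M$ for some $c\neq0$, where $\Phi=\hat{\varphi}\circ\rho_{\bm,\delta}$ with $\hat{\varphi}$ the model eigenfunction on $[0,\const]$ normalized by $\hat{\varphi}|_{(0,\const]}>0$ and $\rho_{\bm,\delta}=e^{-2\delta}\rho_{\bm}$ (which indeed takes values in $[0,\const]$ by Lemma \ref{lem:finite Inscribed radius comparison}). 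From $\hat{\varphi}>0$ on $(0,\const]$ and $\hat{\varphi}(0)=0$ I get $\Phi>0$ on $\inte M$ and $\Phi=0$ on $\bm$, hence $\supp\psi=M$. Then the chain of equality remarks kicks in: by Remark \ref{rem:c1-equality case in global finite p-Laplacian comparison} the equality in (\ref{eq:finite p-Laplacian comparison}) holds on $\supp\psi\setminus(\bm\cup\cut\bm)=M\setminus(\bm\cup\cut\bm)$; by Remark \ref{rem:equality in finite p-Laplacian comparison} this forces the equality in (\ref{eq:strictly finite Laplacian comparison}) there; and since the convex-ball-condition holds (in particular it rules out the degenerate case $\kappa=\lambda=0$), Remark \ref{rem:equality in strictly finite Laplacian comparison} uses $H_{\kappa,\lambda}>0$ on $(0,\bar{C}_{\kappa,\lambda})$ to conclude $f=(n-1)\delta$ along every normal geodesic avoiding $\cut\bm$. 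As $\cut\bm$ is a null set, $M\setminus(\bm\cup\cut\bm)$ is dense, so continuity of $f$ upgrades this to $f\equiv(n-1)\delta$ on all of $M$.

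Finally, once $f$ is the constant $(n-1)\delta$, the density factor cancels in the Rayleigh quotient so $\nu_{f,p}(M)=\nu_{0,p}(M)$, and the weighted curvature hypotheses collapse to the classical lower bounds $\ric_{g}\geq(n-1)\kappa e^{-4\delta}$ and $H_{z}\geq(n-1)\lambda e^{-2\delta}$ for the mean curvature of $\bm$, where $\kappa e^{-4\delta}$ and $\lambda e^{-2\delta}$ again satisfy the convex-ball-condition. The equality $\nu_{0,p}(M)=\nu_{0,p}(B^{n}_{\kappa e^{-4\delta},\lambda e^{-2\delta}})$ together with the already-known unweighted eigenvalue rigidity (Theorem 1.6 in \cite{Sa2}; Kasue \cite{K3} for $p=2$) forces $M$ to be isometric to $B^{n}_{\kappa e^{-4\delta},\lambda e^{-2\delta}}$, which combined with $f\equiv(n-1)\delta$ is the assertion.

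I expect the main obstacle to be the middle step: correctly identifying the minimizer $\psi$ with the pulled-back model eigenfunction $\Phi$ and then verifying $\supp\psi=M$, since only this guarantees that the pointwise Laplacian-comparison equality propagates over \emph{all} of $M\setminus\cut\bm$ rather than over some proper subset, which is what ultimately pins down $f$. After $f$ is shown constant, the reduction to the unweighted case and the invocation of the known rigidity are routine. A secondary point needing care is that it is the convex-ball-condition, not merely the monotone- or weakly-monotone-condition, that yields strict positivity of $H_{\kappa,\lambda}$ on $(0,\bar{C}_{\kappa,\lambda})$; this is however already packaged inside Remark \ref{rem:equality in strictly finite Laplacian comparison}, so no separate case analysis within the convex-ball-condition is required.
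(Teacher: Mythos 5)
Your proof is correct, and the first two-thirds (the inequality via Lemma \ref{lem:inequality in finite eigenvalue rigidity}, the minimizer via Proposition \ref{prop:eigenfunction}, the identification $\psi=c\,\Phi$ and $\supp\psi=M$ via Remark \ref{rem:the equality case in finite eigenvalue rigidity}, and the propagation of equality through Remarks \ref{rem:c1-equality case in global finite p-Laplacian comparison}, \ref{rem:equality in finite p-Laplacian comparison}, \ref{rem:equality in strictly finite Laplacian comparison} to force $f\equiv(n-1)\delta$) run exactly in parallel with the paper. The endgame, however, is genuinely different. Once $f=(n-1)\delta$ is in hand, you rescale to the unweighted situation $\ric_g\geq(n-1)\kappa e^{-4\delta}$, $H\geq(n-1)\lambda e^{-2\delta}$ and invoke the known unweighted $(\kappa e^{-4\delta},\lambda e^{-2\delta})$-equational-model-space rigidity (Theorem 1.6 of \cite{Sa2}), then observe that the convex-ball-condition for $\kappa e^{-4\delta},\lambda e^{-2\delta}$ excludes the model-condition, so the only possible equational model space is $B^{n}_{\kappa e^{-4\delta},\lambda e^{-2\delta}}$. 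This is the same reduction-to-\cite{Sa2} used by the paper to close Theorem \ref{thm:eigenvalue rigidity}, so it is a legitimate device. The paper's own proof of Theorem \ref{thm:finite eigenvalue rigidity} instead stays internal to the present paper: it also derives $f\equiv(n-1)\delta$, but additionally extracts the twisted Jacobi field rigidity $Y_{z,i}=F_{\kappa,\lambda,z}E_{z,i}$ on $[0,\tau(z)]$ from Lemma \ref{lem:Equality in Laplacian comparison}, uses this to show that if $\IR M<C_{\kappa e^{-4\delta},\lambda e^{-2\delta}}$ then the deepest point $x_{0}$ is a cut point that is not a first conjugate point, so $\rho_{\bm,\delta}$ is nondifferentiable there while $\hat\varphi'(\rho_{\bm,\delta}(x_0))>0$ and $\Phi\in C^{1,\alpha}$, a contradiction; then $\IR M=C_{\kappa e^{-4\delta},\lambda e^{-2\delta}}$ and Theorem \ref{thm:finite inscribed radius rigidity} finishes. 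Your route is shorter and avoids the conjugate-point/regularity argument entirely, at the cost of leaning on \cite{Sa2} for the final rigidity. Two small points you leave implicit and should state: when you pass to \cite{Sa2} you need the inscribed radius hypothesis $\IR M\leq C_{\kappa e^{-4\delta},\lambda e^{-2\delta}}$, which comes free from Lemma \ref{lem:finite Inscribed radius comparison} (or Lemma \ref{lem:Cut point comparisons}); and the conclusion of \cite{Sa2} a priori gives an equational model space, so one must note that the convex-ball-condition rules out the warped-product and quotient cases before asserting $M\cong B^{n}_{\kappa e^{-4\delta},\lambda e^{-2\delta}}$.
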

\begin{proof}
By Lemma \ref{lem:inequality in finite eigenvalue rigidity},
we have (\ref{eq:finite eigenvalue rigidity}).
Assume that
the equality holds.
Applying Proposition \ref{prop:eigenfunction} to $f$,
we have a non-negative, 
non-zero $\psi \in W^{1,p}_{0}(M,m_{f})\cap C^{1,\alpha}(M)$ with $R_{f,p}(\psi)=\nu_{0,p}(B^{n}_{\kappa\,e^{-4\,\delta},\lambda\,e^{-2\,\delta}})$.
Let $\hat{\varphi}$ be a non-zero function satisfying (\ref{eq:model space eigenvalue problem}) for $\nu=\nu_{p,\kappa,\lambda,\const}$,
and let $\hat{\varphi}|_{(0,\const]}>0$.
Define $\rho_{\bm,\delta}:=e^{-2\,\delta}\,\rho_{\bm}$ and $\Phi:=\hat{\varphi} \circ \rho_{\bm,\delta}$ (cf. Lemma \ref{lem:finite Inscribed radius comparison}).
Then $\Phi=c\,\psi$ for some $c\neq 0$;
in particular,
$\supp \psi =M$ and $\Phi \in C^{1,\alpha}(M)$.
The equality in (\ref{eq:global finite p-Laplacian comparison}) also holds (see Remark \ref{rem:the equality case in finite eigenvalue rigidity}).

Since $\supp \psi =M$,
the equality in (\ref{eq:finite p-Laplacian comparison}) holds on $\inte M \setminus \cut \bm$ (see Remark \ref{rem:c1-equality case in global finite p-Laplacian comparison}).
Fix $z\in \bm$.
Choose an orthonormal basis $\{e_{z,i}\}_{i=1}^{n-1}$ of $T_{z}\bm$.
Let $\{Y_{z,i}\}^{n-1}_{i=1}$ be the $\bm$-Jacobi fields along $\gamma_{z}$ with $Y_{z,i}(0)=e_{z,i},\,Y_{z,i}'(0)=-A_{u_{z}}e_{z,i}$.
For all $i$
we see $Y_{z,i}=F_{\kappa,\lambda,z}\, E_{z,i}$ on $[0,\tau(z)]$,
where $\{E_{z,i}\}^{n-1}_{i=1}$ are the parallel vector fields with $E_{z,i}(0)=e_{z,i}$.
Moreover,
$f\circ \gamma_{z}=(n-1)\delta$ on $[0,\tau(z)]$.

By Theorem \ref{thm:finite inscribed radius rigidity},
it suffices to show that $\IR M=C_{\kappa\,e^{-4\delta},\lambda\,e^{-2\delta}}$.
Let us suppose $\IR M<C_{\kappa\,e^{-4\delta},\lambda\,e^{-2\delta}}$.
Take $x_{0}\in M$ with $\rho_{\bm}(x_{0})=\IR M$.
Note that
$x_{0} \in \cut \bm$.
By $\rho_{\bm}(x_{0})<C_{\kappa\,e^{-4\delta},\lambda\,e^{-2\delta}}$,
and by the rigidity of the Jacobi fields,
$x_{0}$ is not the first conjugate point along $\gamma_{z_{0}}$,
where $z_{0}$ is a foot point of $x_{0}$.
Hence
$\rho_{\bm,\delta}$ is not differentiable at $x_{0}$.
From $\Phi \in C^{1,\alpha}(M)$
we deduce $\hat{\varphi}'(\rho_{\bm,\delta}(x_{0}))=0$.
This contradicts $\hat{\varphi}'|_{[0,\const)}>0$.
Thus,
we complete the proof of Theorem \ref{thm:finite eigenvalue rigidity}.
\end{proof}

%%%%%%%%%%%%%%%%%%%%%%%%%%
\subsection{Spectrum rigidity}\label{sec:Spectrum rigidity}
Let $\Omega$ be a relatively compact domain in $M$ such that
its boundary is a smooth hypersurface in $M$ with $\partial \Omega \cap \bm=\emptyset$.
For the canonical measure $\vol_{\partial \Omega}$ on $\partial \Omega$,
put $m_{f,\partial \Omega}:=e^{-f|_{\partial \Omega}}\, \vol_{\partial \Omega}$.

Let us prove the following area estimate:
\begin{lem}\label{lem:Kasue volume estimate}
Let $\kappa$ and $\lambda$ satisfy the monotone-condition.
For $N\in (-\infty,1]$,
assume that
$(M,\bm,f)$ has lower $(\kappa,\lambda,N)$-weighted curvature bounds.
Suppose additionally that
there exists $\delta \in \mathbb{R}$ such that $f\leq (n-1)\delta$ on $M$.
Define $\rho_{\bm,\delta}:=e^{-2\delta}\,\rho_{\bm}$.
Let $\Omega$ be a relatively compact domain in $M$ such that
$\partial \Omega$ is a smooth hypersurface in $M$ satisfying $\partial \Omega \cap \bm=\emptyset$.
Set
\begin{equation*}\label{eq:diameter of Omega}
D_{\delta,1}(\Omega):=\inf_{x\in \Omega}\, \rho_{\bm,\delta}(x),\quad D_{\delta,2}(\Omega):=\sup_{x\in \Omega} \,\rho_{\bm,\delta}(x).
\end{equation*}
Then we have
\begin{equation*}\label{eq:Kasue volume estimate}
m_{f}( \Omega) \leq e^{2\delta}\,  \sup_{s\in (D_{\delta,1}(\Omega),D_{\delta,2}(\Omega))}\,  \frac{\int^{ D_{\delta,2}(\Omega)}_{s}\,  s^{n-1}_{\kappa,\lambda}(a)\, da}{s^{n-1}_{\kappa,\lambda}(s)} \, m_{f,\partial \Omega}\, (\partial \Omega).
\end{equation*}
\end{lem}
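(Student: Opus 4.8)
I would prove this by a divergence-theorem argument against the distance-from-boundary function $\rho_{\bm}$, using a carefully chosen radial weight so that the model volume-to-area ratio appears as the boundary term. Write $D_{j}:=D_{\delta,j}(\Omega)$ for $j=1,2$; since $\bar{\Omega}$ is compact we have $D_{2}<\infty$, and $D_{2}\le \bconst$ (automatic when $\kappa\le 0$ and $\lambda=\sqrt{\vert\kappa\vert}$, since then $\bconst=\infty$; in the convex-ball case it follows from Lemma~\ref{lem:finite Inscribed radius comparison}). Put $G(s):=\int_{s}^{D_{2}}s^{n-1}_{\kappa,\lambda}(a)\,da$, so that $G\ge 0$ on $[0,D_{2}]$, $G(D_{2})=0$ and $G'=-s^{n-1}_{\kappa,\lambda}$, and define the non-positive smooth function
\[
\phi(t):=-e^{2\delta}\,\frac{G(e^{-2\delta}t)}{s^{n-1}_{\kappa,\lambda}(e^{-2\delta}t)}
\]
on $\{t:e^{-2\delta}t<\bconst\}$, an interval that contains the range of $\rho_{\bm}$ over $\bar{\Omega}\setminus\cut\bm$. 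A direct computation using $G'=-s^{n-1}_{\kappa,\lambda}$ and the definition $(\ref{eq:model mean curvature})$ of $H_{\kappa,\lambda}$ yields the key identity
\[
\phi'(t)-e^{-2\delta}\,H_{\kappa,\lambda}(e^{-2\delta}t)\,\phi(t)=1 .
\]

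On $\inte M\setminus\cut\bm$, where $\rho_{\bm}$ is smooth and $\Vert\nabla\rho_{\bm}\Vert=1$, I would set $X:=\phi(\rho_{\bm})\,\nabla\rho_{\bm}$ and compute $\Div_{f}X:=\Div X-g(\nabla f,X)=\phi'(\rho_{\bm})-\phi(\rho_{\bm})\,\Delta_{f}\rho_{\bm}$. Since $\kappa$ and $\lambda$ satisfy the monotone-condition and $f\le(n-1)\delta$, inequality $(\ref{eq:strictly finite Laplacian comparison})$ gives $\Delta_{f}\rho_{\bm}(\gamma_{z}(t))\ge H_{\kappa,\lambda}(e^{-2\delta}t)\,e^{-2\delta}$ for $t\in(0,\tau(z))$; combining this with $-\phi\ge 0$ and the key identity, one gets $\Div_{f}X\ge 1$ on $\inte M\setminus\cut\bm$.

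Next I would integrate this over $\Omega$ while avoiding the cut locus. Applying Lemma~\ref{lem:avoiding the cut locus} to $\Omega$ produces the exhausting family $\{\Omega_{i}\}$; for each $i$ the classical divergence theorem for $e^{-f}X$ over $\Omega_{i}$ gives $\int_{\Omega_{i}}\Div_{f}X\,dm_{f}=\int_{\partial\Omega_{i}}g(X,u_{i})\,dm_{f,i}$ (the null exceptional set in $\partial\Omega_i$ does not affect the integral). Split $\partial\Omega_{i}=(\partial\Omega_{i}\cap\partial\Omega)\sqcup(\partial\Omega_{i}\setminus\partial\Omega)$ up to a null set. On $\partial\Omega_{i}\setminus\partial\Omega$ one has $g(u_{i},\nabla\rho_{\bm})\ge 0$, hence $g(X,u_{i})=\phi(\rho_{\bm})\,g(\nabla\rho_{\bm},u_{i})\le 0$ because $\phi\le 0$; moreover any part of $\bm$ that happens to lie in $\bar{\Omega}$ is contained in $\inte\Omega_{i}$ for large $i$ and contributes nothing. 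On $\partial\Omega_{i}\cap\partial\Omega$, $u_{i}$ is the outer unit normal of $\Omega$, so $g(X,u_{i})\le\Vert X\Vert=\vert\phi(\rho_{\bm})\vert=e^{2\delta}\,G(\rho_{\bm,\delta})/s^{n-1}_{\kappa,\lambda}(\rho_{\bm,\delta})$, which, since $\rho_{\bm,\delta}$ takes values in $[D_{1},D_{2}]$ on $\bar{\Omega}$, is bounded by $e^{2\delta}\sup_{s\in(D_{1},D_{2})}G(s)/s^{n-1}_{\kappa,\lambda}(s)$ (the supremum over the open interval coincides with that over $[D_{1},D_{2}]$ by continuity, the value at $s=D_{2}$ being $0$). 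Therefore
\[
m_{f}(\Omega_{i})\le\int_{\Omega_{i}}\Div_{f}X\,dm_{f}\le e^{2\delta}\,\Big(\sup_{s\in(D_{1},D_{2})}\frac{G(s)}{s^{n-1}_{\kappa,\lambda}(s)}\Big)\,m_{f,\partial\Omega}(\partial\Omega).
\]
Letting $i\to\infty$ and using that $\Omega_{i}\uparrow\bar{\Omega}\setminus\cut\bm$ while $\cut\bm$ and $\partial\Omega$ are $m_{f}$-null, one obtains $m_{f}(\Omega_{i})\uparrow m_{f}(\Omega)$, which is the asserted estimate.

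The heart of the argument is the ODE identity in the first step: it is exactly the relation satisfied by the model ratio $G(s)/s^{n-1}_{\kappa,\lambda}(s)$, and it forces $\Div_{f}X\ge 1$ once combined with the Laplacian comparison $(\ref{eq:strictly finite Laplacian comparison})$, which in turn requires the monotone-condition (so that $H_{\kappa,\lambda}\ge 0$ and $H'_{\kappa,\lambda}\ge 0$). The genuinely delicate point I expect is the sign bookkeeping on $\partial\Omega_{i}$: one must verify that, thanks to $\phi\le 0$ and the monotone-condition, the interior-facing pieces of $\partial\Omega_{i}$ — and any stray pieces of $\bm$ that lie in $\bar{\Omega}$ — never contribute a positive boundary term, so that the whole boundary integral is controlled by the part lying on $\partial\Omega$.
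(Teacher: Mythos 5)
Your proof is correct and is essentially the same argument as the paper's, merely phrased in terms of the vector field $X=\phi(\rho_{\bm})\nabla\rho_{\bm}$ rather than the scalar $\Phi=\hat{\varphi}\circ\rho_{\bm,\delta}$ (with $\hat{\varphi}(s)=\int_{D_{1}}^{s}G(a)\,s^{1-n}_{\kappa,\lambda}(a)\,da$), so that your $X$ is $-e^{4\delta}\nabla\Phi$ and your identity $\phi'-e^{-2\delta}H_{\kappa,\lambda}\phi=1$ is the derivative form of the paper's $\hat{\varphi}''-H_{\kappa,\lambda}\hat{\varphi}'=-1$. The remaining steps — invoking $(\ref{eq:strictly finite Laplacian comparison})$, exhausting by the $\Omega_{i}$ from Lemma~\ref{lem:avoiding the cut locus}, dropping the nonpositive contribution from $\partial\Omega_{i}\setminus\partial\Omega$, and bounding the $\partial\Omega$ piece by Cauchy--Schwarz — match the paper's proof.
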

\begin{proof}
Define a function $\hat{\varphi}:[D_{\delta,1}(\Omega),D_{\delta,2}(\Omega)]\to \mathbb{R}$ by
\begin{align*}
\hat{\varphi}(s):=\int^{s}_{D_{\delta,1}(\Omega) }\,   \frac{\int^{ D_{\delta,2}(\Omega)}_{a}\,  s^{n-1}_{\kappa,\lambda}(b)\, db}{s^{n-1}_{\kappa,\lambda}(a)}   \,da.
\end{align*}
Put $\Phi:=\hat{\varphi} \circ \rho_{\bm,\delta}$.
By Lemma \ref{lem:finite p-Laplacian comparison},
on $\Omega \setminus \cut\bm$
we have
\begin{equation}\label{eq:Kasue volume estimate Laplacian comparison}
\Delta_{f}\,\Phi \geq -e^{-4\delta}\,  \left(  \hat{\varphi}''-H_{\kappa,\lambda}\,\hat{\varphi}' \right)\circ \rho_{\bm,\delta}=e^{-4\delta}.
\end{equation}

By Lemma \ref{lem:avoiding the cut locus},
there exists a sequence $\{\Omega_{i}\}$ of compact subsets of the closure $\bar{\Omega}$ such that
for every $i$,
the boundary $\partial \Omega_{i}$ is a smooth hypersurface in $M$ except for a null set in $(\partial \Omega,m_{f,\partial \Omega})$,
and satisfying the following:
(1) for all $i_{1},i_{2}$ with $i_{1}<i_{2}$,
we have $\Omega_{i_{1}}\subset \Omega_{i_{2}}$;
(2) $\bar{\Omega} \setminus \cut \bm=\bigcup_{i}\,\Omega_{i}$:
(3) for every $i$,
     and for almost every point $x \in \partial \Omega_{i}\cap \partial \Omega$ in $(\partial \Omega,m_{f,\partial \Omega})$,
     there exists a unique unit outer normal vector for $\Omega_{i}$ at $x$
     that coincides with the unit outer normal vector $u_{\partial \Omega}$ on $\partial \Omega$ for $\Omega$;
(4) for every $i$,
     on $\partial \Omega_{i}\setminus \partial \Omega$,
     there exists a unique unit outer normal vector field $u_{i}$ for $\Omega_{i}$ such that $g(u_{i},\nabla \rho_{\bm})\geq 0$.

For the canonical measure $\vol_{i}$ on $\partial \Omega_{i}\setminus \partial \Omega$,
put $m_{f,i}:=e^{-f|_{\partial \Omega_{i}\setminus \partial \Omega}}\,\vol_{i}$.
By integrating the both sides of (\ref{eq:Kasue volume estimate Laplacian comparison}) on $\Omega_{i}$,
and by integration by parts,
\begin{align*}
&\,  e^{-4\delta}\,m_{f}\left(\Omega_{i}\right) \leq \int_{\Omega_{i}} \, \Delta_{f}\,\Phi \,d\,m_{f}\\
= &-\int_{\partial \Omega_{i}\setminus \partial \Omega} g(u_{i},\nabla \Phi)  \,d\,m_{f,i}-\int_{\partial \Omega_{i} \cap \partial \Omega} g(u_{\partial \Omega},\nabla \Phi)  \,d\,m_{f,\partial \Omega}.
\end{align*}
The Cauchy-Schwarz inequality and $g(u_{i},\nabla \Phi)\geq 0$ tell us that
\begin{align*}
e^{-4\delta}\,m_{f}\left(\Omega_{i}\right) &\leq -\int_{\partial \Omega_{i}\cap \partial \Omega} g(u_{\partial \Omega},\nabla \Phi)  \,d\,m_{f,\partial \Omega}\\
&\leq \int_{\partial \Omega_{i}\cap \partial \Omega} \left( \hat{\varphi}'\circ \rho_{\bm,\delta}\right) \vert g(u_{\partial \Omega},\nabla \rho_{\bm,\delta}) \vert  \,d\,m_{f,\partial \Omega}\\
&\leq e^{-2\delta}\,\sup_{s\in (D_{\delta,1}(\Omega),D_{\delta,2}(\Omega))}\,\hat{\varphi}'(s)\,m_{f,\partial \Omega}\, (\partial \Omega).                                                   
\end{align*}
By letting $i \to \infty$,
we complete the proof.
\end{proof}
Kasue \cite{K4} has proved Lemma \ref{lem:Kasue volume estimate} when $f=0$ and $\delta=0$.

For $\alpha>0$,
the \textit{$(f,\alpha)$-Dirichlet isoperimetric constant} is defined as
\begin{equation*}
DI_{\alpha}(M,m_{f}):=\inf_{\Omega}\,  \frac{m_{f,\partial \Omega}(\partial \Omega)}{\left( m_{f}(\Omega) \right)^{1/\alpha}},
\end{equation*}
where the infimum is taken over all relatively compact domains $\Omega$ in $M$ such that
$\partial \Omega$ are smooth hypersurfaces in $M$ satisfying $\partial \Omega \cap \partial M=\emptyset$.
The \textit{$(f,\alpha)$-Dirichlet Sobolev constant} is defined as
\begin{equation*}
DS_{\alpha}(M,m_{f}):=\inf_{\phi \in W^{1,1}_{0}(M,m_{f})\setminus \{0\}}\, \frac{\int_{M}\,  \Vert \nabla \phi \Vert \,d\,m_{f}}{\left(\int_{M}\,  \vert \phi \vert^{\alpha} \,d\,m_{f} \right)^{1/\alpha}}.
\end{equation*}

Let us recall the following relation between the constants:
\begin{prop}[\cite{FF}]\label{thm:isoperimetric and Sobolev}
For all $\alpha>0$,
$DI_{\alpha}(M,m_{f})=DS_{\alpha}(M,m_{f})$.
\end{prop}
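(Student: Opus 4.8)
The plan is to prove the two inequalities $DS_{\alpha}(M,m_{f})\le DI_{\alpha}(M,m_{f})$ and $DS_{\alpha}(M,m_{f})\ge DI_{\alpha}(M,m_{f})$ separately; this is the weighted analogue of the classical Federer--Fleming equivalence between Sobolev and isoperimetric constants, and the argument is essentially the one in \cite{FF} (see also Maz'ya's deformation argument) carried out with the measure $m_{f}$ in place of $\vol_{g}$, using the weighted coarea formula, which is just the ordinary one multiplied by the density $e^{-f}$.

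For the inequality $DS_{\alpha}(M,m_{f})\le DI_{\alpha}(M,m_{f})$ I would fix a relatively compact domain $\Omega$ whose boundary $\partial\Omega$ is a smooth hypersurface disjoint from $\bm$, and test the Sobolev quotient against the Lipschitz truncations $\phi_{\varepsilon}:=\max\left\{0,\,1-\varepsilon^{-1}d_{M}(\,\cdot\,,\Omega)\right\}$, which for small $\varepsilon>0$ are supported in the tube $T_{\varepsilon}:=\{\,0<d_{M}(\,\cdot\,,\Omega)<\varepsilon\,\}$ around $\Omega$ and hence lie in $W^{1,1}_{0}(M,m_{f})$. Since $\Vert\nabla\phi_{\varepsilon}\Vert=\varepsilon^{-1}$ on $T_{\varepsilon}$ and vanishes elsewhere, the coarea formula applied to $d_{M}(\,\cdot\,,\Omega)$ near the regular hypersurface $\partial\Omega$ gives $\int_{M}\Vert\nabla\phi_{\varepsilon}\Vert\,d\,m_{f}=\varepsilon^{-1}m_{f}(T_{\varepsilon})\to m_{f,\partial\Omega}(\partial\Omega)$ as $\varepsilon\to0$, while $m_{f}(\Omega)\le\int_{M}|\phi_{\varepsilon}|^{\alpha}\,d\,m_{f}\le m_{f}(\Omega)+m_{f}(T_{\varepsilon})\to m_{f}(\Omega)$. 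Therefore $DS_{\alpha}(M,m_{f})\le m_{f,\partial\Omega}(\partial\Omega)/(m_{f}(\Omega))^{1/\alpha}$, and taking the infimum over all admissible $\Omega$ yields the bound.

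For the reverse inequality it suffices to bound $\int_{M}\Vert\nabla\phi\Vert\,d\,m_{f}/(\int_{M}|\phi|^{\alpha}\,d\,m_{f})^{1/\alpha}$ from below by $DI_{\alpha}(M,m_{f})$ for nonnegative $\phi\in C^{\infty}_{0}(M)$: one may replace $\phi$ by $|\phi|$ using $\Vert\nabla|\phi|\Vert\le\Vert\nabla\phi\Vert$, and pass from $W^{1,1}_{0}(M,m_{f})$ to $C^{\infty}_{0}(M)$ by approximation together with Fatou's lemma for the denominator along the approximating sequence. For almost every $t\in(0,\,\sup\phi)$, Sard's theorem makes $t$ a regular value, so $\Omega_{t}:=\{\phi>t\}$ is a relatively compact domain with smooth boundary $\phi^{-1}(t)$ disjoint from $\bm$, and by definition of $DI_{\alpha}$ one has $m_{f,\phi^{-1}(t)}(\phi^{-1}(t))\ge DI_{\alpha}(M,m_{f})\,(m_{f}(\Omega_{t}))^{1/\alpha}$. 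The coarea formula then gives
\begin{equation*}
\int_{M}\Vert\nabla\phi\Vert\,d\,m_{f}=\int_{0}^{\infty}m_{f,\phi^{-1}(t)}(\phi^{-1}(t))\,dt\ge DI_{\alpha}(M,m_{f})\int_{0}^{\infty}(m_{f}(\Omega_{t}))^{1/\alpha}\,dt,
\end{equation*}
and writing $\phi(x)=\int_{0}^{\infty}\chi_{\Omega_{t}}(x)\,dt$ and invoking Minkowski's integral inequality in $L^{\alpha}(m_{f})$ yields $\int_{0}^{\infty}(m_{f}(\Omega_{t}))^{1/\alpha}\,dt=\int_{0}^{\infty}\Vert\chi_{\Omega_{t}}\Vert_{L^{\alpha}(m_{f})}\,dt\ge\Vert\phi\Vert_{L^{\alpha}(m_{f})}$. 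Combining the two estimates gives $\int_{M}\Vert\nabla\phi\Vert\,d\,m_{f}\ge DI_{\alpha}(M,m_{f})(\int_{M}|\phi|^{\alpha}\,d\,m_{f})^{1/\alpha}$, hence $DS_{\alpha}(M,m_{f})\ge DI_{\alpha}(M,m_{f})$.

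The main obstacle is this reverse inequality, and within it two points require care. First, one must set up the weighted coarea formula carefully and verify that the set of non-regular values of $\phi$ is Lebesgue-null so that the level-set estimate is applicable for a.e.\ $t$; this is routine but is where the argument genuinely lives, since it is precisely the mechanism by which a global isoperimetric inequality is integrated up into a Sobolev inequality. Second, Minkowski's integral inequality is available in the form used only for $\alpha\ge1$, so for $0<\alpha<1$ one must instead appeal to the finer deformation-theoretic proof of \cite{FF}. A minor technical point in the forward direction is that $d_{M}(\,\cdot\,,\Omega)$ need not be smooth on all of $T_{\varepsilon}$, but this is harmless since the coarea formula only requires Lipschitz regularity, and if desired one may replace $T_{\varepsilon}$ by the exhaustion furnished by Lemma \ref{lem:avoiding the cut locus}.
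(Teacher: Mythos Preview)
The paper does not prove this proposition at all; it is quoted as a classical fact with a citation to Federer--Fleming \cite{FF}, and is invoked only once, in the proof of Lemma \ref{lem:p-Laplacian1}, with $\alpha=1$. Your argument for $\alpha\ge 1$ is the standard one and is correct (modulo the harmless slip that $\phi_{\varepsilon}$ is supported in $\overline{\Omega\cup T_{\varepsilon}}$, not in $T_{\varepsilon}$; it is $\nabla\phi_{\varepsilon}$ that is supported in the tube). Since the paper offers no proof, there is nothing to compare approaches against; what you wrote is exactly the proof one would supply.

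The genuine gap is the range $0<\alpha<1$. Your appeal to ``the finer deformation-theoretic proof of \cite{FF}'' does not rescue the argument: Federer and Fleming work in Euclidean space, and their deformation theorem for integral currents has no obvious analogue on a general weighted manifold with boundary. More to the point, the inequality $\int_{0}^{\infty}F(t)^{1/\alpha}\,dt\ge\|\phi\|_{L^{\alpha}}$ that you need \emph{fails} for $\alpha<1$: with $\phi=2\chi_{A}+\chi_{B}$ for disjoint sets of measure $1$ and $\alpha=\tfrac12$ one gets $2^{2}+1=5$ on the left and $(1+\sqrt{2})^{2}=3+2\sqrt{2}>5$ on the right. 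So the coarea-plus-Minkowski route is simply unavailable below $\alpha=1$, and without a replacement the identity $DI_{\alpha}=DS_{\alpha}$ is not established in that range. This does not affect the paper, which only needs $\alpha=1$, but you should either restrict your statement to $\alpha\ge 1$ or supply an independent argument for small $\alpha$.
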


For $D\in (0,\bar{C}_{\kappa,\lambda}]$,
we put
\begin{equation}\label{eq:spectrum constant}
C(\kappa,\lambda,D):=\sup_{s\in [0,D)}\, \frac{\int^{D}_{s}\,  s^{n-1}_{\kappa,\lambda}(a)\, da}{s^{n-1}_{\kappa,\lambda}(s)}.
\end{equation}
Notice that 
$C(\kappa,\lambda,\infty)$ is finite if and only if $\kappa<0$ and $\lambda=\sqrt{\vert \kappa \vert}$;
in this case,
we have
$C(\kappa,\lambda,D)=\left((n-1)\lambda \right)^{-1}\,\left(1-e^{-(n-1)\lambda\, D} \right)$.

From Lemma \ref{lem:Kasue volume estimate}
we derive the following:
\begin{lem}\label{lem:p-Laplacian1}
Let $p\in (1,\infty)$.
Let $\kappa$ and $\lambda$ satisfy the monotone-condition.
For $N\in (-\infty,1]$,
assume that
the triple $(M,\bm,f)$ has lower $(\kappa,\lambda,N)$-weighted curvature bounds.
Suppose additionally that
there exists $\delta \in \mathbb{R}$ such that $f\leq (n-1)\delta$ on $M$.
For $D\in(0,\bar{C}_{\kappa,\lambda}]$,
suppose $\IR M \leq e^{2\,\delta}\,D$.
Then we have
\begin{equation*}
\nu_{f,p}(M)\geq (\,p\,e^{2\delta}\,C(\kappa,\lambda,D)\,)^{-p}.
\end{equation*}
\end{lem}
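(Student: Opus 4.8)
The plan is to read a Dirichlet isoperimetric estimate off Lemma \ref{lem:Kasue volume estimate}, convert it into an $L^{1}$-Sobolev inequality via Proposition \ref{thm:isoperimetric and Sobolev}, and then promote it to the claimed $(f,p)$-eigenvalue bound by the standard substitution $\phi=\vert\psi\vert^{p}$ together with Hölder's inequality.

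First I would fix a relatively compact domain $\Omega\subset M$ whose boundary $\partial\Omega$ is a smooth hypersurface with $\partial\Omega\cap\bm=\emptyset$, and write $\rho_{\bm,\delta}:=e^{-2\delta}\rho_{\bm}$ together with $D_{\delta,1}(\Omega):=\inf_{\Omega}\rho_{\bm,\delta}$, $D_{\delta,2}(\Omega):=\sup_{\Omega}\rho_{\bm,\delta}$ as in Lemma \ref{lem:Kasue volume estimate}. The hypothesis $\IR M\leq e^{2\delta}D$ forces $\rho_{\bm,\delta}\leq D$ on $M$, hence $D_{\delta,2}(\Omega)\leq D$; since $s^{n-1}_{\kappa,\lambda}\geq 0$ on $[0,\bar{C}_{\kappa,\lambda})$, for every $s\in(D_{\delta,1}(\Omega),D_{\delta,2}(\Omega))$ we obtain
\begin{equation*}
\frac{\int^{D_{\delta,2}(\Omega)}_{s} s^{n-1}_{\kappa,\lambda}(a)\,da}{s^{n-1}_{\kappa,\lambda}(s)}\leq \frac{\int^{D}_{s} s^{n-1}_{\kappa,\lambda}(a)\,da}{s^{n-1}_{\kappa,\lambda}(s)}\leq C(\kappa,\lambda,D)
\end{equation*}
by the definition (\ref{eq:spectrum constant}) (if $C(\kappa,\lambda,D)=\infty$ the asserted bound is $0$ and the lemma is vacuous, so I may assume it is finite). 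Substituting this into Lemma \ref{lem:Kasue volume estimate} gives $m_{f}(\Omega)\leq e^{2\delta}C(\kappa,\lambda,D)\,m_{f,\partial\Omega}(\partial\Omega)$, and taking the infimum over all such $\Omega$ this says exactly that $DI_{1}(M,m_{f})\geq (e^{2\delta}C(\kappa,\lambda,D))^{-1}$. By Proposition \ref{thm:isoperimetric and Sobolev} with $\alpha=1$ the same bound holds for $DS_{1}(M,m_{f})$, so
\begin{equation*}
\int_{M}\Vert\nabla\phi\Vert\,d\,m_{f}\geq \frac{1}{e^{2\delta}C(\kappa,\lambda,D)}\int_{M}\vert\phi\vert\,d\,m_{f}
\end{equation*}
for every $\phi\in W^{1,1}_{0}(M,m_{f})\setminus\{0\}$.

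It then remains to pass from $p=1$ to general $p$. Since $W^{1,p}_{0}(M,m_{f})$ is the completion of $C^{\infty}_{0}(M)$ and $R_{f,p}$ is continuous there, it suffices to bound $R_{f,p}(\psi)$ from below for $\psi\in C^{\infty}_{0}(M)\setminus\{0\}$; for such $\psi$ the function $\phi:=\vert\psi\vert^{p}$ belongs to $C^{1}_{0}(M)\subset W^{1,1}_{0}(M,m_{f})$ with $\Vert\nabla\phi\Vert=p\,\vert\psi\vert^{p-1}\,\Vert\nabla\psi\Vert$ almost everywhere. Applying the Sobolev inequality above to $\phi$ and then Hölder's inequality with exponents $p$ and $p/(p-1)$ yields
\begin{align*}
\frac{1}{e^{2\delta}C(\kappa,\lambda,D)}\int_{M}\vert\psi\vert^{p}\,d\,m_{f} &\leq p\int_{M}\vert\psi\vert^{p-1}\Vert\nabla\psi\Vert\,d\,m_{f}\\
&\leq p\left(\int_{M}\Vert\nabla\psi\Vert^{p}\,d\,m_{f}\right)^{1/p}\left(\int_{M}\vert\psi\vert^{p}\,d\,m_{f}\right)^{(p-1)/p}.
\end{align*}
Dividing by $\left(\int_{M}\vert\psi\vert^{p}\,d\,m_{f}\right)^{(p-1)/p}$ and raising to the $p$-th power gives $R_{f,p}(\psi)\geq (p\,e^{2\delta}\,C(\kappa,\lambda,D))^{-p}$, and taking the infimum over $\psi$ finishes the proof. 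I expect the only mild subtleties in carrying this out to be the almost-everywhere chain rule for $\vert\psi\vert^{p}$ (valid since $p>1$) and arranging the constants so that the factor $p$ ends up inside the $p$-th power; the genuine geometric content is already packaged into Lemma \ref{lem:Kasue volume estimate}, so no further curvature estimate is needed.
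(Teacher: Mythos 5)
Your proof is correct and matches the paper's argument essentially line for line: bound the supremum in Lemma \ref{lem:Kasue volume estimate} by $C(\kappa,\lambda,D)$ using $\IR M\leq e^{2\delta}D$, pass from the resulting isoperimetric bound to an $L^{1}$-Sobolev inequality via Proposition \ref{thm:isoperimetric and Sobolev}, and then promote it to the $(f,p)$-Rayleigh quotient by substituting $\phi=\vert\psi\vert^{p}$ and applying H\"older with exponents $p$ and $p/(p-1)$. Your explicit justification that $D_{\delta,2}(\Omega)\leq D$ and that the quotient is monotone in the upper limit fills a minor gap that the paper leaves implicit.
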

\begin{proof}
Let $\Omega$ be a relatively compact domain in $M$ such that
$\partial \Omega$ is a smooth hypersurface in $M$ with $\partial \Omega \cap \bm=\emptyset$.
Set $C_{\delta}:=e^{2\delta}\,C(\kappa,\lambda,D)$.
Lemma \ref{lem:Kasue volume estimate} implies $m_{f}(\Omega) \leq C_{\delta}\,m_{f,\partial \Omega}( \partial \Omega)$.
By Proposition \ref{thm:isoperimetric and Sobolev},
we obtain $DS_{1}(M,m_{f})\geq C^{-1}_{\delta}$.
For all $\phi \in W^{1,1}_{0}(M,m_{f})$
we have
\begin{equation}\label{eq:Poincare1}
\int_{M}\, \vert \phi \vert \,d\,m_{f} \leq C_{\delta} \int_{M}\,\Vert \nabla \phi \Vert \,d\,m_{f}.
\end{equation}

Let $\psi$ be a non-zero function in $W^{1,p}_{0}(M,m_{f})$.
Put $q:=p\,(1-p)^{-1}$.
In (\ref{eq:Poincare1}),
by replacing $\phi$ with $\vert \psi \vert^{p}$, 
and by the H\"older inequality,
we see
\begin{align*}
\int_{M}\, \vert \psi \vert^{p} \,d\,m_{f} &\leq p\,C_{\delta}\, \int_{M}\, \vert \psi \vert^{p-1} \,  \Vert \nabla \psi\Vert \,d\,m_{f}\\
                                                           &\leq p\, C_{\delta}\, \left(\int_{M}\, \vert \psi \vert^{p}\,d\,m_{f}\right)^{1/q} \left(\int_{M}\, \Vert \nabla \psi\Vert^{p}\,d\, m_{f}\right)^{1/p}.
\end{align*}
Considering the Rayleigh quotient $R_{f,p}(\psi)$,
we complete the proof.
\end{proof}

Finally, we prove the following spectrum rigidity theorem:
\begin{thm}\label{thm:spectrum rigidity}
Let $p\in (1,\infty)$.
Let $\kappa<0$ and $\lambda:=\sqrt{\vert \kappa \vert}$.
For $N \in (-\infty,1]$,
assume that
$(M,\bm,f)$ has lower $(\kappa,\lambda,N)$-weighted curvature bounds.
Let $\bm$ be compact.
Suppose additionally that
there exists $\delta \in \mathbb{R}$ such that $f\leq (n-1)\delta$ on $M$.
Then
\begin{equation}\label{eq:spectrum rigidity}
\nu_{f,p}(M)\geq e^{-2p\delta}\left(\frac{(n-1)\lambda }{p}\right)^{p}.
\end{equation}
If the equality in $(\ref{eq:spectrum rigidity})$ holds,
then $M$ is isometric to $[0,\infty)\times_{F_{\kappa,\lambda}} \bm$;
moreover,
if $N\in (-\infty,1)$,
then $f\circ \gamma_{z}$ is constant for every $z\in \bm$.
\end{thm}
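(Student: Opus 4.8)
The plan is to deduce the inequality (\ref{eq:spectrum rigidity}) directly from Lemma \ref{lem:p-Laplacian1}, and to obtain the rigidity statement by combining a sharpness argument for that lemma with the twisted splitting theorem (Theorem \ref{thm:splitting theorem}).

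First I would record two structural facts about the model constants. Since $\lambda=\sqrt{\vert\kappa\vert}$ (and not $\lambda>\sqrt{\vert\kappa\vert}$), the pair $\kappa,\lambda$ does not satisfy the ball-condition, so $\bar{C}_{\kappa,\lambda}=\infty$; moreover $\kappa\le 0$ and $\lambda=\sqrt{\vert\kappa\vert}$ is precisely case (2) of the monotone-condition, so $\kappa,\lambda$ satisfy it. Hence Lemma \ref{lem:p-Laplacian1} is available with the choice $D=\infty\in(0,\bar{C}_{\kappa,\lambda}]$, for which the hypothesis $\IR M\le e^{2\delta}D$ holds vacuously. As noted after (\ref{eq:spectrum constant}), in the present case $C(\kappa,\lambda,\infty)=\left((n-1)\lambda\right)^{-1}$, so Lemma \ref{lem:p-Laplacian1} yields
\[
\nu_{f,p}(M)\ge \bigl(p\,e^{2\delta}\,C(\kappa,\lambda,\infty)\bigr)^{-p}
=e^{-2p\delta}\left(\frac{(n-1)\lambda}{p}\right)^{p},
\]
which is exactly (\ref{eq:spectrum rigidity}).

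Next, assuming the equality in (\ref{eq:spectrum rigidity}) holds, I would first argue that $\IR M=\infty$. If instead $\IR M<\infty$, then $D_{0}:=e^{-2\delta}\,\IR M$ is a finite number in $(0,\bar{C}_{\kappa,\lambda}]$ with $\IR M\le e^{2\delta}D_{0}$, so Lemma \ref{lem:p-Laplacian1} gives $\nu_{f,p}(M)\ge \bigl(p\,e^{2\delta}\,C(\kappa,\lambda,D_{0})\bigr)^{-p}$. Since $C(\kappa,\lambda,D_{0})=\left((n-1)\lambda\right)^{-1}\bigl(1-e^{-(n-1)\lambda D_{0}}\bigr)<\left((n-1)\lambda\right)^{-1}$ whenever $D_{0}<\infty$, this forces the strict inequality $\nu_{f,p}(M)>e^{-2p\delta}\left((n-1)\lambda/p\right)^{p}$, contradicting the assumed equality. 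Therefore $\IR M=\infty$. Because $\partial M$ is compact and $\tau$ is continuous on $\partial M$ with $\sup_{\partial M}\tau=\IR M$, the supremum is attained, so $\IR M=\infty$ gives $\tau(z_{0})=\infty$ for some $z_{0}\in\partial M$.

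Finally, with $\kappa\le 0$, $\lambda=\sqrt{\vert\kappa\vert}$, the lower $(\kappa,\lambda,N)$-weighted curvature bounds for $N\in(-\infty,1]$, and $f\le(n-1)\delta$ (so $f$ is bounded from above), Theorem \ref{thm:splitting theorem} applies to the boundary point $z_{0}$ with $\tau(z_{0})=\infty$ and concludes that $M$ is isometric to $[0,\infty)\times_{F_{\kappa,\lambda}}\partial M$, and, when $N\in(-\infty,1)$, that $f\circ\gamma_{z}$ is constant on $[0,\infty)$ for every $z\in\partial M$. This is the asserted rigidity. I expect the only delicate point to be the reduction $\IR M=\infty$ in the equality case, which hinges on the strict monotonicity in $D$ of the constant $C(\kappa,\lambda,D)$ together with the correct range $D\in(0,\bar{C}_{\kappa,\lambda}]$; everything else is a direct invocation of Lemma \ref{lem:p-Laplacian1} and Theorem \ref{thm:splitting theorem}.
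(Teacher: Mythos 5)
Your proof is correct and follows essentially the same route as the paper's: obtain the inequality from Lemma \ref{lem:p-Laplacian1} together with the explicit formula $C(\kappa,\lambda,D)=\left((n-1)\lambda\right)^{-1}\bigl(1-e^{-(n-1)\lambda D}\bigr)$, use the strict monotonicity of $C(\kappa,\lambda,\cdot)$ to force $\IR M=\infty$ in the equality case, deduce $\tau(z_{0})=\infty$ for some $z_{0}\in\bm$ from compactness and continuity of $\tau$, and invoke Theorem \ref{thm:splitting theorem}. The only cosmetic difference is that you first apply the lemma with $D=\infty$ to get (\ref{eq:spectrum rigidity}) and then separately run a contradiction with $D_{0}=e^{-2\delta}\IR M$, whereas the paper applies the lemma once with $D_{\delta}=e^{-2\delta}\IR M$ and chains the two estimates.
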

\begin{proof}
For $D>0$,
we see $C(\kappa,\lambda,D)=\left((n-1)\lambda \right)^{-1}\,\left(1-e^{-(n-1)\lambda\, D} \right)$.
Note that
the right hand side is monotone increasing as $D\to \infty$.
Put $D_{\delta}:=e^{-2\delta}\,\IR M$.
From Lemma \ref{lem:p-Laplacian1}
we conclude
\begin{equation*}
\nu_{f,p}(M)\geq e^{-2p\delta}\,(\,p\,C(\kappa,\lambda,D_{\delta})\,)^{-p} \geq e^{-2p\delta}\,\left(\,\frac{(n-1)\lambda}{p}\,\right)^{p}.
\end{equation*}

Assume
that the equality in (\ref{eq:spectrum rigidity}) holds.
Then the monotonicity of $C(\kappa,\lambda,D)$ with respect to $D$ implies $D_{\delta}=\infty$;
in particular,
we have $\IR M=\infty$.
Since $\bm$ is compact,
$\tau(z_{0})=\infty$ for some $z_{0}\in \bm$.
Theorem \ref{thm:splitting theorem} completes the proof of Theorem \ref{thm:spectrum rigidity}.
\end{proof}

\end{document}